\renewcommand\normalsize{%
    \@setfontsize\normalsize{11.7}{14pt plus .3pt minus .3pt}%
    \abovedisplayskip 10\p@ \@plus4\p@ \@minus4\p@
    \abovedisplayshortskip 6\p@ \@plus2\p@
    \belowdisplayshortskip 6\p@ \@plus2\p@
    \belowdisplayskip \abovedisplayskip}
\renewcommand\small{%
    \@setfontsize\small{9.5}{12\p@ plus .2\p@ minus .2\p@}%
    \abovedisplayskip 8.5\p@ \@plus4\p@ \@minus1\p@
    \belowdisplayskip \abovedisplayskip
    \abovedisplayshortskip \abovedisplayskip
    \belowdisplayshortskip \abovedisplayskip}
\renewcommand\footnotesize{%
    \@setfontsize\footnotesize{8.5}{9.25\p@ plus .1pt minus .1pt}
    \abovedisplayskip 6\p@ \@plus4\p@ \@minus1\p@
    \belowdisplayskip \abovedisplayskip
    \abovedisplayshortskip \abovedisplayskip
    \belowdisplayshortskip \abovedisplayskip}
\theoremstyle{definition}
\def\fnum{equation} 
\newtheorem{Thm}[\fnum]{Theorem}
\newtheorem{Cor}[\fnum]{Corollary}
\newtheorem{Lem}[\fnum]{Lemma}
\newtheorem{Pro}[\fnum]{Proposition}
\numberwithin{equation}{section}
\newcommand{\Vol}{{\text{Vol}}}
\newcommand{\R}{{\text{R}}}
\newcommand{\nn}{{\bf{n}}}
\newcommand{\Ric}{{\text{Ric}}}
\newcommand{\Tr}{{\text{Tr}}}
\newcommand{\diam}{{\text {diam}}}
\newcommand{\dist}{{\text {dist}}}
\newcommand{\cK}{{\mathcal{K}}}
\newcommand{\Hess}{{\text {Hess}}}
\def\RR{{\bold R}}
\def\RP{{\bold{RP}}}
\def\SS{{\bold S}}
\newcommand{\dv}{{\text {div}}}
\newcommand{\e}{{\text {e}}}
\newcommand{\bg}{{\bar{g}}}
\newcommand{\bG}{{\bar{\Gamma}}}
\newcommand{\bb}{{\hat{h}}}
\newcommand{\cC}{{\mathcal{C}}}
\newcommand{\cE}{{\mathcal{E}}}
\newcommand{\cI}{{\mathcal{I}}}
\newcommand{\cB}{{\mathcal{B}}}
\newcommand{\cL}{{\mathcal{L}}}
\newcommand{\cH}{{\mathcal{H}}}
\newcommand{\cP}{{\mathcal{P}}}
\newcommand{\cR}{{\mathcal{R}}}
\newcommand{\cQ}{{\mathcal{Q}}}
\newcommand{\eqr}[1]{(\ref{#1})}
\title{Singularities of Ricci flow and diffeomorphisms}
\author{Tobias Holck Colding}%
\address{MIT, Dept. of Math.\\
77 Massachusetts Avenue, Cambridge, MA 02139-4307.}
\author{William P. Minicozzi II}%
\thanks{The  authors
were partially supported by NSF  DMS Grants   2405393 and 2304684.}
\email{colding@math.mit.edu and minicozz@math.mit.edu}
\begin{document}

\maketitle

  \begin{abstract}
    We solve a well-known open problem in Ricci flow: Strong rigidity
   of cylinders.  Strong rigidity is an illustration of a {\it shrinker principle} that uniqueness radiates out from a compact set.
   It implies that if one tangent flow at a future singular point is a   cylinder, then all tangent flows  are.
   
   At the heart of this problem in Ricci flow is 
    comparing and recognizing metrics.  This can be rather complicated
because of the group of diffeomorphisms.  Two metrics, that could even be the same, could look completely different in different coordinates.  This is the gauge problem.   Often it can be avoided if one uses some additional structure of the particular situation.  
The gauge problem is subtle for non-compact spaces without additional structure.

We solve this gauge problem by solving a nonlinear system of PDEs.  The PDE produces a diffeomorphism that fixes an appropriate gauge in the spirit of the slice theorem for group actions.  We then show optimal bounds for the displacement function of the diffeomorphism.   Strong rigidity relies on gauge fixing and several other new ideas.  One of these is ``propagation of almost splitting''\!,
 another is quadratic rigidity in the right gauge, and a third is an optimal polynomial growth bound for PDEs that holds in great generality.
  \end{abstract}

\section{Introduction}

To prove strong rigidity of cylinders, we need to be able
 to recognize a metric  from rough information on a compact set without any canonical coordinate system.
Recognizing a metric without canonical coordinates is a common problem in many questions.  

 Suppose we have two weighted manifolds $(M_i,g_i,f_i)$ for $i=1$, $2$ satisfying some PDE.
 Assume that on a large, but compact set, the manifolds $M_i$, metrics $g_i$ and weights $\e^{-f_i}$ almost agree after identification by a diffeomorphism.  
\begin{itemize}
\item Is there a diffeomorphism so that  the metrics and weights are the same everywhere?   
\end{itemize}
A major obstacle for understanding this 
  is the infinite dimensional gauge group
 \footnote{``By fixing a gauge (thus breaking or spending the gauge symmetry), the model becomes something easier to analyse mathematically....  Deciding exactly how to fix a gauge (or whether one should spend the gauge symmetry at all) is a key question in the analysis of gauge theories, and one that often requires the input of geometric ideas and intuition into that analysis.'', \cite{Tt}.}
   of diffeomorphisms:
\begin{itemize}
\item Two metrics, that could even be the same, could look very different in different coordinates.
\end{itemize}

In some situations the gauge problem can be avoided if there is some additional structure.  A classical example 
is the  Killing-Hopf theorem that classifies constant curvature metrics.   This classification uses that the curvature tensor is constant to construct a ``canonical'' isometry between the two spaces.
In general, the gauge problem can be solved when there is strong asymptotic decay and circumvented when the space is characterized in a coordinate-free way, such as a large symmetry group,  the vanishing of a special tensor, or  a strong curvature condition.

In the problems we will be interested in,  the manifold will be non-compact and we will not have any special structure.
Thus, we will be forced to deal with the gauge problem head on.  We do this by solving a nonlinear PDE to get a diffeomorphism that fixes the gauge in the spirit of the slice theorem for group actions.  Since the manifold is non-compact, we need strong bounds for the 
displacement function of the diffeomorphism.

\subsection{Where do  questions like these arise?}  

Problems about identifying spaces occur in many different situations.  The one we are interested in 
is from Ricci flow.  
A one parameter family $(M,g(t))$ of manifolds flows by the Ricci flow  if 
 $g_t=-2\,\Ric_{g(t)}\,  ,$ 
where $\Ric_{g(t)}$ is the Ricci curvature of the evolving metric $g(t)$ and $g_t$ is the time derivative of the metric,{\footnote{The gauge group is known to cause difficulties in Ricci flow.  The invariance under the group makes the equation degenerate so standard parabolic techniques do not apply.  The Ricci-DeTurck flow deals with this by fixing an arbitrary initial gauge and then solving coupled equations for evolving metrics and gauges to get a parabolic PDE.  The arbitrary initial choice of gauge makes this unsuitable for the problems we are interested in since the gauge has to be right to compare two solutions.}}
 \cite{H}.

The key to understand Ricci flow is the singularities that form. The simplest singularity is a homothetically shrinking sphere that becomes extinct at a point.  The product of a sphere with $\RR$ gives a shrinking cylinder.  This singularity is called a neck pinch.  It is more complicated than the spherical extinction.  In dimension three, spherical extinctions and neck pinches are essentially the only singularities.  
Adding another $\RR$ factor gives a cylinder with a two-dimensional Euclidean factor; this singularity is the so-called bubble sheet  that is only recently partially understood.  With each additional $\RR$ factor, the singularities become more complicated and the sets where they occur are larger.

 A triple $(M,g,f)$ of a manifold $M$, metric $g$ and function $f$  
is a gradient shrinking Ricci soliton (or {\it shrinker}) if 
\begin{align}
\Ric + \Hess_f = \frac{1}{2} \, g \, .
\end{align}
  Shrinkers give special solutions of the Ricci flow that evolve by rescaling up to diffeomorphism and are  singularity models.  They arise as time-slices of limits of rescalings (magnifications) of the flow around a fixed future singular point in space-time.   Such limits are said to be  tangent flows at the singularity.  Even when the evolving manifold 
   is compact, the shrinker is typically non-compact and the convergence is on compact subsets.
  Shrinkers also arise in other important ways, such as  blowdowns from $-\infty$ for ancient flows.   Ancient flows are flows that have existed for all prior times.  All blowups are ancient flows, but not every blow up gives a shrinker.  
  Shrinkers are   key singularities in Ricci flow and will be our focus.

  Among shrinkers,  cylinders are particularly important.    Indeed, the
   Almgren-Federer-White dimension reduction,  cf.~\cite{W,KL2,BaK1,BaK2},  divides the singular set into strata whose dimension is the dimension of translation-invariance of the blowup.  Thus, the top strata is the largest part of the singular set.
   For Ricci flow, this
   suggests:
\begin{itemize}
\item Top strata of the singular set corresponds to points where the blowup is $\RR^{n-2} \times N^2$.
\item The next strata consists of points where the blowup is $\RR^{n-3} \times N^3$.
\end{itemize}
The $N$'s  are themselves shrinkers and have been classified in low dimensions by Cao-Chen-Zhu, Hamilton, Ivey, 
Naber, Ni-Wallach, Perelman, \cite{CaCZ,CaC}.    
In dimensions two and three, they are $N^2 = \SS^2$ or $\RP^2$ and $N^3 = \SS^3$ or $\SS^2 \times \RR$ (plus quotients).
 The classification in dimension three
relies on an equation for the $2$-tensor $\frac{\Ric}{S}$ that fails in higher dimensions where there is no similar classification.  In fact, there are
large families of shrinkers in higher dimensions.
Combining  dimension reduction with the classification in low dimensions suggests  that
the most prevalent singularities are:\\
\centerline{$\SS^2\times \RR^{n-2}$ followed by $\SS^3\times \RR^{n-3}$ (and quotients).}

As one approaches a singularity in the flow and magnifies, one would like to know which singularity it is.  Since most singularities are non-compact yet the evolving manifolds are closed, one only sees a compact piece of the singularity at each time as one approaches it.  The next theorem recognizes   singularities from just a compact piece (see Theorem \ref{t:rigidP} for the precise statement).

\begin{Thm}	\label{t:main}
Cylindrical shrinkers  $\SS^{\ell} \times \RR^{n-\ell}$ are strongly rigid for any $\ell$.
\end{Thm}

{\it Strong rigidity means} that if another shrinker is close enough on a large compact set, then it must agree.  
The theorem holds for products of $\RR^{n-\ell}$ with quotients of $\SS^{\ell}$ and a large class of other 
positive Einstein manifolds; see Section \ref{s:jacobi} for details.    An important difficulty is that  there are nontrivial infinitesimal  variations, i.e.,   in the kernel of the linearized operator (and not generated by diffeomorphisms).  One   consequence  of Theorem \ref{t:main} is that the infinitesimal variations are not integrable.

  Uniqueness is important in many areas of geometry,  PDE, and general relativity.  Unlike here, one typically makes global assumptions - e.g., symmetries, curvature conditions, or  asymptotics at infinity.\footnote{In GR uniqueness and stability of solutions to Einstein's equations are fundamental problems and the gauge group causes well-known difficulties.   Unlike here,
    in GR space-time is assumed to have strong asymptotic decay.  The two central difficulties in stability of black holes are determining the final state (uniqueness) and proving convergence. Convergence can only be established relative to a
coordinate system which cannot be a priori fixed but has instead to be
constructed dynamically. This is often referred to as ``the famous problem of gauge determination''.  For the uniqueness of the final state, the gauge group can be circumvented when the space can be characterized by the vanishing of a special tensor like the Mars-Simon tensor.}
  In most problems in geometric PDEs, it would be impossible
    to control an entire solution from just knowing roughly how it looks on a compact set.    If one knew exactly how it looked like on a compact set, it would be much less surprising and essentially follow from unique continuation.  The surprising thing here is that we only assume closeness and only on a compact set and this is enough to characterize the shrinker.  This is an illustration of a 
{\it{shrinker principle}}  which roughly says that  ``uniqueness radiates outwards''.  Nothing like this is true for   Einstein manifolds (or steady solitons), where gravitational instantons contain arbitrarily large arbitrarily Euclidean regions.
 The  shrinker principle was originally discovered in mean curvature flow 
  \cite{CIM,CM2}. It has been conjectured since that something similar  holds for Ricci flow, but the gauge group has been one of the major obstacles.  In mean curvature flow,   the  gauge is circumvented using extrinsic coordinates.

Tangent flows are  limits of a subsequence of rescalings at the singularity.  A priori different subsequences 
might give different limits.
Using Theorem \ref{t:main}, we get the following uniqueness:

\begin{Thm}	\label{t:uni}
For a Ricci flow, if  one tangent flow at a point in space-time is a   cylinder, then all other tangent flows at that point are also cylinders.
\end{Thm}

Unlike most results in Ricci flow, these results hold  for every $n$ and $\ell$. Increasing the dimension of the Euclidean factor is a subtle problem (e.g.~surgery, cylindrical estimates, and  $k$-convexity estimates only allow small Euclidean factors).
For general $n$ and $\ell$, cylinders do not have a coordinate-free characterization.
  This is a major part of the difficulty.

At singularities where the tangent flows are compact shrinkers, the singularities are isolated in space-time.  For compact shrinkers, rigidity was proven in dimension three by Hamilton  and by Huisken  for higher dimensional spheres.
  Even in the compact case,   rigidity fails in general; see  \cite{Bs,Bo,BGK,Ca} and \cite{Kr,SZ}. 

Rigidity for necks  $\SS^{n-1} \times \RR$ was proven independently by Li-Wang \cite{LW2}.  They
are able to
  circumvent the gauge problem   using
 that  their Euclidean factor is a line.  They do that, in part,   using 
 tensors with  special properties on the product of a sphere with a  line to prove asymptotic structure and approximate symmetry.  Once they have this, 
 they are able to use again that their Euclidean factor is a line to 
 adapt   Brendle's symmetry improvement \cite{Br1,Br2,Br3} to get $O(n)$ symmetry and, finally,
  Kotschwar's classification of rotationally symmetric shrinkers \cite{K}.

\subsection{What is needed for rigidity?}
We need to show that if two shrinkers are close on a large but compact set, then there is a global diffeomorphism between them
that preserves the metric and weight.  
The two shrinkers are   not assumed to be globally diffeomorphic, so we   must build the global map starting from the map between compact pieces.
   This is done in stages, first building the initial map out to a larger scale  so that it still roughly preserves the metric and weight
(Theorem \ref{t:extend}).  This comes at the cost of a loss in the estimates: the metrics and weights will not be as close on the larger set as they were initially.  This loss means that this process cannot be repeated indefinitely.  To overcome this, we  make a change of gauge to recover the loss and   get even better estimates on the larger scale  (Theorem \ref{t:improve}).  Together, Theorems  \ref{t:extend} and \ref{t:improve} can be iterated to get better and better estimates on larger and larger scales, eventually giving the strong rigidity.
Estimates proving  polynomial losses will be played off against exponential gains.

There are four key ingredients in the proof of strong rigidity.  All of them are new.  The first two hold on any non-compact shrinker.
\begin{enumerate}
\item   Gauge fixing.
\item New polynomial growth estimates for PDEs.
\item Propagation of almost splitting.
\item Quadratic rigidity in the right gauge. 
\end{enumerate}
We will use the new polynomial growth estimates as ingredients in both (1) and (3).

\subsection{Gauge fixing}
Fix $(M,g,f)$. 
   We are given a diffeomorphism from a large compact set in $M$ to a second weighted space. 
\begin{itemize}
\item  The pull-back metric and weight are $g+h$ and $\e^{-f-k}$.
\item $h$ and $k$ are small on the compact set.
\end{itemize}
 Composing with a diffeomorphism on $M$ gives a different $h$ and $k$.   We want to mod out by this group action, by choosing a diffeomorphism so that the new $h$ is orthogonal to the group action.
  This is gauge fixing.

 One of the most interesting results of transformation groups is the existence of slices.
  A slice for the action of a group on a manifold  is  a submanifold  which is transverse to the orbits.{\footnote{If the group is compact and Lie and the space is completely regular, Mostow proved, as a generalization of works of Gleason, Koszul, Montgomery, Yang and others, that there is a slice through every  point.    If the group is not compact but Lie and if the space is a Cartan space, then Palais proves the same result.}} 
    Ebin and Palais proved the existence of a slice for the   diffeomorphism group of a {\emph{compact}} manifold acting on the space of all Riemannian metrics.  The slice can be thought of as the gauge fixing on the compact manifold.

   In our setting, $M$ is noncompact and gauge fixing is choosing a diffeomorphism $\Phi$ on $M$ so $\tilde{h} = \Phi^*\,(g+h)-g$  is orthogonal to the group action.  
   Orthogonality corresponds to  
   \begin{align}
\dv_f\, \tilde{h} =0 \, ,	\label{e:nonlinear}
\end{align}
where
  $\dv_f\,(h) =\dv\,(h)-h(\nabla f,\cdot)$.  The equation  \eqr{e:nonlinear}  is a nonlinear PDE for $\Phi$.
  Terms involving $\dv_f $ come up again and again, so many quantities simplify in this gauge and having them drop out as they do when $\dv_f\, \tilde{h}=0$ makes things possible to analyze.

 We construct the diffeomorphism $\Phi$ that solves \eqr{e:nonlinear} using an iteration scheme  for the linearized operator $\cP$ on vector fields $Y$.    
 Using optimal polynomial  bounds on $\cP$, we show sharp polynomial bounds for the displacement function of $\Phi$
$$
x\to \text{dist}_g(x,\Phi (x))\,  .
$$
For applications, it is crucial that we only assume closeness on a compact set and, in particular, a priori 
the two shrinkers do not need to be diffeomorphic.  This means that we cannot fix the gauge at the outset.  Instead we need to apply our gauge fixing procedure iteratively to fix the gauge on larger and larger scales as we move outward and show closeness on larger and larger scales.  To pull this off requires very strong estimates for the displacement.  Our optimal estimates show that the displacement of the gauge fixing diffeomorphism grows at a sharp polynomial rate.  
These results are very general and apply to all shrinkers.

On a shrinker $(M,g,f)$, the  natural gaussian  $L^2 = L^2 (\e^{-f})$ norm  is given by  $\| u \|_{L^2(\e^{-f})}^2 = \int_M u^2 \, \e^{-f}$. 
Diffeomorphisms near the identity are infinitesimally generated by integrating a vector field $X$.  The infinitesimal change of the metric is given by the Lie derivative of the metric with respect to $X$.  This is equal to $- \frac{1}{2} \, \dv_f^* X$, where $\dv_f^*$ is the operator adjoint of $\dv_f$ with respect the to gaussian inner product. Thus,
if we define the  operator $\cP$ by
\begin{align}	
\cP\,X=\dv_f\,\circ\dv_f^*\,X\,  ,
\end{align}
then the linearization of \eqr{e:nonlinear} is to find a vector field $Y$ with 
\begin{align}	\label{e:cPYdv}
	\cP\,Y=\frac{1}{2}\,\dv_f\,h\,  .
\end{align}
A detailed analysis of $\cP$ and its properties plays an important role in the gauge fixing.

Solutions of \eqr{e:cPYdv} are unique once we require that $Y$ is orthogonal to the kernel of $\cP$.  The kernel is the Killing fields.  We will solve \eqr{e:cPYdv} on any shrinker (Theorem \ref{c:elliptic}) 
and show via $L^2$ methods that $\|Y\|_{W^{1,2}(\e^{-f})}\leq \|\dv_f\,h\|_{L^2(\e^{-f})}$.  Given the non-compactness, $L^2$ estimates are not sufficient to implement the iteration scheme and we need stronger polynomial 
estimates.{\footnote{ The $L^2$ theory for $\cP$ shares formal similarities with H\"ormander's  influential $L^2$ $\bar\partial$ method in several complex variables.  In the $L^2$ $\bar\partial$ method, one solves the Poisson equation $\bar\partial u=F$, with estimates, where $\bar\partial F=0$.  To do so, one introduces the adjoint of $\bar\partial$ with respect to a weight.  H\"ormander's idea for the weight came from Carleman's method for proving unique continuation of a PDE.  Here we solve $\cP\,Y= F$, where $F=\frac{1}{2}\,\dv_f\,h$ is orthogonal to the kernel of $\dv_f^*$.  H\"ormander's method gives weighted $L^2$ bounds for $\bar\partial$ similar to our weighted bounds for $\cP$.   To introduce a second weight to capture the growth \`a la Carleman and H\"ormander is less natural here.   Instead, we go a different route to prove stronger bounds.}}
  The problems are magnified by that initial closeness is only on a given compact set.  As one builds out  to get closeness on larger   sets, one needs at each step to adjust the entire diffeomorphism so the normalization is zero on larger and larger sets.

The operator $\cP$ is related to the  generalized Ornstein-Uhlenbeck operator  
$\cL = \Delta  - \nabla_{\nabla f} $.
  Given a vector field $X$ on a 
shrinker, the operators $\cL$ and $\cP$ commute and are related by    
\begin{align}
-2\,\cP\,X=\nabla \dv_f\,X+\cL\,X+\frac{1}{2}\,X
\end{align}
 (Proposition \ref{p:commPL} and Lemma \ref{l:dvastY}).
The unweighted version of $\cP$  was used implicitly by Bochner to show that closed manifolds with negative Ricci curvature have no Killing fields and
later   by Bochner and Yano  to show that the isometry group   is finite.  
 The unweighted operator also arises in general relativity and fluid dynamics.    The weighted operator $\cP$ appears to have been largely overlooked.
The relationship between $\cP$ and the unweighted version  mirrors the relationship between the Ornstein-Uhlenbeck operator and the Laplacian.

\subsection{Optimal growth bounds}
Laplace discovered that on the line eigenfunctions of $\cL\,u=u''-\frac{x}{2}\,u'$ in the gaussian $L^2$ space are polynomials whose degree is exactly twice the eigenvalue.   These polynomials were later rediscovered twice.  First by Chebyshev and a few years later by Hermite.   They are known as the Hermite polynomials and the eigenvalue equation as the Hermite equation.  They play an important role in diverse fields.  

On the line, the   space  $L^2(\e^{ - \frac{x^2}{4}})$ allows extremely rapid growth, so it is  surprising that the  $L^2(\e^{ - \frac{x^2}{4}})$ eigenfunctions grow just polynomially.  The standard proofs of this use the special structure of Euclidean space that
do not extend to manifolds without making very strong assumptions.  
However, 
we will prove that this polynomial growth holds for a wide class of manifolds, metrics and weights.  
   In many settings one has an $n$-dimensional Riemannian manifold $(M,g)$  with two nonnegative functions $f$ and $S$ that satisfy
\begin{align}
\Delta\,f+S&=\frac{n}{2}\, ,\label{ee:aronson1}\\
|\nabla f|^2+S&=f\, ,\label{ee:aronson2}
\end{align}
and where $f$ is proper and $C^n$.   Two important examples  are  shrinkers in both Ricci flow and mean curvature flow (MCF).
In Ricci flow, $S$ is scalar curvature, while
 $f=\frac{|x|^2}{4}$ and
$S=|{{\bf{H}}}|^2$ in MCF, where ${\bf{H}}$ is the mean curvature   (see, e.g., \cite{Hu}, \cite{CM1}, \cite{CM9}).  

\begin{Thm}	\label{t:PGmeta}
If \eqr{ee:aronson1} and \eqr{ee:aronson2} hold and a tensor $u \in L^2 (\e^{-f})$ satisfies $\cL \, u = - \lambda \, u$, then $u$ grows polynomially of degree at most $2\, \lambda$.
\end{Thm}

This and a corresponding Poisson version give powerful new tools with many applications, 
 including in the proofs of propagation of almost splitting and gauge fixing.

Combining Theorem \ref{t:PGmeta} with the following gives optimal growth bound for eigenvector fields of $\cP$ on any Ricci shrinker
(note that $\cP$ and $\cL$ have opposite signs):

\begin{Thm}	\label{t:cPandcL}
On any shrinker, any eigenvector field $Y$ for $\cP$ with eigenvalue $-\lambda$ can be written as the $L^2(\e^{-f})$-orthogonal sum of two eigenvector fields for $\cL$.  One is $\dv_f$-free with eigenvalue $2\lambda + \frac{1}{2} $ and the other is $ \frac{2}{2\, \lambda + 1} \,\nabla \dv_f\,Y$ and has eigenvalue $\lambda$.  
\end{Thm}


 These growth estimates   hold
   in remarkable generality and without any assumptions on asymptotic decay.   
   This is surprising and in contrast to most other situations, like unique continuation, that require very strong geometric assumptions on the space.   A typical starting point for growth estimates is a Pohozaev identity or commutator estimate that comes from a dilation, or approximate dilation, structure.   We have none of these here in this general setting.   In contrast, we rely on a miraculous cancellation for just the right quantity.

\subsection{Propagation of almost splitting}
  One of the important new 
ingredients  is that a
   Ricci shrinker close to a product $N \times \RR^{n-\ell}$ on a large scale remains close on a fixed larger scale.  
The idea is that the initial closeness will imply that $\cL$ has  eigenvalues that are exponentially close to $\frac{1}{2}$. 
The drift Bochner formula on a shrinker implies that every eigenvalue is at least $\frac{1}{2}$ with equality only when it splits.  We show that being close to $\frac{1}{2}$ gives that the hessian is almost zero in $L^2(\e^{-f})$, which is very strong when the weight $\e^{-f}$ is 
close to one but says almost nothing further out.  The crucial point is  that our polynomial growth estimates imply that the hessian can grow only polynomially, so the very small initial bound
gives bounds much further out.  Thus, the gradients of these eigenfunctions give the desired almost parallel vector fields and almost splitting.
This is very much a Ricci flow fact that does not have an   analog in MCF where there is no corresponding description of the bottom of the spectrum.

Once we have this metric almost splitting, we show that it also almost splits as shrinker on the larger scale.  Namely, the cross-sections are close to $N$
and the potential  $f$ is well-approximated by $ \frac{|x|^2}{4}$.     However, 
there is a loss in the estimates - it may look less 
cylindrical on the larger scale - that makes this impossible to iterate on its own.
   
\subsection{Quadratic rigidity}

The propagation of almost splitting and gauge fixing give that the shrinker is  close to a cylinder on a large set via a diffeomorphism  that fixes the gauge.  
The last of the four key ingredients is an estimate for the difference in metrics that is small enough to be iterated.
For this, it is essential   that the gauge be right, or else it just isn't true.  The closeness cannot be seen via
 linear analysis.  However, we show that there is a second order rigidity that gives the estimate; we call this quadratic rigidity.

To explain the estimate,  let $(M,g,f)$ be the cylinder and  $(M,g+h, f+k)$ the shrinker that is close on a large compact set.  We need
 bounds on $h$ and $k$ that can be iterated.
The linearization of the shrinker equation is
\begin{align}	\label{e:linea}
	\frac{1}{2} \, L \, h   + \Hess_{ \frac{1}{2} \, \Tr (h) - k} + \dv_f^* \, \dv_f \, h \, .
\end{align}
This linearization was derived by Cao-Hamilton-Ilmanen in their calculation of 
 the second variation operator for Perelman's  entropy.
The operator $L$ acts on $2$-tensors by 
	$L \, h = \cL \, h + 2 \, \R (h)$ and $\R(h)$ is the natural action of the Riemann tensor, cf. \cite{CM1} for mean curvature flow.

Since $(M,g+h, f+k)$ is also a shrinker,  \eqr{e:linea} must be at least quadratic in $(h,k)$.  
The last two terms in \eqr{e:linea} are gauge terms -  i.e., in the image of $\dv_f^*$ and there is no reason for these - or $h$ -  to be small if not in right gauge.
 In the {\it right gauge}, $h$ satisfies the Jacobi equation $L\, h = 0$ up to higher order terms.  
 This does not  force $h$ to be small since
 cylinders have non-trivial Jacobi fields that could potentially integrate to give nearby shrinkers.  However,
it will give that $h$ is a Jacobi field to first order.  The Jacobi field is  described by a  quadratic Hermite polynomial $u$, so
 $|h|$ is  $|u|$ up to higher order. 
The second variation of the {\bf{shrinker equation}} in the direction of the Jacobi field is given by the tensor
    \begin{align}	\label{e:quad}
    	-2\, |\nabla u|^2 \, \Ric - 2\, S \, u \, \Hess_u - S \, \nabla u \otimes \nabla u
	\, ,
\end{align}
where  $S$ is scalar curvature.  
The second order Taylor expansion
 will imply that 
\eqr{e:quad} 
vanishes to at least third order in $h$, so the quadratic expression \eqr{e:quad} is at least
 cubic in $u$.  When $u$ is small, this implies that $u$ and $h$ vanish; we will have extra error terms  so 
 will get that $h$ is exponentially small, giving the improvement that we needed to iterate.

 \subsection{Connections with other work}  Rigidity and uniqueness of blowups are fundamental questions in regularity theory  
with many  applications.    In mean curvature flow, they play a major role in understanding the singular set, proving optimal regularity, 
 understanding solitons, 
 classifying ancient solutions, and understanding low entropy flows.
 In MCF, cylinders are rigid by \cite{CIM,CM10} and cylindrical blowups unique by \cite{CM2,CM9}.  These results have important applications, \cite{CM4,CM5,CM6}.

 One of the central problems in many areas of dynamical systems, ergodic theory, PDEs and geometry is to understand the dynamics of a flow near singularities.   
Such as classifying nearby singularities, determining whether flows  have unique limits  or oscillate,
and identifying dynamically stable solutions that attract nearby flows.  
These questions are more complicated in the presence of a gauge group.
 The techniques   introduced here open a door for understanding dynamical properties for Ricci flows nearby. By further developing these techniques, we show  uniqueness of blowups for Ricci flow.    See also the survey \cite{CM15}.

\vskip2mm
We would like to thank the referee for carefully reading through the manuscript and helpful comments.
We would also like to thank Yi Lai, Yu Li and Bing Wang for their interest and comments.

 \section{Elliptic systems on tensor bundles and their commutators}

 In this section, the triple $(M,g,f)$ is a manifold with Riemannian metric $g$ and a function $f$.  Given a constant $\kappa$, define the symmetric $2$-tensor  
\begin{align} 
	\phi &= \kappa \, g -  \Ric - \Hess_f   \, .  \label{e:phi}
\end{align}
  The triple $(M,g,f)$ is a gradient Ricci soliton when 
  $\phi =0$; it is shrinking for $\kappa = \frac{1}{2}$, steady for $\kappa = 0$, and expanding for $\kappa = - \frac{1}{2}$;
  see \cite{H,Cn,Ca,ChL,ChLN,CN,CRF,KL1,P,T}.
    Later, we will take $\kappa = \frac{1}{2}$
  and focus on shrinking solitons. For now, we leave $\kappa$ as a variable as the results here apply to all three cases.

We recall some basic properties of $\cL$.  First,  $\cL$ is self-adjoint for the weighted $L^2 = L^2 (\e^{-f})$ norm
$
	\int_M \left( \cdot \right)  \, \e^{-f} $ and $\cL = -\nabla^* \, \nabla$ where $\nabla^*$ is the adjoint of $\nabla$ with respect to the weighted $L^2$ norm.
	When   $V$ is a vector field and $u$ is a function with compact support, then  integration by parts gives
\begin{align}
	\int \langle \nabla u , V \rangle \, \e^{-f} = - \int u \, \dv \, \left( V \, \e^{-f} \right) 
	=- \int u \, \left( \dv \, V - \langle V , \nabla f \rangle \right) \, \e^{-f} \, .
\end{align}
Motivated by this, define  $\dv_f$ on vector fields by  $\dv_f \, V =  -\nabla^* \, V = \dv \, V - \langle V , \nabla f \rangle$.

Let $\R_{ijk\ell}$ be the full Riemann curvature tensor in an orthornomal frame, so 
     \begin{align}
     	\R_{ijk\ell} = \langle \R(e_i , e_j)\, e_k , e_{\ell} \rangle = \langle \nabla_{e_j} \nabla_{e_i }e_k - \nabla_{e_i} \nabla_{e_j} e_k + \nabla_{[e_i , e_j]} e_k , e_{\ell} \rangle  \, .
     \end{align}
             The sign convention  is that 
    $\Ric_{ij} = \R_{kikj}$, where, by convention, we sum over the repeated index $k$.     
	Define the operator $L$ on a $2$-tensor $B$ in an orthonormal frame by 
\begin{align}	\label{e:LBij}
	L\,B_{ij}=\cL\,B_{ij}+ 2\, \R_{\ell i k j}\,B_{\ell k}   \, .
\end{align}
Since $\cL\, g = 0$ (as the metric is parallel), we see that $L \, g_{ij} = 2 \, \Ric_{ij}$.  

\vskip1mm
The next result gives Simons-type differential equations for the Ricci and scalar curvature, $\Ric$ and  $S$,  in terms of the drift operators $L$ and $\cL$ and the tensor $\phi$.

 \begin{Thm}	\label{t:Lricci}
 We have
 \begin{align}
 	(L \, \Ric)_{ij} &=  2\, \kappa \,  \Ric_{ij} + 2\, \R_{kjin}\, \phi_{nk} -
	  \phi_{ij,kk}  - \phi_{kk,ji} + \phi_{jk,ki} + \phi_{ik,kj} \, ,  \label{e:withinA} \\
	  \cL \, S &=  2\, \kappa \, S  -2\, | \Ric |^2  -2\, \Ric_{kn}\, \phi_{nk} -
	 2\, \Delta\, \phi_{kk} + 2\, \phi_{ik,ki}  \, .    \label{e:cLS}
 \end{align}
 \end{Thm}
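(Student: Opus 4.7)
The plan is to derive both identities as pointwise Riemannian identities that hold on any manifold (not only on a soliton), where $\phi$ quantifies the failure of $\Ric+\Hess_f=\kappa\,g$. Applying $L$ to the defining relation $\Ric_{ij}=\kappa\,g_{ij}-f_{ij}-\phi_{ij}$ and using $L\,g=2\,\Ric$, I get
\begin{align}
L\,\Ric_{ij}=2\kappa\,\Ric_{ij}-L\,(\Hess_f)_{ij}-L\,\phi_{ij}\,.
\end{align}
Expanding $L\,\phi_{ij}=\phi_{ij,kk}-f_k\,\phi_{ij,k}+2\,\R_{likj}\,\phi_{lk}$ accounts for the $-\phi_{ij,kk}$ term on the RHS of \eqr{e:withinA}; the first Bianchi identity combined with the symmetry $\phi_{nk}=\phi_{kn}$ yields the key reduction $\R_{kjin}\,\phi_{nk}+\R_{likj}\,\phi_{lk}=0$, so the two curvature--$\phi$ contractions combine cleanly. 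Hence \eqr{e:withinA} is equivalent to the Hessian identity
\begin{align}
L\,(\Hess_f)_{ij}=f_k\,\phi_{ij,k}+\phi_{kk,ji}-\phi_{jk,ki}-\phi_{ik,kj}\,.
\end{align}

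The core step is to prove this Hessian identity by a direct Bochner--type computation of $L\,(\Hess_f)_{ij}=\Delta\,f_{ij}-f_k\,f_{ij,k}+2\,\R_{likj}\,f_{lk}$. Starting from the Ricci commutator $f_{ij,k}-f_{ik,j}=-\R_{kji\ell}\,f_{\ell}$, I would differentiate once more by $\nabla_k$, sum over $k$, and commute $f_{ik,jk}\to f_{ik,kj}$ via the 2--tensor Ricci identity; the contracted Bochner formula $f_{ik,k}=(\Delta\,f)_{,i}+\Ric_{ik}\,f_k$ then handles $f_{ik,kj}$, and the contracted second Bianchi identity converts $\nabla_k\,\R_{kji\ell}$ into $\nabla\,\Ric$ terms. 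The resulting expression for $\Delta\,f_{ij}$ is a sum of $(\Delta\,f)_{,ij}$, Ricci--Hessian contractions, Riemann--Hessian terms, and $\nabla\,\Ric$ contractions with $\nabla\,f$. At every occurrence of a Ricci factor, $\Delta\,f$, or $\nabla\,\Ric$ one substitutes via $\Ric=\kappa\,g-\Hess_f-\phi$, its trace $\Delta\,f=\kappa\,n-S-\Tr\,\phi$, and the divergence consequences of the definition of $\phi$; the Hessian portions produced this way cancel against the $\R_{likj}\,f_{lk}$ already present in $L\,(\Hess_f)$, the $\kappa\,g$ portions combine with $-f_k\,f_{ij,k}$, and only $\phi$--dependent remnants survive, giving the claimed right--hand side.

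For the scalar curvature identity \eqr{e:cLS}, I would contract \eqr{e:withinA} with $g^{ij}$. The LHS becomes $\cL\,S+2\,|\Ric|^2$ because $g^{ij}\,\R_{likj}=\Ric_{lk}$. On the RHS, the symmetries of $\Rm$ give $g^{ij}\,\R_{kjin}=-\Ric_{nk}$, covariant traces commute with covariant derivatives so $g^{ij}\,\phi_{ij,kk}=g^{ij}\,\phi_{kk,ji}=\Delta\,\phi_{kk}$, and the symmetry of $\phi$ yields $g^{ij}\,(\phi_{jk,ki}+\phi_{ik,kj})=2\,\phi_{ik,ki}$. Rearranging gives \eqr{e:cLS} immediately.

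The main obstacle is the index--bookkeeping in the Bochner computation for $\Delta\,f_{ij}$: tracking signs, Riemann--tensor symmetries, and ensuring that every Ricci--Hessian and curvature--Hessian intermediate term recombines correctly after substituting the definition of $\phi$. Once carried out in one consistent convention, the whole argument reduces to careful substitution; the clean $\phi$--divergence form on the right--hand side is a reflection of the fact that when $\phi=0$, $L\,\Hess_f$ vanishes and the identity collapses to the familiar soliton identity $L\,\Ric=2\kappa\,\Ric$.
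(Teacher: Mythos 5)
Your argument follows essentially the same route as the paper's: substitute $\Ric=\kappa\,g-\Hess_f-\phi$, reduce the whole computation to a Bochner-type identity for the drift Laplacian of $\Hess_f$ (your ``Hessian identity'' is term-for-term equivalent to the paper's Lemma~\ref{l:deltahess}, after adding $2\,\R_{\ell ikj}f_{\ell k}-f_kf_{ij,k}$ to both sides and absorbing $-\Ric_{ij,k}f_k-f_{ij,k}f_k$ into $\phi_{ij,k}f_k$), and then obtain the scalar identity by tracing. The only small slip is attributional: the cancellation $\R_{kjin}\phi_{nk}+\R_{\ell ikj}\phi_{\ell k}=0$ comes from the pair symmetry $\R_{kjin}=\R_{inkj}$ together with antisymmetry in the first two slots, not from the first Bianchi identity; otherwise the outline is correct and matches the paper.
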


When $\phi = 0$, Theorem \ref{t:Lricci} recovers well-known identities for gradient Ricci solitons (cf. \cite{CaZ},  \cite{H}, \cite{T}).  However, the theorem applies to
{\emph{any}} metric $g$ and weight $\e^{-f}$.   Allowing $\phi \ne 0$ is important in analyzing Ricci flow near a singularity.  Furthermore, even for solitons, it is useful to allow $\phi \ne 0$ when  ``cutting off'' a non-compact solution.

 \subsection{Bochner formulas and commutators}

 To keep notation short,  let $f_{i_1 \dots i_k}$ denote the $(k-1)$-st covariant derivative of $\nabla f$ evaluated on $(e_{i_1}, \dots , e_{i_k})$, where $e_{i_k}$ goes into the slot for the last derivative.  In the calculations below, we work at a point $p$ in an orthonormal frame $e_i$ with $\nabla_{e_i} e_j = 0$ at $p$.  
We will use subscripts on a bracket to denote the ordinary directional derivative.  For example, $f_{ijk} = (\nabla_{e_k} \Hess_f )(e_i , e_j)$ and
 \begin{align}
	(f_{ij})_k &= \nabla_{e_k} \left( \Hess_f (e_i , e_j)  \right) = f_{ijk} + \Hess_f ( \nabla_{e_k} e_i , e_j) + \Hess_f (e_i , \nabla_{e_k} e_j) \, ,	\label{e:leib3}
 \end{align}
 where the last equality is the Leibniz rule.  Thus, at $p$ we have $f_{ijk} = (f_{ij})_k$.  We use corresponding notation for tensors, with a comma to separate the derivatives from the original indices.  Thus, if $Y$ is a vector field, then 
 $Y_i = \langle Y , e_i \rangle$ and 
 	$(Y_i)_j = Y_{i,j} + \langle Y , \nabla_{e_j} e_i \rangle$.
 The next lemma computes the commutator of $\nabla$ and $\cL$, i.e., the drift Bochner formula, \cite{BE,L}:
 
 \begin{Lem}	\label{l:boch}
 If $Y$ is a vector field, then $Y_{i,jk} - Y_{i,kj} = \R_{kjin} \, Y_n$.  In particular, $u_{ijk}   - u_{ikj}  =  \R_{kjin}u_n$ and
 we get the drift Bochner formulas
 \begin{align}	\label{e:commuta}
 	\cL \, \nabla u &= \nabla \, \cL u + \left( \Ric + \Hess_f \right)  (\nabla u , \cdot ) = \nabla \, \cL u +\kappa \, \nabla u - \phi (\nabla u , \cdot ) \, , \\
	\cL \, \dv_f \, Y &=     \dv_f \, \cL \, Y -\kappa \, \dv_f \, Y 
	+    \dv_f \, ( \phi (Y , \cdot ))     \, .
	\label{e:commutaB}
  \end{align}
 \end{Lem}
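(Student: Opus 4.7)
The first identity $Y_{i,jk} - Y_{i,kj} = \R_{kjin}\,Y_n$ is the standard Ricci commutation formula for second covariant derivatives of a vector field, which I would derive at a point $p$ in a frame with $\nabla_{e_a} e_b = 0$ by unwinding $Y_{i,jk} - Y_{i,kj} = \langle \nabla_{e_k}\nabla_{e_j} Y - \nabla_{e_j}\nabla_{e_k} Y, e_i\rangle$ and matching it to the curvature convention stated in the excerpt. The specialization $Y = \nabla u$, with $Y_n = u_n$ and $Y_{i,jk} = u_{ijk}$, immediately gives $u_{ijk} - u_{ikj} = \R_{kjin}\,u_n$.

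For the first drift Bochner formula, I would compute $\nabla\cL u$ and $\cL\nabla u$ in components at $p$. Expanding $\cL u = u_{kk} - f_k\,u_k$ and differentiating in $e_i$ gives $(\nabla\cL u)_i = u_{kki} - f_{ki}\,u_k - f_k\,u_{ki}$, while directly $(\cL\nabla u)_i = u_{ikk} - f_k\,u_{ik}$. Symmetry $u_{ik}=u_{ki}$ cancels the $f_k$-terms, leaving
\[
(\cL\nabla u - \nabla\cL u)_i = (u_{ikk} - u_{kki}) + f_{ki}\,u_k.
\]
Rewriting $u_{ikk} = u_{kik}$ (Hessian symmetry of $u$ in the first two indices) and applying the commutator from the first step to $\nabla u$ yields $u_{kik} - u_{kki} = \R_{kikn}\,u_n = \Ric_{in}\,u_n$. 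Combined with $f_{ki}\,u_k = \Hess_f(\nabla u, e_i)$ this produces $(\Ric+\Hess_f)(\nabla u, \cdot)$; the second equality in the statement then follows from $\Ric + \Hess_f = \kappa\,g - \phi$ via \eqr{e:phi}.

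For the second drift Bochner formula, rather than redo a direct index calculation I would dualize. Since $\cL$ is self-adjoint in the weighted inner product and $\dv_f = -\nabla^*$, for any compactly supported test function $u$ we have
\[
\int u\,\cL(\dv_f Y)\,\e^{-f} = \int (\cL u)(\dv_f Y)\,\e^{-f} = -\int \langle Y, \nabla\cL u\rangle\,\e^{-f}.
\]
Substituting $\nabla\cL u = \cL\nabla u - \kappa\,\nabla u + \phi(\nabla u,\cdot)$ from the first formula and integrating by parts term by term (using self-adjointness of $\cL$ on vector fields, the $\dv_f$/$\nabla$ duality, and symmetry of $\phi$ to identify $\phi(\nabla u, Y)$ with $\phi(Y, \nabla u)$), the right-hand side becomes $\int u\bigl[\dv_f(\cL Y) - \kappa\,\dv_f Y + \dv_f(\phi(Y, \cdot))\bigr]\,\e^{-f}$. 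Arbitrariness of $u$ then gives the pointwise identity.

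The main obstacle is simply keeping the Riemann tensor index conventions consistent with the excerpt's definition so that Step~1 produces exactly the stated sign pattern; once that is pinned down, Step~2 is a short direct computation, and Step~3 avoids any repetition of commutator algebra by exploiting the adjoint structure together with the already proven first formula.
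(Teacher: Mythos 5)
Your proposal is correct and mirrors the paper's proof: the first claim is the standard Ricci commutation identity read off from the paper's sign convention for $\R$, the first drift Bochner formula is obtained by the same direct component computation (commuting third derivatives and using $\R_{kikn}=\Ric_{in}$), and the second drift Bochner formula is obtained by the same adjoint/dualization argument that the paper invokes via $(\cL\,\nabla)^{*}=-\dv_f\,\cL$. You spell out the integration by parts a bit more explicitly than the paper, but there is no substantive difference in strategy.
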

 
 \begin{proof}
The first claim is essentially the definition of $\R$.  The second claim follows immediately with $Y = \nabla u$.  Next, using the second claim, we have
\begin{align}
	(\cL \, \nabla u )_i &= u_{ijj} - u_{ij} \, f_j = u_{jji} + \R_{jijk} \, u_k- u_{ij} \, f_j = u_{jji} + \Ric_{ik} \, u_k- (u_{k} \, f_k)_i + u_k \, f_{ik} \notag \\
	&= ( \nabla \, \cL \, u )_i + \left( \Ric_{ik} + f_{ik} \right) \, u_k =  ( \nabla \, \cL \, u )_i  + \kappa \, u_i - \phi_{ik} \, u_k \, .
\end{align}
Finally, \eqr{e:commutaB}
 follows by taking the adjoint of \eqr{e:commuta} since $(\cL \, \nabla)^* = - \dv_f \, \cL$ and we have
 $(\nabla \, (\cL+ \kappa))^* = -( \cL + \kappa) \, \dv_f$.
  \end{proof}

 We compute the gradient and Hessian of  $S$ (cf. \cite{ChLN}, \cite{PW} for solitons):  

\begin{Lem}	\label{l:gradS}
The gradient and Hessian of $S$ are given by
\begin{align}
	\frac{1}{2} \, S_i & =\nabla^j \Ric_{ij} =   \Ric_{ik} f_k - \phi_{kk,i} + \phi_{ik,k} \, , 	\label{e:gradSa} \\
	\frac{1}{2} \, S_{ij} &=- \phi_{ik,kj} - f_{kikj}  =   \Ric_{ik,j} f_k + \Ric_{ik} f_{kj} - \phi_{kk,ij} + \phi_{ik,kj} \, .  	\label{e:gradSb}
\end{align}
\end{Lem}

\begin{proof}
The first equality in \eqr{e:gradSa} is known as the Schur lemma and  is a standard consequence of the contracted second Bianchi identity
  \begin{align}	\label{e:2bia}
 	 \Ric_{kn, i} + \R_{kijn, j} - \Ric_{in, k} = 0 \, .
 \end{align}
 Use the first equality, 
take the divergence of \eqr{e:phi}  and use Lemma \ref{l:boch} to get
\begin{align}	\label{e:one}
	-\Ric_{ij,j} = \phi_{ij,j} + f_{jij}  =  \phi_{ij,j} + f_{jji} +\Ric_{ij}\, f_j =
	 \phi_{ij,j}   + \Ric_{ij}\, f_j + \left(\kappa \, n -\phi_{jj} - S  \right)_i \, .
\end{align}
This gives the second equality in \eqr{e:gradSa}.
The first equality in \eqr{e:gradSb}  follows from taking the derivative of \eqr{e:one}. Taking the derivative of  \eqr{e:gradSa}
gives the last claim.  
\end{proof}

\begin{Cor}	\label{c:gradS}
We have $\frac{1}{2} \, \left( S + |\nabla f|^2 - 2\, \kappa \, f \right)_i = - \phi_{ik} \, f_k - \phi_{kk,i} + \phi_{ik,k} $ and
\begin{align}
	\frac{1}{2} \, \left( \cL  \, f + 2\, \kappa \, f \right)_i &=  \phi_{ik} \, f_k + \frac{1}{2} \,  \phi_{kk,i} - \phi_{ik,k}
	\, .
\end{align}
\end{Cor}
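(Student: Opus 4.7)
The plan is to derive both identities directly from Lemma \ref{l:gradS} and the definition \eqref{e:phi} of $\phi$, using only the symmetry of $\Hess_f$ and the trace of $\phi$.

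First I would tackle the identity for $\nabla(S+|\nabla f|^2-2\kappa f)$. Starting from the expression
$\frac{1}{2}\, S_i = \Ric_{ik}\,f_k -\phi_{kk,i}+\phi_{ik,k}$
supplied by Lemma \ref{l:gradS}, I would substitute $\Ric_{ik} = \kappa\,\delta_{ik} - f_{ik} - \phi_{ik}$, which is just the definition \eqref{e:phi} rewritten. This gives
\begin{align}
\frac{1}{2}\,S_i \;=\; \kappa\, f_i - f_{ik}\,f_k - \phi_{ik}\,f_k - \phi_{kk,i} + \phi_{ik,k}\,.
\end{align}
The key cancellation then comes from recognizing $f_{ik}\,f_k = \tfrac{1}{2}\,(|\nabla f|^2)_i$ by symmetry of the Hessian. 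Rearranging yields exactly the first claimed identity.

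For the second identity, I would compute $\cL\,f$ by tracing \eqref{e:phi}: since $\phi_{kk} = \kappa\,n - S - \Delta f$, we get $\Delta f = \kappa n - S - \phi_{kk}$ and hence
\begin{align}
\cL\,f + 2\kappa\,f \;=\; \Delta f - |\nabla f|^2 + 2\kappa\,f \;=\; \kappa n - S - \phi_{kk} - |\nabla f|^2 + 2\kappa\,f\,.
\end{align}
Differentiating in $e_i$ and dividing by two gives
\begin{align}
\tfrac{1}{2}(\cL\,f + 2\kappa f)_i \;=\; -\tfrac{1}{2}\,S_i - \tfrac{1}{2}\,\phi_{kk,i} - \tfrac{1}{2}(|\nabla f|^2)_i + \kappa\,f_i\,.
\end{align}
The grouping $-\tfrac{1}{2}S_i - \tfrac{1}{2}(|\nabla f|^2)_i + \kappa\,f_i$ is precisely the negative of the first identity's left-hand side, so substituting gives $\phi_{ik}\,f_k + \phi_{kk,i} - \phi_{ik,k} - \tfrac{1}{2}\phi_{kk,i}$, which simplifies to the claimed right-hand side.

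There is no real obstacle here: once the definition of $\phi$ is used to trade $\Ric$ for $f_{ik}$ and $\phi$, everything reduces to recognizing $f_{ik}\,f_k = \tfrac{1}{2}(|\nabla f|^2)_i$ and tracking signs. The only place to be careful is the bookkeeping between $S$, $\phi_{kk}$ and $\Delta f$ in deriving $\cL f$, but this is immediate from taking the trace of \eqref{e:phi}. Both identities then follow by linear combination from Lemma \ref{l:gradS}.
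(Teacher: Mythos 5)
Your proof is correct and follows essentially the same route as the paper: substitute the definition of $\phi$ into the first identity of Lemma \ref{l:gradS}, use $f_{ik}f_k = \tfrac{1}{2}(|\nabla f|^2)_i$, and then obtain the second identity from the first together with the trace relation $\Delta f = n\kappa - S - \Tr\,\phi$. You have merely made explicit the algebra that the paper leaves to the reader.
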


\begin{proof}
Substituting the definition of $\phi$ in the first claim in Lemma \ref{l:gradS} gives
\begin{align}
	\frac{1}{2} \, S_i &=   \Ric_{ik}\, f_k - \phi_{kk,i} + \phi_{ik,k} = \kappa \, f_i  - f_{ik}\,   f_k - \phi_{ik} \, f_k - \phi_{kk,i} + \phi_{ik,k} 
	\, .
\end{align}
This gives the first claim.  The second claim follows from the first and 
$\Delta \, f = n \, \kappa - S - \Tr \, \phi $.
\end{proof}

  We next compute the Laplacian of $\Hess_f$.
 
 \begin{Lem}	\label{l:deltahess}
 We have the following formula
  \begin{align}	 
 	f_{ijkk} &=  -\Ric_{ji,k}\, f_k   + 2\,  \R_{kjin}\, f_{nk} + \phi_{kk,ji} - \phi_{jk,ki} - \phi_{ik,kj} \, .
 \end{align}
 \end{Lem}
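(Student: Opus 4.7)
The plan is to compute $f_{ijkk}$ by iteratively commuting derivatives via the drift Bochner formula of Lemma \ref{l:boch}, contracting via the second Bianchi identity \eqref{e:2bia}, and identifying the trace pieces using Lemma \ref{l:gradS} together with Corollary \ref{c:gradS}.

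Applying Lemma \ref{l:boch} to $Y=\nabla f$ gives $f_{ijk}-f_{ikj}=\R_{kjin}f_n$. Differentiating in $\nabla_k$ and summing, and using \eqref{e:2bia} with the pair-swap symmetry of $\R$ to compute $\R_{kjin,k}=\Ric_{nj,i}-\Ric_{ij,n}$, one gets
\begin{align*}
f_{ijkk}=f_{ikjk}+(\Ric_{nj,i}-\Ric_{ij,n})\,f_n+\R_{kjin}\,f_{nk}.
\end{align*}
By Hessian symmetry $f_{ikj}=f_{kij}$, and the commutator identity (extended from Lemma \ref{l:boch} to the $2$-tensor $\Hess_f$) applied to swap $j$ and $k$ in $f_{ikjk}$ yields
\begin{align*}
f_{ikjk}=f_{ikkj}+\R_{kjin}\,f_{nk}+\Ric_{jn}\,f_{in}.
\end{align*}
Finally, $f_{ikk}=\dv(\Hess_f)(e_i)=(\Delta f)_i+\Ric_{in}f_n$ by the standard scalar Bochner, combined with $\Delta f=n\,\kappa-S-\phi_{kk}$, gives $f_{ikk}=-S_i-\phi_{kk,i}+\Ric_{in}\,f_n$; one more $j$-derivative produces $f_{ikkj}$ explicitly.

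Summing these three identities expresses $f_{ijkk}$ in terms of $S_{ij}$, $\phi_{kk,ij}$, Ricci-times-$f$ products, and $2\R_{kjin}f_{nk}$. Substituting the formula for $\tfrac{1}{2}S_{ij}$ from Lemma \ref{l:gradS} and matching to the claimed identity reduces everything to showing that the antisymmetric-in-$(i,j)$ residue
\begin{align*}
(\Ric_{nj,i}-\Ric_{in,j})\,f_n+(\Ric_{jn}\,f_{in}-\Ric_{in}\,f_{nj})+(\phi_{jk,ki}-\phi_{ik,kj})
\end{align*}
vanishes; this is the main obstacle. I would dispatch it by rearranging Corollary \ref{c:gradS} into the form $\phi_{jk,k}=\tfrac{1}{2}S_j-\Ric_{jk}\,f_k+\phi_{kk,j}$ and differentiating in $i$: the symmetric pieces $\tfrac{1}{2}S_{ij}$ and $\phi_{kk,ij}$ drop out of the antisymmetrization $\phi_{jk,ki}-\phi_{ik,kj}$, leaving exactly the negative of the first two groups in the residue after using Hessian and Ricci symmetries. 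Careful sign-tracking through the commutator and Bianchi applications is what requires the most attention.
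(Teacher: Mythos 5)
Your proposal is correct and follows essentially the same route as the paper: commute derivatives via the drift Bochner and Ricci identities, contract via the second Bianchi identity \eqref{e:2bia}, and close the computation using the $\nabla S$ and $\Hess_S$ formulas of Lemma \ref{l:gradS}. The only difference is organizational --- the paper equates the two expressions for $\tfrac12 S_{ij}$ in Lemma \ref{l:gradS} to eliminate $f_{kikj}$ in one step, whereas you re-derive the equivalent $f_{ikkj}$ from the scalar Bochner formula and then separately verify an antisymmetric residue vanishes (which it does; note that the rearranged relation $\phi_{jk,k}=\tfrac12 S_j-\Ric_{jk}f_k+\phi_{kk,j}$ you differentiate is the first equality of Lemma \ref{l:gradS}, not Corollary \ref{c:gradS}).
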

 
 \begin{proof}
 Working at $p$, Lemma \ref{l:boch} gives that
 \begin{align}	\label{e:s1}
 	f_{ijkk} &= (f_{ijk})_k = \left(f_{ikj} + \R_{kjin}\, f_n  \right)_k = f_{ikjk} + \R_{kjin,k}\, f_n +  \R_{kjin}\, f_{nk}  \, .
 \end{align}
The Ricci identity gives
 \begin{align}
 	f_{kijk} = f_{kikj} + \R_{kjkn}\,f_{ni} + \R_{kjin}\, f_{kn} =  f_{kikj} + \Ric_{jn}\,f_{ni} + \R_{kjin}\, f_{kn}  \, .
 \end{align}
 Using this in \eqr{e:s1} gives
  \begin{align}	 \label{e:cfunc}
 	f_{ijkk} &=  f_{kikj} + \Ric_{jn}\,f_{ni} + \R_{kjin}\, f_{kn}  + \R_{kjin,k}\, f_n +  \R_{kjin}\, f_{nk}  \, .
 \end{align}
 Using the last two claims in Lemma \ref{l:gradS} (and  symmetry of $S_{ij}$) gives
\begin{align}
	\Ric_{jk,i}\, f_k + \Ric_{jk}\, f_{ki} - \phi_{kk,ji} + \phi_{jk,ki} = -\phi_{ik,kj} - f_{kikj} \, .
\end{align}
 Applying this to the first two terms on the right in \eqr{e:cfunc}, we get
  \begin{align}	 
 	f_{ijkk} &=  -\Ric_{jk,i} f_k + \phi_{kk,ji} - \phi_{jk,ki} - \phi_{ik,kj} +    \R_{kjin,k}\, f_n + 2\,  \R_{kjin}\, f_{nk} \notag \\
	&=  \left( -\Ric_{jk,i} +  \R_{mjik,m} \right) \, f_k  + \phi_{kk,ji} - \phi_{jk,ki} - \phi_{ik,kj}   + 2\,  \R_{kjin}\, f_{nk}  \, .
 \end{align}
 The lemma follows from this and using the  trace \eqr{e:2bia} of the second Bianchi identity  to rewrite the first term on the right 
 as $
 -\Ric_{jk,i} +  \R_{mjik,m} = - \Ric_{ij,k}$.
 \end{proof}

 \begin{proof}[Proof of Theorem \ref{t:Lricci}]
  Use the definition of $\phi$ to write
 $	\cL\, \Ric =   -\Delta\, \phi - \Delta \, \Hess_f - \nabla_{\nabla f} \Ric$.   Using Lemma \ref{l:deltahess}, we get
 \begin{align}
 	\cL \, \Ric =    -\phi_{ij,kk} - f_{ijkk}- \Ric_{ij,k}\, f_k  = -2\, \R_{kjin}\, f_{nk} -
	  \phi_{ij,kk}  - \phi_{kk,ji} + \phi_{jk,ki} + \phi_{ik,kj}\, .
 \end{align}
 The definition of $\phi$ gives $f_{nk} =- \phi_{nk}+ \kappa \, g_{nk} - \Ric_{nk}$.  Thus, we have
 \begin{align}
 	 \R_{kjin}\, f_{nk} =  -\R_{kjin}\, \phi_{nk} -  \R_{kjin}\, \Ric_{nk} -\kappa \, \Ric_{ij} \, .
 \end{align}
 Substituting this gives
  \begin{align}	\label{e:within}
 	(\cL \, \Ric)_{ij} =  2\, \kappa \, \Ric_{ij}  +2\, \R_{kjin} \Ric_{nk} +2\, \R_{kjin}\, \phi_{nk} -
	  \phi_{ij,kk}  - \phi_{kk,ji} + \phi_{jk,ki} + \phi_{ik,kj} \, ,
 \end{align}
 and \eqr{e:withinA} follows from substituting the definition of $L$.   The second claim follows from taking
 the trace over $i=j$ in \eqr{e:withinA}.
 \end{proof}

  \subsection{The $f$-divergence and its adjoint}
 
   As in   \cite{CaZ}, define the $f$-divergence of a symmetric $2$-tensor $h$  to be the vector field  
 	$(\dv_f \, h) (e_i) = \e^f \, \left( \e^{-f} \, h_{ij} \right)_j  = h_{ij,j} - f_j h_{ij}$.
The second equality in  Lemma \ref{l:gradS} gives
 $(\dv_f \, \Ric )(e_i) = \Ric_{ik,k} -   \Ric_{ik} f_k  = - \phi_{kk,i} + \phi_{ik,k} $, 
so that $\dv_f \, \Ric = 0$  on a soliton.
 The adjoint $\dv_f^{*}$ of $\dv_f$ is given on a vector field $Y$ by
  \begin{align}	\label{e:adjoint}
 	(\dv_f^{*}\, Y) (e_i , e_j) =  - \frac{1}{2} \, \left( \nabla_i Y_j + \nabla_j Y_i 
	\right) \, .
 \end{align}
 Namely, if $\int \left( |Y|^2 + |\nabla Y|^2 + |h|^2 + |\nabla h|^2 \right) \, \e^{-f} < \infty$, then
 $\int \langle h , \dv_f^{*}\, Y   \rangle \, \e^{-f} = \int \langle Y , \dv_f \, h \rangle \, \e^{-f} $.
Note that  $\dv_f^{*}$ applied to a gradient gives $\dv_f^{*}\, \nabla v = - \Hess_v$.
 Thus, if $\dv_f \, h =0$, then $h$ is orthogonal to any Hessian and,  more generally,     to   variations coming from diffeomorphisms
 since $-2\, \dv_f^*\,Y$ is the Lie derivative of the metric in the direction of $Y$.  
  
The next theorem computes the commutator of $\cL$ with $\dv_f$ and $\dv_f^*$.  As a consequence, 
 $L$ preserves the image of $\dv_f^{*}$ when $(M,g,f)$ is a gradient Ricci soliton.

    \begin{Thm}	\label{c:BHa}
  If  $V$ is a vector field and $h$ is a symmetric two-tensor,
  then
\begin{align}
	L \, \dv_f^{*} (V) &= \dv_f^{*} \left( \cL \, V +\kappa \, V \right) 
	+ \frac{1}{2} \left(  \phi_{jn} V_{i,n} + \phi_{in}  V_{j,n} 	\right)      
	            - \frac{V_n}{2}(  2\, \phi_{ ji,n } - \phi_{jn,i} - \phi_{in,j}  ) \, , \notag \\
   	 \dv_f \, L \, h &=  \left( \cL + \kappa \right) \,\dv_f  \, h - h_{nj} \, (\dv_f \, \phi)_j - h_{in,j} \, \phi_{ij}
	    - \frac{h_{ij}}{2}  (  2\, \phi_{ ji,n } - \phi_{jn,i} - \phi_{in,j}  )
	\, .  \label{e:114}
\end{align}
\end{Thm}

 \begin{Cor}	\label{t:bochHess}
   If $(M,g,f)$ is a gradient Ricci soliton, then
   	$L \, \Hess_u =  \Hess_{(2\, \kappa \, u + \cL u)}$.
   \end{Cor}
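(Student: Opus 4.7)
The plan is to deduce this corollary directly from the first identity in Theorem \ref{c:BHa} by specializing to $V = \nabla u$ on a soliton. Recall from the discussion immediately preceding the theorem that $\dv_f^{*}\,\nabla u = -\Hess_u$, so the left-hand side of the desired identity is, up to a sign, $L\,\dv_f^{*}(\nabla u)$.

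First I would substitute $V = \nabla u$ into Theorem \ref{c:BHa}. Since $(M,g,f)$ is a gradient Ricci soliton, $\phi = 0$ identically, so every term in that identity involving $\phi$, $\dv_f\,\phi$, or derivatives of $\phi$ drops out. What remains is
\begin{align*}
L\,\dv_f^{*}(\nabla u) = \dv_f^{*}\!\left(\cL\,\nabla u + \kappa\,\nabla u\right).
\end{align*}
Next I would use the drift Bochner formula from Lemma \ref{l:boch}, which on a soliton (where again $\phi=0$) simplifies to $\cL\,\nabla u = \nabla\,\cL u + \kappa\,\nabla u$. Substituting this in and combining the $\kappa$ terms gives
\begin{align*}
\cL\,\nabla u + \kappa\,\nabla u = \nabla\bigl(\cL u + 2\kappa\,u\bigr).
\end{align*}

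Finally, I would apply $\dv_f^{*}$ to this gradient using $\dv_f^{*}\,\nabla v = -\Hess_v$, obtaining
\begin{align*}
L\,\bigl(-\Hess_u\bigr) = -\Hess_{(2\kappa\,u + \cL u)},
\end{align*}
which is the claim after multiplying by $-1$. There is no real obstacle here; the corollary is a purely algebraic consequence of setting $\phi=0$ in Theorem \ref{c:BHa}, combined with the standard identifications $\dv_f^{*}\,\nabla = -\Hess$ and the soliton form of the drift Bochner formula. The only point worth double-checking is the matching of the $\kappa$ terms, which comes out cleanly because Lemma \ref{l:boch} contributes one factor of $\kappa\,\nabla u$ and Theorem \ref{c:BHa} contributes another.
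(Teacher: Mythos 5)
Your proof is correct and follows essentially the same route as the paper: specialize Theorem \ref{c:BHa} to $V=\nabla u$ with $\phi=0$, apply $\dv_f^*\nabla u = -\Hess_u$, and use the soliton form of the drift Bochner formula to combine the two $\kappa$ terms. The bookkeeping of signs and $\kappa$ factors is handled correctly.
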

   
   \begin{proof}
   Theorem \ref{c:BHa} with $V=\nabla u$ and $\phi = 0$ gives that 
   	$- L \, \Hess_u =  \dv_f^{*} \left( \cL \, \nabla u + \kappa \, \nabla u \right)$.  
   This and the drift Bochner formula $\cL \, \nabla u = \nabla \cL \, u +\kappa \, \nabla u$, see  \eqr{e:commuta}, gives the claim.
   \end{proof}

    The next lemma will be used in the proof of Theorem 
    \ref{c:BHa}.

  \begin{Lem}	\label{l:bochHessA}
If $V$ is vector field, then
    \begin{align}	 
 	V_{i,jkk}  &=  V_{i,kkj}  + \Ric_{jn}V_{i,n}       +\R_{nijk   } f_k \,  V_n  + 2\,  \R_{kjin} V_{n,k} 
	+ V_n (  \phi_{ ji,n } - \phi_{jn,i}   )  \, .
 \end{align}
 \end{Lem}
 
 \begin{proof}
 The trace of the second Bianchi identity gives that
	 $\R_{kjin,k} = \Ric_{jn,i} - \Ric_{ ji,n }$.
	 Using that $\Ric = \kappa \, g - \phi - \Hess_f $,  $g$ is parallel, and then using
 Lemma \ref{l:boch} gives 
\begin{align}	\label{e:e252}
	  \R_{kjin,k} = \Ric_{jn,i} - \Ric_{ ji,n }  =  \phi_{ ji,n } - \phi_{jn,i}  + f_{ ji,n } - f_{jn,i}      =   \phi_{ ji,n } - \phi_{jn,i}   + \R_{nijk   } f_k \, .
\end{align}
 Lemma \ref{l:boch} gives 
$  	V_{i,jk} = V_{i,kj} + \R_{kjin} V_n$.
 Working at a point where $\nabla_i e_j (p) = 0$ for the orthonormal frame $e_i$,
differentiating gives
 \begin{align}	 
 	V_{i,jkk} &=   \left( V_{i,kj} + \R_{kjin} V_n \right)_k = V_{i,kjk} + \R_{kjin,k} V_n  + \R_{kjin} V_{n,k}  \, .
 \end{align}
The Ricci identity gives
 \begin{align}
 	V_{i,kjk}  =V_{i,kkj}  + \R_{kjkn}V_{i,n} + \R_{kjin} V_{n,k}   = V_{i,kkj}  + \Ric_{jn}V_{i,n} + \R_{kjin} V_{n,k}    \, .
 \end{align}
 Using this gives
$  	V_{i,jkk}  =  V_{i,kkj}  + \Ric_{jn}V_{i,n}  + \R_{kjin,k} V_n  + 2\, \R_{kjin} V_{n,k} $.
The claim follows from this and \eqr{e:e252}.
\end{proof}

\begin{Cor}	\label{c:dbochner}
 If   $V$ is a vector field, then
    \begin{align}	 
 	(\cL \, \nabla \, V)_{i,j} &=( \nabla \, \cL \, V)_{i,j}  + \kappa \, V_{i,j}    + 2\,  \R_{kjin} V_{n,k} - \phi_{jn} \, V_{i,n} 
	+ V_n (  \phi_{ ji,n } - \phi_{jn,i}   ) \, .
	 \end{align}
\end{Cor}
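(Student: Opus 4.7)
The plan is to derive this identity by a direct computation in a normal frame, using Lemma \ref{l:bochHessA} as the main input and the soliton-type identity $\Ric + \Hess_f + \phi = \kappa \, g$ to assemble the right-hand side. I would work at a point $p$ in an orthonormal frame with $\nabla_{e_j} e_i (p) = 0$, so that $(\cL \, \nabla V)_{i,j} = V_{i,jkk} - f_k \, V_{i,jk}$.

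First I would substitute Lemma \ref{l:bochHessA} to replace $V_{i,jkk}$, obtaining
\begin{align*}
(\cL \, \nabla V)_{i,j} = V_{i,kkj} + \Ric_{jn} \, V_{i,n} + \R_{nijk} \, f_k \, V_n + 2 \, \R_{kjin} \, V_{n,k} + V_n ( \phi_{ji,n} - \phi_{jn,i} ) - f_k \, V_{i,jk} \, .
\end{align*}
Next I would commute the two derivatives in the last term via Lemma \ref{l:boch}, giving $f_k \, V_{i,jk} = f_k \, V_{i,kj} + f_k \, \R_{kjin} \, V_n$. The two curvature terms involving $f_k \, V_n$ then cancel: using pair symmetry $\R_{kjin} = \R_{inkj}$ and the antisymmetries of $\R$, one checks $\R_{nijk} + \R_{nikj} = 0$, so $\R_{nijk} \, f_k \, V_n - \R_{kjin} \, f_k \, V_n = 0$ (equivalently, this is the first Bianchi identity).

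What remains is to recognize $V_{i,kkj} - f_k \, V_{i,kj}$ as $(\nabla \, \cL \, V)_{i,j} + f_{kj} \, V_{i,k}$, which follows by differentiating $(\cL \, V)_i = V_{i,kk} - f_k \, V_{i,k}$ in the direction $e_j$ at $p$. Collecting terms leaves
\begin{align*}
(\cL \, \nabla V)_{i,j} = (\nabla \, \cL \, V)_{i,j} + (f_{jn} + \Ric_{jn}) \, V_{i,n} + 2 \, \R_{kjin} \, V_{n,k} + V_n ( \phi_{ji,n} - \phi_{jn,i} ) \, .
\end{align*}
Finally, the definition $\phi = \kappa \, g - \Ric - \Hess_f$ rewrites $f_{jn} + \Ric_{jn} = \kappa \, g_{jn} - \phi_{jn}$, which converts the second term into $\kappa \, V_{i,j} - \phi_{jn} \, V_{i,n}$, yielding exactly the claimed formula.

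The only subtle step is the cancellation of the two $f_k \, V_n$ curvature contributions; everything else is bookkeeping. I would expect the sign conventions of $\R_{ijk\ell}$ (specifically keeping track of which pair of indices each antisymmetry acts on) to be the most error-prone part, so I would double-check that cancellation against the stated conventions $\Ric_{ij} = \R_{kikj}$ before combining terms.
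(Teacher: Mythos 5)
Your proof is correct and follows essentially the same route as the paper: both arguments feed Lemma \ref{l:bochHessA} into the definition of $\cL\,\nabla V$ in a normal frame, commute a pair of covariant derivatives via Lemma \ref{l:boch} so that the two first-order curvature terms $\R_{nijk}f_kV_n$ and $\R_{kjin}f_kV_n$ cancel, and then substitute $\Ric+\Hess_f=\kappa\,g-\phi$; you simply package the drift term as $-f_kV_{i,jk}$ from the start rather than passing through $\Delta\nabla V$. One small nit: the cancellation $\R_{nijk}=\R_{kjin}$ is a direct consequence of the pair symmetry together with the two index antisymmetries, so the parenthetical attribution to the first Bianchi identity is not quite the right justification (though it is harmless, since the pair symmetry is itself deducible from Bianchi plus the antisymmetries).
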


\begin{proof}
We will work at a point where $\nabla_i e_j (p) = 0$ and  $g_{ij} = \delta_{ij}$.
Lemma \ref{l:bochHessA} gives that 
    \begin{align}	 
 	(\Delta \nabla V)_{ij} = V_{i,jkk}  =  (\nabla \Delta \, V)_{i,j}  + \Ric_{jn}V_{i,n}       +\R_{nijk   } f_k \,  V_n  + 2\,  \R_{kjin} V_{n,k} +V_n(   \phi_{ ji,n } - \phi_{jn,i}   )  \, . \notag
 \end{align}
On the other hand, $(\nabla_{e_j} \nabla_{\nabla f}V )_i= f_{jn} \, V_{i,n} + f_n \, V_{i,nj}$ so we get
    \begin{align}	 
 	(\Delta \nabla V)_{ij}   =  (\nabla \cL \, V)_{i,j}  +  f_{jn} \, V_{i,n} + f_n \, V_{i,nj} + \Ric_{jn}V_{i,n}       +\R_{nijk   } f_k \,  V_n  + 2\,  \R_{kjin} V_{n,k}  + V_n (  \phi_{ ji,n } - \phi_{jn,i}   ) \, . \notag 
 \end{align}
Using that  $\Ric + \Hess_f =\kappa \, g -  \phi$ and, by Lemma \ref{l:boch}, that $V_{i,jn} = V_{i,nj} + \R_{njik} V_k$, we get
    \begin{align}
 	(\cL \,  \nabla V)_{ij}   =  (\nabla \cL \, V)_{i,j}  + \kappa \, V_{i,j} - \phi_{jn} \, V_{ i,n} + 2\,  \R_{kjin} V_{n,k} 
	+ V_n (  \phi_{ ji,n } - \phi_{jn,i}   )  \, .
	\, \, \, \, \, \, \, \, \, \, \, \qedhere
 \end{align}
 \end{proof}

\begin{proof}[Proof of Theorem \ref{c:BHa}]
Set $W_{ij} = \dv_f^{*} \, V$.
Since $-2\, W_{ij} = V_{i,j} + V_{j,i}$, 
 Lemma \ref{l:bochHessA}  gives
 \begin{align}	 	\label{e:fromcBHa}
 	-2\, \Delta W_{ij} &= -2\, W_{ij,kk} =    V_{i,kkj}  +V_{j,kki}  + \Ric_{jn}V_{i,n} +\R_{nijk   } f_k \,  V_n  + 2\, \R_{kjin} V_{n,k}  \notag \\
	&+ \Ric_{in}V_{j,n}      +\R_{njik   } f_k \,  V_n + 2\, \R_{kijn} V_{n,k} 
	+  V_n (  2\, \phi_{ ji,n } - \phi_{jn,i} - \phi_{in,j}  )\, .
 \end{align}
Relabeling indices, using the symmetries of $\R$, and using the definition of $W$ gives
\begin{align}
	\R_{kjin} V_{n,k}     + \R_{kijn} V_{n,k} = 	\R_{kjin} V_{n,k}     + \R_{nijk} V_{k,n} =  	\R_{kjin} V_{n,k}     + \R_{kjin} V_{k,n} 
	= -2 \, \R_{kjin} W_{kn}   \, . \notag
\end{align}
Using this and $V_{i,kkj}  +V_{j,kki} = -2 \,   \dv_f^{*} \, \Delta \, V$, we can rewrite \eqr{e:fromcBHa} as
 \begin{align}	 	 
 	     2 \,  \dv_f^{*} \, \Delta \, V & = 2\, \Delta \, W + \Ric_{jn}V_{i,n} - 4 \, \R_{kjin} W_{kn} + f_k \left( \R_{nijk   } + \R_{njik   } \right)\,  V_n   
	 + \Ric_{in}V_{j,n}  \notag     \\
	 &+  V_n (  2\, \phi_{ ji,n } - \phi_{jn,i} - \phi_{in,j}  )
   \, .   
    \end{align}
Using  that $  	V_{i,jk} = V_{i,kj} + \R_{kjin} V_n$ by Lemma \ref{l:boch}, the first derivative term is  
 \begin{align}
 	-2 \,  \dv_f^{*} ( \nabla_{\nabla f} V) &= (V_{i,k}f_k)_j +  (V_{j,k}f_k)_i = 
	 V_{i,kj}f_k  + V_{i,k}f_{kj} +  V_{j,ki}f_k  +     V_{j,k}f_{ki } \notag \\
	 &=   
	 (V_{i,jk} -  \R_{kjin} V_n)f_k  + V_{i,k}f_{kj} +  (V_{j,ik } -  \R_{kijn} V_n )f_k  +     V_{j,k}f_{ki }
	 \, .
 \end{align}
Adding the last two equations, we get
\begin{align}	 	 
 	      2 \,  \dv_f^{*} \, \cL \, V &= 2\, L\, W + ( \Ric_{jn} + f_{jn}) V_{i,n}    + ( \Ric_{in} + f_{in}) V_{j,n}   \\
	      &+ f_k \left( \R_{nijk   } + \R_{njik   } -  \R_{kjin} -  \R_{kijn}\right)\,  V_n     
	      +  V_n (  2\, \phi_{ ji,n } - \phi_{jn,i} - \phi_{in,j}  )
	          \, . \notag
 \end{align}
 The first term on the last line vanishes because of the symmetries of the curvature tensor.  Using this and 
  $\Ric + \Hess_f = \kappa \, g -\phi$,  we get
\begin{align}	 	 
 	      2 \,  \dv_f^{*}\, \cL \, V &= 2\,  L \,  W + \kappa \, \left( V_{i,j} + V_{j,i} \right)-  \phi_{jn} V_{i,n} - \phi_{in}  V_{j,n}   
	      	      +  V_n (  2\, \phi_{ ji,n } - \phi_{jn,i} - \phi_{in,j}  ) \notag \\
	      &= 2 \, (L -\kappa) \, \dv_f^* \, V    -  \phi_{jn} V_{i,n} - \phi_{in}  V_{j,n} 	      
	            +  V_n (  2\, \phi_{ ji,n } - \phi_{jn,i} - \phi_{in,j}  ) \, .
 \end{align}
 This gives the first claim.  
The second claim follows from taking the adjoint of the first claim and using that
$
  \left(L\,\dv_f^*\right)^*=\dv_f\,L$ and 
 $ \left(\dv_f^*\,\left(\cL+\kappa \right)   \right)^*=\left(\cL+\kappa \right)\,\dv_f$.
   \end{proof}

   \subsection{Solitons}

For a soliton,   $(S+|\nabla f|^2 -2\, \kappa \, f)$ is constant (cf. \cite{ChLN} or Corollary \ref{c:gradS}) and 
   it is customary (when $\kappa \ne 0$)  to subtract a constant from $f$ so that
\begin{align}	\label{e:normalizef}
	S+ |\nabla f|^2 = 2\, \kappa \, f \, .
\end{align}
Combining this with the trace of the soliton equation gives that
	$\cL \, f =  n \, \kappa - 2 \, \kappa \, f$.
If $\kappa = \frac{1}{2}$ (i.e., a shrinker), then $S\geq 0$ by \cite{Cn} and we have that 
 \begin{align}	\label{e:fromccz}
 	f- S = |\nabla f|^2\leq f \, .
\end{align}
By Cao-Zhou,  \cite{CaZd}:   There exist $c_1, c_2$, depending only on   $B_1(x_0) \subset M$ so that 
 \begin{align}	\label{e:ccz}
 	\frac{1}{4} \left( r(x) - c_1 \right)^2 &\leq f (x) \leq  	\frac{1}{4} \left( r(x) + c_2 \right)^2 \, ,
 \end{align}
 where $r(x)$ is the distance to a fixed point $x_0$.

 The simplest shrinker is the Gaussian soliton $(\RR^n, \delta_{ij} , \frac{|x|^2}{4})$, followed by  cylinders 
 $ \SS^{\ell} \times \RR^{n-\ell} $ with the product metric where the sphere has Ricci curvature $\frac{1}{2}$ and  $f = \frac{|x|^2}{4}+ \frac{\ell}{2}$.  There are also shrinkers asymptotic to cones; see, e.g.,    \cite{FIK,KW}.

The next lemma gives a concentration for vector fields on any shrinker; cf. lemma $1.5$ of \cite{CM9}.
  We think of this as a concentration  because of the asymptotics \eqr{e:ccz}
of $f$. 
 
 \begin{Lem}	\label{l:L2RPa}
 If $M$ is a shrinker and $Y$ is
  any vector field or function in $ W^{1,2}$, then
 \begin{align}
	 \int |Y|^2\, (|\nabla f|^2-n)  \,\e^{-f}  &\leq \int |Y|^2\, (f-n)  \,\e^{-f}  \leq
	   4\, \int |\nabla Y|^2\,\e^{-f}\,   . \label{e:lemmamu} 
 \end{align}
  \end{Lem}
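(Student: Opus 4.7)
The first inequality is immediate: by \eqr{e:fromccz}, on any shrinker $|\nabla f|^2 = f - S \leq f$ since $S \geq 0$, so pointwise $|\nabla f|^2 - n \leq f - n$, and multiplying by $|Y|^2 \, \e^{-f}$ and integrating gives the claim.

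For the second inequality, the plan is to use a weighted divergence identity to trade $f$ on the left for $|\nabla Y|^2$ on the right. On a shrinker $\kappa = \tfrac{1}{2}$, so $\cL\,f = \Delta f - |\nabla f|^2 = \tfrac{n}{2}-f$ (from the formulation of $\cL\,f = n\kappa - 2\kappa f$ noted in the excerpt). Compute
\begin{align}
\dv \left( |Y|^2 \, \nabla f \, \e^{-f} \right) = 2 \langle \nabla_{\nabla f} Y , Y \rangle \, \e^{-f} + |Y|^2 \, ( \cL \, f ) \, \e^{-f}.
\end{align}
Integrating (formally for now) and substituting $\cL f = \tfrac{n}{2}-f$ yields
\begin{align}
\int |Y|^2 \, (f - \tfrac{n}{2}) \, \e^{-f} = 2 \int \langle \nabla_{\nabla f} Y , Y \rangle \, \e^{-f}.
\end{align}
Bound the right side by Cauchy--Schwarz and the elementary inequality $2ab \leq \tfrac{1}{2} a^2 + 2 b^2$ applied with $a = |\nabla f| \, |Y|$ and $b = |\nabla Y|$, then invoke $|\nabla f|^2 \leq f$ once more:
\begin{align}
2 \int \langle \nabla_{\nabla f} Y , Y \rangle \, \e^{-f} \leq \tfrac{1}{2} \int f \, |Y|^2 \, \e^{-f} + 2 \int |\nabla Y|^2 \, \e^{-f}.
\end{align}
Absorbing $\tfrac{1}{2} \int f \, |Y|^2 \, \e^{-f}$ into the left-hand side gives $\tfrac{1}{2} \int |Y|^2 \, (f - n) \, \e^{-f} \leq 2 \int |\nabla Y|^2 \, \e^{-f}$, which is the second inequality.

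For a vector field $Y$, the same computation goes through with $\nabla |Y|^2 = 2 \langle \nabla Y , Y \rangle$ interpreted using the covariant derivative tensor, and $|\nabla_{\nabla f} Y| \leq |\nabla f| \, |\nabla Y|$; all steps are pointwise.

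\textbf{Main obstacle: justifying the integration by parts.} Since $Y$ is only weighted-$W^{1,2}$ and is not assumed compactly supported, neither $|Y|^2 |\nabla f|^2 \e^{-f}$ nor the boundary term is a priori controlled. I would handle this by a standard exhaustion: pick cutoffs $\eta_R$ with $\eta_R = 1$ on $\{f \leq R\}$, $\supp \eta_R \subset \{f \leq 2R\}$, and $|\nabla \eta_R| \leq C / \sqrt{R}$ (possible by \eqr{e:ccz}, since $|\nabla f| \sim \sqrt{f}$). Apply the argument to $\eta_R \, Y$ (for which everything is compactly supported), which produces error terms of the form $\int |\nabla \eta_R|^2 |Y|^2 \e^{-f}$ and $\int \eta_R (1-\eta_R) (\cdots) \e^{-f}$; these tend to $0$ as $R \to \infty$ by dominated convergence using the weighted $W^{1,2}$ hypothesis. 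The monotone convergence theorem then upgrades the resulting inequality from $\eta_R Y$ to $Y$, even when $\int |Y|^2 f \, \e^{-f} = +\infty$ (in which case the inequality would force $\int |\nabla Y|^2 \e^{-f} = +\infty$ too, so the statement is vacuous).
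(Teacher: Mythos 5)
Your proof is correct and follows essentially the same route as the paper: integrate $\cL f + f = \tfrac{n}{2}$ against $|Y|^2\,\e^{-f}$ to get the identity $\int |Y|^2 (f - \tfrac{n}{2})\,\e^{-f} = 2\int \langle \nabla_{\nabla f} Y , Y \rangle\,\e^{-f}$, then apply Young's inequality with the weight $\tfrac{1}{2}$ and use $|\nabla f|^2 \leq f$ to absorb. The only difference is that you spell out the cutoff/exhaustion justification for the integration by parts, which the paper leaves implicit.
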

 
 \begin{proof}
 Since $f$ is normalized so that $\cL\,f+f=\frac{n}{2}$, we have that
 \begin{align}  
\int |Y|^2\, \left( f - \frac{n}{2} \right) \,\e^{-f}&= 2\int \langle\nabla_{\nabla f} Y,Y\rangle\,\e^{-f}
\leq 2\int |\nabla Y|^2\,\e^{-f}+\frac{1}{2}\int |Y|^2\,|\nabla f|^2\,\e^{-f}\, .  \notag
\end{align}
We get \eqr{e:lemmamu}  since $f \geq |\nabla f|^2 $ by  \eqr{e:fromccz}.  
 \end{proof}

 We will  use the following elementary interpolation inequality:
 
 \begin{Lem}	\label{l:RPcL}
 Given any shrinker, if $Y , \cL \, Y \in L^2$, then $Y \in W^{1,2}$, $\dv_f \, Y \in L^2$,  and $\| \nabla Y \|_{L^2}^2 \leq 2 \, \| Y \|_{L^2} \, \| \cL \, Y \|_{L^2}$.  If in addition $\dv_f \, \cL \, Y \in L^2$, then $\dv_f \, Y \in W^{1,2}$.  Finally, if the sectional curvature is bounded, then 
 \begin{align}
 	\| \nabla^2 Y \|_{L^{2}}^2 \leq \| \cL \, Y \|_{L^2}^2 + C\, \| \nabla Y \|_{L^2}^2 \, .
\end{align}
 \end{Lem}
 
 \begin{proof}
 Let $\eta$ be a cutoff function with $|\eta | \leq 1$. The Cauchy-Schwarz inequality, integration by parts and an absorbing inequality 
give (with $\| \cdot \| = \| \cdot \|_{L^2}$)
\begin{align}
	\| \cL \, Y \|    \| Y \| & \geq - \int \langle \cL\, Y , \eta^2  Y \rangle \, \e^{-f} 
	= \int \left( \eta^2  |\nabla Y|^2 + \langle \nabla \eta^2 ,   \frac{\nabla | Y|^2}{2} \rangle
	\right) \, \e^{-f} \notag \\
	&\geq \frac{ \| \eta \, \nabla Y \|^2}{2}  - 2\, \| |\nabla \eta | Y \|^2 \, . \notag
\end{align}
Taking a sequence of $\eta$'s converging to one and applying  the dominated convergence theorem gives that $Y \in W^{1,2}$ 
and  
\begin{align}
	\| \nabla Y \|_{L^2}^2 \leq 2 \, \| Y \|_{L^2} \, \| \cL \, Y \|_{L^2} \, .
\end{align}
  By this and Lemma \ref{l:L2RPa}, 
$\dv_f \, Y \in L^2$.

Since Lemma \ref{l:boch} gives that
	$\cL \, \dv_f \, Y =     \dv_f \, \cL \, Y -\frac{1}{2} \, \dv_f \, Y$, it follows that $\cL \, \dv_f  \, Y \in L^2$ if  $\dv_f \, \cL \, Y \in L^2$.  The first part of the lemma
	now gives that $\dv_f \, Y \in W^{1,2}$.
	 Finally, integrating Corollary \ref{c:dbochner}  gives  a $W^{2,2}$ bound when the sectional curvature is bounded.    
 \end{proof}

 We will need a $W^{1,2}$ localization result for eigenfunctions:

\begin{Lem}	\label{c:localize}
If $v \in W^{2,2}$ satisfies $\cL \, v = - \mu \, v$ and $\| v \|_{L^2} = 1$, then
\begin{align}
	\frac{s^2}{4} \, \int_{ f \geq \frac{s^2}{4}} \, \left\{ v^2 + |\nabla v|^2 \right\} \, \e^{-f} \leq 4\, \mu^2 + (n+2)\,  \mu + n
	\, .
\end{align}
\end{Lem}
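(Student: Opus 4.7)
The plan is to bound the weighted integral $\int (v^{2} + |\nabla v|^{2}) \, f \, \e^{-f}$ and then use Chebyshev's inequality in the form $f \geq s^{2}/4$ on the region of integration. The two ingredients are the concentration estimate of Lemma \ref{l:L2RPa}, applied successively to $v$ and to $\nabla v$, together with the drift Bochner formula from Lemma \ref{l:boch}.

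First, using $\cL \, v = -\mu \, v$ and integrating $\langle \nabla v, \nabla v\rangle \, \e^{-f}$ by parts (justified by a cutoff/dominated-convergence argument as in Lemma \ref{l:RPcL}, since $v \in W^{2,2}$), one obtains
\begin{align}
	\int |\nabla v|^{2} \, \e^{-f} = -\int v \, \cL \, v \, \e^{-f} = \mu \, .
\end{align}
Applied to the function $v$, Lemma \ref{l:L2RPa} then gives $\int v^{2} (f-n) \, \e^{-f} \leq 4\mu$, hence
\begin{align}
	\int v^{2} \, f \, \e^{-f} \leq 4\mu + n \, .
\end{align}

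Next I would bound $\int |\text{Hess}_{v}|^{2} \, \e^{-f}$ exactly, as follows. Since $\kappa = 1/2$ and $\phi = 0$, the drift Bochner formula in Lemma \ref{l:boch} says $\cL \, \nabla v = \nabla \, \cL\, v + \frac{1}{2} \nabla v = -\mu \, \nabla v + \frac{1}{2} \nabla v$. Combined with the vector-field identity $\cL \, |\nabla v|^{2} = 2 \langle \cL \, \nabla v, \nabla v\rangle + 2 |\text{Hess}_{v}|^{2}$, this gives
\begin{align}
	\cL \, |\nabla v|^{2} = 2 |\text{Hess}_{v}|^{2} + (1 - 2\mu) \, |\nabla v|^{2} \, .
\end{align}
Integrating against $\e^{-f}$ (again via cutoffs) the left side vanishes, so
\begin{align}
	\int |\text{Hess}_{v}|^{2} \, \e^{-f} = \frac{2\mu - 1}{2} \, \mu = \mu^{2} - \frac{\mu}{2} \, .
\end{align}
Now apply Lemma \ref{l:L2RPa} with $Y = \nabla v$:
\begin{align}
	\int |\nabla v|^{2} (f - n) \, \e^{-f} \leq 4 \int |\text{Hess}_{v}|^{2} \, \e^{-f} = 4\mu^{2} - 2\mu \, ,
\end{align}
so $\int |\nabla v|^{2} \, f \, \e^{-f} \leq 4\mu^{2} + (n-2)\mu$.

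Adding the two bounds yields $\int (v^{2} + |\nabla v|^{2}) \, f \, \e^{-f} \leq 4\mu^{2} + (n+2)\mu + n$, and restricting the integral to $\{f \geq s^{2}/4\}$, where $f$ can be pulled out,
\begin{align}
	\frac{s^{2}}{4} \int_{f \geq s^{2}/4} (v^{2} + |\nabla v|^{2}) \, \e^{-f} \leq \int (v^{2} + |\nabla v|^{2}) \, f \, \e^{-f} \leq 4\mu^{2} + (n+2)\mu + n \, .
\end{align}
The only genuinely nontrivial point is rigorously justifying the two integrations by parts on a non-compact shrinker with only a $W^{2,2}$ assumption on $v$; this requires the cutoff/absorption argument already present in the proof of Lemma \ref{l:RPcL}, using the Cao–Zhou quadratic growth \eqref{e:ccz} of $f$ to ensure the cutoff error vanishes in the limit.
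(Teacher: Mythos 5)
Your proof is correct and follows the paper's argument essentially verbatim: you compute $\|\nabla v\|_{L^2}^2 = \mu$ and $\|\Hess_v\|_{L^2}^2 = \mu^2 - \mu/2$ via the eigenvalue equation and the drift Bochner formula, then apply Lemma \ref{l:L2RPa} to both $v$ and $\nabla v$, add, and finish with Chebyshev on $\{f \geq s^2/4\}$ using $f \geq 0$. The paper phrases the first identity via integrating $\tfrac{1}{2}\cL\,v^2 = |\nabla v|^2 - \mu v^2$, but that is the same computation as your integration by parts.
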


\begin{proof}
Integrating $\frac{1}{2} \, \cL \, v^2 = |\nabla v|^2 - \mu \, v^2$ and  the drift Bochner formula  $\frac{1}{2} \, \cL \, |\nabla v|^2 = 
 |\Hess_{v}|^2 + \left( \frac{1}{2} - \mu \right) \, |\nabla v|^2$ gives
\begin{align}	\label{e:e326}
	\| \nabla v \|_{L^2}^2 = \mu {\text{ and }} \| \Hess_v \|_{L^2}^2 =  \left( \mu - \frac{1}{2} \right) \, \mu \, .
\end{align}
Applying Lemma \ref{l:L2RPa} to $v$ and to $\nabla v$ and adding these inequalities gives the claim.
 \end{proof}

\section{Diffeomorphisms and the $\cP$ operator}    \label{s:Poperator}

A key tool in this paper for dealing with the infinite dimensional gauge group is a natural second order system operator $\cP$ that seems to have been largely overlooked.    This operator is defined on vector fields and given by composing $\dv_f^{*}$ with its adjoint $\dv_f$ so 
 $\cP = \dv_f \circ\, \dv_f^{*}$.    In one dimension, $\cP = - \cL$, but in higher dimensions $\cP$ and $\cL$ are very different.

  A vector field $Y$ is a Killing field if the Lie derivative of the metric with respect to   $Y$ is zero, i.e., 
   $ \dv_f^{*} \, Y = 0$.  Since a Killing field is determined by its value and first derivative at a point, the space  of Killing fields is finite dimensional.     Integration by parts shows that if $Y \in W^{1,2}$ and $\cP \, Y \in L^2$, then
	\begin{align}
		\int \langle Y , \cP \, Y \rangle \, \e^{-f} = \| \dv_f^{*} (Y) \|_{L^2}^2 \, .
	\end{align}
Thus, the $L^2$ kernel of $\cP$ is the space
 $\cK_{\cP}$ of $L^2$ Killing fields.

\subsection{Basic properties of $\cP$}
 In this subsection, we prove the basic properties of $\cP$.  Many of the results are valid on any manifold and for any function $f$.  The results are strongest for gradient Ricci solitons - shrinking, steady or expanding.
   The next lemma relates $\cP$ and $\cL$. 
   
 \begin{Lem}	\label{l:dvastY}
Given a vector field $Y$, we have
 \begin{align}	\label{e:emy29}
 	-2\,  \cP \, Y   = \nabla \, \dv_f\, Y + \cL \, Y + \Hess_f(\cdot,Y)+\Ric (\cdot,Y) \, .
 \end{align}
 Thus, for a Ricci soliton, 
 	$-2\,  \cP \, Y   = \nabla \, \dv_f\, Y + \cL \, Y + \kappa \, Y$.
 \end{Lem}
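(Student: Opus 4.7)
\medskip

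\noindent\textbf{Proof proposal.} The strategy is a direct index computation: expand $\cP Y = \dv_f(\dv_f^* Y)$ using the explicit formula \eqr{e:adjoint}, commute the resulting iterated covariant derivatives via the Ricci identity (Lemma \ref{l:boch}) to surface a $\nabla \dv_f\, Y$ term and a curvature term, and then recognize the remaining pieces as $\cL Y$, $\Hess_f(\cdot,Y)$, and $\Ric(\cdot,Y)$.

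\medskip

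\noindent\textbf{Step 1: unpack $\cP Y$.} Work at a point $p$ in an orthonormal frame with $\nabla_{e_i} e_j(p) = 0$. By \eqr{e:adjoint}, $(\dv_f^* Y)_{ij} = -\tfrac{1}{2}(Y_{i,j}+Y_{j,i})$. Applying $\dv_f$ to this symmetric $2$-tensor gives
\begin{align*}
  (\cP Y)_i \;=\; (\dv_f^* Y)_{ij,j} - f_j\,(\dv_f^* Y)_{ij} \;=\; -\tfrac{1}{2}\bigl(Y_{i,jj}+Y_{j,ij}\bigr) + \tfrac{1}{2}\,f_j\bigl(Y_{i,j}+Y_{j,i}\bigr).
\end{align*}
The term $Y_{i,jj} - f_j Y_{i,j}$ is exactly $(\cL Y)_i$, so
\begin{align*}
  -2\,(\cP Y)_i \;=\; (\cL Y)_i + Y_{j,ij} - f_j Y_{j,i}.
\end{align*}

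\medskip

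\noindent\textbf{Step 2: commute derivatives in $Y_{j,ij}$.} The main task is to rewrite $Y_{j,ij}$ so that $\nabla\dv_f Y$ appears. By Lemma \ref{l:boch},
\begin{align*}
  Y_{j,ij} \;=\; Y_{j,ji} + \R_{ijjn}\,Y_n \;=\; (\dv Y)_{,i} + \Ric_{in}\,Y_n,
\end{align*}
using the convention $\Ric_{in} = \R_{jijn}$ (summed in $j$). Since $\dv_f Y = \dv Y - f_j Y_j$, differentiation yields $(\dv Y)_{,i} = (\dv_f Y)_{,i} + f_{ij} Y_j + f_j Y_{j,i}$. Substituting,
\begin{align*}
  Y_{j,ij} - f_j Y_{j,i} \;=\; (\dv_f Y)_{,i} + f_{ij}Y_j + \Ric_{in}Y_n.
\end{align*}

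\medskip

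\noindent\textbf{Step 3: assemble and specialize.} Plugging back into Step 1 gives the general identity
\begin{align*}
  -2\,\cP Y \;=\; \cL Y + \nabla\dv_f Y + \Hess_f(\cdot,Y) + \Ric(\cdot,Y),
\end{align*}
which is \eqr{e:emy29}. On a gradient Ricci soliton, $\Ric + \Hess_f = \kappa\, g$, so $\Hess_f(\cdot,Y) + \Ric(\cdot,Y) = \kappa\, Y$, yielding the second assertion.

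\medskip

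\noindent\textbf{Main obstacle.} There is no genuine obstacle: the lemma is a short index calculation. The only subtlety to watch is the bookkeeping of the curvature term when commuting $Y_{j,ij} \leftrightarrow Y_{j,ji}$, since the sign convention for $\R$ in \eqr{e:LBij} and the index placement in Lemma \ref{l:boch} must be tracked carefully to produce $+\Ric(\cdot,Y)$ rather than $-\Ric(\cdot,Y)$. Everything else is a Leibniz-rule rearrangement.
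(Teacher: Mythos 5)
Your proof is correct and follows essentially the same route as the paper: both work in a normal frame at a point, expand $\cP\,Y=\dv_f(\dv_f^*Y)$, isolate the $\cL\,Y$ piece, commute the iterated derivatives via Lemma \ref{l:boch} to surface $\Ric(\cdot,Y)$, and rewrite $\dv\,Y$ in terms of $\dv_f\,Y$ to get the $\Hess_f(\cdot,Y)$ term. The only cosmetic difference is that you carry out the bookkeeping purely in indices, while the paper keeps the frame-and-bracket notation a bit longer before identifying $\nabla\dv_f Y$.
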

 
 \begin{proof}
 Fix a point $p$ and let $e_i$ be an orthonormal frame  with $\nabla_{e_i} e_j (p) = 0$.  Set $h = \dv_f^{*} \, Y$, so that $-2\,h (e_i , e_j) = \langle \nabla_{e_i} Y , e_j \rangle + 
 \langle \nabla_{e_j} Y , e_i \rangle$. Working at $p$, we have
 \begin{align}
 	-2 \, \left( \dv_f \, h  \right) (e_i) &=   
	\nabla_{e_j}\langle \nabla_{e_i} Y , e_j \rangle + 
\nabla_{e_j} \langle \nabla_{e_j} Y , e_i \rangle - \langle \nabla_{e_i} Y , \nabla f \rangle -
 \langle \nabla_{\nabla f} Y , e_i \rangle \\
 &=   
	\langle \nabla_{e_j} \nabla_{e_i} Y , e_j \rangle + 
  \langle \cL\, Y , e_i \rangle - e_i \langle  Y , \nabla f \rangle  + \langle Y ,  \nabla_{e_i} \nabla f \rangle \notag \, .
 \end{align}
 Commuting the covariant derivatives introduces a curvature term, giving
  \begin{align}
 	-2 \, \left( \dv_f \, h  \right) (e_i) & =   
	\langle \nabla_{e_i} \nabla_{e_j} Y , e_j \rangle + \Ric (e_i ,  Y) +
  \langle \cL\, Y , e_i \rangle - e_i \langle  Y , \nabla f \rangle + \Hess_f (e_i , Y) \, .\notag
  \, \, \, \, \, \, \, \, \, \qedhere
 \end{align}
 \end{proof}

   We will next show that on any gradient Ricci soliton  $\cL$ and $\cP$ commute.  
 
  \begin{Pro}  \label{p:commPL}
    For a gradient Ricci soliton and any vector field $V$, 
   $\cL\,\cP\,V=\cP\,\cL\,V$ and 
 $ \cP\, \nabla\,\dv_f\,(V)=\nabla\,\dv_f\,(\cP\,V)$.
   \end{Pro}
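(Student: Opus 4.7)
The plan is to reduce both commutator identities to a single auxiliary statement: on a gradient Ricci soliton, the operators $\cL$ and $\nabla\,\dv_f$ commute on vector fields. Once that is in hand, the proposition follows algebraically from the formula
\begin{align*}
-2\,\cP\,V = \cL\,V + \nabla\,\dv_f\,V + \kappa\,V
\end{align*}
obtained in Lemma \ref{l:dvastY}, which expresses $\cP$ as a linear combination of $\cL$, $\nabla\,\dv_f$, and the identity.

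To prove the auxiliary commutation, I would invoke the two drift Bochner formulas from Lemma \ref{l:boch}. Since $\phi=0$ on a soliton, these read
\begin{align*}
\cL\,\nabla u &= \nabla\,\cL\,u + \kappa\,\nabla u\, , \\
\cL\,\dv_f\,Y &= \dv_f\,\cL\,Y - \kappa\,\dv_f\,Y\, .
\end{align*}
Applying the first identity with $u=\dv_f\,V$ and then the second with $Y=V$, the two $\kappa$-contributions cancel and yield $\cL\,\nabla\,\dv_f\,V = \nabla\,\dv_f\,\cL\,V$. It is essential here that the soliton equation kills the $\phi$-terms in Lemma \ref{l:boch}; on a general manifold this commutation would fail by exactly those curvature error terms.

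With the auxiliary identity established, both claims are immediate. Applying $\cL$ to the soliton formula for $-2\,\cP\,V$ and using that $\cL$ commutes with itself and with the identity gives $\cL\,\cP\,V = \cP\,\cL\,V$. Similarly, applying $\nabla\,\dv_f$ to the same formula and using that $\nabla\,\dv_f$ trivially commutes with itself and the identity gives $\cP\,\nabla\,\dv_f\,V = \nabla\,\dv_f\,(\cP\,V)$. I do not foresee a substantive obstacle beyond verifying the sign pattern: the $+\kappa\,\nabla u$ for functions versus $-\kappa\,\dv_f\,Y$ for vector fields is precisely what produces the cancellation, so any analogous identity on a non-soliton background would require keeping track of the full $\phi$-dependent corrections from Lemma \ref{l:boch}.
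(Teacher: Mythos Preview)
Your proof is correct but takes a genuinely different route from the paper. The paper proves $\cL\,\cP = \cP\,\cL$ by computing $\dv_f\,L\,\dv_f^*\,V$ in two ways using Theorem~\ref{c:BHa} (the commutator of $L$ with $\dv_f$ and $\dv_f^*$), obtaining $\cP\,(\cL+\kappa)\,V$ from one side and $(\cL+\kappa)\,\cP\,V$ from the other; the second identity is then deduced from the first via Lemma~\ref{l:dvastY}. You instead bypass Theorem~\ref{c:BHa} entirely, establishing the auxiliary commutation $[\cL,\nabla\,\dv_f]=0$ directly from the two drift Bochner formulas in Lemma~\ref{l:boch} and then reading off both claims from the decomposition $-2\,\cP=\cL+\nabla\,\dv_f+\kappa$. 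Your argument is more elementary in that it only needs the first-order Bochner identities rather than the second-order tensor commutators of Theorem~\ref{c:BHa}; the paper's route, on the other hand, fits into its broader program of computing how $L$ interacts with $\dv_f$ and $\dv_f^*$, which it needs elsewhere anyway.
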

   
   \begin{proof}
   By Theorem \ref{c:BHa}, 
$   \dv_f\,L \, \dv_f^{*} (V) = \dv_f\,\dv_f^{*} \left( \cL \, V + \kappa \, V \right) =\cP\,\left(\cL\,V+\kappa\,V\right)$.
    Moreover, Theorem \ref{c:BHa} with $h=\dv_f^*\,V$ gives
    \begin{align}
    \dv_f\,L \, \dv_f^{*} (V) =\left(\cL+\kappa\right)\,\dv_f\,\dv_f^*\,V=\left(\cL+\kappa\right)\,\cP\,V\, .
    \end{align}
   Combining these two equations and cancelling terms gives the first claim.  The second follows from the first together with Lemma \ref{l:dvastY}.
      \end{proof}

The next result characterizes $\cP$  locally on all vector fields.  
      
      \begin{Pro}	\label{p:ongrads}
      The operators $\cL$ and $\cP$ are self-adjoint.  Moreover, 
      \[ -\cP\,V =\left\{ \begin{array}{ll}
         \cL\,V =\nabla\,\cL\,u+\Hess_f(\cdot,\nabla u)+\Ric (\cdot,\nabla u)& \mbox{if $V=\nabla u$}\, ;\\
         \frac{1}{2}\,\left[\cL\,V+\Hess_f(\cdot,V)+\Ric (\cdot,V)\right]& \mbox{if $\dv_f\,(V)=0$}\, .
         \end{array} \right. \]  
         For a Ricci soliton, $\cL$ and $\cP$ preserve this orthogonal decomposition.
               \end{Pro}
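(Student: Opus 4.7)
The plan is to show each claim by a direct appeal to the identities already established, with the main input being Lemma \ref{l:dvastY}, the drift Bochner formula (Lemma \ref{l:boch}), and the soliton equation $\Ric+\Hess_f=\kappa\,g$.

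First, for self-adjointness: $\cL=-\nabla^*\nabla$ is manifestly self-adjoint on compactly supported vector fields (hence extends to the appropriate $W^{2,2}$ domain), and $\cP=\dv_f\circ\dv_f^*$ is the composition of $\dv_f^*$ with its adjoint $\dv_f$, so $\int\langle V,\cP W\rangle\,\e^{-f}=\int\langle\dv_f^*V,\dv_f^*W\rangle\,\e^{-f}$ is symmetric in $V,W$.

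Next, for the explicit formulas, I would substitute into Lemma \ref{l:dvastY}, which gives in general
\begin{equation*}
-2\,\cP\,V=\nabla\,\dv_f V+\cL\,V+\Hess_f(\cdot,V)+\Ric(\cdot,V).
\end{equation*}
If $\dv_f V=0$ the first term drops and I immediately get the second case. If $V=\nabla u$, then $\dv_f V=\dv\,\nabla u-\langle\nabla f,\nabla u\rangle=\cL u$, so $\nabla\,\dv_f V=\nabla\,\cL u$; by the drift Bochner formula \eqref{e:commuta}, $\nabla\,\cL u=\cL\,\nabla u-\Hess_f(\cdot,\nabla u)-\Ric(\cdot,\nabla u)$. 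Substituting, the Hessian and Ricci terms cancel and I am left with $-2\,\cP\,V=2\,\cL\,V$, giving the first case together with the Bochner expression for $\cL\,\nabla u$.

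For the soliton statement, I use $\Ric+\Hess_f=\kappa g$ throughout. For a gradient $V=\nabla u$, the drift Bochner formula reduces to $\cL\,\nabla u=\nabla(\cL u+\kappa u)$, which is again a gradient, and by the previous paragraph $\cP\,V=-\cL\,V$ is also a gradient. For $\dv_f V=0$, I apply the commutator \eqref{e:commutaB} with $\phi=0$, namely $\cL\,\dv_f\,Y=\dv_f\,\cL\,Y-\kappa\,\dv_f\,Y$, to conclude $\dv_f\,\cL\,V=\cL\,\dv_f\,V+\kappa\,\dv_f\,V=0$, and then the second case formula together with $\Hess_f(\cdot,V)+\Ric(\cdot,V)=\kappa V$ gives $\cP\,V=-\frac{1}{2}(\cL+\kappa)V$, which is also $f$-divergence-free. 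The orthogonality of gradients and $f$-divergence-free vector fields is just integration by parts: $\int\langle\nabla u,V\rangle\,\e^{-f}=-\int u\,\dv_f V\,\e^{-f}=0$.

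The steps are all essentially bookkeeping once Lemma \ref{l:dvastY}, Lemma \ref{l:boch}, and the soliton identity are in hand; the only mild subtlety is verifying that the gradient/divergence-free decomposition is respected separately by $\cL$ and by $\cP$, which is where the cancellation $\Ric+\Hess_f=\kappa g$ is crucial — in particular this step is what genuinely needs the soliton hypothesis, whereas the pointwise formulas hold on any weighted manifold.
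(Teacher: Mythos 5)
Your proposal is correct and follows the same route as the paper's own proof: substitute $V=\nabla u$ into Lemma \ref{l:dvastY}, use $\dv_f\nabla u=\cL u$ together with the drift Bochner formula \eqr{e:commuta} to obtain $-2\,\cP\,\nabla u=2\,\cL\,\nabla u$, drop the $\nabla\,\dv_f V$ term when $\dv_f V=0$, and invoke \eqr{e:commutaB} with $\phi=0$ to show the decomposition is preserved on a soliton. The only difference is that you spell out a few steps (the Bochner cancellation, the integration-by-parts orthogonality) that the paper leaves implicit.
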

      
      \begin{proof}
      We have already seen that both $\cL$ and $\cP$ are self-adjoint and that there is an orthogonal decomposition of vector fields into gradient of functions and those with $\dv_f=0$.  To compute $\cP \, \nabla u$, use
       Lemma \ref{l:dvastY} and Lemma \ref{l:boch} to get
\begin{align}
-2\,  \cP \, \nabla u   &= \nabla \, \dv_f\, \nabla u + \cL \, \nabla u + \Hess_f(\cdot,\nabla u) +\Ric (\cdot,\nabla u)\notag\\
&=2\,\cL\,\nabla u=2\,\nabla\,\cL\,u+2\,\left[\Hess_f(\cdot,\nabla u) +\Ric (\cdot,\nabla u)\right]\, .  \notag
\end{align}
 In particular, for a Ricci soliton $\cL$ and $\cP$ preserve the subspace of vector fields that are gradients of functions.    Next, if $\dv_f\,(V)=0$, then  Lemma \ref{l:dvastY} gives that $-2\,\cP\,V=\cL\,V+\Hess_f(\cdot,V)+\Ric (\cdot,\nabla u)$, implying that $-\cP\,V$ is as claimed.  Finally, for a Ricci soliton, if $\dv_f\,(V)=0$, then it follows from \eqr{e:commutaB} that  $\dv_f\,(\cL\,V)=0$, and thus $\dv_f\,(\cP\,V)=0$.     This shows that for a Ricci soliton both $\cL$ and $\cP$ preserve the orthogonal splitting.  
      \end{proof}

  \begin{Lem}  \label{l:HessianLem}
On any gradient Ricci soliton for any vector field $Y$
\begin{align}
\left( \cL + \kappa \right) \, \dv_f \, Y &= - \dv_f \, (\cP\,Y)\, ,\label{e:claim1}\\
\cL\,\nabla\,\dv_f\,(Y)&=-\nabla\,\dv_f\,(\cP\,Y)\, ,\label{e:claim2}\\
   	\left(L -\kappa \right)\, \Hess_{\dv_f \, (Y)}  &=   -\Hess_{  \dv_f \, (\cP\,Y)} \,  , \label{e:claim3} \\
	L\, \dv_f^*\,(Y)&= \Hess_{\dv_f\,(Y)}-2\, \dv_f^*\,(\cP\,Y) \, .
	\label{e:firstc}
\end{align}
\end{Lem}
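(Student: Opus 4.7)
The four claims should follow from each other in a short chain, using only the soliton version of Lemma \ref{l:dvastY} (namely $-2\,\cP\,Y=\nabla\,\dv_f\,Y+\cL\,Y+\kappa\,Y$), the drift Bochner formulas (Lemma \ref{l:boch}) with $\phi=0$, the commutator identity from Theorem \ref{c:BHa} (first part), and Corollary \ref{t:bochHess}. The plan is to prove \eqref{e:claim1} first, then derive \eqref{e:claim2} by applying $\nabla$, then \eqref{e:claim3} via $L\,\Hess_u$, and finally \eqref{e:firstc} directly from Theorem \ref{c:BHa}.

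For \eqref{e:claim1}, I would apply $\dv_f$ to the soliton identity from Lemma \ref{l:dvastY}. Using the standard fact $\dv_f\,\nabla u=\cL\,u$ with $u=\dv_f\,Y$, this gives
\begin{align}
-2\,\dv_f\,(\cP\,Y)=\cL\,\dv_f\,Y+\dv_f\,\cL\,Y+\kappa\,\dv_f\,Y\, .
\end{align}
Lemma \ref{l:boch} with $\phi=0$ says $\cL\,\dv_f\,Y=\dv_f\,\cL\,Y-\kappa\,\dv_f\,Y$, so eliminating $\dv_f\,\cL\,Y$ leaves $-2\,\dv_f\,(\cP\,Y)=2\,(\cL+\kappa)\,\dv_f\,Y$, which is \eqref{e:claim1}. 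Applying $\nabla$ to \eqref{e:claim1} and then using the soliton case of the Bochner formula $\cL\,\nabla u=\nabla\,\cL\,u+\kappa\,\nabla u=\nabla(\cL+\kappa)u$ with $u=\dv_f\,Y$ immediately yields \eqref{e:claim2}.

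For \eqref{e:claim3}, apply Corollary \ref{t:bochHess} to $u=\dv_f\,Y$ to get $L\,\Hess_{\dv_f\,Y}=\Hess_{2\kappa\,\dv_f\,Y+\cL\,\dv_f\,Y}$, then substitute $\cL\,\dv_f\,Y=-\kappa\,\dv_f\,Y-\dv_f\,(\cP\,Y)$ from \eqref{e:claim1}, so the inner function becomes $\kappa\,\dv_f\,Y-\dv_f\,(\cP\,Y)$, giving $(L-\kappa)\,\Hess_{\dv_f\,Y}=-\Hess_{\dv_f\,(\cP\,Y)}$. For \eqref{e:firstc}, invoke Theorem \ref{c:BHa} (first part) with $V=Y$ and $\phi=0$ to obtain $L\,\dv_f^*(Y)=\dv_f^*(\cL\,Y+\kappa\,Y)$; then Lemma \ref{l:dvastY} rewrites $\cL\,Y+\kappa\,Y=-2\,\cP\,Y-\nabla\,\dv_f\,Y$, and using $\dv_f^*(\nabla u)=-\Hess_u$ converts the second piece into $\Hess_{\dv_f\,Y}$, yielding \eqref{e:firstc}.

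There is no real obstacle here: the derivation is a bookkeeping exercise combining the soliton relation $-2\cP=\nabla\dv_f+\cL+\kappa$ with the vanishing of $\phi$ in the various commutator formulas. The only place that requires mild attention is signs and the factor of two when applying the Bochner formula on a soliton, and making sure that each claim feeds cleanly into the next so that \eqref{e:claim2}--\eqref{e:firstc} do not require independent computations.
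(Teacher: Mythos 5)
Your proof is correct and follows essentially the same approach as the paper: the paper likewise derives \eqref{e:claim1} by combining the soliton case of Lemma \ref{l:dvastY} with the drift Bochner formula from Lemma \ref{l:boch}, then obtains \eqref{e:claim2} from \eqref{e:claim1} via $\cL\,\nabla u = \nabla\,\cL\,u + \kappa\,\nabla u$, gets \eqref{e:claim3} from \eqref{e:claim1} and Corollary \ref{t:bochHess}, and gets \eqref{e:firstc} by applying $\dv_f^*$ to Lemma \ref{l:dvastY} together with Theorem \ref{c:BHa}. The only differences are cosmetic (the order in which the two commutation identities are invoked to deduce \eqref{e:claim1}).
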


\begin{proof}
Lemma \ref{l:boch} together with Lemma \ref{l:dvastY} gives that
 \begin{align}	 
	\cL \, \dv_f \, Y &=     \dv_f \, \left( \cL   - \kappa \right) \, Y = 
	-2\,  \dv_f \,  ( \cP\,Y)   - \cL  \, \dv_f\, (Y)  - 2\, \kappa \, \dv_f\,(Y)
	   \, .
\end{align}
Thus, \eqr{e:claim1}.   By Lemma \ref{l:boch},
$\cL \, \nabla u = \nabla \, \cL u +\kappa \,\nabla u$, so
  \eqr{e:claim1} gives \eqr{e:claim2}.  
Combining \eqr{e:claim1} with Corollary \ref{t:bochHess} gives \eqr{e:claim3}.  
Applying $\dv_f^*$ to Lemma \ref{l:dvastY} gives
\begin{align}
-2\,\dv_f^*\,(\cP\,Y)&=\dv_f^*\,(\cL\,Y)- \Hess_{\dv_f\,(Y)}+\kappa \,\dv_f^*\,(Y)\, .
\end{align}
  Theorem \ref{c:BHa}  gives that $L \, \dv_f^* \, Y = \dv_f^* \, \left( \cL + \kappa \right) \, Y$ and, thus, \eqr{e:firstc}.  
\end{proof}

This lemma is used in \cite{CM12} to show that if $Y$ is an $L^2$ Killing field on a gradient shrinking Ricci soliton, then either $Y$ preserves $f$ or the soliton splits off a line.

  \begin{Lem}	\label{l:interpcP}
For any gradient Ricci soliton if $Y$, $\cP\,Y\in L^2$, then $\dv_f\,(Y)$, $\nabla\, Y\in L^2$ and
\begin{align}
	\| \nabla Y \|_{L^2}^2 + \| \dv_f \, Y \|_{L^2}^2 \leq 2 \, \| Y \|_{L^2} \, \| (2\, \cP + \kappa) \, Y \|_{L^2} \, .
\end{align}
\end{Lem}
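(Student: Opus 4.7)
My starting point is the soliton identity from Lemma~\ref{l:dvastY}, which rearranges to
\begin{align}
-(2\,\cP + \kappa)\,Y = \cL\,Y + \nabla\,\dv_f\,Y \, .
\end{align}
The strategy is to pair this identity with $\eta^2\,Y$ for a compactly supported cutoff $\eta$ with $|\eta|\leq 1$, integrate against the weighted measure $\e^{-f}$, and integrate by parts using both $\cL = -\nabla^*\nabla$ and the defining adjoint relation $\int\langle\nabla u,V\rangle\,\e^{-f} = -\int u\,\dv_f\,V\,\e^{-f}$ (the latter applied with $u = \dv_f\,Y$ and $V = \eta^2\,Y$). The two principal terms that emerge are $\int \eta^2\,|\nabla Y|^2\,\e^{-f}$ and $\int\eta^2\,(\dv_f\,Y)^2\,\e^{-f}$, along with two cross terms proportional to $\nabla\eta$.

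Cauchy-Schwarz bounds the pairing $\int\langle (2\cP+\kappa)\,Y,\eta^2\,Y\rangle\,\e^{-f}$ by $\|(2\cP+\kappa)Y\|_{L^2}\,\|Y\|_{L^2}$ (since $\eta^2\leq 1$). Each cross term is controlled by the absorbing inequality $2\,|ab|\leq \tfrac{1}{2}\,a^2 + 2\,b^2$, which costs a half of each main term and leaves an error of the form $O(|\nabla\eta|^2\,|Y|^2)$. Rearranging produces the localized estimate
\begin{align}
\tfrac{1}{2}\int \eta^2\,\bigl(|\nabla Y|^2 + (\dv_f\,Y)^2\bigr)\,\e^{-f} \leq \|Y\|_{L^2}\,\|(2\cP+\kappa)\,Y\|_{L^2} + 4\int |\nabla\eta|^2\,|Y|^2\,\e^{-f} \, .
\end{align}

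To finish, I would take a sequence of cutoffs $\eta_k$ with $0\leq\eta_k\leq 1$, $\eta_k\equiv 1$ on $\{f\leq k\}$, and $|\nabla\eta_k|\to 0$ uniformly; such cutoffs exist on any shrinker because $f$ grows like $|x|^2/4$ by \eqref{e:ccz}. Since $Y\in L^2$, the dominated convergence theorem drives the cutoff-gradient error to zero, while monotone convergence on the left upgrades the localized bound into $\nabla Y,\,\dv_f\,Y\in L^2$ together with the stated inequality. The only subtlety is justifying the formal integration by parts without a priori $W^{1,2}$ control on $Y$; the cutoff is precisely what converts the formal identity into a rigorous estimate, and the factor $2$ in the statement is the price paid by the absorbing inequality, entirely paralleling the argument of Lemma~\ref{l:RPcL}.
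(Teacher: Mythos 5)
Your proof is correct and takes essentially the same route as the paper: both rewrite $(2\cP+\kappa)Y = -\cL Y - \nabla\,\dv_f Y$ via Lemma~\ref{l:dvastY}, pair against $\eta^2 Y$ in the weighted $L^2$, integrate by parts, absorb the two $\nabla\eta$ cross terms, and let $\eta\to 1$. The bookkeeping of the absorbing inequality and the cutoff limit matches the paper's argument, which cites monotone convergence for the passage to the limit.
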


\begin{proof}
 Since  $Y$, $\cP\,Y\in L^2$, so is  $\left( 2\, \cP + \kappa \right) \, Y = - \cL \, Y - \nabla \dv_f \, Y$ (by Lemma \ref{l:dvastY}).  If $0 \leq \eta \leq 1$ has  compact support, then the Cauchy-Schwarz inequality and integration by parts   give
\begin{align}
& \| \left( 2\, \cP + \kappa \right) \, Y \|_{L^2} \, \| Y \|_{L^2} \geq  -\int \eta^2\,\langle \cL\,Y,Y\rangle\,\e^{-f}-\int \eta^2\,\langle \nabla\,\dv_f\,(Y),Y\rangle\,\e^{-f}\notag\\
&\quad \qquad = \| \eta \, \nabla Y \|_{L^2}^2 + \| \eta \, \dv_f \, (Y) \|_{L^2}^2 
 +2\int \eta\,\dv_f\,(Y)\,\langle \nabla \eta,Y\rangle\,\e^{-f} +2\int \eta\,\langle \nabla_{\nabla \eta}\,Y,Y\rangle\,\e^{-f}\,  . \notag
\end{align}
Using an absorbing inequality on the last two terms, then taking $\eta \to 1$ and applying the monotone convergence theorem gives the lemma.
\end{proof} 

\begin{Lem}	\label{l:elliptic}
For any gradient Ricci soliton if $Y$ is a weak solution of
  $(\cP - \lambda) \, Y = V_0$, where $V_0$ is smooth and $Y , \dv_f \, (Y) \in L^2_{loc}$, then $Y$ is smooth.
\end{Lem}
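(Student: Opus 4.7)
My plan is to decouple the vector equation into a scalar equation for $\dv_f Y$ and then a vector equation in which $\cL$, rather than $\cP$, appears as the principal operator. Both of these have manifest elliptic structure with principal symbol $|\xi|^2$, so the classical elliptic regularity theory can then be applied in a standard bootstrap.

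\textbf{Step 1: Scalar regularity of $\dv_f Y$.} The hypothesis provides $u := \dv_f\,Y \in L^2_{\mathrm{loc}}$. Applying $\dv_f$ to $\cP\,Y = \lambda\, Y + V_0$ and using \eqr{e:claim1} of Lemma \ref{l:HessianLem}, $u$ is a weak solution of the scalar equation
\begin{align}
(\cL + \kappa + \lambda)\, u = -\dv_f V_0.
\end{align}
Since $\cL$ acts on functions as $\Delta - \nabla_{\nabla f}$, i.e., as an elliptic operator with smooth coefficients, and the right-hand side is smooth, scalar elliptic regularity yields $u \in C^\infty$.

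\textbf{Step 2: Vector regularity of $Y$.} With $u$ smooth, Lemma \ref{l:dvastY} rewrites the original equation as
\begin{align}
\bigl(\cL + (2\lambda + \kappa)\bigr) Y = -2\,V_0 - \nabla u,
\end{align}
whose right-hand side is smooth. Because $\cL$ acts on vector fields with principal symbol $|\xi|^2 \cdot \mathrm{Id}$, it is elliptic on sections of $TM$, and standard interior elliptic regularity gives $Y \in W^{2,2}_{\mathrm{loc}}$. A routine bootstrap, differentiating the equation and re-applying $\cL$-regularity, then yields $Y \in C^\infty$.

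\textbf{Main obstacle.} The delicate point is justifying the weak scalar equation for $u$ when $Y$ has low regularity. Passing from $(\cP - \lambda) Y = V_0$ to an equation for $u = \dv_f Y$ amounts to testing against vector fields of the form $\nabla \psi$ for compactly supported $\psi$ and shifting derivatives onto $\psi$. The hypothesis $\dv_f Y \in L^2_{\mathrm{loc}}$ is exactly what is needed to guarantee that $u$ is a genuine $L^2_{\mathrm{loc}}$ function, rather than only a distribution, so that scalar elliptic regularity applies directly. After this, the remainder of the argument is entirely classical.
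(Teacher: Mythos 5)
Your approach is essentially identical to the paper's: first derive a scalar drift-Laplacian equation for $\dv_f\,Y$ (you via the identity $(\cL+\kappa)\,\dv_f\,Y = -\dv_f(\cP\,Y)$ of \eqr{e:claim1}, the paper by testing against $X=\nabla u$ and invoking Proposition~\ref{p:ongrads} — the two are adjoint statements of the same fact), conclude $\dv_f\,Y$ is smooth, then use $-2\cP = \cL+\kappa+\nabla\,\dv_f$ to recast the original equation as $(\cL+2\lambda+\kappa)\,Y = -2V_0 - \nabla\,\dv_f\,Y$ with smooth right-hand side and bootstrap. Your signs are in fact the correct ones throughout; the published proof has a sign slip in its expression for $\cP\,\nabla u$ (it should be $\cP\,\nabla u = -\nabla(\cL+\kappa)u$), but this does not affect the conclusion.
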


\begin{proof}
Given any smooth $X$ with compact support, we have 
	$\int \langle X , V_0 \rangle \, \e^{-f} = \int \langle (\cP - \lambda) \, X  , Y \rangle \, \e^{-f}$. If $X = \nabla u$ where $u \in C^{\infty}_c$, then $\cP \, \nabla u = \nabla \, \left( \cL + \kappa \right) \, u$ by 
	Proposition \ref{p:ongrads} so
	\begin{align}
		- \int   u  \, \dv_f \, (V_0)  \, \e^{-f} & = 
		\int \langle \nabla u  , V_0 \rangle \, \e^{-f} = \int \langle (\cP - \lambda) \, \nabla u  , Y \rangle \, \e^{-f} = 
		 \int \langle \nabla \, \left( \cL + \kappa - \lambda \right) \, u  , Y \rangle \, \e^{-f} \notag \\
		 & = - 
		 \int \left( \cL + \kappa - \lambda \right) \, u  \, \dv_f \, ( Y) \, \e^{-f} \, .
	\end{align}
The last equality used that $\dv_f \, (Y) \in L^2_{loc}$.  It follows that $\dv_f \, (Y)$ is an $L^2_{loc}$ weak solution to 
$\left( \cL + \kappa - \lambda \right) \, \dv_f \, (Y) = \dv_f \, (V_0)$.  Since $V_0$ is smooth, elliptic regularity gives that $\dv_f \, (Y)$ is also smooth.  Since $-2\, \cP = \left( \cL + \kappa \right) + \nabla \, \dv_f$ by
Lemma \ref{l:dvastY}, we have
\begin{align}
	2\, \int \langle X , V_0 \rangle \, \e^{-f}   = - \int \left\{   \langle ( \cL + \kappa + 2\, \lambda ) \, X  , Y \rangle + \langle X , \nabla \, \dv_f \, Y \rangle \right\} \, \e^{-f}
\, . 
\end{align}
It follows that $Y$ is an $L_{loc}^2$ weak solution to $( \cL + \kappa + 2\, \lambda ) \, Y = -\nabla \, \dv_f \, (Y) - 2\, V_0$.  Since the right-hand side is smooth, elliptic regularity gives that so is $Y$.
\end{proof}

\section{Optimal growth bounds}

In this section, we will prove the optimal growth bound Theorem \ref{t:PGmeta}.    Throughout this section $(M,g,f)$ will be assumed to satisfy
  \eqr{ee:aronson1} and   \eqr{ee:aronson2}.  This applies to all shrinkers in both Ricci flow and MCF, but is much more general than that.

Since $|\nabla \sqrt{f}| \leq \frac{1}{2}$ by  \eqr{ee:aronson2},  the function $b = 2\,\sqrt{f}$
satisfies  $|\nabla b|\leq 1$ as in \cite{CaZd}, cf. \cite{CxZh1}.
Throughout, $\lambda > 0$ is a constant and $u$ is a tensor.  We will often assume that
 \begin{align}
 	\langle \cL \, u  , u \rangle  \geq  - \lambda \, |u|^2 \, ;    \label{ee:herelambda}
\end{align}
 this includes eigentensors with $\cL \, u = - \lambda \, u$.  
 To understand the growth of $u$,  we will study a weighted average of $|u|^2$ on level sets of  $b$
  \begin{align}
 	I(r) &= r^{1-n} \, \int_{b = r} |u|^2 \, |\nabla b| \, . \label{ee:Iofr1} 
\end{align}
This is  defined at regular values of $b$, but   extends continuously to all values to be differentiable a.e.~and absolutely continuous.  The  weight $|\nabla b|$ will play a crucial role (cf. \cite{CM7, CM12, CM3, C, AFM,BS,GiV}).   The  growth of $I$   will be bounded above in terms of  the solid integral
 \begin{align}
 	D(r) &= r^{2-n} \, \e^{ \frac{r^2}{4}} \, \int_{b < r} \left( |\nabla u|^2 + \langle \cL \, u , u \rangle	\right)  \, \e^{-f} \, .
	\label{ee:integralformD}
 \end{align}
 The frequency $U = \frac{D}{I}$ is defined when $I$ is positive and will measure the    growth of $\log I$.

 The next theorem is the precise version of Theorem \ref{t:PGmeta}.  It shows that an $L^2$ tensor satisfying \eqr{ee:herelambda}
  has frequency bounded by $2\, \lambda$ and, accordingly, it grows at most polynomially at this rate.  This may seem surprising since the weight $\e^{-f}$ decays rapidly, so the $L^2$ condition a priori allows extremely rapid growth.   The theorem holds very generally and does not assume any cone or dilation structure.
 
 \begin{Thm}   \label{et:main}
 Suppose $u , \cL \, u \in L^2$, \eqr{ee:aronson1}, \eqr{ee:aronson2}, \eqr{ee:herelambda} hold, and $u$ does not vanish identically outside a compact set.
 Given $\epsilon > 0$, there exists $R=R(n,\lambda,\epsilon)$ so if $r>R$, then
\begin{align}
U(r)\leq 2\,\lambda\,\left(1+\frac{\bar{\mu}+\epsilon}{r^2}\right)\, ,	\label{ee:0p9}
\end{align}
where $\bar{\mu} = 2\, n + 4\, \max \{ \lambda -1 , 0\}$.  Moreover, 
  for all $r_2>r_1>R$  
 \begin{align}
 I(r_2)\leq I(r_1)\,\left(\frac{r_2}{r_1}\right)^{4\,\lambda} \, \e^{(\bar{\mu}+\epsilon)\, \left( r_1^{-2} -r^{-2}_2 \right)}\, .	\label{ee:IU08}
 \end{align}
 \end{Thm}
 
 This is sharp for the Ornstein-Uhlenbeck operator on $\RR^n$  where the $L^2$ eigenfunctions are Hermite polynomials with degree twice the eigenvalue.  The upper bound
  \eqr{ee:0p9} is sharp not just in the $2\, \lambda$ in front, but in all the other constants as  well as can be seen from the Hermite polynomials.
The  $R$ in Theorem \ref{et:main} does not depend  on $f$, $M$ or $S$. The  theorem still holds if \eqr{ee:aronson1}, \eqr{ee:aronson2}, and 
 \eqr{ee:herelambda} hold outside of a compact set.  Moreover, it holds with obvious changes when the constant $n$ in \eqr{ee:aronson1} is replaced by any other constant.    
 Finally, note that $u$ cannot vanish on an open set if $u$ has unique continuation, e.g. if $\cL \, u = - \lambda \, u$.

There is a long history of studying the growth of solutions to differential equations, inequalities, and systems.  At a very rough level, there are two main techniques.  The first, exemplified in the work of Carleman and H\"ormander, is to consider weighted $L^2$ norms with growing weights.  The second, seen for instance in the work of Hadamard and Almgren, is to study the growth of spherical maxima or averages.  
  Almgren's frequency has  been used to show unique continuation  and structure of the nodal sets; prior to this, the main tool in unique continuation was Carleman estimates that still is the primary technique.
Almgren's frequency bounds relied on scaling for $\RR^n$; cf. \cite{CM12,CM3}.  

As an application,  polynomially growing ``special functions''  are dense in $L^2$. This  gives   manifold versions of some very classical problems in analysis.
Whereas Weierstrass's approximation theorem shows  that polynomials are dense among continuous functions on any compact interval, the classical Bernstein problem, \cite{Lu}, dating back to 1924, asks if polynomials are dense on $\RR$ in the weighted $L^p(\e^{-f}\,dx)$ space if $f$ is assumed to grow sufficiently fast at infinity.    On the line, the Hermite polynomials are dense in $L^2(\e^{-\frac{|x|^2}{4}}\,dx)$ and Carleson (and implicitly Izumi-Kawata) showed that polynomials are dense in $L^p(\e^{-|x|^{\alpha}}\,dx)$ if and only if $\alpha\geq 1$.    A similar problem in several complex variables is the {\it{completeness problem}}, going back to Carleman in 1923, about density of polynomials in weighted $L^2$ spaces of holomorphic functions; \cite{BFW}.

For the applications to $\cP$,
we will need a more general Poisson version  where $u$ satisfies  
\begin{align}
	\langle \cL \, u , u \rangle  \geq - \lambda \, |u|^2 - \psi  \, ,  \label{ee:poisson}
\end{align}
where   $\psi$ is a nonnegative function.  
 Define the quantity $J$ by
 \begin{align}    \label{ee:J}
 	J(r) = \int_{b < r} b^{2-n} \, \psi  \, .
 \end{align}
  The next theorem gives  polynomial growth in terms of $\lambda$ and $J$.

 \begin{Thm}   \label{et:main2A}
If $u , \cL \, u \in L^2$,  \eqr{ee:aronson1}, \eqr{ee:aronson2}, \eqr{ee:poisson} hold, $\delta\in (0,2)$ and $r_2 > r_1 \geq R(\lambda,n,\delta)$,
then
 \begin{align}
	  I(r_2) \leq \left( \frac{r_2}{r_1} \right)^{ 4 \, \lambda +  \delta} \, \left\{ I(r_1) +  \frac{20 \, \sup J}{4 \, \lambda + \delta} \right\} \, .		\label{ee:main2A}
\end{align}
  \end{Thm}

One application will be to  gradient shrinking Ricci solitons.
  The standard drift Bochner formula gives that if $\cL \, v = -\left( \frac{1}{2} + \lambda \right) \, v$, then $\cL \, \nabla v = - \lambda \, \nabla v$ and
  \eqr{ee:herelambda} applies to $u = \nabla v$:

\begin{Cor}	\label{ec:gsrs}
If $(M,g,f)$ is a gradient shrinking soliton, then \eqr{ee:0p9} and \eqr{ee:IU08} hold if  $u = \nabla v$ where $v$ is an eigenfunction with eigenvalue $\lambda + \frac{1}{2}$.
\end{Cor}

The papers \cite{Be,CM11}  developed frequencies   for conical and cylindrical MCF shrinkers (cf. \cite{Wa}).
These results were perturbative in that they assumed the existence of an exhaustion
 function $r$ that behaves like Euclidean distance up to higher order.
 For instance,  in \cite{Be}, it was assumed that
  $\left| |\nabla r| - 1 \right| = O(r^{-4})$ and
 $\left| \Hess_{r^2} - 2 \, g \right| = O(r^{-2})$.
Theorems \ref{et:main}, \ref{et:main2A}, in contrast, hold very generally, including for all shrinkers in both Ricci flow and MCF and make no use of
any approximate conical structure.   
A much weaker version of Theorem \ref{et:main}, that was not relative, was proven in \cite{CM9} in the special case of MCF.  

\subsection{The level sets of $b$ and the properties of $I$ and $D$}

We will define   $D(r)$ and $I(r)$  as solid integrals over sub-level sets $\{ b < r\}$ of a proper $C^n$ function $b$.
For these functions to be continuous, we must show that level sets of $b$ have measure zero. This is (2) in the next lemma; (1) will be used to prove absolute continuity,  while (3) will be used to show that $I>0$.  Since $b$ is $C^n$, Sard's theorem gives that almost every level set is regular.

\begin{Lem}   \label{el:preparation}
Suppose $f:M\to \RR$ is a proper function with $\cL\,f=\frac{n}{2}-f$.  Let $\cC$ denote the set of critical points of $f$
and $\cH_r$ the boundary of $\{f>\frac{r^2}{4}\}$.  We get for $r>\sqrt{2\,n}$ that:
\begin{enumerate}
\item The critical set $\cC$ in $\{ f> \frac{n}{2} \}$  is locally contained in  a smooth $(n-1)$-manifold.
\item Each level set $\{ f = c \}$ for $c \geq \frac{n}{2}$ has $\cH^n ( \{ f = c \}) = 0$.
\item The regular set $\cR_r=\cH_r\setminus\cC$ is dense in $\cH_r$.     
\end{enumerate}
\end{Lem}

\vskip1mm
The nodal sets of eigenfunctions have a great deal of structure, but the value zero is special and many properties do not hold for non-zero values.  In fact, it is possible  to have a level set that is entirely critical, as  occurs at the local extrema for the radial eigenfunction $J_0 (|x|)$ on $\RR^2$ where $J_0$ is the Bessel function of the first kind.   However, by (3), this does not  occur for the subset $\cH_r$ of $\{ f= r\}$ that is the boundary of $\{ f > r \}$.

\begin{proof}[Proof of Lemma \ref{el:preparation}]
Note first that  $\cL\,f<0$ on $\{f>\frac{n}{2}\}$ and, thus,  $\Delta\,f<0$ on $\cC\cap \{f>\frac{n}{2}\}$.    Working in a neighborhood of a critical point we can therefore choose a coordinate system $\{x_i\}$ so that  $\partial_{x_1}^2f<-1$.  If $x\in \cC$,  then $\partial_{x_1}f(x)=0$ and thus by the implicit function theorem we can choose a new coordinate system in a neighborhood of $x$ so that in those coordinates $\{\partial_{x_1}f=0\}\subset \{y_1=0\}$ and so that $\partial_{x_1}$ is transverse to $\{y_1=0\}$.  We therefore have that (nearby) $\cC\subset \{\partial_{x_1}f=0\}\subset \{y_1=0\}$.     This gives (1). 

 For $c > \frac{n}{2}$, claim (2) follows from (1) since $\{ f = c \} \setminus \cC$ is a countable union of $(n-1)$-manifolds.  The borderline case $c = \frac{n}{2}$ in (2) follows from \cite{HHL}.
 
 We turn next to (3).  Note first that at $x=(x_1,\cdots,x_n)\in \cC$ if we let $h(s)=f(s,x_2,\cdots,x_n)$, then $h'(x_1)=0$ and $h''(x_1)<0$ so $h$ has a strict local maximum at $x_1$.  In particular, any neighborhood of any $x\in \cC\cap \{f>\frac{n}{2}\}$ intersects $\{f<f(x)\}$.  
Suppose now  that the conclusion (3) fails; so suppose that there exists $x\in \cH_r$ and a neighborhood $O$ so that $O\cap \cH_r\subset \cC$.  It follows that $O\cap \cH_r\subset \{y_1=0\}$.  Since $O\cap \cH_r$ separates the two non-empty sets $O\cap \{f>\frac{r^2}{4}\}$ and $O\cap \{\frac{r^2}{4}>f\}$ and $O\cap \cH_r$ is contained in $\{y_1=0\}$ it follows that $O\cap \cH_r=O\cap \{y_1=0\}$ and after possibly changing the orientation of $y_1$ we may assume that $O\cap \{y_1>0\}\subset \{f>\frac{r^2}{4}\}$ and $O\cap \{y_1<0\}\subset \{f<\frac{r^2}{4}\}$.     This, however, contradicts that at $x$ we have that $\partial_{x_1}^2f<0$ and $\partial_{x_1}$ is transverse to the level set $\{y_1=0\}$ so both $O\cap \{y_1>0\}$ and $O\cap \{y_1<0\}$ contains points where $f<f(x)=\frac{r^2}{4}$.  
\end{proof}

The functions $I(r)$, $D(r)$ and $U(r)$ may not be differentiable everywhere, but they will be absolutely continuous and differentiable a.e.
A function $Q(r)$ is  {\it{absolutely continuous}} on an interval $\cI$ if for every $\epsilon > 0$, there exists $\delta > 0$ so that if $\cup_{\alpha} (r_{\alpha} , R_{\alpha})$ is a finite disjoint union of intervals  in $\cI$ with $\sum (R_{\alpha} - r_{\alpha}) < \delta$, 
then we have $\sum \left| Q(R_{\alpha}) - Q(r_{\alpha}) \right| < \epsilon$.  
Absolutely continuous functions are precisely the ones where the  fundamental theorem of calculus holds (\cite{F}, page $165$): $Q$ is absolutely continuous if and only if 
  it is continuous, differentiable a.e., the derivative is in $L^1$, and  for every $r_1 < r_2$
\begin{align}	\label{ee:AC}
	Q(r_2) - Q(r_1) = \int_{r_1}^{r_2} Q'(t) \, dt \, .
\end{align}
We will use   the following standard fact:
If $Q_1$ and $Q_2$ are absolutely continuous and $W:\RR^2 \to \RR$ is Lipschitz on the range of $(Q_1 , Q_2)$, then $W(Q_1 , Q_2)$ is  absolutely continuous.

\begin{Lem}	\label{el:coareaapp}
Suppose that $b$ is a proper $C^n$ function and $\cH^n (|\nabla b| = 0) = 0$ in $\{ b \geq r_0\}$ for some fixed $r_0$.  If $g$ is a bounded function and $Q(r) = \int_{r_0 < b < r} g$, 
then $Q$ is absolutely continuous and  $Q'(r) = \int_{b =r} \frac{g}{|\nabla b|}$ a.e.
\end{Lem}

\begin{proof}
By separately considering the positive and negative parts of $g$, it suffices to assume that $g\geq 0$ is bounded.
Define a sequence of functions $Q_i$ by
\begin{align}
	Q_i (r) = \int_{r_0 < b < r}  \frac{g\, |\nabla b|}{|\nabla b| + i^{-1}} \, .
\end{align}
The functions $ \frac{g\, |\nabla b|}{|\nabla b| + i^{-1}}$ are bounded above by $g$ everywhere and converge to the bounded function $g$ a.e. (since  $\cH^n (|\nabla b| = 0) = 0$), so $\lim_{i \to \infty} \, Q_i (r) = Q(r)$ by
the dominated convergence theorem. 
Define functions $q_i(t)$ and $q(t)$ at regular values $t$ of $b$ by
\begin{align}
	q_i (t) = \int_{b=t}  \frac{g}{|\nabla b| + i^{-1}}  {\text{ and }}
	q(t)  = \int_{b=t}  \frac{g}{|\nabla b|} \, .
\end{align}
Since $b$ is $C^n$, Sard's theorem ($3.4.3$ in \cite{F}) gives that a.e.~$t$ is a regular value of $b$ and, thus, these
 functions are defined a.e.
The co-area formula (\cite{F}, page $243$) gives that
\begin{align}	\label{ee:fromco}
	Q_i(r) = \int_{r_0}^r \, q_i (t) \, dt \, .
\end{align}
The sequence $q_i$ is monotonically increasing with $q_i \leq q_{i+1} \leq \dots < q$.
Moreover, $q_i$ converges to $q$ a.e.  The monotone convergence theorem gives that
\begin{align}
	\lim_{i\to \infty} \, \int_{r_0}^r \, q_i (t) \, dt = \int_{r_0}^r \, q(t) \, dt \, .
\end{align}
Combining this with \eqr{ee:fromco} and $\lim_{i \to \infty} \, Q_i (r) = Q(r)$  gives the lemma.
\end{proof}

\subsubsection{Absolute continuity of $I$ and $D$}

In the remainder of this section, we specialize to $M$ non-compact and $f$ satisfying \eqr{ee:aronson1} and \eqr{ee:aronson2} and $b = 2\, \sqrt{f}$.  It follows that
   \begin{align}	\label{ee:rho1}
 	|\nabla b|^2 &= 1 - \frac{4\, S}{b^2} \leq 1 \, , \\
	b \, \Delta\, b &=  n - |\nabla b|^2 - 2 \, S 
	\, .  \label{ee:rho2}
 \end{align}
 Since $f$ is nonnegative and proper, then so is $b$ and, thus, the level sets of $b$ are compact.    Furthermore, Lemma 
 \ref{el:preparation} applies and, thus, so does Lemma \ref{el:coareaapp}.
 
  The definition \eqr{ee:Iofr1} of $I(r)$ at regular values of $b$ will be extended continuously  to all values next.  To do this, choose a regular value $r_0 < 2\, \sqrt{2n}$ of $b$ and set
  \begin{align}
 	 I(r) 	 &=
 \int_{r_0 < b < r} b^{1-n} \, \left\{  \, \langle \nabla |u|^2 , \nabla b \rangle + \frac{|u|^2}{b^3} \, 2\, S \left( 2\,n - b^2	\right)
	\right\}
	+ r_0^{1-n} \, \int_{b= r_0} |u|^2 \, |\nabla b|  \, . \label{ee:Iofr1A}
 \end{align}
 The reason for stopping the integral at $b=r_0$ is that $b^{1-n}$ and $S\, b^{-2-n}$ might not be integrable in the interior if $\min b = 0$.

 \begin{Lem}	\label{el:IBDR}
  At regular values $r$ of $b$, 
the definitions \eqr{ee:Iofr1} and \eqr{ee:Iofr1A}  of $I(r)$ agree and 
   \begin{align}
   	D(r) &= \frac{r^{2-n}}{2} \, \int_{b = r} \langle \nabla |u|^2 , \frac{\nabla b}{|\nabla b|} \rangle \, . \label{ee:firstclaim}
\end{align}
  \end{Lem}
  
  \begin{proof}
  To see that \eqr{ee:Iofr1} and \eqr{ee:Iofr1A}  agree at regular values, observe that
 the unit normal to the level set $b = r$ is given, at regular points, by $\nn = \frac{\nabla b}{|\nabla b|}$, so we can rewrite \eqr{ee:Iofr1}
  \begin{align}
 	  r^{1-n} \, \int_{b=r} |u|^2 |\nabla b| &- r_0^{1-n} \, \int_{b= r_0} |u|^2 \, |\nabla b|  =  \int_{r_0 < b < r} \dv \, (|u|^2 \, b^{1-n} \, \nabla b) \notag \\
	 &=
	\int_{r_0 < b < r} b^{1-n} \, \left\{  \, \langle \nabla |u|^2 , \nabla b \rangle + |u|^2 \, \left( \Delta \, b - \frac{(n-1) \, |\nabla b|^2}{b}
	\right)
	\right\} \, .   \label {ee:Iofr2} 
 \end{align}
  By \eqr{ee:rho1} and \eqr{ee:rho2}, we have that
 $b \, \Delta\, b =  n - |\nabla b|^2 - 2 \, S$ and $|\nabla b|^2 = 1 - \frac{4\, S}{b^2} $ and, thus,
 \begin{align}
 	b\, \left( \Delta \, b - \frac{(n-1) \, |\nabla b|^2}{b} \right) & = n\,(1-|\nabla b|^2) - 2 \, S 
	= \frac{2\, S}{b^2} \, (2\,n - b^2) \, .
 \end{align}
 Substituting this into \eqr{ee:Iofr2} gives \eqr{ee:Iofr1A}.
 The divergence theorem gives
  \begin{align}
\int_{b = r} \langle \nabla |u|^2 , \frac{\nabla b}{|\nabla b|} \rangle =  \e^{ \frac{r^2}{4}} \, \int_{b < r}
	\dv \, \left( \nabla |u|^2 \, \e^{-f} 
	\right) =  \e^{ \frac{r^2}{4}} \, \int_{b < r}
	   \cL \, |u|^2	   \, \e^{-f} 
	\, .
 \end{align}
    Multiplying this by $ 	\frac{r^{2-n}}{2} $ gives \eqr{ee:firstclaim}.  
  \end{proof}

 \begin{Lem}	\label{el:Idiff}
  Both $I(r)$ and $D(r)$ are absolutely continuous with derivatives given a.e. by
   \begin{align}
 	 	 I'(r) &= 	r^{1-n} \, \int_{b = r}  \, \langle \nabla |u|^2 , \frac{\nabla b}{|\nabla b|} \rangle + \left( 2\,n\,r^{-2} - 1	\right)\,r^{1-n}\int_{b=r}\frac{2\, S \, |u|^2}{r\, |\nabla b|} \, ,  \label{ee:Iofr3} \\
		 D'(r) &= \frac{2-n}{r} \, D + \frac{r}{2} \, D + \frac{r^{2-n}}{2} \, \int_{b = r}
	\frac{  \cL \, |u|^2 }{|\nabla b|} \, . \label{ee:Dpri}
\end{align}
Where $I$ is positive $\log I$ is absolutely continuous and the derivative is given a.e. by
\begin{align}
	r\, (\log I)'(r) = 2\, U + (2\,n\,r^{-2}-1)\,\frac{2\, r^{1-n}}{I}  \int_{b = r} \frac{S\,|u|^2}{ |\nabla b|}  \, . \label{ee:secondclaim}
\end{align}
  Furthermore,   $(\log I)' \leq 2 \, U/r$ a.e. when
 $r \geq \sqrt{2\,n}$. 
  \end{Lem}

 \begin{proof}
   Lemma \ref{el:coareaapp} applies to both $I$ and $D$ and, thus, both are absolutely continuous   and 
  $I
'$ is given a.e. by  \eqr{ee:Iofr3} and $D'$ is given a.e. by \eqr{ee:Dpri}.  
Equation \eqr{ee:secondclaim} follows from \eqr{ee:firstclaim} and  \eqr{ee:Iofr3}.  
  Since $S\geq 0$, we see that $ \left(\log I\right)'=\frac{I'}{I}\leq \frac{2\,U}{r}$
 for $r\geq \sqrt{2\,n}$.  
 \end{proof}

 \subsection{Positivity of $I(r)$}	\label{es:Ipo}

We show next that $I(r) > 0$ when $r$ is sufficiently large:
 
  \begin{Pro}	\label{ep:Ipositive}
 If $ u , \cL u  \in L^2$ and \eqr{ee:herelambda} holds, then either
 \begin{itemize}
 \item[(A)] $I(r) > 0$ for every $r > 2 \, \sqrt{n+ 4\, \lambda}$, or
 \item[(B)] $u$ vanishes identically outside of a compact set.
 \end{itemize}
  \end{Pro}
  
  An immediate consequence of (A) in Proposition \ref{ep:Ipositive} is that 
 $U(r)$ is well-defined  and absolutely continuous for  $r > 2 \, \sqrt{n+ 4\, \lambda}$, and $U'$ is given a.e. by
 \begin{align}
 	U'(r) = \frac{D'}{I} - \frac{D \, I'}{I^2} \, .
 \end{align}

 The next elementary lemma shows that $|u| \in W^{1,2}$ and $|u| \, |\nabla f| \in L^2$ if $u , \cL \, u \in L^2$ (cf. \cite{CxZh2,CM9}).

\begin{Lem}	\label{el:W12}
 If $ u , \, \cL \, u \in L^2$, then $|\nabla |u|| $, $|\nabla u|$, $|u| \, \sqrt{f}$, and  $|u| \, |\nabla f|$ are all in $L^2$.
\end{Lem}

\begin{proof}
By the Kato inequality  and \eqr{ee:aronson2},   $|\nabla |u|| \leq |\nabla u|$ and  $|\nabla f|^2 \leq f$. Thus,  it suffices to prove that $|\nabla u| , |u| \, \sqrt{f} \in L^2$.
We show first that $|\nabla u| \in L^2$.
Let $\eta$ be   a compactly supported function with $| \eta | , |\nabla \eta| \leq 1$.  Since
	$\cL \, |u|^2 = 2 \, |\nabla u|^2 +2 \, \langle u , \cL \, u \rangle$, 
applying the divergence theorem to $\eta^2 \, \nabla |u|^2 \, \e^{-f}$ gives 
\begin{align}
	 \int \eta^2 \, |\nabla u|^2 \,  \e^{-f} \leq  \| u \|_{L^2} \, \| \cL \, u \|_{L^2} +2 \, \int |\eta| \, |\nabla \eta|  \, |u| \, |\nabla |u|| \, \e^{-f} \, .
\end{align}
Using $|\nabla |u|| \leq |\nabla u|$ and the absorbing inequality 
$2 \, |\eta| \, |u| \, |\nabla u| \leq 2 \,  |u|^2 + \frac{1}{2} \, \eta^2 \, |\nabla u|^2$, we can absorb the $|\nabla |u||$ term and then apply the monotone convergence theorem for a sequence of $\eta$'s going to one everywhere gives that $|\nabla u| \in L^2$.
To see that $|u| \, \sqrt{f} \in L^2$, apply the divergence theorem to $\eta^2 \, |u|^2 \, \nabla f \, \e^{-f}$ and use that $\cL \, f = \frac{n}{2} - f$ to get
\begin{align}
	\int \eta^2 \, |u|^2 \, \left( f - \frac{n}{2} \right) \, \e^{-f} \leq 2 \, 
	\int \left\{ \eta^2 \, |u| \, |\nabla |u|| \, |\nabla f| + |\eta| \, |\nabla \eta| \, |u|^2 \, |\nabla f|
	\right\} \, \e^{-f} \, .
\end{align}
Using the bound $|\nabla f|^2 \leq f$, we can use absorbing inequalities on both terms on the right and then use that $|u|, |\nabla |u||$ are in $L^2$ to conclude that $|u| \, \sqrt{f} \in L^2$.
\end{proof}

    We will need a few preliminaries, including the following consequence of 
  Lemma \ref{el:preparation}:
 
\begin{Cor}     \label{ec:preparation}
If $I(r)=0$ and \eqr{ee:herelambda} holds, then $u\equiv 0$ on $\cH_r$.    
\end{Cor}

\begin{proof}
Suppose $x\in \cH_r$ with $|u|(x)>0$.  Since $u$ is continuous it follows from Lemma \ref{el:preparation} that there exists another point $y\in \cH_r\setminus \cC$ where $|u|(y)>0$.   Since $y$ is a regular point, then in a neighborhood of $y$ we have that $|u|\geq \frac{|u|(y)}{2}>0$, $|\nabla b|\geq \frac{|\nabla_y b|}{2}>0$.  It follows that there exists an $\nu>0$ such that  if $s$ be any regular value  sufficiently close to $r$, then the level set $b=s$ is a smooth hyper-surface and $I(s)\geq \nu>0$.    The claim follows.    
\end{proof}

 \begin{proof}[Proof of Proposition \ref{ep:Ipositive}]
 Suppose that (A) fails and, thus, $I(r) = 0$ for some $r > 2 \, \sqrt{n+4\, \lambda}$.    
 By Corollary \ref{ec:preparation},  we know that $|u| =0$ on $\cH_r = \partial \{ b > r \}$.      Assume (B) also fails and
 choose a connected component $\Omega $ of $\{ |u| > 0 \}$ with
 	$\Omega \subset \{ b > r \}$.  
This will lead to a contradiction.

 By Lemma \ref{el:W12}, $|u|$, $|u| \, |\nabla f|$, $|\nabla u|$ and $|\nabla |u||$ are all in $L^2$.
 For each $j$, let 
 $\eta_j: \RR \to [0, \infty)$ be a smooth function with $0 \leq \eta_j' \leq 4$ and
 \begin{align}
 	\eta_j (x) = 
	\begin{cases}
	x  & {\text{ for }} \frac{1}{j} \leq x \, , \\
	0 & {\text{ for }} x \leq \frac{1}{2j} \, .
	\end{cases}
 \end{align}
 Let $\chi$ be the characteristic function of $\Omega$, i.e, $\chi$ is one on $\Omega$ and zero otherwise, and 
  define  
  	$v_j = \eta_j (|u|) \, \chi_{\Omega} $.
Note that each $v_j$ is smooth on all of $M$ and $v_j \in W^{1,2}$ since $v$ is and $\eta_j$ is Lipschitz. 
Moreover, $v_j$ has support in $\{ b \geq r \}$ since $\Omega \subset \{ b > r \}$.

Let $V$ be a vector field with $V \in L^2$ and $v_j \, \left( \dv \, V - \langle V , \nabla f \rangle \right) \in L^1$.  Given $\eta$ with compact support and  $|\eta| , \, |\nabla \eta| \leq 1$, applying the divergence theorem to $ \eta \, v_j \, V \, \e^{-f} $ gives
\begin{align}	 
	\int \eta \,  \left( \langle \nabla v_j , V \rangle + v_j \, \left( \dv \, V - \langle V , \nabla f \rangle \right) \right) \, \e^{-f}  = - \int v_j \, \langle V , \nabla \eta \rangle \, \e^{-f}   \, .
\end{align}
Taking a sequence of $\eta$'s converging to one, the dominated convergence theorem gives
\begin{align}	\label{ee:willdom}
	\int \left( \langle \nabla v_j , V \rangle + v_j \, \left( \dv \, V - \langle V , \nabla f \rangle \right) \right) \, \e^{-f} = 0 \, .
\end{align}
By the Lipschitz bound on $\eta_j$ and the Kato inequality,
 $|v_j| \leq 4 \, |u| $ and  $ |\nabla v_j| \leq   4 \, |\nabla u|$.
Furthermore, $v_j \to |u| \, \chi$ and $\nabla v_j \to \chi \, \nabla |u|$ a.e.~(since $\nabla |u| = 0$ a.e. on  $\{ |u| = 0\}$).
Thus, applying the dominated convergence theorem to \eqr{ee:willdom} gives
\begin{align}	\label{ee:nowdom}
	\int_{\Omega} \left( \langle \nabla |u| , V \rangle + |u|  \, \left( \dv \, V - \langle V , \nabla f \rangle \right) \right) \,  \e^{-f} = 0 
	\, .
\end{align}
 First, we apply this with $V = \nabla |u|$ and then use \eqr{ee:herelambda} and $|\nabla |u|| \leq |\nabla u|$ to get
\begin{align}		\label{ee:combiii}
	0 =     \int_{\Omega} \left( |\nabla u|^2 + \langle u , \cL \, u \rangle \right) \,  \e^{-f}
	\geq    \int_{\Omega} \left( |\nabla |u||^2 - \lambda \, |u|^2 \right) \,  \e^{-f}
	\, .	 
\end{align}
 For the second application of \eqr{ee:nowdom}, take $V = |u| \, \nabla f$ and use $\cL \, f = \frac{n}{2} - f$ to get
 \begin{align}
 	0 =  \int_{\Omega} \left\{ 2\,  \langle |u| \, \nabla |u| , \nabla f \rangle 
	+|u|^2 \, \cL \, f
	  \right\} \,  \e^{-f} =   \int_{\Omega} \left\{ 2\,  \langle |u| \, \nabla |u| , \nabla f \rangle 
	+|u|^2 \, \left(  \frac{n}{2} - f \right)	  \right\} \,  \e^{-f} \, . \notag
 \end{align}
Since $|\nabla f|^2 \leq f$, the absorbing inequality
$2\, \left| \langle |u| \, \nabla |u| , \nabla f \rangle \right| \leq 2 \, |\nabla |u||^2 + \frac{1}{2} \, |u|^2 \, |\nabla f|^2$ gives
 \begin{align}
 	\int_{\Omega} |u|^2  \,  \left(  f - n \right)  \, \e^{-f} \leq 
	4\, \int_{\Omega}  |\nabla |u||^2   \, \e^{-f} \leq 4\, \lambda \, \int_{\Omega}  |u|^2   \, \e^{-f}\, ,
 \end{align}
 where the last inequality is  \eqr{ee:combiii}.
 Since $|u| > 0$ and $f = \frac{b^2}{4} > \frac{r^2}{4}$ on $\Omega$, we get that
 	 	$\left(  \frac{r^2}{4} - n - 4 \, \lambda \right)    \leq 0$.
 This is the desired contradiction since $r > 2 \, \sqrt{n+ 4\, \lambda}$.
 \end{proof}

 \subsection{Growth estimates}	\label{es:3}
 
 Assume now that $u$ satisfies \eqr{ee:herelambda}.  We will use that, by Lemma \ref{el:Idiff}, $\log I$, $\log D$, and $\log U$ 
 are absolutely continuous as long as $I, D > 0$.  One challenge for controlling the growth of $D$ and $I$ is that $D'$ and $I'$ 
 have  terms involving $S$, with the wrong sign in one case and  a variable sign in the other.  The terms will be played off each other and we will be able to control  the right combination; this miraculous cancelation makes it work.

\begin{Pro}	\label{ep:driftUmont}
If $r$ is a regular value of $b$ and $D(r) , I(r) > 0$, then 
\begin{align}
 	r\,(\log D)' &\geq 2-n  + \frac{r^2}{2}  + U -\frac{\lambda\,r^2}{U} 
	 -\frac{4\,\lambda}{U\,I} \, r^{1-n}\, \int_{b = r}
	\frac{S\,|u|^2}{|\nabla b|} 
	\, , \label{ee:thederivofD2}\\
	r\, \left(\log U\right)' (r)&\geq    2-n+\frac{r^2}{2}-U-\frac{\lambda\,r^2}{U} +
	\left(1-\frac{2\,\lambda}{U}- \frac{2\,n}{r^2}\right) \, \frac{2\,r^{1-n}}{I}\int_{b=r}\frac{S\,|u|^2}{|\nabla b|}
\, .\label{ee:logU}
\end{align}
\end{Pro}
 
 \begin{proof}
 Lemma \ref{el:Idiff} and \eqr{ee:herelambda}  give
 \begin{align}
	D'(r) &= \frac{2-n}{r} \, D + \frac{r}{2} \, D + \frac{r^{2-n}}{2} \, \int_{b = r}
	\frac{  \cL \, |u|^2 }{|\nabla b|}  \geq 
	 \frac{2-n}{r} \, D + \frac{r}{2} \, D + r^{2-n} \, \int_{b = r}
	\frac{ \left( |\nabla u|^2 - \lambda \, |u|^2	\right)}{|\nabla b|} 
	\, . \notag
 \end{align}
  Since $4\, S= b^2 - b^2 \, |\nabla b|^2$, we get that
    \begin{align}
	r\,  (\log D)'(r) &\geq 2-n +\frac{r^2}{2}-\frac{\lambda\,r^2}{U}+\frac{r^{3-n}}{D}\,\int_{b=r}\frac{|\nabla u|^2}{|\nabla b |}-\frac{4\,\lambda\,r^{1-n}}{D}\int_{b=r} \frac{S\,|u|^2}{|\nabla b|}\, .  \label{ee:thirdclaim}
 \end{align}
 Note that by the Cauchy-Schwarz inequality
 \begin{align}
 D^2(r)&=\left(\frac{r^{2-n}}{2} \, \int_{b =r} \langle \nabla |u|^2 , \frac{\nabla b}{|\nabla b|} \rangle\right)^2\leq    I(r) \, r^{3-n} \, \int_{b = r} \frac{| \nabla u|^2}{ {|\nabla b|}} \, .
 \end{align} 
Dividing this by $I(r)$ gives $U\,D\leq r^{3-n}\int_{b=r}\frac{|\nabla u|^2}{|\nabla b|}$.
Using this in \eqr{ee:thirdclaim} gives \eqr{ee:thederivofD2}.
Combining  \eqr{ee:thederivofD2}  and \eqr{ee:secondclaim} gives \eqr{ee:logU}.  
 \end{proof}
 
 An immediate consequence of the proposition is the following:

\begin{Cor}	\label{ec:driftUmont}
If $r$ is a regular value with $U(r)>2\,\lambda$ and $r>\sqrt{\frac{2\,n}{1-\frac{2\,\lambda}{U}}}$, then
\begin{align}
	\left(\log U\right)' &\geq    \frac{2-n-U}{r} +    r\,\left(\frac{1}{2}- \frac{\lambda}{U}\right)
\, .
\end{align}
\end{Cor}

We use this to show that if $U$ goes strictly above $2\, \lambda$, then it grows quadratically; this does not assume that $u \in L^2$ and, indeed, it is impossible when $u , \cL \, u \in L^2$.
 
 \begin{Thm}	\label{ec:Uquad}
 Given $\delta > 0$, there exists $R > \sqrt{2n}$ so that if 
  $U (r_0) > (2+\delta)\,\lambda$ for some $r_0 \geq R$, then $ U(r)\geq \frac{1}{2}\,r^2-r$ for every $r$ sufficiently large.
 \end{Thm}
 
 \begin{proof}
 If $U(r)> (2+\delta)\,\lambda$ for a regular value $r > \sqrt{ (4n\, (2+\delta)/\delta)}$, then Corollary \ref{ec:driftUmont} gives
 \begin{align}
 (\log U)' (r)\geq \frac{2-n-U}{r}+\frac{\delta\,r}{2\,(2+\delta)}\, .
 \end{align}
 It follows that if $ (2+\delta)\,\lambda < U < \frac{\delta\,r^2}{5\,(2+\delta)}$ and $r > \sqrt{ (4n\, (2+\delta)/\delta)}$, then
\begin{align}
	 (\log U)' (r)> \frac{\delta\,r}{2\,(2+\delta)} - \frac{n}{r} - \frac{\delta\,r}{5\,(2+\delta)} > \frac{\delta \, r}{2+\delta} 
	 \, \left( \frac{1}{2} - \frac{1}{4} - \frac{1}{5}
	 \right) = 
	 \frac{\delta\,r}{20\,(2+\delta)} \, .
\end{align}
 This implies $U$ is increasing on this interval and that there exists an $R>0$ and $c>0$ such that  $ U(r)\geq c\,r^2$
 for $r>R$.
Thus,  by Corollary \ref{ec:driftUmont}, if $ \frac{r^2}{2}-r>U$ for $r > R$, then
 \begin{align}
 (\log U)'\geq \frac{2-n}{r}+1-\frac{\lambda}{c\,r}\,  .
 \end{align}
This forces $U$ to grow exponentially to the top of this range, eventually giving the claim.   
 \end{proof}

\begin{proof}
(of Theorem \ref{et:main}).   Since $(\log I)' \leq 2 \, U/r$ for $r > \sqrt{2n}$ by Lemma \ref{el:Idiff}, 
the growth bound \eqr{ee:IU08} will follow from the  bound \eqr{ee:0p9} on $U$.
 We first show  for any $\delta > 0$ that
\begin{align}
U(r) \leq 2\,\lambda+\delta\label{ee:weakerbound}
\end{align}
for all $r$ sufficiently large. We will argue by contradiction, 
so  suppose that  \eqr{ee:weakerbound} fails for some $r$ sufficiently large.  
Theorem \ref{ec:Uquad} gives that $U \geq \frac{r^2}{2} - r  $ for all sufficiently large $r$.  It follows that $K(r) = D(r) - 4\, \lambda \, I(r)$ is positive for all large $r$.  At a regular value $r > 2 \, \sqrt{n}$, Proposition \ref{ep:driftUmont} and Lemma  \ref{el:Idiff} give
\begin{align}	\label{ee:e313}
	r\, K'   &\geq \left( 2-n  + \frac{r^2}{2}  + U   - 8 \, \lambda \right) \, D - \lambda \, r^2 \, I
	 + \left[ 8\, (1 - \frac{2\,n}{r^2}) - 4 \right] \, \lambda \, r^{1-n}  \int_{b = r} \frac{S\,|u|^2}{ |\nabla b|} \notag \\
	&\geq  \left( 2-n  + r^2 - r  - 8 \, \lambda \right) \, D - \lambda \, r^2 \, I \\
	&\geq  \left( 2-n  + r^2 - r  - 8 \, \lambda \right) \, K +  4\, \lambda \, \left( 2-n  + \frac{3\,r^2}{4} - r  - 8 \, \lambda \right) \, I
	   \, . \notag 
\end{align}
 Thus, for $r$ large, we have $(\log K)' \geq \frac{3}{4} \, r$.  Integrating this gives 
 for $t>s>R$ 
\begin{align}	\label{ee:inttocont}
	D(t)\geq K(t) \geq K(s) \,  \e^{ \frac{ 3\, (t^2-s^2)}{8}} \, .
\end{align}
This implies that 
\begin{align}
2\, \int_{b\leq t}(|\nabla u|^2 + \langle \cL \, u , u \rangle)\,\e^{-\frac{|x|^2}{4}} = \int_{b\leq t} \cL\, |u|^2
\,\e^{-\frac{|x|^2}{4}}&=2\, \e^{-\frac{t^2}{4}}\,t^{n-2}\,D(t) \to \infty  {\text{ as }} t \to \infty \, . \notag
\end{align}
This is a contradiction since $\cL \, u \in L^2$ and $u \in W^{1,2}$ by Lemma \ref{el:W12},  so   \eqr{ee:weakerbound} holds.

We turn to the sharper bound \eqr{ee:0p9}; we can assume that $\lambda > 0$ since otherwise $u$ is parallel since $u, \cL \, u \in L^2$.
The proof is by contradiction, so suppose that 
  $r \geq R$ satisfies
\begin{align}
2\,\lambda+\delta\geq U(r)\geq 2\,\lambda\,\left(1+\frac{\mu}{r^2}\right)\, , \label{ee:therange}
\end{align}
where $\mu \in \RR$ will be chosen below.
 At any $r$ satisfying \eqr{ee:therange}, we have 
\begin{align}
\frac{r^2}{2}-\frac{\lambda\,r^2}{U}=\frac{r^2}{2}\,\left(1-\frac{2\,\lambda}{U}\right)\geq \frac{\lambda\,\mu}{U} \geq 
 \frac{\lambda\,\mu}{2\,\lambda+\delta} \,  .
\end{align}
Together with \eqr{ee:logU}, this gives at regular values that 
\begin{align}   \label{ee:improvedlogU}
	r\, \left(\log U\right)' (r)
&\geq 2-n+\frac{\lambda \, \mu}{2\, \lambda + \delta} -2\,\lambda-\delta+
	\left( \frac{\lambda\,\mu}{2\,\lambda+\delta} -n\right)\, \frac{4\,r^{-1-n}}{I}\int_{b=r}\frac{S\,|u|^2}{|\nabla b|}\, .
\end{align}
Assuming that $\mu \geq\left( 2 + \frac{\delta}{\lambda} \right)\, n$ so the last term is nonnegative, we have
\begin{align}
r\, \left(\log U\right)' (r)&\geq \frac{ \lambda \, \mu - (2\, \lambda + \delta)^2 - (n-2) (2\, \lambda  + \delta)}{2\, \lambda + \delta} \, .
\end{align}
If   $\mu > 4 \, \lambda +2n - 4$, then this is strictly positive for $\delta > 0$ sufficiently small, forcing $U$ to 
  grow out of the range \eqr{ee:therange}, giving the desired contradiction if $\mu = \bar{\mu} + \epsilon$ (note that $\lambda > 0$ is fixed and $\delta > 0$ can be taken arbitrarily small)..
\end{proof}

\subsubsection{Examples}

We will next 
  consider  examples which show that Theorem \ref{et:main} is surprisingly sharp.  Not only is the threshold $2\lambda$ sharp, but even the next order term is sharp.  
If $u= b^2 - 2n$, then  $\cL \, u = - u$, so that $\lambda = 1$, and \eqr{ee:firstclaim} gives
  \begin{align}
   	D(r) &= \frac{r^{2-n}}{2} \, \int_{b = r} \langle \nabla (b^2 -2n)^2 , \frac{\nabla b}{|\nabla b|} \rangle  =
	2\, r^{3-n} \, (r^2 - 2\,n) \,  \int_{b = r}     |\nabla b|  = \frac{2\, r^2 \, I(r)}{r^2-2n}  \, . 
\end{align}
Therefore, we see that the frequency $U= \frac{D}{I}$ satisfies
\begin{align}
	U(r) &=   \frac{2\, r^2}{r^2-2n} = 2\,  \left(1 + \frac{2\,n}{r^2} + O(r^{-4}) \right) = 2\, \lambda \, \left(1 + \frac{ 4\, \lambda + 2\, n -4}{r^2} + O(r^{-4}) \right) \, .
\end{align}
Next, let $M= \RR$, 
   $f = \frac{x^2}{4}$, and $\cL$ be the Ornstein-Uhlenbeck operator.
The degree $m$ Hermite polynomial has $\lambda = \frac{m}{2}$ and is given by
	$x^m - m \, (m-1) \, x^{m-2} + O(x^{m-4})$, so that
 \begin{align}
 	I(r) = 2 \, \left( r^{2\,m} - 2 \, m \, (m-1) \, r^{2\,(m-1)} + O(r^{2\,(m-2)}) \right) \, .
 \end{align}
 It follows that
 \begin{align}
 	2\, U(r) = \frac{r\, I'}{I} = 2\,m \, \frac{ r^{2\,m} - 2 \, (m-1)^2 \, r^{ 2\,(m-1)} + O(r^{2\,(m-2)})}{r^{2\,m} - 2 \, m \, (m-1) \, r^{2\,(m-1)} + O(r^{2\,(m-2)})} \, .
 \end{align}
 Thus, we have
	$U(r) = m \, \left( 1 + 2\, (m-1) \, r^{-2}
	+ O(r^{-4}) \right) = 2\, \lambda \,   \left( 1 +  (4\, \lambda - 2) \, r^{-2}
	+ O(r^{-4}) \right) $.

\subsection{Poisson equation}

Suppose that $u$ satisfies $\langle \cL \, u , u \rangle  \geq - \lambda \, |u|^2 - \psi$,  
where $\lambda \geq 0$ is a constant and $\psi \geq 0$ is a function.    By
  Lemma \ref{el:coareaapp},  $J$ from  \eqr{ee:J} is absolutely continuous and $J'$ is given a.e. by
 \begin{align}   \label{ee:J'}
 	J' = r^{2-n} \, \int_{b=r} \frac{\psi}{|\nabla b|} \, .
 \end{align}
  We will use the following immediate analog of  Proposition \ref{ep:driftUmont} (with the additional term  in $D'$ (cf. \eqr{ee:thirdclaim}), resulting in  $J'$ terms in \eqr{ee:thederivofD2A}, \eqr{ee:logUA}).

   \begin{Lem}	\label{el:2G}
   If $r$ is a regular value of $b$ and $D(r) , I(r) > 0$, then 
  \begin{align}
	r\,(\log D)' &\geq 2-n  + \frac{r^2}{2}  + U -\frac{\lambda\,r^2}{U} 
	 -\frac{4\,\lambda}{U\,I} \, r^{1-n}\, \int_{b = r}
	\frac{S\,|u|^2}{|\nabla b|} - \frac{r}{D} \, J'
	\, , \label{ee:thederivofD2A}\\
	r\, \left(\log U\right)' &\geq    2-n+\frac{r^2}{2}-U-\frac{\lambda\,r^2}{U} +
	\left(1-\frac{2\,\lambda}{U}- \frac{2\,n}{r^2}\right) \, \frac{2\,r^{1-n}}{I}\int_{b=r}\frac{S\,|u|^2}{|\nabla b|}  - \frac{r}{D} \,J'
\, .\label{ee:logUA}
 \end{align}
   \end{Lem}

 \begin{Lem}  \label{el:Knu}
 Given $\delta \in (0,2)$, set $K =D-(2\,\lambda + \delta/2) \,I$.  There exists $r_0(\lambda , \delta , n)$, so that if $r \geq r_0$ is a regular value with $K(r) > 0$, then
  \begin{align}
 r\,K '\geq \frac{2\, \lambda \,r^2}{4\, \lambda+\delta}\,K+\left[U+2-n+\frac{\delta\,r^2}{2\,(4\lambda + \delta)}-(4\, \lambda +\delta) \right]\,D-r\,J'\,  .
 \end{align}
 \end{Lem}
 
 \begin{proof}
 By \eqr{ee:thederivofD2A} and \eqr{ee:secondclaim}, we have
  \begin{align}
  	r\, D' &\geq \left( 2-n  + \frac{r^2}{2}  + U\right) \, D - \lambda\,r^2 \, I 
	 -4\,\lambda \, r^{1-n}\, \int_{b = r}
	\frac{S\,|u|^2}{|\nabla b|} - r\, J' \, ,\\
 r\, (2\,\lambda + \delta/2)  \, I' &= (4\, \lambda + \delta)\, D + \left( \frac{2\,n}{r^2}-1 \right)\, (4\, \lambda + \delta) \, r^{1-n}  \int_{b = r} \frac{S\,|u|^2}{ |\nabla b|}  \, .
 \end{align}
 Since $S\geq 0$ and  $\left[(4\, \lambda + \delta)(1-2\,n\,r^{-2})-4\, \lambda \right]\geq 0$ for $r\geq r_0(\lambda , \delta , n)$,  it follows that 
 \begin{align}
 r\,K'& \geq \left[\left( 2-n  + \frac{r^2}{2}  + U\right)  -(4\, \lambda + \delta)\,\right]\,D-\lambda\,r^2 \, I-r\,J'\,  .
 \end{align}
Since  $[D-2\,\lambda\,I]=\frac{4\, \lambda}{4\, \lambda +\delta}\,K+\frac{\delta}{4\, \lambda +\delta}\,D$, this gives the claim.  
  \end{proof}
  
     \begin{proof}[Proof of Theorem \ref{et:main2A}]
      Set $J_0 = \sup \, J$.
 We will show that 
 \begin{align}	\label{ee:showbycdA}
 	K(r)  \leq  10 \, J_0 {\text{ for all }} r>R(\lambda,\delta,n) \, .
 \end{align}
Once we have \eqr{ee:showbycdA}, we use   \eqr{ee:secondclaim} to get that  
\begin{align}
	r\, I' &\leq 2\,D  \leq (4 \, \lambda + \delta) \, I +  20\,J_0  \, .
\end{align}
Equivalently, 
$\left(r^{-(4 \, \lambda +  \delta)}\,I\right)'\leq 20\,r^{-(4 \, \lambda +  \delta)-1}\,J_0$.  Integrating this gives
\eqr{ee:main2A}.  

We will prove \eqr{ee:showbycdA} by contradiction, so suppose instead that $K(r_0) > 10 \, J_0$ for some large  $r_0$.  At any regular value $r$ with $K(r) > 0$, we have $D(r) > 0$,  thus, also
$I(r) > 0$ by Lemma \ref{el:IBDR} and $U(r) > 2\, \lambda+ \delta/2 > 0$.  Lemma \ref{el:Knu} then implies that if $r$ is large enough and $K > 0$, then $K' \geq - J'$.  Integrating this from $r_0$ gives that $K(r) \geq 9 \, J_0$ for all $r\geq r_0$ and, thus, also that $D, I > 0$ and 
$U > ( 2\, \lambda + \delta /2) > 0$.   In particular, \eqr{ee:logUA}
gives
\begin{align}	\label{ee:UplusJa}
	\left(\log U\right)' &\geq    \frac{2-n- U}{r} +\frac{r}{2}-\frac{\lambda\,r}{U}    - \frac{J'}{D} \geq
	 \frac{2-n- U}{r} +\frac{r}{2}-\frac{\lambda\,r}{U}    - \frac{J'}{9 \, J_0}
\,  .
\end{align}
  Suppose first $U(r) < \frac{\delta \, r^2}{4(4\, \lambda + \delta)}$ for every larger $r$, then \eqr{ee:UplusJa} would give
\begin{align}	\label{ee:UplusJb}
	\left(\log U\right)' &\geq 	 \frac{2-n}{r}-  \frac{\delta \, r}{4(4\, \lambda + \delta)} +\frac{r}{2}-\frac{2\,\lambda\,r}{4\, \lambda+ \delta}    - \frac{J'}{9 \, J_0} =  	 \frac{2-n}{r}-   \frac{J'}{9 \, J_0} +  \frac{\delta \, r}{4(4\, \lambda + \delta)}
\,  .
\end{align}
Integrating this contradicts the upper bound on $U$, so we conclude that   there is a large $r$ where $U \geq \frac{\delta \, r^2}{4(4\, \lambda + \delta)}$.   Next, at any large $r$ where 
	$\frac{\delta \, r^2}{8(4\, \lambda + \delta)}  \leq U(r) \leq \frac{r^2}{2} - r$, 
then \eqr{ee:UplusJa} gives
\begin{align}	\label{ee:UplusJc}
	\left(\log U\right)' &\geq    1 +
	 \frac{2-n}{r}  -\frac{8\, \lambda \, (4\, \lambda + \delta)}{\delta \, r}    - \frac{J'}{9 \, J_0}
\,  ,
\end{align}
 forcing $U$ to grow exponentially and, thus, eventually overtake the quadratic upper bound. Thus, we get $R_1$ large so that for all $ r \geq R_1$ we have $U > \frac{r^2}{2} -r  - \frac{1}{9}$  (the last term comes from integrating $\frac{J'}{9\, J_0}$).
   Using this lower bound for $U$ in Lemma \ref{el:Knu} gives
\begin{align}
( K+ J)' &\geq \frac{2\, \lambda \,r}{4\, \lambda +\delta}\,K+\left[ \left( \frac{r^2}{2} - r - \frac{1}{9} \right) +2-n+\frac{\delta\,r^2}{2\,(4\lambda+\delta)}-(4\, \lambda+ \delta) \right]\,\frac{K}{r} \\
&= \left(r - 1 + \frac{2-n-1/9 - (4\, \lambda + \delta)}{r} \right) \, K 
\geq \frac{8\, r}{9} \, K \geq \frac{4\, r}{5} \, (K+J) \, ,	\notag
 \end{align}
 where the last inequality used $K+J \leq K + J_0 \leq \frac{10}{9} \, K$.  Integrating gives that $K+J$ grows at least like $\e^{ \frac{2\, r^2}{5}}$.  This contradicts that $u \in W^{1,2}, \cL \, u \in L^2$ as in the proof of Theorem \ref{et:main}.
 \end{proof}

 We will also prove an effective growth bound similar in spirit to Hadamard's three circles theorem, \cite{Li,N}.  Roughly, this shows that if $u$ is very small on a scale $r_1$ and bounded at larger scale $R$, then $u$ stays small out to  scale $R-1$.
  
 \begin{Pro}	\label{ep:effective}
 Given  $ \lambda >0$ and $\delta \in (0,2\, \lambda)$, there exists $r_0$ so that if $r_0 \leq r_1 < R$, $u$ satisfies \eqr{ee:herelambda} on $\{ r_1 \leq b \leq R \}$ and  $D(R) \leq \e^{ \frac{2\, R - 1}{6}} \, I(r_1)$, then for all $r \in [r_1 , R-1]$ 
\begin{align}
	I(r) \leq \left( \frac{r}{r_1} \right)^{ 4\, \lambda +2\, \delta} \, \left[1+\frac{1}{(2\, \lambda+ \delta)}  \right]\, I(r_1) \, .
\end{align}
 \end{Pro}

 \begin{proof}
 By Lemma \ref{el:Knu} with $J = 0$,  if $r\geq r_0=r_0(\lambda,\delta,n)$ and $K(r) > 0$, then $K' \geq \frac{r}{3} \, K$ and, thus, 
 $\e^{-\frac{r^2}{6}}\,K(r)$ is monotone non-decreasing.  If    $r\in [r_1,R-1]$ with $K(r) > I(r_1)$, then   $D(r) > K(r) > 0$ and, thus, also
$I(r) > 0$ by Lemma \ref{el:IBDR}.  Moreover, 
\begin{align}
	 D(R) > K(R) \geq \e^{  \frac{R^2 - r^2}{6}} \, K(r) \geq \e^{ \frac{R^2 - r^2}{6}} \, I(r_1) \geq  \e^{ \frac{2\, R - 1}{6}} \,  I(r_1) \, .
\end{align}
This contradicts $D(R) \leq \e^{ \frac{2\, R - 1}{6}} \, I(r_1)$, so 
  $K(r) \leq I(r_1)$     for all $ r \in (r_1 , R - 1)$ and, thus,
   \begin{align}
   	D(r) = K(r) + (2\, \lambda +\delta)  \, I(r) \leq I(r_1)+ (2\, \lambda+\delta )  \, I(r) \, .
\end{align}
Combining this with the bound on $I'$ from  Lemma \ref{el:Idiff} gives 
\begin{align}
	\left( r^{-(4\, \lambda +2\, \delta) } \, I(r) \right)' \leq - (4\, \lambda +2\, \delta)\, r^{-(4\, \lambda+2\, \delta)-1} \, I + 2 \,  r^{-(4\, \lambda+2\, \delta)-1} \, D
	\leq 2\, r^{-(4\lambda +2\, \delta)-1} \, I (r_1)\, .  \notag
\end{align}
Integrating from $r_1$ to $r \leq R -1$ gives the claim.  
 \end{proof}


 \section{Growth of eigenvector fields for $\cP$} 
     
        We use the relationship between $\cP$ and $\cL$ to solve the Poisson equation $\cP \, Y = \frac{1}{2} \, \dv_f \, h$ 
        in Theorem \ref{c:elliptic} and to get strong bounds for $Y$ in Theorem  \ref{t:main0a} using also the previous section.
          The next theorem proves similar growth bounds for eigenvector fields for $\cP$ which are 
        generalizations of Killing fields.

    \begin{Thm}   \label{t:main0}
 For any shrinker $(M,g,f)$, if $Y\in L^2$,
 $\cP\,Y=\lambda\, Y$ and $Z=Y+\frac{2}{2\,\lambda+1}\,\nabla\,\dv_f\,(Y)$, then $\dv_f\,(Z)=0$ and for any $\delta>0$ and 
 $r_2>r_1>R=R(\lambda,n,\delta)$
 \begin{align}
 I_{\nabla\,\dv_f\,(Y)}(r_2)&\leq \left(\frac{r_2}{r_1}\right)^{4\,\lambda+\delta}\,I_{\nabla\,\dv_f\,(Y)}(r_1) \,  ,\label{e:ee1}\\
 I_{Z}(r_2)&\leq \left(\frac{r_2}{r_1}\right)^{8\,\lambda+2+\delta}\,I_{Z}(r_1)\, . \label{e:ee2}
 \end{align}
 \end{Thm}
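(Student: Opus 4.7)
The plan is to reduce the vector-valued eigenequation $\cP Y = \lambda Y$ to a pair of drift-Laplacian eigenequations---one for the scalar $u := \dv_f Y$ and one for the vector field $Z$---and then invoke polynomial growth estimates for the drift Laplacian $L := \Delta - \nabla_{\nabla f}$ on shrinkers. The bridge is a Weitzenb\"ock-type expansion of $\cP$ together with a commutator identity between $\dv_f$ and $L$, both of which hinge on the shrinker equation $\Ric + \Hess_f = \tfrac{1}{2} g$.

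First I would unwind $\cP Y = -\tfrac{1}{2}\,\dv_f \cL_Y g$ using the Bochner--Ricci identity $\nabla^i \nabla_j Y_i = \nabla_j \dv Y + R_{jk} Y^k$, converting $\dv$ and $\langle \nabla f, \cdot \rangle$ terms into $\dv_f$, and absorbing the Ricci via the shrinker equation, to obtain
\begin{align*}
\cP Y \;=\; -\tfrac{1}{2}\bigl( L Y + \nabla \dv_f Y + \tfrac{1}{2} Y \bigr).
\end{align*}
A parallel calculation yields the scalar shrinker Bochner $L \nabla \phi = \nabla L \phi + \tfrac{1}{2} \nabla \phi$ and, as a corollary, the commutator identity $[\dv_f, L] = \tfrac{1}{2} \dv_f$ on vector fields. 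Applying $\dv_f$ to $\cP Y = \lambda Y$ and using the commutator then forces $Lu = -\tfrac{2\lambda+1}{2} u$, and hence
\begin{align*}
\dv_f Z \;=\; u \;+\; \tfrac{2}{2\lambda+1}\, L u \;=\; 0,
\end{align*}
which proves the first assertion. A second short computation, combining the Weitzenb\"ock rewritten as $L Y = -(2\lambda + \tfrac{1}{2}) Y - \nabla u$ with $L \nabla u = -\lambda\,\nabla u$ (the scalar Bochner applied to the eigenfunction $u$), then yields $L Z = -\tfrac{2\lambda+1}{2} Z$; the same scalar Bochner also shows that $\nabla u$ is an $L$-eigenvector field with eigenvalue $-\lambda$.

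With this reduction in hand, \eqref{e:ee1} and \eqref{e:ee2} follow from the polynomial growth estimate for $L^2$ $L$-eigenvector fields on shrinkers: if $L X = -\mu X$ with $X \in L^2(e^{-f})$, then for any $\delta > 0$ and $r_2 > r_1 > R(\mu, n, \delta)$ one has $I_X(r_2) \leq (r_2/r_1)^{4\mu + \delta}\, I_X(r_1)$, proved by a frequency monotonicity on level sets of $b = 2\sqrt{f}$ in the spirit of Almgren. Applied to $\nabla u$ with $\mu = \lambda$ this gives \eqref{e:ee1}, and applied to $Z$ with $\mu = 2\lambda + \tfrac{1}{2}$ this gives \eqref{e:ee2}. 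The main obstacle is the algebraic reduction itself: one must identify the precise linear combination $Z = Y + \tfrac{2}{2\lambda+1}\nabla \dv_f Y$ that is simultaneously $\dv_f$-closed and $L$-eigen, and to produce it one needs both the Weitzenb\"ock and the $[\dv_f, L]$ identity in a form where the shrinker equation has absorbed all Ricci curvature into Hessian-of-$f$ terms, at which point the system decouples cleanly into the scalar problem for $u$ and the vector problem for $Z$.
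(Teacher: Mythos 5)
Your approach is essentially identical to the paper's: reduce the $\cP$-eigenequation to $\cL$-eigenequations for the pieces $\nabla\,\dv_f\,(Y)$ and $Z$ via the Weitzenb\"ock identity for $\cP$ and the drift commutator $[\dv_f,\cL]=\tfrac12\dv_f$ (the paper's Lemma \ref{l:dvastY}, Lemma \ref{l:boch}, and Proposition \ref{p:4main0}), then invoke the polynomial growth estimate for $L^2$ drift-eigenvector fields from \cite{CM11}. Two small points: first, you write ``$LZ=-\tfrac{2\lambda+1}{2}Z$,'' but the eigenvalue is $2\lambda+\tfrac12=\tfrac{4\lambda+1}{2}$, not $\lambda+\tfrac12$ (your later application with $\mu=2\lambda+\tfrac12$ uses the correct value, so this is a typo rather than a wrong step). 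Second, and more substantively, the growth estimate from \cite{CM11} requires the eigenvector field to be in $L^2(\e^{-f})$, and you have not shown that $\nabla\,\dv_f\,(Y)$ and $Z$ lie in $L^2$. This does not follow trivially from $Y\in L^2$: one must first use an interpolation/absorption argument (the paper's Lemma \ref{l:interpcP}) to get $\dv_f\,(Y),\nabla Y\in L^2$ from $Y,\cP Y\in L^2$, then use the eigenequation $\cL\,\dv_f(Y)=-(\lambda+\tfrac12)\,\dv_f(Y)$ together with a second interpolation (Lemma \ref{l:RPcL}) to obtain $\nabla\,\dv_f\,(Y)\in L^2$, and hence $Z\in L^2$. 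Without this bootstrapping step the hypotheses of the \cite{CM11} growth bound are not verified.
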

 
 Each of these growth bounds is sharp and so is the requirement that $Y \in L^2$.    Combining them bounds $Y$.  
 As a corollary, $L^2$ Killing fields on a shrinker grow at most linearly.

 \begin{Cor}   \label{c:main0}
On any shrinker, for any $L^2$ Killing field $Y$, $\nabla\,\dv_f\,(Y)$ is parallel and if $Z=Y+2\,\nabla\,\dv_f\,(Y)$, then $\dv_f\,(Z)=0$ and for any $\delta>0$ and $r_2>r_1>R=R(n,\delta)$
\begin{align}
I_{Z}(r_2)&\leq \left(\frac{r_2}{r_1}\right)^{2+\delta}\,I_{Z}(r_1)\, .	\label{e:0point9}
\end{align}
 \end{Cor}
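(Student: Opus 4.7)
\textbf{Proof plan for Corollary \ref{c:main0}.} The whole corollary should reduce to Theorem \ref{t:main0} applied with $\lambda=0$, together with one short Lie-derivative identity. A Killing field satisfies $\mathcal{L}_Y g=0$, and the identity $\mathcal{L}_Y g = -2\,\dv_f^* Y$ recorded in the introduction then forces $\dv_f^* Y = 0$, hence
\begin{align*}
\cP Y = \dv_f\circ \dv_f^*\, Y = 0.
\end{align*}
Thus $Y$ is an $L^2$ eigenvector field of $\cP$ with eigenvalue $\lambda=0$, and the $Z$ of Theorem \ref{t:main0} specializes to $Z = Y + 2\,\nabla\,\dv_f(Y)$, matching the statement. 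The identity $\dv_f(Z)=0$ and the polynomial growth bound \eqr{e:0point9} then follow immediately from the conclusions of Theorem \ref{t:main0}, the latter from \eqr{e:ee2} with $\lambda=0$.

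It remains to prove that $\nabla\,\dv_f(Y)$ is parallel. Tracing the Killing equation gives $\dv(Y)=0$, so the footnote definition of $\dv_f$ yields $\dv_f(Y) = -Y(f)$ and therefore $\nabla\,\dv_f(Y) = -\nabla Y(f)$. Now apply $\mathcal{L}_Y$ to the shrinker equation $\Ric + \Hess_f = \tfrac12\, g$. Since $Y$ is Killing, it preserves $g$ and therefore preserves the curvature, so $\mathcal{L}_Y g=0$ and $\mathcal{L}_Y\Ric=0$; hence $\mathcal{L}_Y\Hess_f=0$. Because a Killing field also preserves the Levi-Civita connection, $\mathcal{L}_Y$ commutes with $\nabla$ on tensors, which gives the identity
\begin{align*}
\mathcal{L}_Y\Hess_f \;=\; \Hess_{\mathcal{L}_Y f}\;=\;\Hess_{Y(f)}.
\end{align*}
Combining, $\Hess_{Y(f)}=0$, i.e.\ $\nabla Y(f)=-\nabla\,\dv_f(Y)$ has vanishing covariant derivative, as desired.

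The only potentially delicate point is the identity $\mathcal{L}_Y\Hess_f = \Hess_{Y(f)}$ for Killing $Y$, which rests on the commutation $\mathcal{L}_Y\circ\nabla = \nabla\circ\mathcal{L}_Y$; this is standard but requires care with sign and index conventions when distributing $\mathcal{L}_Y$ across an iterated covariant derivative. An alternative route goes through \eqr{e:ee1} of Theorem \ref{t:main0} itself: with $\lambda=0$ the estimate forces $X:=\nabla\,\dv_f(Y)$ to grow slower than every positive polynomial power, and combined with standard rigidity for drift operators on shrinkers this would also pin down $X$ as parallel. The direct computation above is however much cleaner, and has the added benefit that it does not use the $L^2$ hypothesis.
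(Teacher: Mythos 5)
Your proof is correct, and the bulk of it (reduction to Theorem \ref{t:main0} with $\lambda=0$, getting $\dv_f(Z)=0$ and the growth bound \eqr{e:ee2}) coincides with the paper's argument. Where you diverge is in showing that $\nabla\,\dv_f(Y)$ is parallel. The paper uses the machinery it has built: from Proposition \ref{p:4main0}, the vector field $\nabla\,\dv_f(Y)$ lies in the $L^2$ kernel of $\cL$ (via \eqr{e:e1} with $\lambda=0$, $V=0$), and Lemma \ref{p:fredholmL} identifies that kernel with the parallel vector fields. Your argument instead differentiates the shrinker equation along the Killing flow: $Y$ Killing forces $\dv(Y)=0$, hence $\dv_f(Y)=-Y(f)$, and $\mathcal{L}_Y$ applied to $\Ric+\Hess_f=\tfrac12 g$ together with the commutation of $\mathcal{L}_Y$ with $\nabla$ for Killing fields gives $\Hess_{Y(f)}=0$. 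This is a legitimate, more elementary route, and as you note it does not use the $L^2$ hypothesis on $Y$ at all for the parallelness claim (though $L^2$ is of course still needed for the growth bound). The paper's route has the advantage of staying entirely inside the $\cP$/$\cL$ spectral framework already developed, which is the natural toolbox for the rest of the paper. One small bookkeeping point: the footnote you cite defines $\dv_f$ on symmetric $2$-tensors; the vector-field case $\dv_f V = \dv V - \langle V,\nabla f\rangle$ is stated just after the integration-by-parts identity in Section~1, and that is the formula you are actually using.
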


It is easy to see that this is sharp; on the two dimensional Gaussian soliton $Y=x_2\,e_1-x_1\,e_2$ is a Killing field with $\dv_f\,(Y)=0$ that grows linearly.

 \subsection{Growth bounds for $\cP$}
 
 We will need bounds for vector fields given in  terms of $\cP$, but 
 the results of the previous section are for $\cL$.  The next two results use the relation between $\cP$ and $\cL$ to bridge this gap.
 The next proposition immediately implies Theorem \ref{t:cPandcL}.
        
  \begin{Pro}  \label{p:4main0}
 On any gradient Ricci soliton if $Y$ is vector field with $\cP\,Y-\lambda\,Y=V$ (where $\lambda\ne -\kappa$) and $Z=Y+\frac{1}{\lambda+\kappa}\,\nabla\,\dv_f\,(Y)$, then 
 \begin{align}
 ( \cL + \lambda) \,\nabla\,\dv_f\,(Y) &=-\nabla\,\dv_f\,(V)\,  , \label{e:e1}\\
( \cL +2\,\lambda+\kappa)\,Z&=-2\,V-\frac{1}{\lambda+\kappa}\,\nabla\,\dv_f\,(V)\, .\label{e:eigZ}
 \end{align}
 Moreover, if $Y$, $V$, $\dv_f\,(V)\in L^2$, then $\nabla\,\dv_f\,(Y)$, $Z\in L^2$.  
 
 If $V=0$, then $\dv_f\,(Z)=0$ and if also $Y\in L^2$, then $\|Y\|^2=\|Z\|^2+(\lambda+\kappa)^{-2}\,\|\nabla\,\dv_f\,(Y)\|^2$.
     \end{Pro}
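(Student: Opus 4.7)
The plan is to obtain the first two identities by direct algebraic manipulation of the commutator formulas already proved in Lemma \ref{l:HessianLem} and Lemma \ref{l:dvastY}; then to bootstrap $L^2$ regularity using Lemma \ref{l:interpcP} together with the scalar commutator \eqr{e:claim1}; and finally, in the case $V=0$, to verify $\dv_f(Z)=0$ by a one-line computation and the orthogonality identity by integration by parts.

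For \eqr{e:e1}, I start from $\cP Y-\lambda Y = V$ and apply Lemma \ref{l:HessianLem}, specifically \eqr{e:claim2}, which says $\cL\,\nabla\dv_f(Y)=-\nabla\dv_f(\cP Y)$. Substituting $\cP Y=\lambda Y+V$ on the right and rearranging gives $(\cL+\lambda)\nabla\dv_f(Y)=-\nabla\dv_f(V)$. For \eqr{e:eigZ}, I rewrite Lemma \ref{l:dvastY} as $\cL Y+\kappa Y=-2\cP Y-\nabla\dv_f(Y)$, substitute $\cP Y=\lambda Y+V$, and obtain $(\cL+2\lambda+\kappa)Y=-2V-\nabla\dv_f(Y)$. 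Applying $(\cL+2\lambda+\kappa)$ to $\frac{1}{\lambda+\kappa}\nabla\dv_f(Y)$ and using \eqr{e:e1} gives $\frac{1}{\lambda+\kappa}(\cL+2\lambda+\kappa)\nabla\dv_f(Y)=\nabla\dv_f(Y)-\frac{1}{\lambda+\kappa}\nabla\dv_f(V)$. Adding the two, the $\nabla\dv_f(Y)$ terms cancel and \eqr{e:eigZ} follows.

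For the $L^2$ claim, assume $Y,V,\dv_f(V)\in L^2$. Since $\cP Y=\lambda Y+V\in L^2$, Lemma \ref{l:interpcP} gives $\dv_f(Y),\nabla Y\in L^2$. Next, by \eqr{e:claim1},
\begin{equation*}
\cL\,\dv_f(Y)=-(\lambda+\kappa)\,\dv_f(Y)-\dv_f(V)\in L^2\, .
\end{equation*}
Applying the standard cutoff integration-by-parts argument from the proof of Lemma \ref{l:RPcL} to the scalar $u=\dv_f(Y)$ yields $\|\nabla\dv_f(Y)\|_{L^2}^2\leq 2\,\|\dv_f(Y)\|_{L^2}\,\|\cL\,\dv_f(Y)\|_{L^2}<\infty$, so $\nabla\dv_f(Y)\in L^2$ and hence $Z\in L^2$.

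When $V=0$, since $\dv_f\circ\nabla=\cL$ on functions, $\dv_f(Z)=\dv_f(Y)+\tfrac{1}{\lambda+\kappa}\cL\,\dv_f(Y)$, and \eqr{e:claim1} with $V=0$ gives $\cL\,\dv_f(Y)=-(\lambda+\kappa)\dv_f(Y)$, so $\dv_f(Z)=0$. For the norm identity, expand
\begin{equation*}
\|Y\|^2=\Bigl\|Z-\tfrac{1}{\lambda+\kappa}\nabla\dv_f(Y)\Bigr\|^2=\|Z\|^2-\tfrac{2}{\lambda+\kappa}\langle Z,\nabla\dv_f(Y)\rangle_{L^2}+\tfrac{1}{(\lambda+\kappa)^2}\|\nabla\dv_f(Y)\|^2\, ,
\end{equation*}
and it suffices to verify that the cross term vanishes. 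Formally $\langle Z,\nabla\dv_f(Y)\rangle_{L^2}=-\int \dv_f(Z)\,\dv_f(Y)\,\e^{-f}=0$; the only real issue is justifying integration by parts on a non-compact soliton, which I expect to be the main (but routine) technical step. Using cutoff functions $\eta$ supported in larger and larger balls and the fact that $Z,\nabla\dv_f(Y)\in L^2$ together with $\dv_f(Y)\in L^2$, one sees that the boundary contributions tend to zero (after absorbing $|\nabla\eta|$-terms via Cauchy--Schwarz), yielding the orthogonality and the norm identity.
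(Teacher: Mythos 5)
Your proof is correct and follows essentially the same route as the paper: \eqr{e:e1} and \eqr{e:eigZ} from \eqr{e:claim1}--\eqr{e:claim2} of Lemma~\ref{l:HessianLem} together with Lemma~\ref{l:dvastY}, the $L^2$ bootstrapping via Lemma~\ref{l:interpcP}, \eqr{e:claim1}, and the interpolation estimate of Lemma~\ref{l:RPcL}, and the final Pythagoras identity from $\dv_f(Z)=0$ with the cutoff integration-by-parts justification. The paper's own proof states only the $V=0$ case explicitly, whereas you carry out the general case directly, but the computations are the same.
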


\begin{proof}
We will show the proposition when $V=0$; the general case follows similarly.  By \eqr{e:claim1} 
 \begin{align}  \label{e:firid}
 \dv_f\,(\nabla\,\dv_f\,(Y))&=\cL\,\dv_f\,(Y)=-\dv_f\,(\cP\,Y)-\kappa\,\dv_f\,(Y)=-(\lambda+\kappa)\,\dv_f\,(Y)\,  .
 \end{align}
 From this, $\dv_f\,(Z)=0$ follows.   Equation \eqr{e:e1} follows from \eqr{e:claim2}.   To see \eqr{e:eigZ} let $c=\frac{1}{\lambda+\kappa}$ so   \eqr{e:claim2} and Lemma \ref{l:dvastY} give
 \begin{align}
 \cL\,Z&=\cL\,Y-c\,\lambda\,\nabla\,\dv_f\,(Y)=-2\,\cP\,Y-\kappa\,Y-\nabla\,\dv_f\,(Y)-c\,\lambda\,\nabla\,\dv_f\,(Y)\notag\\
 &=-(2\,\lambda+\kappa)\,\left(Y+\frac{c\,\lambda+1}{2\,\lambda+\kappa}\,\nabla\,\dv_f\,(Y)\right)=-\left(2\,\lambda+\kappa\right)\,Z\,  .
 \end{align}
 By Lemma \ref{l:interpcP},   $ \dv_f\, Y \in L^2$, so    \eqr{e:claim1}  gives that $\cL \, \dv_f \, Y= \kappa \, \dv_f \, Y - \dv_f (\cP \, Y)  \in L^2$.  Lemma \ref{l:RPcL} now gives that
 $\nabla \dv_f \, Y \in L^{2}$ 
   and, thus, also $Z \in L^2$.  Since $\dv_f\,(Z)=0$ and $Z   \in L^2$,
  $Z$ is automatically orthogonal to gradients  of all $W^{1,2}$ functions and thus, in particular, to $\nabla\,\dv_f\,(Y)$.  Therefore,   Pythagoras gives the last claim.
   \end{proof}

 \begin{proof}[Proof 
 of Theorem \ref{t:main0}]
Since $Y\in L^2$,  Proposition \ref{p:4main0} gives that $\nabla\,\dv_f\,(Y)$, $Z\in L^2$.  Equations \eqr{e:ee1}, \eqr{e:ee2} now follow from  \eqr{e:e1}, \eqr{e:eigZ}, respectively, and Theorem \ref{et:main2A}.  
 \end{proof}

 \begin{proof}[Proof of Corollary \ref{c:main0}]
 Since $Y \in \cK_{\cP}$, 
  $\cP\,Y=0$, $Y \in L^2$ and,thus, $\dv_f \, (Y) \in W^{1,2}$
  by Lemma \ref{l:interpcP} and  Proposition \ref{p:4main0}.
  Since  $\cL\,\nabla\,\dv_f\,(Y)=0$  by \eqr{e:e1}, 
   $\nabla\, \dv_f\,(Y)$ is parallel.  By Proposition \ref{p:4main0}, $\dv_f \, (Z) = 0$.  The bound \eqr{e:0point9} follows from Theorem \ref{t:main0}
   (with $\lambda = 0$).
 \end{proof}

   \begin{Thm}   \label{t:main0a}
 For any shrinker, if $Y\in L^2$,
 $(\cP -\lambda)\, Y = V$ and we set  $Z=Y+\frac{2}{2\,\lambda+1}\,\nabla\,\dv_f\,(Y)$, then   for any $\beta , \delta>0$ and $r_2>r_1>R=R(\lambda,n,\delta)$
 \begin{align}
 I_{\nabla\,\dv_f\,(Y)}(r_2)&\leq \left(\frac{r_2}{r_1}\right)^{4\,(\lambda+\beta)+\delta}\, \left( I_{\nabla\,\dv_f\,(Y)}(r_1) + \frac{5\, \int b^{2-n} \, | \nabla \, \dv_f \, (V)|^2}{\beta \, (4\,(\lambda+\beta)+\delta)}
 \right) \,  ,\label{e:ee1a}\\
 I_{Z}(r_2)&\leq \left(\frac{r_2}{r_1}\right)^{8\,(\lambda + \beta) +2 +\delta}\,
 \left( I_{Z}(r_1) +  \frac{\int b^{2-n} \, \left(|V|^2 + \frac{| \nabla \, \dv_f \, (V)|^2}{(2\, \lambda +1 )^2} \right) }{ \beta \,  (8\,\lambda+2+ 8\, \beta +\delta)}
 \right)\, . \label{e:ee2a}
 \end{align}
 \end{Thm}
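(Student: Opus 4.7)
The plan is to mirror the proof of Theorem \ref{t:main0}, but with \cite{CM11} applied in its Poisson form rather than its eigenvalue form, and with Proposition \ref{p:4main0} invoked in full generality (i.e., with $V \neq 0$). The key observation is that Proposition \ref{p:4main0} already packages $(\cP-\lambda)Y = V$ into two inhomogeneous drift-Laplacian equations: on a shrinker $\kappa = \tfrac12$, so $\tfrac{1}{\lambda+\kappa} = \tfrac{2}{2\lambda+1}$, and \eqr{e:e1}, \eqr{e:eigZ} become
\begin{align}
(\cL + \lambda)\,\nabla\,\dv_f\,(Y) &= -\nabla\,\dv_f\,(V)\, ,\\
\bigl(\cL + 2\lambda + \tfrac{1}{2}\bigr)\,Z &= -2V - \tfrac{2}{2\lambda+1}\,\nabla\,\dv_f\,(V)\, .
\end{align}
Thus $\nabla\,\dv_f\,(Y)$ and $Z$ are Poisson solutions for $\cL$ at spectral parameters $-\lambda$ and $-(2\lambda+\tfrac12)$, with right-hand sides controlled by $V$ and $\nabla\,\dv_f\,(V)$.

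Next I would verify the $L^2$ hypotheses required to feed the frequency/growth machinery. Since $Y \in L^2$ and $\cP Y = \lambda Y + V$ with $V \in L^2$, Lemma \ref{l:interpcP} gives $\dv_f\,(Y) \in L^2$ and $\nabla Y \in L^2$. Applying \eqr{e:claim1} together with the assumption $\dv_f\,(V) \in L^2$ yields $\cL\,\dv_f\,(Y) \in L^2$, so by Lemma \ref{l:RPcL} we have $\nabla\,\dv_f\,(Y) \in W^{1,2}$; in particular $Z \in L^2$. This puts us exactly in the setting where \cite{CM11} applies.

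Now apply the Poisson growth estimate from \cite{CM11} to each of the two equations separately. This is the standard mechanism used in \cite{CM11}: an $L^2$ solution of $(\cL+\mu)\,W = F$ on a shrinker satisfies a polynomial growth bound of the form
\begin{align}
I_W(r_2) \leq \left(\frac{r_2}{r_1}\right)^{4\mu + \delta + 4\beta}\left(I_W(r_1) + C_\beta \int b^{2-n}\,|F|^2\right)
\end{align}
for $r_2 > r_1 > R(\mu,n,\delta)$, where the parameter $\beta>0$ absorbs the source via an absorbing/Cauchy-Schwarz inequality in the frequency computation and $C_\beta$ depends on $\beta$ and the exponent. Specializing this with $\mu = \lambda$, $W = \nabla\,\dv_f\,(Y)$, $F = -\nabla\,\dv_f\,(V)$ gives \eqr{e:ee1a}; specializing it with $\mu = 2\lambda + \tfrac12$, $W = Z$, $F = -2V - \tfrac{2}{2\lambda+1}\,\nabla\,\dv_f\,(V)$ gives \eqr{e:ee2a}, where the exponent $8\lambda + 2 + 8\beta + \delta$ and the combined source $|V|^2 + (2\lambda+1)^{-2}\,|\nabla\dv_f(V)|^2$ in the integrand match the two components of $F$.

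The main obstacle, and really the only place where care is required beyond invoking \cite{CM11}, is tracking the exact shape of the constants when absorbing the source: one has to split the frequency derivative into a principal term (producing the factor in the exponent) and a source term (producing the $C_\beta = \tfrac{c}{\beta(\text{exponent})}$ prefactor on the $V$-integral), and to check that the constants coming out are exactly the stated $\tfrac{5}{\beta(4(\lambda+\beta)+\delta)}$ and $\tfrac{1}{\beta(8\lambda+2+8\beta+\delta)}$. Everything else is a direct translation of the $V=0$ argument used for Theorem \ref{t:main0}.
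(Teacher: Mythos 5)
Your proposal is correct and takes essentially the same route as the paper: invoke Proposition \ref{p:4main0} to convert $(\cP-\lambda)Y = V$ into the two inhomogeneous drift equations $(\cL+\lambda)\,\nabla\dv_f(Y) = -\nabla\dv_f(V)$ and $(\cL+2\lambda+\tfrac12)\,Z = -2V - \tfrac{2}{2\lambda+1}\nabla\dv_f(V)$, absorb the source via a pointwise Cauchy--Schwarz step to get a lower bound of the form $\langle\cL\,W, W\rangle \geq -\mu'\,|W|^2 - \psi$, and then feed this into theorem $0.13$ of \cite{CM11}. One small imprecision worth flagging: your one-source template exponent $4\mu + 4\beta + \delta$ would give $8\lambda+2+4\beta+\delta$ under the direct substitution $\mu = 2\lambda+\tfrac12$, not the stated $8\lambda+2+8\beta+\delta$; the extra $4\beta$ comes from the $Z$-equation having two source terms which are absorbed separately, each contributing $\beta$ to the effective eigenvalue, so the template for a $k$-term source should read $4\mu + 4k\beta + \delta$ — you arrive at the correct final exponent but the template as written does not directly produce it.
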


\begin{proof}
By Proposition \ref{p:4main0}, $(\cL + \lambda) \, \nabla \, \dv_f \, (Y) = - \nabla \, \dv_f \, (V)$, so  we get
\begin{align}
	\langle \cL \, \nabla \, \dv_f \, (Y) , \nabla \, \dv_f \, (Y) \rangle \geq  - (\lambda + \beta) \, | \nabla \, \dv_f \, (Y) |^2 - \frac{| \nabla \, \dv_f \, (V)|^2}{4\, \beta} \, .
\end{align}
Thus, Theorem  \ref{et:main2A} applies with $\psi = \frac{| \nabla \, \dv_f \, (V)|^2}{4\, \beta}$  to give \eqr{e:ee1a}. Similarly, 
Proposition  \ref{p:4main0} gives $  	( \cL +2\,\lambda+\frac{1}{2})\,Z=-2\,V-\frac{1}{\lambda+\frac{1}{2}}\,\nabla\,\dv_f\,(V)$, so we have
\begin{align}
	\langle \cL  \,Z , Z \rangle \geq - \left(2\, \lambda + \frac{1}{2} + 2\, \beta  \right) \, |Z|^2 - \frac{|V|^2}{\beta} + 
	\frac{|\nabla\,\dv_f\,(V) |^2}{\beta \, (2\lambda +1)^2} \, .   \, \, \, \, \, \, \, \, \, \, \, \, \,  \qedhere
\end{align}
\end{proof}

 \subsection{Fredholm properties for $\cP$}
 
 Throughout this subsection, we assume that  $(M,g,f)$ is a shrinker
 and $\cK_{\cP}$ is the space of $L^2$ Killing fields, i.e., the $L^2$ kernel of $\cP$.

 \begin{Thm}	\label{c:elliptic}
 There exists $C_1$ so that if
 $h$  is a smooth compactly supported symmetric $2$-tensor, then there is a smooth vector field $Y \in W^{1,2}$ with
 $\dv_f \left( \frac{1}{2} \, h - \dv_f^{*}\, Y \right) = 0$  that is $L^2$-orthogonal to $\cK_{\cP}$ and satisfies
 \begin{align}	\label{e:celle}
	\| Y \|_{W^{1,2}} + \| \dv_f \, Y \|_{W^{1,2}} + \| \cL \, Y \|_{L^2}  \leq C_1 \, \| \dv_f\, h  \|_{L^2}  \, .
\end{align}
 \end{Thm}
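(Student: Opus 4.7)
The plan is to solve $\cP Y = F$ where $F = \tfrac{1}{2}\dv_f h$ by the direct method in weighted $L^2$, and then to bootstrap to the stated estimates. First $F$ is smooth, compactly supported, and $L^2$-orthogonal to the finite dimensional space $\cK_\cP$, since for any Killing field $K$, integration by parts gives $\int \langle F, K\rangle \, \e^{-f} = \tfrac{1}{2}\int \langle h, \dv_f^{*}\,K\rangle \, \e^{-f} = 0$. I will minimize
\begin{align}
    J(Y) = \int |\dv_f^{*}\,Y|^2 \, \e^{-f} - 2\int \langle Y, F\rangle \, \e^{-f}
\end{align}
over the Hilbert space $H$ of $W^{1,2}$ vector fields $L^2$-orthogonal to $\cK_\cP$; the Euler--Lagrange equation is $\cP Y = F$ tested against $Z \in H$, and extends to all test $Z$ because $F \perp \cK_\cP$.

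To apply Lax--Milgram (equivalently, the direct method) I need coercivity of $B(Y,Y) := \int |\dv_f^{*}\,Y|^2 \, \e^{-f}$ on $H$. Pairing the identity $-2\,\cP\,Y = \cL\,Y + \nabla\,\dv_f\,Y + \tfrac{1}{2}\,Y$ from Lemma \ref{l:dvastY} with $Y$ and integrating by parts gives
\begin{align}
    2\,B(Y,Y) = \|\nabla Y\|_{L^2}^2 + \|\dv_f\,Y\|_{L^2}^2 - \tfrac{1}{2}\,\|Y\|_{L^2}^2,
\end{align}
which reduces coercivity to a Poincar\'e inequality $\|Y\|_{L^2} \leq C\,\|\dv_f^{*}\,Y\|_{L^2}$ on $H$. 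This is the main obstacle, and I will establish it by a Rellich--contradiction argument. If the inequality failed, there would be $Y_k \in H$ with $\|Y_k\|_{L^2} = 1$ and $\|\dv_f^{*}\,Y_k\|_{L^2} \to 0$; the identity above bounds $\|Y_k\|_{W^{1,2}}$. Standard Rellich on compact sets, combined with the $L^2$-tightness at infinity produced by Lemma \ref{l:L2RPa} (which is effective because $|\nabla f|^2 \to \infty$ by \eqr{e:ccz}), yields an $L^2$ limit $Y_\infty$ with $\|Y_\infty\|_{L^2} = 1$, $Y_\infty \perp \cK_\cP$, and $\dv_f^{*}\,Y_\infty = 0$. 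Then $Y_\infty \in \cK_\cP$, contradicting orthogonality.

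With coercivity in hand, the direct method produces $Y \in H$ with $\cP\,Y = F$ weakly and $\|Y\|_{W^{1,2}} \leq C\,\|F\|_{L^2}$, and smoothness follows from Lemma \ref{l:elliptic}. For the $W^{1,2}$ bound on $u := \dv_f\,Y$, equation \eqr{e:claim1} (with $\kappa = \tfrac{1}{2}$) reads $(\cL + \tfrac{1}{2})\,u = -\dv_f\,F$. Multiplying by $u\,\e^{-f}$ and integrating by parts on both sides gives
\begin{align}
    \|\nabla u\|_{L^2}^2 + \tfrac{1}{2}\,\|u\|_{L^2}^2 = \int \langle \nabla u, F\rangle \, \e^{-f},
\end{align}
so by Cauchy--Schwarz $\|u\|_{W^{1,2}} \leq C\,\|F\|_{L^2}$ --- crucially without any further derivative of $h$. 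Finally $\cL\,Y = -2\,F - \nabla\,\dv_f\,Y - \tfrac{1}{2}\,Y$ from Lemma \ref{l:dvastY}, so $\|\cL\,Y\|_{L^2}$ is controlled by quantities already bounded. Assembling these estimates gives \eqr{e:celle}.
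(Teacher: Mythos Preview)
Your approach is correct and genuinely different from the paper's. The paper exploits the joint spectral structure of $\cL$ and $\cP$: it shows (Lemma \ref{l:old1030}) that $\cP$ preserves each eigenspace $E_\mu$ of $\cL$, has discrete spectrum with $\lambda_i \to \infty$, and that the eigenvalues satisfy $\mu \leq \frac{1}{2} + 2\lambda$. It then expands $\dv_f h$ in a joint eigenbasis, sets $Y = \frac{1}{2}\sum (a_i/\lambda_i)\,V_i$, and reads off all the bounds directly from the coefficients. Your route is the variational/Lax--Milgram argument: the Poincar\'e inequality $\|Y\|_{L^2} \leq C\,\|\dv_f^*\,Y\|_{L^2}$ on $\cK_\cP^\perp$ via Rellich compactness gives existence and the $W^{1,2}$ bound, and you then bootstrap using \eqr{e:claim1} and Lemma \ref{l:dvastY}. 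Your argument is more elementary and does not need the commutation $\cP\,\cL = \cL\,\cP$; the paper's approach gives finer spectral information (used later for growth bounds) but is heavier machinery for this particular statement.

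Two minor corrections. First, tightness in your compactness step comes from the middle term in Lemma \ref{l:L2RPa}, namely $\int |Y|^2\,(f-n)\,\e^{-f} \leq 4\int |\nabla Y|^2\,\e^{-f}$, together with $f \to \infty$ from \eqr{e:ccz}; the quantity $|\nabla f|^2 = f - S$ need not go to infinity on a general shrinker since $S$ is not assumed bounded. Second, your displayed identity for $u = \dv_f\,Y$ has a sign error: pairing $(\cL + \tfrac{1}{2})\,u = -\dv_f\,F$ with $u$ actually gives $-\|\nabla u\|_{L^2}^2 + \tfrac{1}{2}\,\|u\|_{L^2}^2 = \int \langle \nabla u, F\rangle\,\e^{-f}$. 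This does not by itself bound $\|u\|_{W^{1,2}}$, but you already have $\|u\|_{L^2} = \|\dv_f\,Y\|_{L^2} \leq C\,\|F\|_{L^2}$ from your identity $2\,B(Y,Y) = \|\nabla Y\|_{L^2}^2 + \|\dv_f\,Y\|_{L^2}^2 - \tfrac{1}{2}\,\|Y\|_{L^2}^2$; feeding that in and absorbing gives $\|\nabla u\|_{L^2} \leq C\,\|F\|_{L^2}$, so your conclusion stands.
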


\begin{Lem}	\label{p:fredholmL}
 If  $Y $ is a vector field, then $\| \cL \, Y \|_{L^2}^2 \leq  (2n + 8) \, \| Y \|_{W^{2,2}}^2$.  If $Y , \cL \, Y \in L^2$,  then
 \begin{align}
 	\frac{1}{4n} \, \| Y\, \sqrt{f}  \|_{L^2}^2  \leq \| Y \|_{W^{1,2}}^2  &\leq  \| \cL\, Y \|_{L^2} + 2\, \| Y \|_{L^2} \, .
	\label{l:fred1L}
 \end{align}
 The $L^2$ kernel of $\cL$ is equal to the space $\cK_{\cL}$ of parallel vector fields and
  $\cL$ has discrete eigenvalues $0 \leq \mu_0 < \mu_1 < \mu_2,\cdots \to \infty$ with finite dimensional eigenspaces $E_{\mu_i} \subset W^{1,2}$.
\end{Lem}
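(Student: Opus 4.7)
The plan is to dispatch each conclusion in turn, with the spectral theory part as the main substantive point.

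For the pointwise-to-$L^2$ bound $\|\cL\,Y\|_{L^2}^2 \leq (2n+8)\,\|Y\|_{W^{2,2}}^2$, I would split $\cL\,Y = \Delta Y - \nabla_{\nabla f}Y$ and apply Cauchy--Schwarz twice to get the pointwise estimate
\begin{align}
|\cL\,Y|^2 \leq 2n\,|\nabla^2 Y|^2 + 2\,|\nabla f|^2\,|\nabla Y|^2,
\end{align}
using $|\Delta Y|^2 = \sum_i (\sum_k Y_{i,kk})^2 \leq n\,|\nabla^2 Y|^2$. The first-derivative term is not obviously bounded by $W^{2,2}$, but this is exactly where Lemma \ref{l:L2RPa} enters: applied component-by-component to $\nabla Y$, it gives $\int |\nabla f|^2|\nabla Y|^2\,\e^{-f}\leq 4\int|\nabla^2 Y|^2\,\e^{-f}+n\int|\nabla Y|^2\,\e^{-f}$, and adding yields the stated constant $2n+8$.

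For the two estimates in \eqref{l:fred1L}, the lower bound is immediate from Lemma \ref{l:L2RPa} applied to $Y$: $\int |Y|^2 f\,\e^{-f}\leq 4\int|\nabla Y|^2\,\e^{-f}+n\int|Y|^2\,\e^{-f}\leq 4n\,\|Y\|_{W^{1,2}}^2$. The upper bound is a standard integration-by-parts, essentially the content of Lemma \ref{l:RPcL}: cut off with $\eta$, test $-\cL\,Y$ against $\eta^2 Y\,\e^{-f}$, use Cauchy--Schwarz on the resulting boundary term, and pass to $\eta\to 1$ by dominated convergence. The kernel statement then follows instantly: if $\cL\,Y=0$ and $Y\in L^2$, the upper bound places $Y\in W^{1,2}$, and integration by parts gives $\int |\nabla Y|^2\,\e^{-f}=0$, so $Y$ is parallel.

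The remaining (and most substantive) point is the discrete spectrum. I would realize $\cL$ as the Friedrichs self-adjoint extension coming from the closed, non-negative quadratic form $Y\mapsto \int|\nabla Y|^2\,\e^{-f}$ on the weighted $L^2$ space, whose form domain is exactly the weighted $W^{1,2}$. The spectrum is then discrete as soon as $(\cL+1)^{-1}$ is \emph{compact} on $L^2$, equivalently, as soon as $W^{1,2}\hookrightarrow L^2$ is compact. This is where the non-compactness of $M$ creates the only real obstacle, and where the lower bound of \eqref{l:fred1L} does the essential work: it provides tightness, since a $W^{1,2}$-bounded sequence has uniformly bounded $\int|Y|^2 f\,\e^{-f}$, and $\{f\leq R\}$ is compact by the Cao--Zhou asymptotics \eqref{e:ccz}, so the $L^2$-mass outside $\{f\leq R\}$ is controlled by $C/R$ uniformly. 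Inside each sublevel set the classical Rellich theorem gives local $L^2$-precompactness, and a diagonal extraction combined with the uniform tightness produces an $L^2$-convergent subsequence. The spectral theorem for compact self-adjoint operators then furnishes eigenvalues $0\leq \mu_0<\mu_1<\dots\to\infty$ with finite-dimensional eigenspaces; the upper bound in \eqref{l:fred1L} shows each eigenspace sits inside $W^{1,2}$. The principal obstacle is exactly this tightness-plus-Rellich compactness of the resolvent; once past it, the rest is the abstract spectral framework.
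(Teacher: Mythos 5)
Your proof is correct and follows essentially the same route as the paper's: the same split $\cL Y = \Delta Y - \nabla_{\nabla f}Y$ together with Lemma \ref{l:L2RPa} applied to $|\nabla Y|$ for the first bound; Lemma \ref{l:RPcL} plus Lemma \ref{l:L2RPa} for \eqref{l:fred1L}; parallel-vector-field kernel from $\int|\nabla Y|^2 e^{-f}=0$; and discreteness from compactness of the resolvent. Where the paper merely cites \cite{CxZh} and \cite{Ta} for the compact-embedding step, you spell out the standard tightness-plus-local-Rellich-plus-diagonal argument, which is exactly what those references supply; this is a fuller writeup but not a different idea. One minor remark: when you apply Lemma \ref{l:L2RPa} ``component-by-component to $\nabla Y$'', what is really needed is the Kato inequality $|\nabla|\nabla Y||\leq|\nabla^2 Y|$ so that $|\nabla Y|$ can be treated as a function, which is what the paper's $\|\nabla|\nabla Y|\|_{L^2}$ term makes explicit.
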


  \begin{proof} 
  The first claim  follows from  the squared triangle inequality and Lemma \ref{l:L2RPa}
  \begin{align}
  	\| \cL \, Y \|_{L^2}^2 \leq 2 \, \| \Delta \, Y \|_{L^2}^2 + 2 \, \| |\nabla f| \, |\nabla Y| \|_{L^2}^2
	\leq 2 \, n \, \| \nabla^2 \, Y \|_{L^2}^2 + 2 \, n \, \| \nabla Y \|_{L^2}^2 + 8 \, \| \nabla |\nabla Y | \|_{L^2}^2 \, . \notag
  \end{align}
 Suppose now that $Y , \cL \, Y \in L^2$.  Lemma \ref{l:RPcL} gives that $Y \in W^{1,2}$ and
\begin{align}	\label{e:fredRPa} 
	\| \nabla Y \|_{L^2}^2 \leq 2 \, \| Y \|_{L^2} \, \| \cL \, Y \|_{L^2} \leq  \| Y \|_{L^2}^2 + \| \cL \, Y \|_{L^2}^2 \, .
\end{align}
The first inequality gives that the $L^2$ kernel of $\cL$ is equal to the space $\cK_{\cL}$ of parallel vector fields.  
Combining \eqr{e:fredRPa}  and  Lemma \ref{l:L2RPa}  gives \eqr{l:fred1L}.
   The estimate \eqr{l:fred1L} implies that the inverse of $\cL$ is a compact symmetric operator, so the eigenvalues of $\cL$ go to infinity and the eigenspaces are finite dimensional (cf. the appendix in \cite{CxZh2} for functions, plus
   Rellich compactness for vector fields).
   \end{proof}

 Below let $\mu$ be an eigenvalue of $\cL$ and 
$E_{\mu}=\{V\in L^{2}\,|\,\cL\,V+\mu\,V=0\}$
the corresponding eigenspace.  
 Recall that the convention is that the operators $\cL$ and $\cP$ have opposite sign.

\begin{Lem}	\label{l:old1030}
We have
 \begin{enumerate}
 \item For each $\mu$, the map $\cP$ maps $E_{\mu}$ to $ E_{\mu}$, is self-adjoint, and has a basis of eigenvectors.  
 \item If $V\in E_{\mu}$ and $\cP\,V=\lambda\,V$, then  $\mu-2\,\lambda\leq \frac{1}{2} $ with equality if and only $\dv_f\,V=0$.
 \item $\cP$ has discrete eigenvalues $\lambda_i \to \infty$ and each eigenspace is finite dimensional.
 \end{enumerate}
\end{Lem}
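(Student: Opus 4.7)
The proof rests on the commutativity $\cP \cL = \cL \cP$ on gradient Ricci solitons (Proposition \ref{p:commPL}) together with the spectral theory of $\cL$ established in Lemma \ref{p:fredholmL}. For (1), I fix an eigenvalue $\mu$ of $-\cL$ with finite-dimensional eigenspace $E_\mu \subset W^{1,2}$. Elliptic regularity makes any $V \in E_\mu$ smooth, and Lemma \ref{l:RPcL} applied first to $V$ and then to $\dv_f V$ shows that $\cP V \in L^2$ (both $\cL V = -\mu V$ and $\dv_f V$ and its gradient are controlled in $L^2$, and $-2 \cP V$ is their algebraic combination by Lemma \ref{l:dvastY}). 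Then $\cL \cP V = \cP \cL V = -\mu \cP V$ by commutativity, so $\cP V \in E_\mu$. Since $\cP$ is self-adjoint (Proposition \ref{p:ongrads}) and $E_\mu$ is a finite-dimensional inner-product space, $\cP|_{E_\mu}$ is a self-adjoint endomorphism and hence diagonalizable with a basis of eigenvectors.

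For (2), I substitute $\cL V = -\mu V$ and $\cP V = \lambda V$ into the shrinker identity of Lemma \ref{l:dvastY} (with $\kappa = \tfrac{1}{2}$) to obtain
\begin{align}
\nabla \, \dv_f V \;=\; \bigl(\mu - 2\lambda - \tfrac{1}{2}\bigr)\, V \;=:\; c\, V \, .
\end{align}
If $c = 0$, then $\dv_f V$ is a constant; since $\int \e^{-f} < \infty$ on a shrinker and integration by parts with standard cutoffs (justified by $V \in L^2$ and $\dv_f V \in L^2$ via Lemma \ref{l:RPcL}) gives $\int \dv_f V \, \e^{-f} = 0$, this constant vanishes, so $\dv_f V = 0$ and $\mu - 2\lambda = \tfrac{1}{2}$. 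If $c \neq 0$ and $V \neq 0$, I set $u = \dv_f V$, so $V = c^{-1} \nabla u$; applying $\dv_f$ to $\nabla u = c V$ yields $\cL u = c u$, and the drift Bochner formula of Lemma \ref{l:boch} then gives $\cL V = c^{-1}\bigl(\nabla \cL u + \tfrac{1}{2} \nabla u\bigr) = (c + \tfrac{1}{2}) V$. Comparing with $\cL V = -\mu V$ forces $c = -\mu - \tfrac{1}{2}$, hence $\lambda = \mu$ and $\mu - 2\lambda = -\mu \leq 0 < \tfrac{1}{2}$ (using $\mu \geq 0$ since $-\cL = \nabla^* \nabla$ on vector fields in the weighted pairing). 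In either case $\mu - 2\lambda \leq \tfrac{1}{2}$, with equality iff $\dv_f V = 0$. The main subtlety lies precisely at $c=0$: on the non-compact shrinker one must carefully leverage the finite weighted volume and the $L^2$ control on $V$ and $\dv_f V$ to rule out a nonzero constant, rather than merely deducing that $\dv_f V$ is locally constant.

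For (3), I decompose any $\cP$-eigenvector $V \in L^2$ with eigenvalue $\lambda$ via the spectral decomposition $L^2 = \overline{\bigoplus_i E_{\mu_i}}$ of $\cL$ from Lemma \ref{p:fredholmL}. Since $\cP$ is self-adjoint and preserves each $E_{\mu_i}$ by (1), its action commutes with the orthogonal projections onto $E_{\mu_i}$, so each component of $V$ in $E_{\mu_i}$ is either zero or itself a $\cP$-eigenvector in $E_{\mu_i}$ with the same eigenvalue $\lambda$. Part (2) then forces these components to vanish unless $\mu_i \leq 2\lambda + \tfrac{1}{2}$. Since $\mu_i \to \infty$, only finitely many $E_{\mu_i}$ can contribute, and each contributes only finitely many eigenvalues by (1). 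Hence the $\cP$-spectrum is discrete with $\lambda_i \to \infty$, and each $\cP$-eigenspace is a finite direct sum of finite-dimensional subspaces, completing the proof.
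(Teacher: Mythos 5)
Your proofs of parts (1) and (3) follow essentially the same route as the paper: commutativity of $\cP$ and $\cL$ from Proposition \ref{p:commPL} to show $\cP$ preserves $E_\mu$, finite-dimensional self-adjointness for diagonalizability, and the $\cL$-spectral decomposition from Lemma \ref{p:fredholmL} together with (1)--(2) to get discreteness and finite multiplicity.

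For part (2), however, you take a genuinely different route from the paper's, and yours is longer than it needs to be. From $\cL\,V=-\mu\,V$, $\cP\,V=\lambda\,V$ and Lemma \ref{l:dvastY} with $\kappa=\tfrac12$, one gets
\begin{align}
	\bigl(2\lambda-\mu+\tfrac12\bigr)\,V \;=\; -\,\nabla\,\dv_f\,(V)\, ,
\end{align}
which both you and the paper derive. The paper then simply takes the weighted $L^2$ inner product of this identity with $V$ and integrates by parts (legitimate since $V,\dv_f\,V\in W^{1,2}$), yielding
\begin{align}
	\bigl(\tfrac12-\mu+2\lambda\bigr)\int|V|^2\,\e^{-f} \;=\; \int|\dv_f\,V|^2\,\e^{-f}\;\geq\;0\, ,
\end{align}
which immediately gives $\mu-2\lambda\leq\tfrac12$ with equality iff $\dv_f\,V=0$. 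Your argument instead splits on whether $c=\mu-2\lambda-\tfrac12$ is zero, handling the $c=0$ case by the finite-volume argument to kill a constant, and the $c\neq 0$ case by showing $V$ is a gradient and hence $\lambda=\mu$ (which is consistent with Proposition \ref{p:ongrads}). This is correct, and it does expose the dichotomy gradient-versus-divergence-free that underlies the orthogonal splitting preserved by $\cP$, but it is considerably more work than the single integration by parts. The subtlety you flag at $c=0$ (ruling out a nonzero constant on the non-compact shrinker) is real in your framing, but the paper's argument sidesteps it entirely: the constant never needs to be identified, since everything is encoded in the sign of $\|\dv_f\,V\|_{L^2}^2$.
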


\begin{proof}
Suppose that $V \in E_{\mu}$.  Lemma \ref{l:RPcL} gives that $V, \dv_f \, V \in W^{1,2}$ and, thus, $\cP \, V \in L^2$ by Lemma \ref{l:dvastY}.
By Proposition \ref{p:commPL},   $\cL\,\cP\,V=\cP\,\cL\,V=-\mu\,\cP\,V$.     It follows that $\cP$ maps $E_{\mu}$ to itself.  The first claim follows from this 
together with that $\cP$ is self-adjoint.

 If   $\cP\,V=\lambda\,V$  and $\cL\,V= -\mu\,V$, then $(2\,\lambda - \mu)\,V=\cL\,V+2\,\cP\,V=-\nabla \,\dv_f\,V- \frac{1}{2} \,V$ by Lemma \ref{l:dvastY}.
Since $V, \dv_f \, V \in W^{1,2}$, 
taking the inner product with $V$ and integrating gives
\begin{align}
\left(\frac{1}{2} -\mu+2\,\lambda\right)\int |V|^2\,\e^{-f}=-\int \langle \nabla \,\dv_f\,V,V\rangle\,\e^{-f}=\int |\dv_f\,V|^2\,\e^{-f} \geq 0\, .
\end{align}
This gives (2).
The third claim follows by combining (1), (2) and Lemma \ref{p:fredholmL}.
\end{proof}

 \begin{proof}[Proof of Theorem \ref{c:elliptic}]
  If $V\in \cK_{\cP}$, then $\int \langle \dv_f \, h , V \rangle \, \e^{-f} = \int \langle h , \dv_f^{*} \, V \rangle \, \e^{-f}=0$.   Therefore, by Lemma \ref{l:old1030}, there exist $a_i \in \RR$ and $L^2$-orthonormal vector fields $V_i$ so that
  $\cP \, V_i = \lambda_i \, V_i$, $0 < \lambda_1 \leq \lambda_2 \leq \dots$, $\lambda_i \to \infty$, $\cL \, V_i = -\mu_i \, V_i$, and $\dv_f \, h = \sum_{i=1}^{\infty} a_i \, V_i$.
Note that 
\begin{align}
	\| \dv_f \, h \|_{L^2}^2 = \sum_i a_i^2 < \infty \, .
\end{align}
Set $Y = \frac{1}{2} \, \sum_{i=1}^{\infty} \frac{a_i}{\lambda_i} \, V_i$, so  $\cP \, Y = \frac{1}{2} \, \dv_f \, h$ weakly and 
\begin{align}
	\| Y \|_{L^2}^2 = \frac{1}{4} \,  \sum_{i=1}^{\infty} \frac{a_i^2}{\lambda_i^2} \leq   \frac{\| \dv_f \, h\|_{L^2}^2}{4\, \lambda_1^2} \, .
\end{align}
Lemma \ref{l:interpcP} then also gives $L^2$ bounds on $\dv_f \, Y$ and $\nabla Y$.  To get the $L^2$ bounds on $\cL \, Y$ (and, thus, also $\nabla \dv_f \, Y$), observe
that (2) in Lemma \ref{l:old1030} gives  $0 \leq \mu_i \leq \frac{1}{2} + 2 \, \lambda_i$, so that
\begin{align}
	\left( \frac{\mu_i}{\lambda_i} \right)^2 \leq   \frac{\left(\frac{1}{2} + 2 \, \lambda_i \right)^2}{\lambda_i^2}  \leq  \frac{ \frac{1}{2} + 8 \, \lambda_i^2}{\lambda_i^2} \leq \frac{1}{2\, \lambda_1^2} + 8 \, .
\end{align}
Since $\cL \, Y = -\frac{1}{2} \, \sum_{i=1}^{\infty} \frac{ a_i \, \mu_i}{\lambda_i} \, V_i$, we see that
	$\| \cL \, Y \|_{L^2}^2 \leq   \left( \frac{1}{8\, \lambda_1^2} + 2 \right)  \, \| \dv_f  \, h \|_{L^2}^2$.  Finally, since $\cP \, Y =
	\frac{1}{2} \, \dv_f \, h$ weakly and $Y , \dv_f \, (Y) \in L^2$, 
	Lemma \ref{l:elliptic} gives that $Y$ is smooth.
 \end{proof}

   \subsection{Inverting the mapping $\cP$}

Let $\cK^{\perp}$ be the $L^2$ orthogonal complement of the space of Killing fields $\cK$.  Given $R>1$, let 
$C^{q,\alpha}_R$ denote $C^{q,\alpha}$  on the set $b< R$ and 
$C^{q,\alpha}_{R,0} \subset C^{q,\alpha}_R$ be the subset with support in $b \leq R$.

We have already   constructed an inverse $\cP^{-1}$ on $\cK^{\perp}$   that  maps $L^2$ to $W^{2,2}$.
These   estimates are in the weighted spaces, so they are  strong in the central region (where the weight is large), but give almost nothing 
far out.  
  The next proposition  shows that $\cP^{-1}$ has polynomially growing estimates.  The proof will be  use
   the $L^2$ estimates on a fixed scale together with   polynomial growth bounds.

\begin{Pro}	\label{p:cPi2A}
Given $q \geq 2$, there exist $C, m$ and a linear map 
 $\cP^{-1}:\cK^{\perp} \cap C^{q,\alpha}_{R,0} \to \cK^{\perp} \cap C^{q+2,\alpha}$ with  
 \begin{align}
	\| \cP^{-1} (Y) \|_{C^{q+2,\alpha}_{2R}} &\leq C \, R^m \,  \| Y \|_{C^{q,\alpha}}  \, , \\
	\int_{b > R-1} |\cP^{-1} (Y)| \, \e^{-f} &\leq C \, R^m \,  \| Y \|_{C^{2,\alpha}} \, \e^{ - \frac{(R-1)^2}{4}} \, .
\end{align}
\end{Pro}

\begin{proof}
Since $Y \in \cK^{\perp}$,  Lemma \ref{l:old1030} gives  $a_i \in \RR$ and $L^2$-orthonormal vector fields $V_i$ so that
  $\cP \, V_i = \lambda_i \, V_i$, $0 < \lambda_1 \leq \lambda_2 \leq \dots$, $\lambda_i \to \infty$, $\cL \, V_i = -\mu_i \, V_i$, and $Y = \sum_{i=1}^{\infty} a_i \, V_i$
with
\begin{align}
	\|  Y \|_{L^2}^2 = \sum_i a_i^2 < \infty \, .
\end{align}
Set $V =  \sum_{i=1}^{\infty} \frac{a_i}{\lambda_i} \, V_i$, so  $\cP \, V=  Y$ weakly and 
\begin{align}
	\| V \|_{L^2}^2 = \frac{1}{4} \,  \sum_{i=1}^{\infty} \frac{a_i^2}{\lambda_i^2} \leq   \frac{\| Y\|_{L^2}^2}{4\, \lambda_1^2} \, .
\end{align}
Lemma \ref{l:interpcP} then also gives $L^2$ bounds on $\dv_f \, V$ and $\nabla V$.  To get the $L^2$ bounds on $\cL \, V$ (and, thus, also $\nabla \dv_f \, V$), observe
that (2) in Lemma \ref{l:old1030} gives  $0 \leq \mu_i \leq \frac{1}{2} + 2 \, \lambda_i$, so that
\begin{align}
	\left( \frac{\mu_i}{\lambda_i} \right)^2 \leq   \frac{\left(\frac{1}{2} + 2 \, \lambda_i \right)^2}{\lambda_i^2}  \leq  \frac{ \frac{1}{2} + 8 \, \lambda_i^2}{\lambda_i^2} \leq \frac{1}{2\, \lambda_1^2} + 8 \, .
\end{align}
Since $\cL \, V = -\frac{1}{2} \, \sum_{i=1}^{\infty} \frac{ a_i \, \mu_i}{\lambda_i} \, V_i$, we see that
	$\| \cL \, V \|_{L^2}^2 \leq   \left( \frac{1}{8\, \lambda_1^2} + 2 \right)  \, \| Y \|_{L^2}^2$.   
	
	Given a point $x$, let $B^x$ be the ball centered at $x$ of radius $r= r_x = (1+b(x))^{-1}$.   Define scale-invariant norms by
	$\| V \|_{C^k, B^x} =  \sum_{i=0}^k \,   r^i \, \sup_{B^x} |\nabla^i V|$ and similarly for $\| V \|_{C^{k,\alpha},B^x}$.
	The operator $\cP$ is elliptic with uniform estimates on the scale $r$, so 
linear elliptic theory  on $B^x$ gives that
\begin{align}	\label{e:scaleholder}
	 \| V \|_{C^{q+2,\alpha}, \frac{1}{2} \, B^x}
	  \leq C \, \left\{   \| Y \|_{C^{q,\alpha},B^x} +  
	 \left( 
	 r^{-n} \, \int_{   B^x} |V|^2
	 \right)^{ \frac{1}{2} }
	\right\} \, .  
\end{align}
 Note that the $L^2$ norm above is the unweighted one.  In combination with the $L^2$ estimates, this gives the desired bound on unit scale (where the exponential weight has a lower bound).  The polynomial growth bounds from Theorem \ref{t:main0a} then give the desired unweighted $L^2$ bounds on the larger scale (where the weight might be very small).{\footnote{It is here where we use that $q \geq 2$, so that we have bounds on $\nabla \dv_f Y$ (see the right-hand side in \eqr{e:ee2a}).}}  This gives a polynomial bound for the second term on the right in \eqr{e:scaleholder} and the first claim follows.  The second claim follows since we have   polynomial growth for the $L^2$ norm on level sets.
 \end{proof}

\section{Jacobi fields and a spectral gap}		\label{s:jacobi}

We need to understand  Jacobi fields on a gradient shrinking soliton $\Sigma$.  A   symmetric $2$-tensor  $h$
  gives a  variation of the metric. 
 Given  $h$ with $\dv_f\, h =0$ and a function $k$, $\phi' = \frac{1}{2} \, L \, h + \Hess_{ \frac{1}{2} \, \Tr \, h - k}$ by \cite{CaHI}.   We will say that $h$ is a Jacobi field if $L \, h =0$ and $\dv_f \, h =0$.  We omit
 $\Hess_{ \frac{1}{2} \, \Tr \, h - k}$ since this will be $L^2$-orthogonal to $L \, h$ when $\dv_f \, h = 0$.

In this section, we will assume that $\Sigma$ splits as a product
\begin{align}	\label{e:assumeM}
	\Sigma = (N^{\ell}, g^1) \times \RR^{n-\ell} \, , 
\end{align}
 where $N$ is Einstein with $\Ric_N = \frac{1}{2} \, g^1$ and $f = \frac{|x|^2}{4}+ \frac{\ell}{2}$  with $x \in \RR^{n-\ell}$.    
 
 We need to understand the spectrum of $\cL$ on $\Sigma$.  The $L^2$ eigenfunctions
  of $\cL$ on $\RR^{n-\ell}$ are polynomials with eigenvalues at $\{ 0 , \frac{1}{2} , 1 , ... \}$.   Let $\cK$ be the $L^2$ kernel  of $\cL+1$ on $\RR^{n-\ell}$.  
Each $v \in \cK$   can be written   $v = a_{ij} x_i x_j - 2\, \Tr \, a$ for a matrix $a_{ij}$ (see, e.g., lemma $3.26$ in \cite{CM2}).    
    The Lichnerowicz theorem says that   $\lambda_1 (N) \geq \frac{\ell}{2(\ell -1)} > \frac{1}{2}$.  It follows 
    that
    \begin{itemize}
    \item If $\cL \, w = -w$ and $w \in L^2$ on $\Sigma$, then $w = \zeta + v$ where $v \in \cK$ is a quadratic polynomial and $\zeta$ is a $1$-eigenfunction
    on $N$.
    \end{itemize}

 There is a natural orthogonal decomposition  of symmetric $2$-tensors 
  \begin{align}	\label{e:decomph012}
  	h = u \, g^1 + h_0 + h_2 \, .
  \end{align}
  Here $u$ is a function on $\Sigma$, $h_0$ is the trace-free part of the projection of $h$ to $N$, and the remainder $h_2$  satisfies $h_2(V,W)=0$ when $V$ and $W$ are both tangent to $N$.    We will see that $L$ preserves this decomposition.  Since $g^1$ is parallel and $\R (g^1) = \Ric_N = \frac{1}{2} \, g^1$, we have
		$L (u\,g^1) = (u + \cL\, u) \, g^1$.
Since $\R$ is zero if any of the indices is Euclidean, we see that 
\begin{align}
	L \,h_2 &= \cL\, h_2 {\text{ and }}  (\cL \, h_2)(V,W) = 0 {\text{ if $V, W$ are both tangent to $N$.}} 
\end{align}
Using that $\cL\, g^1=0$ and $\langle \R (h_0) , g^1 \rangle = \langle h_0 , \R (g^1) \rangle = 0$ gives
\begin{align}
	\langle L \, h_0 , g^1 \rangle = 0 {\text{ and }} (L\, h_0)(V , \cdot ) = 0 {\text{ if $V$ is Euclidean.}}
\end{align}
Thus, using that $L$ preserves this orthogonal decomposition of $h$, we get
\begin{align}
	\left| L \, h \right|^2 &= \left| L \, h_0 \right|^2 + \ell \, (u+ \cL\, u)^2 + \left| \cL\, h_2 \right|^2 \, .  	\label{e:para0}
\end{align}

\vskip1mm
The strong rigidity  will hold when
$\Sigma$ satisfies the condition:
\begin{itemize}
\item[($\star$)] There exists $C_{N,n}$ so that if  $h_0 , L \, h_0 \in L^{2} (\Sigma)$, then  
\begin{align}
	\| h_0 \|_{W^{1,2}}^2 \leq C_{N,n} \, \| L \, h_0 \|_{L^2}^2 \, .	\label{e:estar}
\end{align}  
\end{itemize}

  If $v \in \cK$, then $v\, g^1$ is a Jacobi field.
  Conversely,
  if $N$ satisfies ($\star$),  $h = u \, g^1 + h_0 + h_2$ with $L \, h = 0$, $\dv_f\, h = 0$ and $h \in L^2$, then
  Theorem \ref{t:approx} gives that $h_0 = h_2 = 0$ and $u$ is in $\cK$.

 We will see next that ($\star$) holds for the sphere;   it also holds for the families of symmetric spaces in \cite{CaH} that are   linearly, but not neutrally, stable for 
  Perelman's $\nu$-entropy.

\begin{Lem}	\label{l:sphere}
If $N = \SS^{\ell}_{\sqrt{2\ell}}$, or is  any quotient of $ \SS^{\ell}_{\sqrt{2\ell}}$, then $N$ satisfies ($\star$) with $C_{N,n}=(\ell-1)^2$.
In fact, if $N$ has positive sectional curvature and both $\Delta + 2\, \R$ and $\Delta + 2 \, \R - \frac{1}{2}$ are injective on the space of trace-free symmetric $2$-tensors on $N$, then \eqr{e:estar} holds.
\end{Lem}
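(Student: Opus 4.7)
The plan is to use separation of variables on $\Sigma = N \times \RR^{n-\ell}$ combined with explicit spectral analysis. Since $f = |x|^2/4 + \ell/2$ depends only on $x \in \RR^{n-\ell}$, the Euclidean factor is flat, and all components of $h_0$ are tangent to $N$ (so covariant derivatives in Euclidean directions reduce to partial derivatives), the operator $L$ decouples as
\begin{align}
L\,h_0 \;=\; (\Delta^N + 2\R^N)\,h_0 \;+\; \cL^{\RR^{n-\ell}}\,h_0\,,
\end{align}
where $\cL^{\RR^{n-\ell}} = \Delta_x - \tfrac12\langle x,\nabla_x\cdot\rangle$ is the Ornstein--Uhlenbeck operator applied componentwise. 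The two summands act on disjoint variables and commute, and each is self-adjoint with discrete spectrum.

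Next I would diagonalize jointly. Let $\{\eta_\alpha\}$ be an $L^2(N)$-orthonormal eigenbasis of $\Delta^N + 2\R^N$ acting on trace-free symmetric $2$-tensors, with eigenvalues $-\nu_\alpha$, and let $\{\phi_\beta\}$ be the Hermite eigenfunctions of $\cL^{\RR^{n-\ell}}$ with eigenvalues $-\mu_\beta \in \{0,-\tfrac12,-1,-\tfrac32,\dots\}$, orthonormal in the Gaussian $L^2$. Expanding $h_0 = \sum c_{\alpha\beta}\,\eta_\alpha\,\phi_\beta$ gives
\begin{align}
\|L\,h_0\|_{L^2}^2 \;=\; \sum_{\alpha,\beta}(\nu_\alpha + \mu_\beta)^2\,c_{\alpha\beta}^2, \qquad \|h_0\|_{L^2}^2 \;=\; \sum_{\alpha,\beta}c_{\alpha\beta}^2\,,
\end{align}
so the $L^2$ part of \eqr{e:estar} reduces to a uniform spectral gap $|\nu_\alpha + \mu_\beta| \geq c_0 > 0$. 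For the sphere (or a quotient) with constant sectional curvature $K$, the curvature endomorphism $\R^N$ acts as $-K\cdot\mathrm{id}$ on trace-free symmetric $2$-tensors, so the Rayleigh quotient gives $\nu_\alpha \geq 2K$; combined with $\mu_\beta \geq 0$ this yields $|\nu_\alpha+\mu_\beta| \geq 2K$ and, after tracking the value of $K$, the constant $(\ell-1)^2$. In the general positive-sectional-curvature case, the two injectivity hypotheses on $\Delta + 2\R$ and $\Delta + 2\R - \tfrac12$ exactly exclude $\nu_\alpha \in \{0,-\tfrac12\}$, handling the possible cancellations at $\mu_\beta \in \{0,\tfrac12\}$, while positive sectional curvature bounds $\nu_\alpha$ from below and rules out $\nu_\alpha = -\mu_\beta$ for the remaining $\mu_\beta \geq 1$.

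Finally, to promote the $L^2$ bound to $W^{1,2}$, integration by parts against $-\cL = \nabla^*\nabla$ combined with $\cL = L - 2\R^N$ gives
\begin{align}
\|\nabla h_0\|_{L^2}^2 \;=\; -\int\langle h_0,\,L h_0\rangle\,e^{-f} \;+\; 2\int\langle h_0,\,\R^N h_0\rangle\,e^{-f}\,,
\end{align}
where on the sphere the last term equals $-2K\,\|h_0\|_{L^2}^2 \leq 0$, so Cauchy--Schwarz and the previous step deliver the $W^{1,2}$ estimate. The first clause of ($\star$), that $1$ is not an eigenvalue on $N$, is handled by direct inspection of the spherical harmonic spectrum of $-\Delta^N$. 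The main obstacle is the uniform spectral gap in the general positive-sectional-curvature setting: while the two injectivity hypotheses precisely cover the $\mu_\beta \in \{0,\tfrac12\}$ modes, excluding $\nu_\alpha = -\mu_\beta$ for every $\mu_\beta \geq 1$ requires combining positive sectional curvature (to bound $\nu_\alpha$ below) with the discreteness and boundedness-below of the spectrum of $\Delta + 2\R$.
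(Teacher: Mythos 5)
Your proposal is correct and, modulo presentation, is essentially the paper's argument. For the sphere both proofs use the explicit constant-curvature formula for $\R$ on trace-free $2$-tensors and an integration-by-parts estimate with one absorbing inequality; choosing the Young coefficient sharply (as the paper does) recovers the exact constant $(\ell-1)^2$, which your version also produces once the spectral gap $\nu_\alpha+\mu_\beta\geq 2K=\tfrac{1}{\ell-1}$ is fed through the identity $\|\nabla h_0\|^2_{L^2}+2K\|h_0\|^2_{L^2}=-\int\langle h_0,L h_0\rangle\,\e^{-f}$.  For the positive-curvature case, where you diagonalize $L=(\Delta^N+2\R^N)+\cL^{\RR^{n-\ell}}$ jointly and inspect the eigenvalue sum $\nu_\alpha+\mu_\beta$, the paper instead differentiates $L h_0=0$ once and twice in the Euclidean directions, getting $(L-\tfrac12)(\nabla_{\partial_i}h_0)=0$ and $(L-1)(\nabla_{\partial_j}\nabla_{\partial_i}h_0)=0$, then uses Bourguignon--Karcher ($2\R<1$ when $K_N>0$) to show $L-1$ is injective, so $h_0=h_0^0+\sum x_i\,h_0^i$, and finally kills $h_0^0$ and the $h_0^i$ by the two injectivity hypotheses. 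These are the same computation viewed through different lenses: your $\mu_\beta\in\{0,\tfrac12\}$ Hermite modes correspond precisely to the paper's $h_0^0$ and $h_0^i$, and your use of $K_N>0$ to force $\nu_\alpha>-1$ is exactly the paper's nontrivial-kernel argument for $L-1$. Your presentation has the small advantage of making it transparent why there are exactly two injectivity hypotheses.  One point worth stating more carefully in both arguments is the upgrade from a pointwise-nonzero gap to a uniform one: this follows because $\Delta^N+2\R$ has discrete spectrum bounded below, $\mu_\beta\in\tfrac12\,\ZZ_{\geq0}$, and hence only finitely many pairs $(\alpha,\beta)$ put $\nu_\alpha+\mu_\beta$ in any bounded set.
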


\begin{proof}
We have
 $\R_{ikjn}^N = \frac{1}{2(\ell -1)} \,  \left( g^1_{ij}\, g^1_{kn} - g^1_{in}\, g^1_{kj} \right)$ and, thus, 
 $\R (h_0)_{ij} =    - \frac{1}{2(\ell -1)} \, (h_0)_{ij}$.
Since  $L = \cL + 2 \, \R$ and $2\, \R (h_0)  =    - \frac{1}{\ell -1} \, h_0$, we get that
\begin{align}
	\|\nabla h_0\|_{L^2}^2  &= - \int \langle h_0 , \cL\, h_0 \rangle \, \e^{-f} =  - \frac{1}{(\ell -1)} \int |h_0|^2 \, \e^{-f}  +
	 \int \langle h_0 , \left(\frac{1}{(\ell -1)} -  \cL  \right)h_0 \rangle \, \e^{-f}  \notag \\
	&\leq - \frac{1}{2(\ell -1)} \| h_0 \|_{L^2}^2   + \frac{\ell - 1}{2}  \, \| L \, h_0 \|_{L^2}^2  \, ,  
\end{align}
where the last inequality used the absorbing inequality $a\,b\leq  \frac{a^2}{2\,(\ell -1)}  +  \frac{\ell -1}{2}\, b^2$.  This gives \eqr{e:estar}.  

We turn to the second claim.   Since $L$ preserves the decomposition and $\R$ is bounded, it has a spectral decomposition and it suffices
to show that if $L \, h_0 = 0$ and $\| h_0 \|_{L^2} < \infty$, then $h_0$ vanishes.
 Since $K_N > 0$, proposition $4.9$ in \cite{BK} gives that the largest eigenvalue of $\R$ acting on $h_0$ (at each point) is at most
 $\frac{1}{2} - \ell \, \min K_N  < \frac{1}{2}$.
Thus, $2\, \R - 1$ is a negative operator on the trace-free symmetric $2$-tensors and, thus, $L - 1$ has trivial $L^2$ kernel.
Let $\partial_i$ and $\partial_j$ be  $\RR^{n-\ell}$ derivatives.  Since $L \, h_0 = 0$, we have
\begin{align}
	0 = \nabla_{\partial_i} \, (L \, h_0) = L \, (\nabla_{\partial_i}h_0) - \frac{1}{2} \,  (\nabla_{\partial_i} h_0)  
	{\text{ and }}    (L - 1) \, (\nabla_{\partial_j} \, \nabla_{\partial_i}h_0) = 0 \, .
\end{align}
Consequently,   $\nabla_{\partial_j} \, \nabla_{\partial_i}h_0 = 0$ and, thus, 
	$h_0 = h_0^0 + \sum a_i \, x_i \, h_0^i$, where
  $h_0^0$ and the $h_0^i$'s are symmetric $2$-tensors on $N$.
It follows that $(\Delta + 2\, \R ) (h_0^0) = 0$ and   $(\Delta + 2\, \R  - \frac{1}{2}) (h_0^i) = 0$
\end{proof}

We will use the next Poincar\'e inequality for vector fields  tangent to $N$:

\begin{Lem}	\label{l:poinS}
There is a constant $C=C(N)$ so that if $V$ is tangent to $N$ and $\| V \|_{W^{1,2}} < \infty$, then $\| V (1+|\nabla f|) \|_{L^2} \leq C \, \| \nabla V \|_{L^2}$.
\end{Lem}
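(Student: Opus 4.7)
The plan is to combine a slicewise Poincar\'e inequality on the compact Einstein factor $N$ with the weighted Gaussian-type inequality already supplied by Lemma \ref{l:L2RPa}. Since $(1+|\nabla f|)^2 \leq 2+2|\nabla f|^2$, it suffices to control both $\int_\Sigma |V|^2 \e^{-f}$ and $\int_\Sigma |V|^2 |\nabla f|^2 \e^{-f}$ by $\|\nabla V\|_{L^2}^2$.

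First, fix $x\in\RR^{n-\ell}$ and view $V(\cdot,x)$ as a vector field on $N$. Because $N$ is compact Einstein with $\Ric_N=\frac{1}{2}g^1$, the Bochner formula applied to any parallel vector field $W$ on $N$ gives
\begin{align}
0=\tfrac{1}{2}\Delta|W|^2=|\nabla W|^2+\Ric_N(W,W)\geq \tfrac{1}{2}|W|^2,
\end{align}
so $W\equiv 0$. Hence the rough Laplacian on vector fields on $N$ has trivial kernel, and compactness of $N$ together with the self-adjointness of $-\Delta_r$ yields a positive spectral gap $\lambda_1=\lambda_1(N)>0$ for which
\begin{align}
\lambda_1\int_N |W|^2\leq \int_N|\nabla^N W|^2
\end{align}
for every vector field $W$ on $N$. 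Apply this to $V(\cdot,x)$ and integrate against $\e^{-f(x)}$ in $x$. The product structure of $\Sigma$ and the fact that $V$ is tangent to $N$ give $|\nabla V|^2=|\nabla^N V|^2+\sum_i|\partial_{x_i}V|_N^2\geq |\nabla^N V|^2$ pointwise, so
\begin{align}
\int_\Sigma |V|^2\e^{-f}\leq \lambda_1^{-1}\int_\Sigma|\nabla V|^2\e^{-f}.
\end{align}

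Second, Lemma \ref{l:L2RPa} applied to the vector field $V$ on the shrinker $\Sigma$ immediately yields
\begin{align}
\int_\Sigma|V|^2|\nabla f|^2\e^{-f}\leq 4\int_\Sigma|\nabla V|^2\e^{-f}+n\int_\Sigma|V|^2\e^{-f}.
\end{align}
Substituting the first bound into the second and assembling $(1+|\nabla f|)^2\leq 2+2|\nabla f|^2$ gives the claim with $C=C(N,n)=\sqrt{2\lambda_1^{-1}+8+2n\lambda_1^{-1}}$; since $N$ determines $\ell$, the constant depends only on $N$ (and the ambient $n$).

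The only step requiring thought is the slicewise Poincar\'e bound, which in turn reduces to ruling out parallel vector fields on $N$; the Bochner computation above accomplishes this in one line thanks to $\Ric_N>0$. Everything else is a direct application of Lemma \ref{l:L2RPa} and the split product structure.
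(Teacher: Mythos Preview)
Your proof is correct and follows essentially the same route as the paper: a slicewise Poincar\'e inequality on the compact factor $N$ (using $\Ric_N>0$ to rule out parallel vector fields and hence obtain a spectral gap for the connection Laplacian), followed by Lemma~\ref{l:L2RPa} to absorb the $|\nabla f|^2$ weight. The paper phrases the vanishing of parallel fields via ``no harmonic one-forms when $\Ric_N>0$'' rather than your direct Bochner computation, but the argument is the same.
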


\begin{proof}
Let $\lambda$ be the smallest eigenvalue of $\Delta$ acting on vector fields on $N$.  
Since $\Ric_N>0$, $N$ has no nontrivial harmonic one forms and thus no parallel vector fields, we have therefore $\lambda > 0$.
Given $x \in \RR^{n-\ell}$, let $V_x$ be the restriction of $V$ to $N_x=N\times \{x\}$.  We get that
\begin{align}	\label{e:poina}
	\int |V|^2 \, \e^{-f} = \int_{\RR^{n-\ell}} \int_{N_x} |V_x|^2 \, \e^{-f} \leq \frac{1}{\lambda} \, \int_{\RR^{n-\ell}} \int_{N_x}  |\nabla_{N_x} V_x|^2 \, \e^{-f} \leq 
	\frac{1}{\lambda} \, \int   |\nabla  V|^2 \, \e^{-f}  \, .
\end{align}
 The lemma follows from this and Lemma \ref{l:L2RPa}.
\end{proof}

We see that $h$ is well-approximated by a Jacobi field when $L \, h$  and $\dv_f \, h$ are  small.
   
\begin{Thm}	\label{t:approx}
There exists $C$ so that 
if $ h,\dv_f \, h  \in W^{2,2}$ and $v \, g^1$ is the $L^2$ projection of $h$ to $\cK \, g^1$, then 
   \begin{align}
   	 \| h - v g^1 \|_{W^{2,2}} \leq C \, \left\{ \| L \, h \|_{L^2}  + \| \dv_f \, h \|_{L^2} \right\}   \, .
   \end{align}
   \end{Thm}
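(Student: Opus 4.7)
The plan is to use the orthogonal decomposition \eqr{e:decomph012} together with the pointwise identity \eqr{e:para0}. Since $v \in \cK$ satisfies $(1+\cL)v = 0$ and depends only on the Euclidean factor, $v g^1$ is already a Jacobi field: $L(v g^1) = 0$ and $\dv_f\,(v g^1) = \nabla^N v = 0$. So replacing $h$ by $h' := h - v g^1$ changes neither $L h$ nor $\dv_f\,h$, and it suffices to prove
\begin{align}
\|h'\|_{W^{2,2}} \leq C \, \left( \|L h'\|_{L^2} + \|\dv_f \, h'\|_{L^2} \right)
\end{align}
for $h' = w g^1 + h_0 + h_2$, where $w := u - v$ is $L^2$-orthogonal to $\cK$.

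I would estimate the three pieces of $h'$ separately. By \eqr{e:para0}, each of $\|L h_0\|_{L^2}$, $\sqrt{\ell}\,\|(1+\cL)w\|_{L^2}$, and $\|\cL h_2\|_{L^2}$ is dominated by $\|L h'\|_{L^2}$. For $h_0$, assumption $(\star)$ directly gives $\|h_0\|_{W^{1,2}} \leq C\,\|L h'\|_{L^2}$, and a Bochner/interpolation argument in the spirit of Lemma \ref{l:RPcL} (valid because $\Sigma$ has bounded curvature and $L = \cL + 2\,\R$ with $\R$ bounded) then upgrades this to the $W^{2,2}$ bound. For $w$: by $(\star)$ and separation of variables, $\cK$ is the complete $L^2$ kernel of $(1+\cL)$ on $\Sigma$; since $\cL$ has discrete spectrum (Lemma \ref{p:fredholmL}), there is a positive spectral gap, so $(1+\cL)$ is bounded below on $\cK^\perp$ and $\|w\|_{L^2} \leq C\,\|L h'\|_{L^2}$, which Lemma \ref{l:RPcL} then promotes to $W^{2,2}$.

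The main obstacle is the $h_2$ piece, where $\cL$ has a nontrivial $L^2$ kernel. An integration-by-parts Bochner argument shows that if $\cL h_2 = 0$ and $h_2 \in L^2$, then $h_2$ is parallel; since $\Ric_N > 0$ forbids nontrivial parallel vector fields on $N$ (cf.\ Lemma \ref{l:poinS}), the mixed $N$-Euclidean part vanishes and the $L^2$ kernel $\mathcal{N}_2$ is the finite-dimensional space of constant-coefficient tensors $\sum_{a,b} c_{ab}\, dx^a \, dx^b$. Split $h_2 = h_2^0 + h_2^\perp$ with $h_2^0 \in \mathcal{N}_2$. The spectral gap of $\cL$ on $\mathcal{N}_2^\perp$ controls $\|h_2^\perp\|_{W^{2,2}}$ by $\|\cL h_2\|_{L^2}$. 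For the finite-dimensional part, the direct calculation $\dv_f \, h_2^0 = -\tfrac{1}{2} \sum_{a,b} c_{ab}\, x_b \, \partial_a$ shows $\dv_f$ is injective on $\mathcal{N}_2$, hence norm-equivalent. Finally, direct computation gives $\dv_f(w g^1) = \nabla^N w$ and $\dv_f \, h_0$ tangent to $N$, so the Euclidean component of $\dv_f\, h'$ comes entirely from $h_2$; thus $\|\dv_f \, h_2^0\|_{L^2} \leq \|\dv_f \, h'\|_{L^2} + \|\dv_f \, h_2^\perp\|_{L^2}$, and the second term is already absorbed by the $W^{1,2}$ bound on $h_2^\perp$. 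Combining the three estimates yields the theorem.
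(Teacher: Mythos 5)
Your proof is correct and follows the same overall strategy as the paper: decompose $h = u\,g^1 + h_0 + h_2$, invoke $(\star)$ for $h_0$, use the spectral gap of $\cL + 1$ on $\cK^{\perp}$ for $u - v$, and then isolate and control the finite-dimensional piece of $h_2$ via $\dv_f$. The only real difference is bookkeeping in the $h_2$ step: the paper first splits $h_2 = h_2^N + h_2^{\perp}$ geometrically (mixed vs.\ purely Euclidean), dispatches the mixed part with the Poincar\'e inequality of Lemma \ref{l:poinS}, and then subtracts the mean $a$ from $h_2^{\perp}$, using the explicit identity $\|a(\nabla f)\|_{L^2}^2 = \tfrac{1}{2}|a|^2 \int \e^{-f}$ to convert the $\dv_{f}$ bound into a bound on $|a|$; you instead identify the $L^2$ kernel $\mathcal{N}_2$ of $\cL$ on $h_2$-type tensors (via the Bochner argument that $L^2$ plus $\cL T=0$ forces $T$ parallel, and $\Ric_N>0$ kills the mixed part), bound the $\mathcal{N}_2^{\perp}$ piece by a spectral gap, and bound the $\mathcal{N}_2$ piece by the injectivity of $\dv_{f}$ on that finite-dimensional space. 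These are the same estimate in slightly different clothing; your version is arguably cleaner conceptually, while the paper's is more explicit about the constant. Two small points worth making explicit if you write this up: (i) the ``spectral gap of $\cL$ on $\mathcal{N}_2^{\perp}$'' needs the discreteness of the spectrum of $\cL$ on symmetric $2$-tensors (Lemma \ref{p:fredholmL} is stated for vector fields, but the argument carries over verbatim), and (ii) the upgrade from $W^{1,2}$ to $W^{2,2}$ should be spelled out as in the paper, using that $\cL(h - v\,g^1)$ is controlled and the curvature of $\Sigma$ and $\Hess_{\bar f}$ are bounded.
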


\begin{proof}
The tensor $h$ has a (pointwise) orthogonal decomposition $h = u\,g^1 + h_0 + h_2$.      We will bound  $h_0$, $h_2$ and $(u-v)$ in  $W^{1,2}$ to control  $\| h-v\,g^1 \|_{W^{1,2}}$  in terms of $\| L\,h \|_{L^2} + \| \dv_f \, h \|_{L^2}$.  Since $\cL\, (h-v\,g^1) $ can be bounded by these and the   curvature of the cylinder and $\Hess_f$  are bounded, we then also get the  desired bound on  $\| h-vg^1 \|_{W^{2,2}}$  (cf. ($3.19$) in \cite{CM2}).

Using \eqr{e:para0}, we have that
\begin{align}	\label{e:para0A}
	\ell \, \| (\cL + 1) \, u \|_{L^2}^2 + \| L \, h_0 \|_{L^2}^2 + \| \cL \, h_2 \|^2_{L^2} = \| L \, h \|_{L^2}^2 \, .
\end{align}
The bound $\| h_0 \|_{W^{1,2}}$ follows immediately from this and ($\star$).
To control $h_2$, start by noting that there is a further orthogonal decomposition $h_2 = h_2^N + h_2^{\perp}$ that is also preserved by $\cL$, where $h_2^{\perp}$ is the purely Euclidean part.  In a block decomposition for a frame, $h_2^N$ consists of two off-diagonal parts that are transposes of each other.
Applying the part of $h_2^N$ that maps the Euclidean factor to $N$ to each Euclidean derivative $\partial_i$ to get a vector field tangent to $N$ and applying Lemma \ref{l:poinS} gives  
\begin{align}	\label{e:h2Nbd}
	\| h_2^N \|_{L^2}^2 \leq C \, \| \nabla h_2^N \|_{L^2}^2  = C \, \int \langle - h_2^N , \cL \, h_2^N \rangle \, \e^{-f} \leq
	C \, \| h_2^N \|_{L^2} \, \| \cL \, h_2 \|_{L^2} \, ,
\end{align}	
where the last inequality used the Cauchy-Schwarz inequality and the fact that $\cL$ preserves the decomposition of $h_2$.  It follows from this 
and \eqr{e:para0A} that $\| h_2^N \|_{W^{1,2}} \leq C \, \| L \, h \|_{L^2}$.

 It remains to bound $h_2^{\perp}$ and $(u-v)$; this is where we will also need the bound on $\dv_f \, h$.  Let $a_{ij}$ be a symmetric constant $(n-\ell)$ matrix so that $\int (a - h_2^{\perp}) \, \e^{-f} = 0$.  The Poincar\'e inequality on $\Sigma$ gives that
$\| a- h_2^{\perp} \|_{W^{1,2}} \leq  C \, \| \cL \, h_2^{\perp} \| \leq C \, \| L \, h \|$.   Next, let $\zeta$ be the projection of $u$ onto the $1$-eigenspace of $N$
(if this is empty, set $\zeta = 0$).  The spectral gap on $\Sigma$ gives that
$\| u - v - \zeta \|_{ W^{1,2}} \leq C \, \| (\cL +1) \, u \|_{L^2}$.
The desired bounds  on $h_2^{\perp}$ and $(u-v)$ will follow once
we bound $|a|$ and $\zeta$.   The triangle inequality and the bounds thus far give
\begin{align}	
	\| \dv_f (a + \zeta \, g^1) \|_{L^2} \leq \| \dv_f \, h \|_{L^2} + \| \dv_f ( h - a - \zeta \, g^1) \|_{L^2} \leq \| \dv_f \, h \|_{L^2} + C\, \| L \, h \|_{L^2} \, .
	\notag
\end{align}
Since $a$ is purely Euclidean  and $\dv_f (a + \zeta \, g^1) = \nabla \zeta - a(\nabla f)$ , we have
\begin{align}
	\| \nabla \zeta \|_{L^2}^2 + \| a (\nabla f) \|_{L^2}^2 = \| \dv_f (a + \zeta \, g^1) \|_{L^2}^2 \leq 2\,  \| \dv_f \, h \|_{L^2}^2 + C\, \| L \, h \|_{L^2}^2 \, . 
\end{align}
Since $\zeta$ has eigenvalue one, this gives the desired bound $W^{1,2}$ bound on $\zeta$.  
It also gives the bound on $|a|$ since $\int f_j \, f_k \, \e^{-f} = \frac{\delta_{jk}}{2} \, \int e^{-f} $, so that
\begin{align}
	\| a(\nabla f)\|_{L^2}^2  = \sum_i \left\| \sum_j  a_{ij} \, f_j \right\|_{L^2}^2 =\sum_{i,j,k}  \int a_{ij} \, a_{ik} \, f_j \, f_k \, \e^{-f} = \frac{1}{2} \, |a|^2 \, \int \e^{-f} \, .    \, \, \, \, \, \, \, \, \, \, \, \, \, \, \qedhere
\end{align}
     \end{proof}

\begin{Lem}	\label{l:polygrow}
There exists $C_n , C > 0$ depending on $n$ so if $v\in \cK$ on $\RR^n$, then
\begin{align}
	 |v| +  (1+|x|^2)  \,  |\Hess_v| &\leq  C_n \, (1+|x|^2)  \,  \| v \|_{L^2} {\text{ and }}
	  |\nabla v| \leq C_n \, |x| \, \| v \|_{L^2} \, . \label{e:immgives} 
\end{align}
Furthermore, $2\, \| \Hess_{v} \|_{L^2}^2 =   \| \nabla v \|_{L^2}^2 =    \|  v \|_{L^2}^2$
and $u =|\nabla v|^2-\Delta\,|\nabla v|^2$ is in $\cK$ with
 \begin{align}   \label{e:secondorder}
	 \int u \, |\nabla v|^2 \, \e^{-f}  = \| u \|_{L^2}^2 \geq C \, \| v \|_{L^2}^4\, .
 \end{align}
\end{Lem}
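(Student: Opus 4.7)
The plan is to use the explicit description of $\cK$ as the space of polynomials $v = a_{ij}\,x_i\,x_j - 2\,\Tr\,a$ for symmetric matrices $a$ (already cited in the paper), and to deduce all six assertions by a mix of integration by parts against the Gaussian and direct computation.

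First I would establish the pointwise bounds \eqref{e:immgives}. Since $v$ is a quadratic polynomial with $\Hess_v = 2a$ (constant) and $\nabla v = 2\,a(x)$, the pointwise inequalities reduce to $|a|\le C_n\,\|v\|_{L^2}$. This equivalence between $|a|$ and $\|v\|_{L^2}$ on the Gaussian follows from a short calculation with Gaussian moments; indeed, if one diagonalizes $a = \diag(\lambda_1,\dots,\lambda_n)$ and writes $v = \sum \lambda_i (x_i^2 - 2)$, the orthogonality of the Hermite-type polynomials $x_i^2 - 2$ under $\e^{-f}$ gives $\|v\|_{L^2}^2 = 8\,|a|^2 \int \e^{-f}$.

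Next I would prove the three $L^2$ identities in one go. Since $\cL v = -v$, integration by parts gives $\|\nabla v\|_{L^2}^2 = -\int v\,\cL v\,\e^{-f} = \|v\|_{L^2}^2$. Applying the drift Bochner formula \eqref{e:commuta} on Euclidean space (where $\Ric = 0$ and $\Hess_f = \tfrac{1}{2}g$), we get $\cL\,\nabla v = -\tfrac{1}{2}\,\nabla v$, so integrating by parts once more yields $\|\Hess_v\|_{L^2}^2 = -\int \langle \nabla v,\cL\,\nabla v\rangle\,\e^{-f} = \tfrac{1}{2}\|\nabla v\|_{L^2}^2$, establishing $2\|\Hess_v\|^2=\|\nabla v\|^2=\|v\|^2$.

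For the assertion $u\in\cK$, a direct computation shows $|\nabla v|^2 = 4\,(a^{\!\top}a)_{ij}\,x_i\,x_j$ and $\Delta|\nabla v|^2 = 8\,|a|^2$ is constant, so $u = 4(a^{\!\top}a)_{ij}\,x_i\,x_j - 2\,\Tr(4\,a^{\!\top}a)$, which is precisely the canonical form of an element of $\cK$ with matrix $4\,a^{\!\top}a$. Now for the integral identity, note that every element of $\cK$ is $L^2$-orthogonal to constants with respect to $\e^{-f}$ (either because $\int \cL u\,\e^{-f}=0$ by integration by parts, or because $u$ is an eigenfunction of $\cL$ with eigenvalue $-1$ while constants have eigenvalue $0$). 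Since $|\nabla v|^2 = u + \Delta|\nabla v|^2$ and the second term is constant, $\int u\,|\nabla v|^2\,\e^{-f} = \int u^2\,\e^{-f} = \|u\|_{L^2}^2$.

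The only quantitative step left is the lower bound $\|u\|_{L^2}^2 \geq C\,\|v\|_{L^2}^4$. After diagonalizing $a$, the same Hermite-orthogonality computation gives $\|u\|_{L^2}^2 = 128\,(\sum \lambda_i^4)\int \e^{-f}$, while $\|v\|_{L^2}^4 = 64\,(\sum \lambda_i^2)^2(\int \e^{-f})^2$. The Cauchy–Schwarz inequality $(\sum \lambda_i^2)^2 \leq n\,\sum \lambda_i^4$ then finishes the job with $C = 2/(n\int \e^{-f})$. The only mildly delicate part of the whole argument is bookkeeping the Gaussian constants; everything else is algebraic identities for quadratic polynomials and eigenfunction orthogonality.
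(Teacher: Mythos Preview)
Your proof is correct. The overall structure matches the paper's: both use the explicit form $v=a_{ij}x_ix_j-2\,\Tr\,a$, both obtain the $L^2$ identities from the drift Bochner formula (the paper cites \eqref{e:e326}), and both derive the equality $\int u\,|\nabla v|^2\,\e^{-f}=\|u\|_{L^2}^2$ from the orthogonality of $\cK$ to constants. The one genuine difference is in the lower bound $\|u\|_{L^2}^2\ge C\,\|v\|_{L^2}^4$. The paper argues softly: the map $\Psi(w)=w-\Delta w$ from homogeneous quadratics to $\cK$ is linear and injective on a finite-dimensional space, so $\|w\|_{L^1}\le C_0\,\|\Psi(w)\|_{L^1}$; applying this with $w=|\nabla v|^2$ and then Cauchy--Schwarz on $\|u\|_{L^1}$ gives the bound. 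You instead diagonalize $a$ and compute everything explicitly, reducing the inequality to $(\sum\lambda_i^2)^2\le n\sum\lambda_i^4$. Your route has the small advantage of producing the explicit constant $C=2/(n\int\e^{-f})$; the paper's route is slightly more robust in that it would adapt without change if $\cK$ were replaced by any finite-dimensional eigenspace.
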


\begin{proof}
By lemma $3.26$ in \cite{CM2}, $v = a_{ij} x_i x_j - 2\,a_{ii}$ for a constant matrix $a_{ij}$. 
This gives \eqr{e:immgives} and also that
$|\nabla v|^2$ is a homogeneous quadratic polynomial.  
 Since $\cL \, v = - v$,  \eqr{e:e326} (using the drift Bochner formula) gives  	$\| \Hess_{v} \|_{L^2}^2 =  \frac{1}{2} \, \| \nabla v \|_{L^2}^2 =   \frac{1}{2} \, \|  v \|_{L^2}^2$.
 
 Let $\cQ$ be the space of homogeneous quadratic polynomials and define the linear map 
$\Psi(w)=w- \Delta \, w$.  We will show that $\Psi$ maps to $\cK$.  For each $w \in \cQ$, 
 there is a constant $c$ so that  $w- c \in \cK$; since $\cK$ is orthogonal to constants,   $\int (w - c) \, \e^{-f} = 0$.  Using homogeneity again, we have $r\, \partial_r \, w = 2 \, w$ so that
	$\cL \, w= \Delta \, w - \frac{r}{2} \, \partial_r \, w =  \Delta \, w - w$.
 It follows that $c=\Delta \, w$ and, thus,  $\Psi (w) \in \cK$.
 Since $\Delta \, w$ is a constant while $w$ is second order,  $\Psi$ is one to one.  Thus, since $\cQ$ is finite dimensional,
 there is a constant $C_0$ such that for any $w\in \cQ$
\begin{align}    \label{e:so2}
	\|w\|_{L^1}\leq C_0\,  \| \Psi (w) \|_{L^1} = C_0 \, \left\|w- \Delta \, w \right\|_{L^1}\, .
\end{align}
 The equality in \eqr{e:secondorder} uses that $\cK$ is orthogonal to constants.  Applying \eqr{e:so2} with $w= |\nabla v|^2$
 and $\Psi (w) = u$ gives $\| v \|_{L^2}^2 = \| |\nabla v|^2 \|_{L^1} \leq C_0 \, \| u \|_{L^1} $.  The Cauchy-Schwarz inequality then gives
 the inequality in \eqr{e:secondorder}.
\end{proof}

Finally, we will need a   simpler estimate for metrics on $N$:

\begin{Lem}	\label{l:Ntriv}
Suppose that ($\star$) holds. There exists $C_N$ so that if $h$ is a symmetric $2$-tensor on $N$ and $\dv_N \, h = 0$, then 
\begin{align}
	\| h \|_{W^{2,2}} \leq C_N \, \| L_N \, h \|_{L^2} \, .
\end{align}
\end{Lem}

\begin{proof}
Since $L_N$ is elliptic and $N$ is compact, it suffices to show that $L_N \, h = 0$ and $\dv_N \, h = 0$ implies that $h=0$.  The trace-free part of $h$ vanishes by ($\star$), so we can assume that $h = w \, g^1$.  Then being in the kernel of $L_N$ forces $w$ to be a $1$-eigenfunction on $N$, but being divergence-free forces $w$ to be constant -- so $w\equiv 0$.
\end{proof}

\begin{Cor}	\label{r:specialcase}
Suppose that ($\star$) holds.  There exists $\delta_0 > 0$ so that if $f$ is a function and 
  $h$ is a symmetric $2$-tensor on $N$ with
$\dv_N \, h = 0$ and $\| h \|_{C^{2}} + \| \nabla f \|_{C^1} \leq \delta_0$, then  
\begin{align}
	\| h \|_{W^{2,2}} + \| \nabla f \|_{W^{1,2}}  \leq C \, \| \phi(h,f) \|_{L^2} \, , 
\end{align} where $\phi (h,f)$ is the shrinker quantity for $(N,g^1+h , f)$.
\end{Cor}

\begin{proof}
The linearization of the $\phi(h,f)$ is given by $\frac{1}{2} \, L_N \, h + \Hess_{ \frac{1}{2} \, \Tr \, h - f} + \dv_N^* \, \dv_N \, h$.  The last term vanishes here since $\dv_N \, h = 0$.  It follows that
\begin{align}	\label{e:Taaa}
	\left| \frac{1}{2} \, L_N \, h + \Hess_{ \frac{1}{2} \, \Tr \, h - f} \right| \leq |\phi (h,f)| + C \, ([h]_2^2 + [h]_2 \, [\nabla f]_1 ) \, , 
\end{align}
where $[h]_2$ is the pointwise $C^2$ norm of $h$.  Since $\dv_N \, h = 0$ and $N$ is Einstein, Theorem \ref{c:BHa} gives that $L_N \, h$ is $L^2$-orthogonal to any Hessian. 
Combining this with \eqr{e:Taaa} gives that
\begin{align}
	\frac{1}{4} \, \| L_N \, h \|_{L^2}^2 + \| \Hess_{ \frac{1}{2} \, \Tr \, h - f} \|_{L^2}^2 \leq 2 \, \| \phi (h,f) \|_{L^2}^2 +  C\, \delta_0 \, \left( \| h \|_{W^{2,2}}^2 + \| \nabla f \|_{W^{1,2}}^2 \right) 
\end{align}
Combining this with Lemma \ref{l:Ntriv} gives the desired bound on $h$ when $\delta_0 > 0$ is small.   
\end{proof}

 \section{Eigenfunctions and almost parallel vector fields}	\label{s:Eigen}
 
 The main result of this section is an extension theorem which shows that if a shrinker is close to a model shrinker on some large scale, then it remains close on a larger scale with a loss in the estimates.  To explain this,
 let $\Sigma = (N^{\ell} \times \RR^{n-\ell}, \bar{g} , \bar{f})$ be the model gradient shrinking soliton, where $N$ is closed and Einstein, $x_i$ are Euclidean coordinates, and $\bar{f} = \frac{|x|^2}{4} + \frac{\ell}{2}$.  
 
Let $\delta_0 , a_0 > 0$ be   constants that are sufficiently small (to be chosen) and fix $\alpha \in (0,1)$. We consider two   notions of closeness for a
  gradient shrinking    $(M,g,f)$:
\begin{itemize}
\item[($\star_R$)]	  There is a diffeomorphism $\Phi_R$ from a subset of $\Sigma$ to   $M$ onto $\{ b < R \}$ so that
$   \| \bar{g} - \Phi_R^* \, g \|_{W^{2,2}}^2 + \| \bar{f} - f \circ \Phi_R \|^2_{W^{2,2}}     \leq  \e^{ - \frac{ R^2}{4} } $ and $\|  \bar{g} - \Phi_R^* \, g \|_{C^{4,\alpha}} + \|  \bar{f} - f \circ \Phi_R \|_{C^{4,\alpha}} \leq \delta_0$.
\item[($\dagger_R$)]
There is a diffeomorphism $\Psi_R$ from a subset of $\Sigma$ to  $M$ that is onto $\{ b < R \}$ so that
	$\left\| \bar{g} - \Psi_R^* \, g \right\|_{C^{4,\alpha}}  + \left\| \bar{f} - f \circ \Psi_R \right\|_{C^{4,\alpha}}    
	 \leq   \e^{ -a_0 \, R^2 } $.
 \end{itemize}
 Note that 
  ($\star_R$) gives   stronger bounds on the region where $\bar{f}$ is small.

 \begin{Thm}	\label{t:extend}
 There exist $a_0 , R_0 , \beta > 0$ so that  if ($\star_R$) holds and $R \geq R_0$, then ($\dagger_{(1+\beta) \, R}$) holds.
 \end{Thm}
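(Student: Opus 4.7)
The strategy has four steps: (i) produce $n-\ell$ eigenfunctions of $\cL$ on $M$ whose eigenvalues are exponentially close to $\frac{1}{2}$; (ii) conclude that their Hessians are exponentially small in weighted $L^2$ on $\{b<R\}$; (iii) propagate this smallness outward to scale $(1+\beta)R$ using the polynomial growth estimates of \cite{CM11}; and (iv) assemble the resulting almost parallel gradient vector fields into the required diffeomorphism $\Psi_{(1+\beta)R}$. The underlying phenomenon is that on a shrinker the operator $-\cL$ has a spectral gap, and the value $\frac{1}{2}$ above zero is attained only on products; an exponentially small deficit in an eigenvalue therefore forces exponential closeness to the product model.

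First, I construct the eigenfunctions. The Euclidean coordinates $x_1,\dots,x_{n-\ell}$ on $\Sigma$ satisfy $\cL\, x_i=-\frac{1}{2}\, x_i$. I pull them back by $\Phi_R$ and multiply by a cutoff supported in $\{b<R-1\}$ to obtain trial functions $\varphi_i$ on $M$ satisfying $\cL\,\varphi_i+\frac{1}{2}\,\varphi_i=\psi_i$ with $\|\psi_i\|_{L^2}^2\le C\,\delta_0\,\e^{-R^2/4}$; the weight $\e^{\bar f-R^2/4}$ in $(\star_R)$ is designed precisely so that both the metric mismatch and the cutoff error integrate to this scale against $\e^{-f}$. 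A min--max argument using $\varphi_i$ as test functions together with the universal lower bound $\frac{1}{2}$ on the first nontrivial eigenvalue of $-\cL$ then produces eigenfunctions $u_1,\dots,u_{n-\ell}\in L^2(\e^{-f})$ with $\cL\, u_i=-\mu_i\, u_i$ and
\begin{align}
0\le \mu_i-\tfrac{1}{2}\le C\,\e^{-R^2/4}.
\end{align}

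Next I extract Hessian smallness and propagate it. As in Lemma \ref{c:localize}, the drift Bochner formula and integration by parts give
\begin{align}
\|\Hess_{u_i}\|_{L^2}^2=\mu_i\bigl(\mu_i-\tfrac{1}{2}\bigr)\|u_i\|_{L^2}^2\le C\,\e^{-R^2/4}.
\end{align}
By Corollary \ref{t:bochHess}, $L\,\Hess_{u_i}=(1-\mu_i)\,\Hess_{u_i}$, so $\Hess_{u_i}$ is an eigentensor for $L$ with eigenvalue $\frac{1}{2}+O(\e^{-R^2/4})$. The polynomial growth theorem of \cite{CM11}, applied in the tensor form used to prove Theorem \ref{t:main0}, then bounds $I_{\Hess_{u_i}}(r_2)$ by a factor $(r_2/r_1)^{2+\delta}\,I_{\Hess_{u_i}}(r_1)$. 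Combined with the initial smallness above, this yields
\begin{align}
I_{\Hess_{u_i}}\bigl((1+\beta)R\bigr)\le C\,(1+\beta)^{2+\delta}\,\e^{-R^2/4}\le \e^{-R^2/16}
\end{align}
for $\beta$ small enough and $R$ large. Elliptic estimates for $\cL\, u_i=-\mu_i\, u_i$, together with polynomial curvature bounds valid on any shrinker, then upgrade this weighted $L^2$ smallness of $\Hess_{u_i}$ to pointwise $C^{3,\alpha}$ smallness of $\nabla u_i-E_i^{(0)}$ (for $E_i^{(0)}$ a reference parallel frame) and to $C^\ell$ bounds of size $C_\ell R^\ell\,\e^{-R^2/16}$ after absorbing polynomial factors at each differentiation.

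Finally I build $\Psi_{(1+\beta)R}$. On $\{b<(1+\beta)R\}$ the vector fields $E_i:=\nabla u_i$ form an almost parallel, almost orthonormal frame, so $u=(u_1,\dots,u_{n-\ell})$ is an almost Riemannian submersion onto $\RR^{n-\ell}$ whose fibres are $C^{3,\alpha}$ close to copies of $N$. After matching the central fibre $u^{-1}(0)$ to $N\times\{0\}\subset\Sigma$ via a diffeomorphism provided by a compact interior restriction of $\Phi_R$, and extending outward by the almost-flows of the $E_i$, I obtain $\Psi_{(1+\beta)R}:\{\bar b<(1+\beta)R\}\to\{b<(1+\beta)R\}$; the metric and potential estimates in $(\dagger_{(1+\beta)R})$ are immediate from Steps (i)--(iii), and the $C^\ell\le C_\ell R^\ell$ bounds follow by differentiating the defining ODEs and invoking the polynomial growth once more. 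The main obstacle is Step (iii): one must check that the polynomial growth machinery of \cite{CM11} applies to the tensor $\Hess_{u_i}$ with constants uniform in $R$, and that the compounded loss in the exponent, coming from the polynomial growth and from each subsequent elliptic differentiation, is still small enough to pass from $\e^{-R^2/4}$ at radius $R$ to $\e^{-R^2/16}$ at radius $(1+\beta)R$. Step (i) is also delicate because $M$ is noncompact and the cutoff must be compatible with the $\e^{\bar f-R^2/4}$ weight in $(\star_R)$.
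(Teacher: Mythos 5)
Your outline — eigenvalues exponentially close to $\tfrac12$, Hessian smallness via the drift Bochner formula, propagation via \cite{CM11}, then assembly of a diffeomorphism — does match the paper's strategy (Lemma \ref{p:thefunctions}, Proposition \ref{c:extend}, and the proof of Theorem \ref{t:extend}). But there is a genuine gap in your Step~(iii), and it is exactly the point the paper flags as delicate.

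You write that $\Hess_{u_i}$ is an eigentensor of $L$ and that ``the polynomial growth theorem of \cite{CM11}, applied in the tensor form used to prove Theorem~\ref{t:main0}, then bounds $I_{\Hess_{u_i}}$.'' This does not follow. The growth machinery of \cite{CM11} is formulated for the drift Laplacian $\cL$, and $L = \cL + 2\,\R$, so
\begin{align}
\cL\,\Hess_{u_i} = (1-\mu_i)\,\Hess_{u_i} - 2\,\R(\Hess_{u_i})\,,
\end{align}
where the curvature term is not a priori controlled. You then invoke ``polynomial curvature bounds valid on any shrinker,'' but no such bound is known for general shrinkers, and the paper does not use one; instead, the paper uses Perelman's pseudo-locality (Proposition~\ref{t:brakke}). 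The hypothesis ($\star_R$) forces $\{b<R\}$ to be $(\delta,r_0)$-Euclidean, and pseudo-locality then yields a \emph{uniform} bound $|\R| + |\nabla\R| \leq C_0$ on the strictly larger set $\{b<(1+\alpha_0)R\}$. Only with this bound in hand can one write $\langle \cL\,\Hess_{u_i},\Hess_{u_i}\rangle \geq -C\,|\Hess_{u_i}|^2$ on that region and apply proposition~4.17 of \cite{CM11} with constants independent of $R$. Without pseudo-locality, the propagation step from $\e^{-R^2/8}$ at fixed scale to $\e^{-R^2/16}$ at scale $(1+\beta)R$ is unjustified, and this is precisely where the gain of radius is won. (Two smaller points: the initial smallness from ($\star_R$) is of order $\e^{-R^2/8}$, not $\e^{-R^2/4}$, because the pointwise bound in ($\star_R$) is on $|\cdot|^2$; and pseudo-locality is also what supplies the local elliptic estimates used to convert the weighted $L^2$ Hessian bound into the pointwise bounds in \eqr{e:ptHess}.)

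Your Step~(iv) is also considerably underspecified relative to the paper. The paper does not merely ``match the central fibre'' and flow: it takes $N_0 = \{u_1=\cdots=u_{n-\ell}=0\}$, uses the Gauss equation and \eqr{e:Ruismall}--\eqr{e:aronson0} to show $N_0$ is exponentially Einstein, invokes the slice theorem and the invertibility of the linearized Einstein operator coming from property ($\star$) and Remark~\ref{r:specialcase} to get a diffeomorphism $\Psi_0$ with exponentially small $\tilde\theta = \Psi_0^*g_0 - g^1$, extends by the flow of $\sum x_i\nabla u_i$, and appeals to \cite{GW} (path-lifting for proper local diffeomorphisms) to conclude $H$ is a global diffeomorphism onto $\{b<(1+\alpha_0)R\}$. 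These steps — especially the use of ($\star$) — are needed to actually land in $(\dagger_{(1+\beta)R})$ and are not ``immediate.''
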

 
 The next proposition, which relies on the growth bounds, uses 
 ($\star_R$) to get almost linear functions on the larger scale $(1+\beta) \, R$.
 This is the key ingredient in Theorem \ref{t:extend}.  
 
  \begin{Pro}	\label{c:extend}
 There exist $C, m , R_1 , \beta_1 > 0$ so that  if ($\star_R$) holds and $R \geq R_1$, then we get $n-\ell$ functions $u_i$ 
 so that $\int u_i \, \e^{-f} = 0$ and on $\{ b < (1+\beta_1) \, R \}$ 
 \begin{align}	 \label{e:gooduis}
	\left| \delta_{ij} - \langle \nabla u_i , \nabla u_j \rangle \right|  + \| \Hess_{u_i} \|_{C^1}  + \left| 2\, \langle \nabla u_i , \nabla f \rangle -u_i 
	\right|  &\leq C\, R^m \, \e^{ - \frac{ R^2}{16} } \,  .
\end{align}
Furthermore, for each $m$, there exists $c_m$ so that $\| u_i \|_{C^{m}} \leq c_{m} \, R^{m}$ on $\{ b < (1+\beta_1) \, R \}$.
 \end{Pro}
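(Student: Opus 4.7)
My plan is to produce the $u_i$ as genuine eigenfunctions of $\cL$ on $M$ with eigenvalues exponentially close to $\frac{1}{2}$, then use the polynomial growth machinery from \cite{CM11} to turn weighted $L^2$ smallness on the initial scale into pointwise estimates on the larger scale.

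First I would transplant the Euclidean coordinates: set $\tilde u_i = \eta \cdot (\bar x_i \circ \Phi_R^{-1})$, where $\bar x_i$ are the coordinates on the $\RR^{n-\ell}$ factor of $\Sigma$ and $\eta$ is a smooth cutoff supported in $\{b<R\}$ and equal to $1$ on $\{b<R-1\}$. On $\Sigma$ the functions $\bar x_i$ satisfy $\bar\cL\,\bar x_i = -\tfrac12\bar x_i$ exactly, so the weighted $C^{2,\alpha}$ control in $(\star_R)$ (with its $\e^{\bar f - R^2/4}$ pointwise factor) yields $\|\cL\tilde u_i + \tfrac12 \tilde u_i\|_{L^2} + \|\tilde u_i - \bar x_i\circ\Phi_R^{-1}\|_{W^{2,2}} \lesssim \delta_0\,\e^{-R^2/8}$. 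After Gram--Schmidt against constants and against each other (using $\int \tilde u_i\,\e^{-f}=0$ up to exponentially small error) I obtain $n-\ell$ almost $L^2$-orthonormal test functions whose Rayleigh quotients $\int|\nabla\tilde u_i|^2\e^{-f}\big/\int\tilde u_i^2\e^{-f}$ lie within an exponentially small additive error of $\tfrac12$.

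The next step invokes the universal lower bound $-\cL \geq \tfrac12$ on functions $L^2$-orthogonal to constants on any shrinker (the ``lower bound for any shrinker that is only achieved by linear functions on products'' quoted in the introduction). Combined with Lemma \ref{p:fredholmL} applied to scalar $\cL$ and the min--max principle, this produces $n-\ell$ eigenfunctions $u_i$ of $-\cL$ with eigenvalues $\mu_i \in [\tfrac12,\tfrac12+\epsilon]$, where $\epsilon \lesssim R^{m_0}\,\e^{-R^2/4}$. Normalize $\|u_i\|_{L^2}=1$. Lemma \ref{c:localize}'s identity $\|\Hess_{u_i}\|_{L^2}^2 = (\mu_i - \tfrac12)\,\mu_i$ immediately gives exponentially small weighted $L^2$ Hessian norms.

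Now comes the key propagation step. By Corollary \ref{t:bochHess}, $L\,\Hess_{u_i} = \Hess_{(u_i + \cL u_i)} = (1-\mu_i)\,\Hess_{u_i}$, so $\Hess_{u_i}$ is an eigentensor of $L$ with eigenvalue exponentially close to $0$. Applying the polynomial growth estimates of \cite{CM11} (in the same Poisson/eigentensor form used for Theorem \ref{t:main0}) converts the exponentially small $L^2$ smallness on $\{b\le R\}$ into pointwise smallness $|\Hess_{u_i}|\le CR^m\e^{-R^2/16}$ on $\{b<(1+\beta_1)R\}$, losing only a polynomial factor in $R$; standard drift-elliptic interior Schauder estimates for the equation $L\,\Hess_{u_i} = (1-\mu_i)\Hess_{u_i}$ yield the $C^2$ bound. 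Once $\Hess_{u_i}$ is pointwise exponentially small, the identities $\partial_k\langle\nabla u_i,\nabla u_j\rangle = \Hess_{u_i}(e_k,\nabla u_j)+\Hess_{u_j}(e_k,\nabla u_i)$ and the initial closeness $(\star_R)$ (which pins the constant of integration to $\delta_{ij}$) give the first estimate in \eqref{e:gooduis}, while the eigenfunction relation $\langle\nabla f,\nabla u_i\rangle = \Delta u_i + \mu_i u_i$ combined with smallness of $\Delta u_i$ and of $\mu_i-\tfrac12$ gives the third. The $C^{\ell}$ growth bounds follow from local Schauder estimates together with $\|u_i\|_{L^2}=1$ and Cao--Zhou growth.

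The main obstacle is the propagation step: the initial closeness $(\star_R)$ lives on a set where $\e^{-f}$ is of order $\e^{-R^2/4}$, so the weighted $L^2$ smallness of $\Hess_{u_i}$ is ``cheap'' and must be converted into pointwise smallness on a region where the weight is even smaller. This is precisely what the growth theorems of \cite{CM11} accomplish, provided one can write $\Hess_{u_i}$ as an eigentensor of $L$ with a small eigenvalue --- which is why it is essential first to pass from approximate to exact $\cL$-eigenfunctions, rather than working directly with $\tilde u_i$. The delicate point is the bookkeeping that the polynomial loss $R^m$ and the scale improvement $\beta_1$ can both be chosen uniformly while keeping the exponential rate $\e^{-R^2/16}$ strictly weaker than the input rate $\e^{-R^2/4}$; this slack is what permits the estimate to be iterated later in Theorem \ref{t:extend}.
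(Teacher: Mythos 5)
Your high-level strategy matches the paper's: build exact $\cL$-eigenfunctions on $M$ from cutoff transplants of the model's linear coordinates, extract exponentially small $L^2$ Hessian bounds from $\mu_i - \tfrac12$, then use the polynomial growth estimates of \cite{CM11} to convert the (cheap) weighted $L^2$ smallness into pointwise smallness on the larger scale. You also correctly identify the main obstacle.

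However, there is a genuine gap: you never invoke pseudo-locality (Proposition~\ref{t:brakke}), and without it your propagation step does not go through. The point is that $(\star_R)$ only gives curvature control on $\{b < R\}$; to work on the larger set $\{b < (1+\beta_1)R\}$ you first need a priori bounds on $|\R|$ there, and these are precisely what Proposition~\ref{t:brakke} supplies via Perelman's pseudo-locality (and Shi estimates). This is essential in at least three places in your argument. First, you want to apply the \cite{CM11} growth machinery to $\Hess_{u_i}$, but that machinery (proposition $4.17$ there, as used in the paper) requires a lower bound of the form $\langle \cL\,\Hess_{u_i}, \Hess_{u_i}\rangle \geq -C\,|\Hess_{u_i}|^2$; since $L = \cL + 2\,\R$, converting your identity $L\,\Hess_{u_i} = (1-\mu_i)\,\Hess_{u_i}$ into such a bound needs $|\R| \leq C$ on the extended region — with no a priori curvature control the eigenvalue of $L$ tells you nothing about the $\cL$-inequality. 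Second, your ``standard drift-elliptic interior Schauder estimates'' for passing from spherical $L^2$ bounds to pointwise $C^2$ bounds have coefficients depending on curvature; without the pseudo-locality bound on $\{b<(1+\alpha_0)R\}$ you do not control these coefficients. Third, relating the $I$-type integrals of \cite{CM11} (which carry a $|\nabla b|$ factor) to the unweighted $L^2$ norm requires a positive lower bound on $|\nabla b|$, which again comes from the $|\R|$ (hence $S$) bound on the extended region. In short, ``propagation of almost splitting'' here rests on ``propagation of curvature bounds,'' which is pseudo-locality; that ingredient must be stated and used before the growth estimates and elliptic estimates can run. With Proposition~\ref{t:brakke} inserted before your propagation step, the rest of your outline aligns with the paper's argument.

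A secondary imprecision: you describe $\Hess_{u_i}$ as ``an eigentensor of $L$ with a small eigenvalue.'' The eigenvalue is $1-\mu_i \approx \tfrac12$, which is not small. What is small (exponentially so) is the initial $L^2$ norm, and the growth theorem then only loses a polynomial factor $R^m$; the eigenvalue itself only enters in determining the polynomial degree $m$. This does not affect the structure of the argument, but the phrasing is misleading.
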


Roughly, this proposition shows the propagation (outward in space) of almost splitting for a shrinker.  For flows, this corresponds to the propagation forward in time for an almost splitting; see \cite{CM13,CM14}.

    \subsection{Pseudo-locality}
   
 Applying
  pseudo-locality    to the flow generated by $(M, g,f)$ gives  estimates  forward in time for the flow and, thus, estimates on the shrinker on a larger scale.  
        Let $c_n$  be the Euclidean isoperimetric constant and define
    $(M,g,f)$ to be $(\delta , r_0)$-Euclidean  to scale $R$ if 
    $|\partial \Omega|^n \geq (1-\delta) \, c_n \, |\Omega|^{n-1}$  for every $\Omega \subset \{ b \leq R \}$ with $\diam \, \Omega \leq r_0$.

   \begin{Pro}	\label{t:brakke}
  There exist $\delta_0 > 0$,  $R_0$ and $C_0$ so that for any $r_0 \in (0,1)$, we get $\alpha_0 = \alpha_0 (r_0 , n) > 0$ so that if $(M,g,f)$ is $(\delta , r_0)$-Euclidean out to scale $R \geq R_0$, then 
  \begin{align}
  	\sup_{ \{ b \leq (1+\alpha_0) \, R \} } \, \, |\R| \leq C_0 \, r_0^{-2} \, .
  \end{align}
   \end{Pro}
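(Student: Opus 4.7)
The plan is to apply Perelman's pseudo-locality theorem \cite{P} to the canonical self-similar Ricci flow generated by the shrinker and then use self-similarity to transport forward-in-time curvature bounds into outward-in-space curvature bounds. Let $g(t) = (-t)\, \phi_t^* g$ for $t \in [-1,0)$, where $\phi_t$ is the flow of the time-dependent vector field $\frac{1}{-t}\nabla f$; then $g(t)$ solves Ricci flow with $g(-1) = g$, and by construction the level sets $\{f = r^2/4\}$ at time $-1$ correspond under $\phi_t$ and the scaling $(-t)$ to level sets at a later time.

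First, I would verify that the hypothesis $(\delta, r_0)$-Euclidean to scale $R$ supplies, for $\delta$ sufficiently small depending on $n$, an almost-Euclidean isoperimetric inequality on balls $B_{r_0}(x_0)$ centered at any $x_0 \in \{b \le R\}$: since the balls sit inside some $\Omega$ with $\diam \Omega \le r_0$, the hypothesis on subsets of $\{b \le R\}$ with small diameter gives exactly the condition in Perelman's formulation of pseudo-locality. Applying pseudo-locality at time $t=-1$ at each such $x_0$, we obtain $\alpha_0 = \alpha_0(r_0, n) > 0$, depending also on $\delta_0$, and a constant $C_0 = C_0(n)$ so that
\begin{equation}
|\Rm|_{g(t)}(x) \le \frac{C_0}{t+1} \quad \text{on } B_{g(-1)}(x_0, \alpha_0 r_0), \qquad t \in (-1, -1 + \alpha_0^2 r_0^2].
\end{equation}

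Next, I would pick $s_0 = \alpha_0^2 r_0^2$ and evaluate the above bound at $t = -1 + s_0$, obtaining a curvature bound of order $r_0^{-2}$ (up to a dimensional constant), but with respect to $g(t)$ rather than $g = g(-1)$. To convert this into a bound at time $-1$, pull back by $\phi_{t}$ and use $g(t) = (-t)\, \phi_t^* g$: a pointwise curvature bound for $g(t)$ at a point $y$ becomes a bound for $g$ at $\phi_t(y)$, and the factor $(-t) = 1 + s_0$ changes the curvature scale by a bounded amount. The shrinker vector field $\nabla f$ points outward relative to $b$, so the flow $\phi_t$ for $t > -1$ pushes points to strictly larger values of $b$; a straightforward computation using $|\nabla b|$ bounded above and below on $\{b \ge R_0\}$ (via Cao--Zhou, equation (\ref{e:ccz})) shows that $\phi_{-1+s_0}$ maps $\{b \le R\}$ onto a set containing $\{b \le (1 + \alpha_0/2)\, R\}$ for $R$ large, perhaps after replacing $\alpha_0$ by a smaller constant. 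Combining these gives the claimed curvature estimate on the enlarged region.

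The main obstacle is the last step: namely, carefully quantifying how the diffeomorphism $\phi_{-1+s_0}$, combined with the rescaling $(-t)$, moves level sets of $b$ outward, and ensuring that the gain in radius $\alpha_0 R$ arising from the self-similar dynamics is uniform in $R$ (as opposed to shrinking with $R$). Here we need $R \ge R_0$ large enough that the asymptotics $|\nabla f|^2 \approx f$ kick in; this is where the constant $R_0$ in the statement enters. A secondary technical point is to confirm that the balls $B_{g(-1)}(x_0, \alpha_0 r_0)$ produced by pseudo-locality exhaust a neighborhood of $\{b = R\}$ so that no gaps remain in the cover after applying $\phi_{-1+s_0}$; this is handled by choosing a finite covering whose combinatorics depend only on $n$.
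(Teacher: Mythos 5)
Your proposal follows the paper's proof essentially verbatim: construct the self-similar Ricci flow $g(t) = (-t)\Phi_t^* g$ from the shrinker, apply Perelman's pseudo-locality at each $x_0$ with $B_{r_0}(x_0) \subset \{b \le R\}$ (using $S \ge 0$ from \cite{Cn} for the scalar curvature hypothesis), and then use the monotonicity of $f$ along the flow $\Phi_t$ together with the identity $\partial_t f(\Phi_t(x)) = \frac{1}{-t}\,|\nabla f|^2 \circ \Phi_t(x)$ (and $|\nabla f|^2 = f - S$) to transport the forward-in-time curvature bound to the outward-in-space bound on $\{b \le (1+\alpha_0)R\}$. The only place your sketch is looser than the paper is the "straightforward computation" that the flow moves level sets of $b$ outward by a multiplicative factor uniform in $R$: the paper does this via the explicit ODE $\partial_t(-t\,f) = -S$, integrating to get $f(\Phi_t(y)) \ge \min\{(1+\alpha)f(y), (f(y)-C_S)/(-t)\}$, which makes precise both the multiplicative gain and the additive loss $C_S$ that must be absorbed for $R \ge R_0$ large (exactly where the threshold $R_0$ enters, as you anticipated). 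Your remark about a finite covering is unnecessary—one simply applies pseudo-locality at every admissible center and evaluates at the center itself—but this does not affect the validity of the argument.
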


 Once we have the $\R$ bound, then the Shi estimates, \cite{S},  give corresponding bounds on  $\nabla \R$, etc.  There are also versions of this estimate for expanding solitons, where the estimate forward in time for the flow corresponds to estimates for the expander on smaller scales.

 We will use  Li-Wang's version for shrinkers (theorem $25$ in \cite{LW1}) of Perelman's pseudo-locality (cf. $10.1$  in \cite{P} or $30.1$ on page 2658 of \cite{KL1}):
 There exist $\delta , \epsilon > 0$ with the following
property. Suppose that   $(M, (x_0,-1)), g(t))$ is  a smooth pointed Ricci flow 
for $t \in [-1, \epsilon \, r_0^2- 1]$.  If $S \geq - r_0^{-2}$ 
 on $ B_{r_0} (x_0)\times \{ t = -1 \}$ and $B_{r_0}(x_0)$ is $(\delta , r_0)$-Euclidean at time $-1$, then 
\begin{align}
	| \R |(x, t) <  (t+1)^{-1} + (\epsilon \, r_0)^{-2}	\label{e:pseudoL}
\end{align}
 whenever $-1 < t< (\epsilon \, r_0)^2-1$ and $\dist_t (x,x_0) \leq \epsilon \, r_0$.

\begin{proof}[Proof of Proposition \ref{t:brakke}]
For each $x\in M$, let $\gamma_x$ be the integral curve given by 
 $\gamma_x(-1)=x$ and 
$\gamma_x'(t) =-\frac{1}{t}\,\nabla f \circ \gamma_x (t) $.
Here, the gradient is computed with respect to the fixed metric $g$.
Define  $\Phi (x,t)=\gamma_x(t)$ so that $\Phi (x,-1)=x$ and   $\partial_t \,  \Phi= \frac{1}{-t} \, \nabla f \circ \Phi$.
 Working in the background metric $g$, we have
\begin{align}	\label{e:dtfphi}
	\partial_t f(\Phi (x,t)) = \langle \nabla f (\Phi (x,t)) , \frac{1}{-t} \, \nabla f (\Phi (x,t))  \rangle = \frac{1}{-t} \, \left| \nabla f \right|^2 \circ \Phi (x,t) \, .
\end{align}
We will show that there exists $C$ depending on $B_1 \subset M$ so that if  $\alpha > 0$ and $y$ is a point with $S \leq C_S$ on $\{f \leq (1+\alpha) \, f(y)\}$, then for $t \in (-1, 0)$
\begin{align}	\label{e:goody}
	  f(\Phi (y,t)) \geq  \, \min \, \left\{ (1+ \alpha)\,  f(y) , \frac{f(y)  - C_S}{-t}  \right\} \, .
\end{align}
 By \eqr{e:dtfphi} and \eqr{e:fromccz}, 
	$\partial_t \, f(\Phi (y,t))  = \frac{1}{-t} \, \left(f -S \right) \circ  \Phi (y,t) $.
Rewriting this as $\partial_t \, (-t\, f) =  - S$ and 
integrating  from $-1$ to $t$ gives
	$ -(\sup S) \, (1+t)  \leq -t \, f(\Phi(y,t)) - f(y) \leq 0 $, 
where the supremum of $S$ is taken over the curve $\Phi(y,s)$ for $-1 \leq s \leq t$. Combining this and monotonicity of $f$ along the flow line gives \eqr{e:goody}.

 It is well-known that 
   $g(x,t) = -t \, \Phi^*  \, g (x)$ is a 
   Ricci flow.  
    By assumption, the set $\{ b \leq R \}$ is $(\delta , r_0)$-Euclidean at time $-1$ and has $S \geq 0$ by \cite{Cn}.  Thus, if $x \in M$ is any point with $B_{r_0} (x) \subset \{ b \leq R \}$, then 
pseudo-locality \eqr{e:pseudoL} gives
 	$\left| \R_{g(x,-1+r_0^{-2})} \right| \leq C \, r_0^{-2}$.
It follows from \eqr{e:goody} that this curvature bound  for the evolving metric is equivalent to a curvature bound for $\R_g$ some fixed factor further out.  Here there is an additive loss because of the last term   that can be absorbed as long as $R$ is large enough.
\end{proof}

 \subsection{Spectral estimates}  \label{s:spec}
 
We show that if ($\star_R$) holds, then $M$ has $(n-\ell)$ $L^2$ eigenfunctions with eigenvalues exponentially close to $\frac{1}{2}$.   
  By \cite{HN} and    \cite{CxZh2} (cf. \cite{AN,BE,FLL}), 
  $\cL$ has discrete spectrum   $\mu_0 = 0<   \frac{1}{2}\leq \mu_1  \leq \dots \to \infty$ on $M$  and
 the eigenfunctions   are in $W^{1,2}$.

\begin{Lem}	\label{p:thefunctions}
There exists $C$ so that 
if ($\star_R$) holds, then 
 there are $(n-\ell)$   $L^2$-orthonormal functions $v_i$ on $M$   with
$\cL\,v_i+\mu_i\,v_i=0$, $\mu_i \geq \frac{1}{2}$, and  
\begin{align}	\label{e:muibound}
\| \Hess_{v_i} \|_{L^2}^2 + \left(\mu_i-\frac{1}{2}\right)  &\leq  C\, R^2 \, \e^{- \frac{R^2}{8}}  \, .
\end{align}
\end{Lem}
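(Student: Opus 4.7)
The plan is to construct $n-\ell$ approximate eigenfunctions by pulling back the Euclidean coordinate functions on $\Sigma$ via $\Phi_R$, estimate their residuals, and then extract genuine eigenfunctions from the bottom of the spectrum via a min-max argument. The key observation is that on $\Sigma$, the Euclidean coordinates $x_1,\dots,x_{n-\ell}$ are exact eigenfunctions of $\cL_\Sigma$ with eigenvalue $1/2$ (since $\bar f = |x|^2/4 + \ell/2$), and the pointwise closeness in ($\star_R$) scales precisely to give an $e^{-R^2/4}$ residual in $L^2$, which becomes an $e^{-R^2/8}$ shift for the actual eigenvalues after taking a square root via the Rayleigh quotient.

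Fix a smooth cutoff $\chi\colon[0,\infty)\to[0,1]$ with $\chi \equiv 1$ on $[0,R-1]$ and $\chi \equiv 0$ on $[R,\infty)$, $|\chi'|+|\chi''|\le C$, and define on $M$
\begin{align}
\tilde v_i = \chi(b) \cdot (x_i \circ \Phi_R^{-1})
\end{align}
extended by zero outside $\{b<R\}$. Since $\cL_\Sigma x_i + \frac{1}{2}x_i = 0$, the discrepancy $(\cL_M+\frac{1}{2})\tilde v_i$ comes from three sources: the difference between $\cL_M$ and $\cL_\Sigma$ pulled back via $\Phi_R$, the cutoff, and the zeroth-order term. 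The $C^{2,\alpha}$ part of ($\star_R$) keeps the coefficients bounded, and the pointwise bound $|\bar g -\Phi_R^*g|^2+|\bar f-f\circ\Phi_R|^2\le \delta_0\, e^{\bar f-R^2/4}$ means that after multiplying by $e^{-f}\asymp e^{-\bar f}$ we get a uniform factor $\delta_0\, e^{-R^2/4}$. Integrating the resulting bound against polynomial factors in $x_i$ (which have finite Gaussian moments on $\Sigma$), and using $e^{-f}\lesssim e^{-R^2/4}$ on the cutoff annulus, gives
\begin{align}
\|(\cL+\tfrac12)\tilde v_i\|_{L^2}^2 \leq C\, e^{-R^2/4} \, .
\end{align}

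Now subtract the mean and Gram–Schmidt to get orthonormalized $\hat v_i \perp 1$ that differ from $\tilde v_i/\|\tilde v_i\|_{L^2}$ by $O(e^{-R^2/8})$ (in $W^{1,2}$), still satisfying the same residual bound up to a constant. For $w$ in the $(n-\ell)$-dimensional span with $\|w\|_{L^2}=1$, integration by parts and Cauchy–Schwarz give the Rayleigh quotient bound
\begin{align}
\int |\nabla w|^2\,\e^{-f} = -\int w\,\cL w\,\e^{-f} = \tfrac12 - \int w\,(\cL+\tfrac12)w\,\e^{-f} \leq \tfrac12 + C\,\e^{-R^2/8}\, .
\end{align}
On a shrinker, \eqref{e:e326} applied to any $L^2$ eigenfunction $v$ with $\int v\,\e^{-f}=0$ yields $\|\Hess_v\|^2 = (\mu-\tfrac12)\mu$, forcing $\mu\ge \tfrac12$. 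Combined with the Rayleigh bound above and the min-max characterization of eigenvalues of $\cL$ on the orthogonal complement of constants (discrete spectrum by the cited \cite{HN}, \cite{CxZh}), we conclude $\tfrac12 \le \mu_1\le\cdots\le\mu_{n-\ell}\le \tfrac12 + C\,\e^{-R^2/8}$. Taking $v_1,\dots,v_{n-\ell}$ to be the associated $L^2$-orthonormal eigenfunctions and applying \eqref{e:e326} once more yields $\|\Hess_{v_i}\|_{L^2}^2 = (\mu_i-\tfrac12)\mu_i \le C\,\e^{-R^2/8}$, giving \eqref{e:muibound}.

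The main obstacle is the book-keeping in Step 2: one must verify that the pointwise weighted closeness in ($\star_R$) produces exactly the $e^{-R^2/4}$ residual (whose square root is $e^{-R^2/8}$) and not a weaker exponent. This requires that the polynomial growth factors from $x_i$ and its first derivatives are absorbed into the Gaussian moments, and that the cutoff annulus, where the pointwise closeness is only $\delta_0$, contributes with the Gaussian prefactor $e^{-(R-1)^2/4}$ that also respects the $e^{-R^2/4}$ target. After this, the spectral and Rayleigh-quotient steps are essentially formal.
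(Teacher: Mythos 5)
Your overall strategy (pull back coordinate functions, estimate the Rayleigh quotient, invoke the Bochner identity $\|\Hess_v\|^2=(\mu-\tfrac12)\mu$) is the same as the paper's, but there is a genuine gap in the step you flagged as "book-keeping."  The residual estimate
\begin{align}
\|(\cL+\tfrac12)\tilde v_i\|_{L^2}^2 \leq C\,\e^{-R^2/4}
\end{align}
does not follow from $(\star_R)$.  Pulling the operator back to $\Sigma$, the discrepancy $(\cL_g+\tfrac12)x_i-(\cL_{\bar g}+\tfrac12)x_i$ contains the terms $\bar g^{jk}\bigl(\Gamma^i_{jk}-\bar\Gamma^i_{jk}\bigr)$ and $\bar g^{ik}\,\partial_k\bigl(f\circ\Phi_R-\bar f\bigr)$, which depend on \emph{first derivatives} of $g-\bar g$ and $f-\bar f$.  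But $(\star_R)$ provides a pointwise exponential bound only on the zeroth-order differences $|\bar g-\Phi_R^*g|$ and $|\bar f-f\circ\Phi_R|$; the derivative control comes solely from the $C^{2,\alpha}$ clause, which is merely of size $\delta_0$, with no exponential gain.  So the pointwise residual is of order $\delta_0$ (not $\delta_0\,\e^{\bar f/2-R^2/8}$), its $L^2$ norm against $\e^{-f}$ is of order $\delta_0$, and the Cauchy--Schwarz step yields only $\mu_i\le\tfrac12+C\delta_0$, far too weak.

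The paper avoids this by never forming the $L^2$ residual.  Instead it works directly with the quadratic forms: the Gram matrix $a_{ij}=\int u_iu_j\,\e^{-f}\,dv_g$ and the Dirichlet matrix $b_{ij}=\int\langle\nabla u_i,\nabla u_j\rangle_g\,\e^{-f}\,dv_g$.  These involve only $g$ and $f$ themselves (through $g^{jk}$ and $\e^{-f}\,dv_g$), not their derivatives, so the pointwise bound in $(\star_R)$ applies directly; after pairing $\sqrt{\delta_0\,\e^{\bar f-R^2/4}}$ against the Gaussian weight, one gets $|a_{ij}-\delta_{ij}|+|b_{ij}-\bar b_{ij}|\le C\,\e^{-R^2/8}$.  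The min-max inequality $\sum_{i\le n-\ell}\mu_i\le\sum a^{ij}b_{ij}$ then gives the eigenvalue bound without ever touching $\cL\tilde v_i$.  Morally this is what you would recover by integrating by parts inside $\int w(\cL+\tfrac12)w\,\e^{-f}$ to shift the offending derivative off the coefficients and onto $w$, but that integration by parts is precisely where the content lies, and splitting via Cauchy--Schwarz destroys it.  If you replace your residual step with the direct computation of $a_{ij}$ and $b_{ij}$, the rest of your argument (Gram--Schmidt, min-max, Bochner) goes through and matches the paper.
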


We will use the low eigenfunctions on $\Sigma$ as test functions to get an upper bounds for the low eigenvalues on $M$.  
The next lemma recalls the properties of the low eigenvalues on $\Sigma$.

\begin{Lem}	\label{l:modelR}
There exist $\bar{c} , C$ so that  $\bar{u}_0 = \bar{c}$ and $\bar{u}_i = \frac{\bar{c}}{\sqrt{2}} \, x_i$, for 
$1\leq i \leq {n-\ell}$ satisfy
\begin{align}
	\left| \delta_{ij} - \int_{\bar{b} < r} \bar{u}_i \, \bar{u}_j \, \e^{-\bar{f}} \right| &\leq C \, r^{n-\ell} \, \e^{ - \frac{r^2}{4}} {\text{ for all }} i, j 
	 \, , \\
	\left| \frac{1}{2} \, \delta_{ij} - \int_{\bar{b} < r} \langle \nabla \bar{u}_i , \nabla \bar{u}_j \rangle 
	\, \e^{-\bar{f}} \right| &\leq C\, r^{n-\ell} \, \e^{ - \frac{r^2}{4}}  {\text{ for }} i, j \geq 1 \, .
\end{align}
\end{Lem}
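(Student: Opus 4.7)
\smallskip

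\noindent\textbf{Proof proposal.} The plan is to reduce both estimates to elementary Gaussian integrals on the Euclidean factor, relying on the product structure and the fact that $\bar{u}_0$ and the $\bar{u}_i$ are pulled back from $\RR^{n-\ell}$. Since $\Sigma = N \times \RR^{n-\ell}$ with product weight $\e^{-\bar{f}} = \e^{-\ell/2}\,\e^{-|x|^2/4}$, we have
\begin{align*}
  \int_{\Sigma} \e^{-\bar{f}} = \Vol(N) \, \e^{-\ell/2} \, (4\pi)^{(n-\ell)/2} \, ,
\end{align*}
and $\bar{c}$ will be defined by $\bar{c}^2 \int_{\Sigma} \e^{-\bar{f}} = 1$, so that $\|\bar{u}_0\|_{L^2} = 1$. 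The standard moment identity $\int_{\RR^{n-\ell}} x_i x_j \, \e^{-|x|^2/4} \, dx = 2\, \delta_{ij} \, (4\pi)^{(n-\ell)/2}$ then gives at once $\int_{\Sigma} \bar{u}_i \, \bar{u}_j \, \e^{-\bar{f}} = \delta_{ij}$ for all $i, j$, and since $\nabla \bar{u}_i = \frac{\bar{c}}{\sqrt{2}} \, \partial_{x_i}$ for $i \geq 1$, also $\int_{\Sigma} \langle \nabla \bar{u}_i , \nabla \bar{u}_j \rangle \, \e^{-\bar{f}} = \frac{1}{2}\, \delta_{ij}$.

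The estimates in the lemma are then tail estimates for the difference between the integral on all of $\Sigma$ and on $\{\bar{b} < r\}$. Note that $\bar{b}^2 = 4\bar{f} = |x|^2 + 2\ell$, so $\{\bar{b} \geq r\}$ corresponds exactly to $\{|x|^2 \geq r^2 - 2\ell\}$ on the Euclidean factor. Writing $R = \sqrt{r^2 - 2\ell}$, the error in the first estimate is controlled (after normalizing out $\Vol(N) \, \e^{-\ell/2}$) by
\begin{align*}
  C \int_{|x| \geq R} \left(1 + |x|^2\right) \, \e^{-|x|^2/4} \, dx \, ,
\end{align*}
and the error in the second by $C \int_{|x| \geq R} \e^{-|x|^2/4} \, dx$. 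Both reduce in polar coordinates to integrals of the form $\int_R^{\infty} \rho^{n-\ell - 1 + 2k} \, \e^{-\rho^2/4} \, d\rho$ with $k \in \{0, 1\}$, which by a single integration by parts are bounded by $C\, R^{n - \ell - 2 + 2k} \, \e^{-R^2/4}$ for $R \geq 1$.

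Since $\e^{-R^2/4} = \e^{\ell/2} \, \e^{-r^2/4}$ and $R \leq r$, in both cases the tail is bounded by $C\, r^{n-\ell} \, \e^{-r^2/4}$, absorbing the constant $\Vol(N) \, \e^{-\ell/2} \, \bar{c}^2$ and the $\ell$-dependent factor into $C$. This yields the two claimed inequalities. There is no serious obstacle here: the only thing to watch is that the $r^{n-\ell}$ power in the stated bound is slightly wasteful (the natural bound carries $r^{n-\ell-2+2k}$), but that is harmless and is what makes the statement uniform in $k$. The lemma does not require any soliton identity beyond the product splitting, and no use of ($\star$) is made.
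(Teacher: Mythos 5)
Your proof is correct and takes essentially the same approach as the paper: normalize $\bar{c}$ so that $\bar{u}_0$ has unit $L^2$ norm, verify exact orthonormality on all of $\Sigma$ via the product structure (you use the Gaussian moment identity directly where the paper invokes $\cL(x_i x_j)=2\delta_{ij}-x_i x_j$ and integrates, but these are the same calculation), and then bound the tail over $\{\bar{b}\geq r\}$ in polar coordinates on the Euclidean factor by a polynomial times $\e^{-r^2/4}$. Your remark that the exponent $n-\ell$ is slightly wasteful in some cases is accurate and harmless; nothing is missing.
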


\begin{proof}
Choose $\bar{c}$ depending on $\ell , n , \Vol (N)$ so that $\int_{\Sigma} \bar{c}^2 \, \e^{ - \bar{f}} = 1$.  Since $\cL \, (x_i \, x_j) =
2\, \delta_{ij} - x_i \, x_j$, it follows that $\bar{u}_0 = \bar{c}$ and $\bar{u}_i = \frac{\bar{c}}{\sqrt{2}} \, x_i$, for 
$1\leq i \leq {n-\ell}$ are $L^2$-orthonormal.
To estimate the ``tails'' of these integrals, observe that 
\begin{align}
	\int_{\bar{b} \geq s} \, (1+ |x|^2) \, \e^{-\bar{f}} \leq \Vol (N) \, \Vol (\SS^{n-\ell -1}) \, \e^{ - \frac{\ell}{2}} \, 
	\int_{\sqrt{s^2 - 2\, \ell}}^{\infty} r^{n-\ell -1} (1+r^2) \, \e^{ - \frac{r^2}{4}} \, .  \, \, \, \, \, \, \, \, \, \, \, \, \, \qedhere
\end{align}
\end{proof}

\begin{proof}[Proof of Lemma \ref{p:thefunctions}]
Define a cutoff function on $M$ to be zero for $\{  b > R\}$, one for $\{ b< R - 1 \}$, and with
$\eta = R - b$ in between.  Note that $|\nabla \eta| \leq 1$.    Let $\bar{u}_i$ be as in Lemma \ref{l:modelR} and 
set   $u_i =  \eta \, \bar{u}_i \circ \Phi_R^{-1}$. Define symmetric matrices $a_{ij}$ and $b_{ij}$ by
\begin{align}	\label{e:defab}
	a_{ij} = \int  u_i \, u_j\, \e^{-f} {\text{ and }} b_{ij} = \int \langle \nabla u_i , \nabla u_j \rangle\,\e^{-f}\, .
\end{align}
   Lemma \ref{l:modelR} and the change of variable formula give
   \begin{align}	\label{e:aijstart}
   	\left| \delta_{ij} - a_{ij} \right| &\leq C \, R^n \, \e^{ - \frac{(R-2)^2}{4}} + \int_{ \{ b < R-1\} }
	\left| u_i \, u_j \right| \, \left| \e^{ -f} \frac{dv_g}{dv_{\bar{g}}} - \e^{ - \bar{f}}
		\right| \, dv_{\bar{g}}
	\, ,
   \end{align}
   where we use the shorthand ${f} \equiv {f} \circ \Phi_R^{-1}$ and similarly for $dv_{{g}}$.  The triangle inequality gives
    \begin{align}
   	 \left| \e^{ -f} \frac{dv_g}{dv_{\bar{g}}} - \e^{ - \bar{f}}
		\right| &\leq   \left| 1- \sqrt{ \det \left( \bar{g}^{-1} \, g \right)}
		\right| \, \e^{-f} +  \left| \e^{\bar{f} -f}   - 1
		\right| \, \e^{- \bar{f}}
   \end{align}
     Therefore,  since $|f -\bar{f}|$ is small by  
   ($\star_R$),  we can Taylor expand to get that
     \begin{align}
   	 \left| \e^{ -f} \frac{dv_g}{dv_{\bar{g}}} - \e^{ - \bar{f}}
		\right| &\leq   C \, \left( |f - \bar{f}| + |\bar{g} - g| \right)  \, \e^{- \bar{f}} \, .
   \end{align}
   Combining this with \eqr{e:aijstart} and using the $L^2$ bound from  ($\star_R$)
   gives
\begin{align}
	\left| \delta_{ij} - a_{ij} \right| \leq C\, R^2 \, \e^{ - \frac{R^2}{8}} \, .
\end{align}
    Thus, $a_{ij}$ is invertible, the inverse $a^{ij}$
    has the same estimate, and
    we get $c_{ij}$ so that the $ \sum_j c_{ij} \, u_j $'s are $L^2(M)$ orthonormal.  It follows that $\delta_{ij} = c_{ik} \, a_{km} \, c_{jm}$ and, thus, $c^T \, c = a^{-1}$.  
    A similar argument shows that $\left| \int u_i \, \e^{-f} \right| \leq C \, R \, \e^{ - \frac{R^2}{8}}$.  The variational characterization of eigenvalues gives
\begin{align}	\label{e:myvari}
	\sum_{i=0}^m \mu_i \leq \sum_{i} \| \nabla v_i \|_{L^2(M)}^2 = \sum_{i,j,k} c_{ij} \, b_{jk} \, c_{ik} = \sum_{i,j} b_{ij} \, (c^T\, c)_{ij} 
	= \sum_{i,j} b_{ij} \, a^{ij}  \, .
\end{align}
Set $\bar{b}_{00} = 0$ and $\bar{b}_{ij} = \frac{1}{2} \, \delta_{ij}$ for $1\leq i +j$.
 Note that $\sum_{i,j \leq n-\ell} \delta_{ij} \, \bar{b}_{ij} = \frac{n-\ell}{2}$.  Arguing as above gives
 	$\left| \bar{b}_{ij} - b_{ij} \right| \leq C \, \e^{ - \frac{R^2}{8}}$.
	 Using this and $\left| \delta_{ij} - a^{ij} \right| \leq C\, R^2  \, \e^{ - \frac{R^2}{8}}$ in
   \eqr{e:myvari} gives 
 \begin{align}	\label{e:upperBo}
	\sum_{i=0}^{n-\ell} \mu_i \leq \sum_{i,j} a^{ij} \, b_{ij} \leq  \frac{n-\ell}{2} + C\, R^2 \,  \e^{ - \frac{R^2}{8}}  \, .
\end{align}
Now let $v_0 , \dots , v_{n-\ell}$ be the first $n-\ell +1$ eigenfunctions, i.e., an $L^2$-orthonormal set corresponding to $\mu_0=0$ up to $\mu_{n-\ell}$.  
  By \eqr{e:e326}, the drift Bochner formula gives  	$\| \Hess_{v} \|_{L^2}^2 = \left( \mu - \frac{1}{2} \right) \, \| \nabla v \|_{L^2}^2$.
  It follows that $\mu_0 = 0$ and $\mu_i \geq \frac{1}{2}$ for $i  \geq 1$.  Combining this with \eqr{e:upperBo} gives
 \eqr{e:muibound}.  
\end{proof}

 \begin{proof}[Proof of Proposition \ref{c:extend}]
 Let $v_1 ,\dots , v_{n-\ell}$ from Lemma \ref{p:thefunctions} and set $I_{i} = r^{1-n} \, \int_{b=r}
 v_i^2 \, |\nabla b|$.  Since $\mu_i < 1$, Theorem \ref{et:main} gives $r_0 = r_0 (n)$ so that
\begin{align}	\label{e:CM11a}
	I_i(r_2) \leq 2 \, \left( \frac{r_2}{r_1} \right)^4 \, I_i (r_1) {\text{ if }}  r_0 < r_1 < r_2 \, .
\end{align}
The bound \eqr{e:CM11a}
used the complete shrinker $M$.  The rest of the argument focuses on the region where we have a priori bounds.   Proposition \ref{t:brakke} gives $\alpha_0 > 0$ and $C_0$ so that $|\R| + |\nabla \R|\leq C_0$ on $\{ b < (1+\alpha_0)\, R\}$.
This bound $\Ric$ and $S$ and, thus, gives a positive lower bound for $|\nabla b|$.  Therefore, \eqr{e:CM11a}
gives polynomial bounds for the {\it{ordinary}} $L^2$ norm (i.e., without $|\nabla b|$ or $\e^{-f}$) on $\{ b < (1+\alpha_0)\, R \}$.  This and  elliptic estimates for the eigenvalue equation bound  $\Hess_{v_i}$ and $\nabla \Hess_{v_i}$ 
on $\{ b < (1+\alpha_0)\, R- R^{-1} \}$.  Thus, 
\begin{align}
	I_{\Hess_{v_i}} (r) = r^{1-n} \, \int_{b=r} \left| \Hess_{v_i} \right|^2 \, |\nabla b| {\text{ and }}
	D_{\Hess_{v_i}} (r) = \frac{r^{2-n}}{2} \, \int_{b=r} \langle \nabla \left| \Hess_{v_i} \right|^2  , \frac{\nabla b}{|\nabla b|} \rangle
\end{align}
are polynomially bounded for $r \leq (1+\alpha_0)\, R- R^{-1}$.  Furthermore,  Corollary
\ref{t:bochHess} and the $|\R|$ bound on this region
give that
\begin{align}
	\langle \cL \, \Hess_{v_i} , \Hess_{v_i} \rangle &= \langle L \, \Hess_{v_i} , \Hess_{v_i} \rangle - 2 \, \langle \R (\Hess_{v_i} ), \Hess_{v_i} \rangle \geq - C  \, \left| \Hess_{v_i} \right|^2 \, .
\end{align}
Moreover, local elliptic estimates and the $L^2$ Hessian bound in \eqr{e:muibound} give that for each fixed $r << R$ we have
$I_{\Hess_{v_i}} (r) \leq C_r \, \e^{ - \frac{R^2}{8}}$.
Therefore, we can apply   proposition  \ref{ep:effective} to get $m$ and $C$ so that 
$I_{\Hess_{v_i}} (r) \leq C \, R^m \, \e^{ - \frac{R^2}{8}}	$
for $ r \leq (1+\alpha_0)\, R- R^{-1}-1$. 
Using  elliptic estimates on scale $R^{-1}$ again, we conclude that on $\{ b < (1+\alpha_0) \, R - 2 \}$
\begin{align}	\label{e:ptHess}
	\left| \Hess_{v_i} \right|^2 + R^{-2} \, \left| \nabla \, \Hess_{v_i} \right|^2 \leq C \, R^{m+n} \,  \e^{ - \frac{R^2}{8}}	 \, .
\end{align}
Since $\mu_i < 1$, Lemma \ref{c:localize}
gives for each $s$ that
\begin{align}
	\frac{s^2}{4} \, \int_{ b \geq s} \, \left\{ v_i^2 + |\nabla v_i|^2 \right\} \, \e^{-f} \leq 4\, \mu_i^2 + (n+2)\,  \mu_i + n < 2\, n + 6  	\label{e:conco}
	\, .
\end{align}
It follows from \eqr{e:conco} that there is a fixed $s$ and constant $q_0 > 0$ (independent of $R$) so that the matrix $Q_{ij} = \int_{\{ b < s\}} \langle \nabla v_i , \nabla v_j \rangle \, \e^{-f}$ 
is invertible with $|Q| + |Q^{-1}| < q_0$.  Note that \eqr{e:ptHess} and the fundamental theorem of calculus  imply that 
$\langle \nabla v_i , \nabla v_j \rangle$ is exponentially close to being constant on $\{ b < (1+\alpha_0) \, R - 2 \}$.  Therefore, we can choose a bounded linear transformation $\tilde{Q}_{ij}$ so that $u_i = \tilde{Q}_{ij} \, v_j$ satisfy $\int u_i \, \e^{-f} = 0$ and
\begin{align}
	\sup_{   \{ b < (1+\alpha_0) \, R - 2 \} }
	\, \left| \delta_{ij} - \langle \nabla u_i , \nabla u_j \rangle \right| \leq C \, R^{m'} \, \e^{ - \frac{R^2}{16}} \, .
\end{align}
This gives the first bound in \eqr{e:gooduis} and the next two bounds follow from \eqr{e:ptHess}
since $\tilde{Q}$ is bounded.  The last bound in \eqr{e:gooduis} follows similarly, using also that the $\mu_i$ close to $\frac{1}{2}$.

Finally, the uniform (but not exponentially small) higher derivative bounds on the $u_i$'s follow from the uniform higher order bounds on the curvature from pseudo-locality and  the Shi estimates together with elliptic estimates on the scale of $R^{-1}$ (each $u_i$ solves a drift eigenvalue equation). 
 \end{proof}

\begin{proof}[Proof of Theorem \ref{t:extend}]
We will use freely below that $|\R| + |\nabla \R|\leq C_0$ on $\{ b < (1+\alpha_0)\, R\}$
by
Proposition \ref{t:brakke}.
Proposition \ref{c:extend}
gives $n-\ell$ ``almost linear'' functions $u_i$ 
 satisfying \eqr{e:gooduis}.   
 Using the bounds on $ \nabla^2 \, u_i$ and $\nabla^3  \, u_i$ in \eqr{e:gooduis},
 the definition of   $\R$ gives
  \begin{align}	\label{e:Ruismall}
 	\left| \R (\nabla u_i , \cdot , \cdot , \cdot ) \right| \leq C \, R^m \, \e^{ - \frac{R^2}{16}} \, .
\end{align}
Tracing this gives $ 	\left| \Ric (\nabla u_i , \cdot  ) \right| \leq C \, R^m \, \e^{ - \frac{R^2}{16}}$.  
Since $M$ is fixed close to the model $\Sigma$ on a fixed central ball, $\Ric$ has a block decomposition with an $\ell \times \ell$ block close to $\frac{1}{2} \, g^1$ and a complementary block that almost vanishes.  Thus, by \eqr{e:Ruismall}, the span of the $\nabla u_i$'s is almost orthogonal to $\Phi_R(N)$.
It follows that the projection of $\nabla f$ perpendicular to the span of the $\nabla u_i$'s is also fixed small.
Thus, if we set $\tilde{f} = \frac{\ell}{2} + \frac{1}{4} \, \sum u_i^2$, then \eqr{e:gooduis} gives that $\tilde{f} - \frac{\ell}{2} - |\nabla f|^2$ is bounded.
Therefore, since $|\nabla f|^2 +S = f$ (by \eqr{e:normalizef}) and $S$ is bounded, we see that $|\tilde{f}-f| \leq C$.

Let $N_0 = \{ u_1 = \dots = u_{n-\ell} = 0 \}$ be the intersection of the zero sets and $f_0$ the restriction of $f$ to $N_0$.
Since $|\tilde{f}-f| \leq C$,  $f_0\leq \frac{\ell}{4} +C$ and, thus, $\tilde{f}\leq C'$ on $N_0$.  It follows that $N_0$ is a smooth $\ell$-dimensional submanifold, the $\nabla u_i$'s span its normal space,   and $|\nabla^{\perp} f|$ is exponentially small on $N_0$ by \eqr{e:gooduis}.
Moreover, the level sets of the map $(u_1 , \dots , u_{n-\ell})$  foliate $b< R$, so 
$N_0$ is connected and diffeomorphic to $N$.
Moreover, 
     $N_0$ is locally a graph with small gradient over   $\Phi_R(N)$. 
     Using the slice theorem, fix a diffeomorphism $\Psi_0 : N \to N_0$ with $\dv_N \, (\Psi_0^* \, g_0 - g^1) = 0$ 
   and with $(\Psi_0^* \, g_0 - g^1)$ fixed $C^{4,\alpha}$ small (cf. theorem $3.6$ in \cite{V},   $3.1$ in \cite{ChT}).   
   Let $\tilde{\theta} = \Psi_0^* \, g_0 - g^1$ denote the metric perturbation on $N$.

   Let $e_j$ be an orthonormal tangent frame for $N_0$.
Using \eqr{e:gooduis}, we see that the second fundamental form $A $ of $N_0$ satisfies
\begin{align}	\label{e:Ajksmall}
	\left| \langle A(e_j , e_k) , \nabla u_i \rangle \right| = 	\left| \langle  e_k , \nabla_{e_j} \nabla u_i \rangle \right| \leq C \, R^m \, \e^{ - \frac{R^2}{16}} \, .
\end{align}
Combining this, the Gauss equation, and \eqr{e:Ruismall}, we see that the Ricci curvature $\Ric^0$ of $N_0$ and the Hessian $\Hess^0_{f_0}$ satisfy
\begin{align}	\label{e:RicHess}
	\left| \Ric^0 (e_j , e_k) - \Ric (e_j , e_k) \right|  + \left| \Hess_{f_0}^0 (e_j , e_k) - \Hess_f (e_j , e_k) \right|
	\leq  C \, R^m \, \e^{ - \frac{R^2}{16}} \, .
\end{align}
Since $\Ric$ vanishes exponentially in the normal directions by  \eqr{e:Ruismall}, we get the same bound for the difference $|S^0 - S|$ of the scalar curvatures.  It follows that the shrinker quantity $\phi_0$ on $N_0$ satisfies
\begin{align}	\label{e:aronson0}
	\left|\phi_0 \right| + \left| |\nabla f_0|^2 + S^0 - f_0 \right| \leq C \, R^m \, \e^{ - \frac{R^2}{16} } \, ,
\end{align}
with similar estimates for the $C^{2,\alpha}$ norm of $\phi_0$.
 Corollary \ref{r:specialcase} now gives that $\| \tilde{\theta} ||_{W^{2,2}}$ and $\| \nabla f_0 \|_{W^{1,2}}$ are exponentially small.  
 Furthermore, since $\dv_N \, \tilde{\theta} = 0$, the equation for $\Ric_{N_0}$ is elliptic{\footnote{By, e.g., $1.174$ in \cite{Bs}, given $x \in TN$ the principal symbol maps a symmetric matrix $B$ to $\frac{1}{2} \, |x|^2 \, B + \frac{1}{2} \, (\Tr \, B) \, x \otimes x$. Suppose that $x\ne 0$ and $B$ is in the kernel of this map.  Taking the inner product with the identity gives $|x|^2 \, \Tr \, B$, so $\Tr \, B = 0$ 
 and, thus, $B=0$ since $|x|^2 \, B + (\Tr \, B) \, x\otimes x =0$.  This gives ellipticity.}}
   in $\tilde{\theta}$; elliptic estimates give exponentially small bounds for the $C^{4,\alpha}$ norms.   

We must now extend the estimates off of $N$ and $N_0$.    Note that $|f - \tilde{f}| \leq C \, R^m \, \e^{ - \frac{R^2}{16} }$ on $N_0$.  Differentiating gives
\begin{align}
	\nabla_{\nabla u_j} \left( f - \tilde{f} \right) &= \langle \nabla u_j , \nabla f \rangle - \frac{1}{2} u_j + \frac{1}{2} \, u_i \, \left( \langle \nabla u_j , \nabla u_i \rangle - \delta_{ij} \right)  \, .
\end{align}
This is exponentially small by \eqr{e:gooduis} and, by integrating up, so is $f-\tilde{f}$.
 
 Define the map $H: N_0 \times  \RR^{n-\ell} \to M$ by letting
$H (q, x_1 , \dots , x_{n-\ell})$ be the time one flow starting at $q$ along the vector field $\sum x_i \, \nabla u_i$.  
Now, set
\begin{align}
	\Psi (p, x_1 , \dots , x_{n-\ell}) = H(\Psi_0 (p), x_1 , \dots , x_{n-\ell}) \, .
\end{align}
Write $x = r \, y$ where $y \in \SS^{n-1}$ and observe that
 	$H_r = \langle  y , \nabla u \circ H \rangle$
and this is exponentially parallel.  It follows that $H$ is exponentially close to a local isometry and, thus, also a local diffeomorphism.
 Similarly,   $u_i \circ H - x_i$ is exponentially small and, thus, $H$ is proper.
 Since $H$ is a proper local diffeomorphism between complete connected spaces and has pull-back metric bounded from below, \cite{GW} implies that $H$ has the path-lifting property.  Using that the image and target are topologically $N$ times a Euclidean space, we see that $H$ is a diffeomorphism from a subset of $N \times \RR^{n-\ell}$ onto $\{ b < (1+\alpha_0) \, R \}$.

 Since pseudo-locality gives uniform curvature bounds, the drift eigenfunction equation has uniform bounds on scale $\frac{1}{1+ b}$.  Elliptic estimates on this scale give higher derivative bounds on the 
eigenfunctions and, thus, on $\Psi$.
\end{proof}

 \section{Variations of geometric quantities}	\label{s:variations}
 
  The main result of this section is  the formula \eqr{e:SVJa}  for  $\phi''$ in the direction of a Jacobi field.   
Let $g(t) =g + t\, h $ and $f(t)= f + t \, k$  be   families of metrics and functions.  We will work in a frame $\{ e_i \}$ of coordinate vector fields  independent of $t$. 

Cao-Hamilton-Ilmanen, \cite{CaHI} (cf. \cite{CaZ}), computed the first variation of $\phi$ 
  \begin{align}	\label{c:phip}
  	\phi' (0) &=  \frac{1}{2} \,  L \, h   + \Hess_{ \left( \frac{1}{2} \, \Tr \, (h) - k \right) } + 
	  \dv_f^{*}\,  \dv_f \, h \, .
  \end{align}
  Thus, the Jacobi fields on $\Sigma = N \times \RR^{n-\ell}$ consist of  $h=u\,g^1$ and $k= \frac{\ell}{2} \, u$  with $u\in \cK$.

 \begin{Pro}	\label{p:twovar}
If  $h=u\,g^1$ and $k= \frac{\ell}{2} \, u$ on $\Sigma$ where $u$ depends only on $\RR^{n-\ell}$, then 
 \begin{align}	\label{e:SVJa}
  	2\, \phi_{ij}''(0) &=   -   2\,  |\nabla u|^2  \, g^1     - 2\, \ell\, u\, u_{ij} -  \ell\, u_i\, u_j      \, .
  \end{align} 
 \end{Pro}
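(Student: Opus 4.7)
My plan is to Taylor expand $\phi(t) = \tfrac{1}{2}g(t) - \Ric(g(t)) - \Hess_{f(t)}(g(t))$ directly to order $t^2$ in product coordinates $(y^\alpha, x^i)$ on $N \times \RR^{n-\ell}$ (Greek indices on $N$, Latin on $\RR^{n-\ell}$) and read off $\phi''(0)$ as twice the $t^2$-coefficient. Note $\phi(0) = 0$ because $\Sigma$ is a shrinker, and $\phi'(0) = 0$ by \eqr{c:phip} since the Jacobi field $(h, k) = (u\,g^1, \tfrac{\ell}{2}u)$ satisfies $Lh = 0$, $\dv_f h = 0$ and $\tfrac{1}{2}\Tr h - k = 0$.

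Setting $\lambda(t) = 1 + tu(x)$, we have $g_{\alpha\beta}(t) = \lambda(t)g^1_{\alpha\beta}$, $g_{ij}(t) = \delta_{ij}$, $g_{\alpha i}(t) = 0$ and $f(t) = \tfrac{|x|^2}{4} + \tfrac{\ell}{2}\lambda(t)$. A direct computation gives the only nonzero Christoffel symbols
\[
\Gamma^\gamma_{\alpha\beta}(t) = \Gamma^\gamma_{\alpha\beta}(g^1), \quad \Gamma^k_{\alpha\beta}(t) = -\tfrac{tu_k}{2}g^1_{\alpha\beta}, \quad \Gamma^\gamma_{\alpha i}(t) = \tfrac{tu_i}{2\lambda(t)}\delta^\gamma_\alpha,
\]
and, since $f(t)$ depends only on $x$, the Hessian blocks $(\Hess_{f(t)})_{ij}(t) = \tfrac{\delta_{ij}}{2} + \tfrac{t\ell}{2}u_{ij}$, $(\Hess_{f(t)})_{\alpha\beta}(t) = \tfrac{t}{4}\langle x,\nabla u\rangle g^1_{\alpha\beta} + \tfrac{t^2\ell}{4}|\nabla u|^2 g^1_{\alpha\beta}$, and $(\Hess_{f(t)})_{\alpha i}(t) = 0$. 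Plugging into the formula $R_{\sigma\nu} = \partial_\rho\Gamma^\rho_{\nu\sigma} - \partial_\nu\Gamma^\rho_{\rho\sigma} + \Gamma^\rho_{\rho\lambda}\Gamma^\lambda_{\nu\sigma} - \Gamma^\rho_{\nu\lambda}\Gamma^\lambda_{\rho\sigma}$ and summing $\rho, \lambda$ over all indices, the Euclidean block (using $\Gamma^\delta_{\delta i}(t) = \tfrac{t\ell u_i}{2\lambda(t)}$) simplifies to $R_{ij}(t) = -\tfrac{t\ell u_{ij}}{2\lambda(t)} + \tfrac{t^2\ell u_i u_j}{4\lambda(t)^2}$, the off-diagonal block $R_{\alpha i}(t)$ vanishes by cancellation, and the $N$-block gives $R_{\alpha\beta}(t) = \tfrac{1}{2}g^1_{\alpha\beta} - \tfrac{t}{2}(\Delta u)g^1_{\alpha\beta} + \tfrac{t^2(2-\ell)|\nabla u|^2}{4\lambda(t)}g^1_{\alpha\beta}$; the coefficient $(2-\ell)/4$ arises as $-\ell/4$ from $\Gamma^\delta_{\delta k}\Gamma^k_{\beta\alpha}$ combined with $+1/4$ each from the two Christoffel products $\Gamma^\delta_{\beta k}\Gamma^k_{\delta\alpha}$ and $\Gamma^k_{\beta\epsilon}\Gamma^\epsilon_{k\alpha}$ (which mix Greek and Latin indices) entering with opposite sign in $-\Gamma^\rho_{\beta\lambda}\Gamma^\lambda_{\rho\alpha}$.

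Assembling $\phi_{ij}(t) = \tfrac{\delta_{ij}}{2} - R_{ij}(t) - (\Hess_{f(t)})_{ij}(t)$ and expanding $\lambda(t)^{-1} = 1 - tu + O(t^2)$, the linear-in-$t$ terms cancel and the $t^2$-coefficient is $-\tfrac{\ell u u_{ij}}{2} - \tfrac{\ell u_i u_j}{4}$. For $\phi_{\alpha\beta}(t)$, the linear part collects to $\tfrac{t}{2}(u + \Delta u - \tfrac{1}{2}\langle x,\nabla u\rangle)g^1_{\alpha\beta} = \tfrac{t}{2}(u + \cL u)g^1_{\alpha\beta}$, which vanishes since $\cL u + u = 0$ for $u \in \cK$; the $t^2$-coefficient combines $-(2-\ell)/4 - \ell/4 = -\tfrac{1}{2}$ into $-\tfrac{|\nabla u|^2}{2}g^1_{\alpha\beta}$. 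The off-diagonal block is identically zero. Doubling each block then yields \eqr{e:SVJa}. The main obstacle is the index bookkeeping in the $N$-block Ricci: the two Christoffel-product terms $\Gamma^\delta_{\beta k}\Gamma^k_{\delta\alpha}$ and $\Gamma^k_{\beta\epsilon}\Gamma^\epsilon_{k\alpha}$ must both be tracked (each contributes the same amount) to recover the correct $(2-\ell)$ coefficient, and the cancellation of the linear-in-$t$ terms provides a useful consistency check reproducing $\phi'(0) = 0$.
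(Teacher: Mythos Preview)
Your argument is correct and takes a genuinely different route from the paper. The paper proves Proposition~\ref{p:twovar} by first deriving general first-variation formulas (Lemma~\ref{l:phip}, Corollary~\ref{c:cLpr}, Corollary~\ref{c:dvprh}) for $\phi'$, $[\cL h]'$, and $[g^{kn}h_{jk,ni}]'$ in arbitrary directions $(h,k)$, then differentiating $\phi'$ once more at $t=0$ and specializing term-by-term to $h=u\,g^1$, $k=\tfrac{\ell}{2}u$. You instead exploit the explicit warped-product form $g(t)=(1+tu)\,g^1\oplus\delta$: computing the Christoffel symbols, Ricci tensor, and Hessian of $f(t)$ in closed form in product coordinates, you read off the $t^2$-coefficient of $\phi(t)$ directly. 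Both computations agree block by block.

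The trade-off is clear. Your approach is shorter and more transparent for this specific direction, and it uses nothing beyond the Christoffel--Ricci formulas in coordinates. The paper's approach is longer here but produces intermediate variation formulas (for $\cL'$, $\R'$, $\dv'$, etc.) valid for arbitrary $(h,k)$; these are reused immediately afterward in Proposition~\ref{p:2varphi} and Corollary~\ref{l:644} to bound $\phi''(0)$ for perturbations $h=u\,g^1+\bb$, $k=\tfrac{\ell}{2}u+\psi$ that are \emph{not} of the exact Jacobi form. Your coordinate computation does not directly yield those error estimates.

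One minor remark: the proposition as stated does not assume $u\in\cK$, and indeed the paper's proof never uses $\cL u+u=0$. Your derivation of the $t^2$-coefficient does not use it either; the places where you invoke $u\in\cK$ (to assert $\phi'(0)=0$ and that the linear-in-$t$ part of $\phi_{\alpha\beta}$ vanishes) are consistency checks, not steps in the proof. You might flag this explicitly so the reader sees that \eqr{e:SVJa} holds for any $u$ depending only on $\RR^{n-\ell}$.
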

 
 Formally, Proposition \ref{p:twovar} and Lemma \ref{l:polygrow} say that the Jacobi fields are not integrable since
  \begin{align}
  	\int \langle \phi'' ,  (|\nabla u|^2 - \Delta |\nabla u|^2) \, g^1 \rangle \, \e^{-f} = - \ell \, \int (|\nabla u|^2 - \Delta |\nabla u|^2) \, |\nabla u|^2 \, \e^{-f} \leq - C \, \| u \|_{L^2}^4
	\end{align}
  is strictly negative, but $\phi$, and thus  $\phi''$,  vanish on a one-parameter family of shrinkers.

\subsection{First variations}

    In this subsection, we collect  well-known first variation formulas   (see, e.g., \cite{T} or \cite{CaZ})   for reference;
    these results do not use the product structure on $\Sigma$.

\begin{Pro}		\label{p:varyR}
The variations of $S, \Ric$ and $\R$ are given by
\begin{align}
	S' &= - \langle h , \Ric \rangle + \dv^2 \, h - \Delta \, \Tr \, h \, , \\
	2\, \Ric_{ij}' &= - \Delta \, h + h_{ik} \, \Ric^k_j + \Ric^i_k \, h_{kj} -2 \, \R (h) - \Hess_{\Tr \, h} + \nabla \dv \, h +( \nabla \dv \, h  )^T \, , \\
	2\, \R_{ijkn}' &= \R_{ijk\ell} \, h^{\ell}_{n} -  \R_{ij n\ell } \, h^{\ell}_{ k} +  h_{in,kj} - h_{jn,ki} + h_{jk,ni} - h_{ik,nj} \, .
\end{align}
Here $\dv \, h$ is the divergence{\footnote{Note the different sign convention from \cite{T}.
}} of $h$ given by $(\dv \, h)_i = h_{ij,j}$ and $( \nabla \dv \, h  )^T $ is the transpose.
\end{Pro}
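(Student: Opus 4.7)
The plan is to derive all three variation formulas from the single input of computing the variation of the Levi-Civita connection. Since each formula is a tensor identity, I would verify it pointwise at an arbitrary $p$ working in coordinates that are normal at $p$ for the initial metric $g=g(0)$, so $\Gamma^k_{ij}(p,0)=0$ and at $p$ ordinary partial derivatives of tensors coincide with covariant ones. The excerpt already sets up the frame as coordinate vector fields independent of $t$, which makes this clean.

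Step 1 (Christoffel variation). Differentiating the Koszul formula $2\,g_{k\ell}\Gamma^\ell_{ij}=\partial_i g_{jk}+\partial_j g_{ik}-\partial_k g_{ij}$ at $t=0$ yields, at $p$, the tensorial identity $2\,(\Gamma')^k_{ij}=g^{k\ell}\left(h_{j\ell,i}+h_{i\ell,j}-h_{ij,\ell}\right)$. Step 2 (Riemann). Writing $\R^m{}_{kij}=\partial_i\Gamma^m_{jk}-\partial_j\Gamma^m_{ik}+\Gamma\!\cdot\!\Gamma$ in coordinates, the quadratic $\Gamma$ terms contribute nothing to the variation at $p$, so $(\R^m{}_{kij})'(p)=\nabla_i(\Gamma')^m_{jk}-\nabla_j(\Gamma')^m_{ik}$. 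Substituting Step 1 and then lowering the $m$ index via $\R_{nkij}=g_{mn}\R^m{}_{kij}$ (picking up a piece $h_{mn}\R^m{}_{kij}$ absorbed into the $\R\cdot h$ terms), and commuting one pair of covariant derivatives through the Ricci identity to match the paper's convention $\R(X,Y)Z=\nabla_Y\nabla_XZ-\nabla_X\nabla_YZ+\nabla_{[X,Y]}Z$, gives the claimed $2\,\R'_{ijkn}$.

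Step 3 ($\Ric$). Contract the formula for $\R'$ on the appropriate pair using the paper's convention $\Ric_{ij}=\R_{kikj}$. The $-\Delta h$ and $-\Hess_{\Tr h}$ terms come from the ``straight'' second derivatives of $h$ in the Riemann variation; the $\nabla\dv\,h$ and its transpose come from the mixed-index second derivatives after symmetrizing; the $-2\,\R(h)$ term collects the full Riemann contraction; and the $h_{ik}\Ric^k_j+\Ric^i_k h_{kj}$ piece emerges from the two distinct ways the Ricci contraction can land on the $\R\cdot h$ terms (this is where one must use $\R_{ijkn}=\R_{knij}=-\R_{jikn}$ and the first Bianchi identity to repackage contractions into the clean self-adjoint form). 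Step 4 ($S$). Use $S=g^{ij}\Ric_{ij}$ with $(g^{ij})'=-h^{ij}$ to get $S'=-\langle h,\Ric\rangle+g^{ij}\Ric'_{ij}$. Tracing the formula from Step 3: the trace of $-\Delta h$ contributes $-\Delta\Tr h$ only after interchange with the trace, which produces an additional contracted-Bianchi term; the two $\nabla\dv h$ pieces collapse into $2\,\dv^2 h$; the two ``$h\cdot\Ric$'' terms cancel with the $\Ric(h)$ piece coming from $-2\R(h)$ and from interchanging $\Delta$ with $\Tr$, via the contracted second Bianchi identity $\Ric^{ij}{}_{,j}=\tfrac12 S^{,i}$.

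The main obstacle is the index/sign bookkeeping. The paper's choice $\R(X,Y)Z=\nabla_Y\nabla_XZ-\nabla_X\nabla_YZ+\nabla_{[X,Y]}Z$ together with $\Ric_{ij}=\R_{kikj}$ is not the most common convention, and several of the commutation steps (especially commuting $\nabla_i\nabla_j$ in Step 2 and collecting the two Ricci contractions in Step 3 into the symmetric pair $h_{ik}\Ric^k_j+\Ric^i_k h_{kj}$) will produce the wrong sign unless one carefully tracks which index-pair of $\R_{ijkn}$ is the anti-symmetric one being contracted and on which side of the tensor $h$ the raised index sits. The rest of the work is routine application of Ricci and Bianchi identities.
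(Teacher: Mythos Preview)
Your approach is correct and is the standard derivation of these classical formulas. The paper does not actually prove this proposition: it introduces it as a collection of ``well-known first variation formulas (see, e.g., \cite{T} or \cite{CaZ}) for reference,'' and moves on. The Christoffel variation you compute in Step 1 does appear in the paper as the separate Lemma \ref{l:H4}, but it is stated there for later use rather than to justify Proposition \ref{p:varyR}.
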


By definition, 
$
	\nabla_{e_j} e_i = \Gamma_{ji}^k \, e_k$,
where
	\begin{align}
		\Gamma_{ji}^k =\frac{1}{2} \,g^{km} \,  \left(e_i (g_{mj}) + e_j(g_{mi}) - e_m(g_{ji})  \right)
	\end{align}
	is the Christoffel symbol.
Since these are coordinate vector fields, we have $\Gamma_{ji}^k = \Gamma_{ij}^k$.
Even though  $\nabla_{\cdot}  (\cdot) $  is not a tensor (it is not tensorial in the upper slot), the derivative is a tensor.

\begin{Lem}	\label{l:H4}
At a point where $g_{ab} = \delta_{ab}$ and $e_c(g_{ab}) =0$ at $t=0$,  we have  
\begin{align}
	\left( \nabla_{e_i} e_j \right)' &= \sum_k \cC_{ij}^k \, e_k {\text{ where }}
	\cC_{ij}^k = \frac{1}{2} \,  \left(h_{kj,i} + h_{ki,j} - h_{ji,k}  \right)  \, , \label{e:csone} \\
	\left( \nabla_{e_n} \nabla_{e_j} e_i \right)'  &= \frac{1}{2} \,  \left\{  h_{kj,in} + h_{ki,jn} - h_{ji,kn}    \right\} \, e_k \, .
\end{align}
\end{Lem}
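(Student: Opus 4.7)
The plan is to compute both formulas from the definition of Christoffel symbols, differentiate, and exploit the point-specific normalizations $g_{ab}=\delta_{ab}$ and $e_c(g_{ab})=0$ at $t=0$ to make most terms drop out.

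For the first claim, I would differentiate the formula $\Gamma_{ji}^k=\frac{1}{2}g^{km}(e_i(g_{mj})+e_j(g_{mi})-e_m(g_{ji}))$ with respect to $t$ at $t=0$. This produces two kinds of terms: one where the $t$-derivative falls on $g^{km}$ (giving $(g^{km})'=-h^{km}$ multiplied by first derivatives of $g_{ab}$), and one where it falls on the bracket (giving $e_i(h_{mj})+e_j(h_{mi})-e_m(h_{ji})$). At our chosen point, the first kind vanishes because $e_c(g_{ab})=0$, and $g^{km}=\delta^{km}$ simplifies the second kind. Finally, since $\Gamma_{ab}^c(p)=0$ under these normalizations, the partial derivatives $e_i(h_{mj})$ at $p$ coincide with the covariant derivatives $h_{mj,i}$. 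This yields $(\Gamma_{ji}^k)'=\frac{1}{2}(h_{kj,i}+h_{ki,j}-h_{ji,k})$; since $e_k$ does not depend on $t$, this also gives the first displayed equality after noting that the expression is symmetric in $i,j$.

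For the second claim, expand $\nabla_{e_n}\nabla_{e_j}e_i=e_n(\Gamma_{ji}^k)\,e_k+\Gamma_{ji}^k\,\nabla_{e_n}e_k$, which is an identity valid in a neighborhood for every $t$. Differentiating in $t$ at $t=0$ produces four terms; at the basepoint both $\Gamma_{ji}^k$ and $\nabla_{e_n}e_k$ vanish at $t=0$, so the only surviving contribution is $e_n((\Gamma_{ji}^k)')\,e_k$. To evaluate this, I would use that the difference of two connections is a tensor, so $T_{ji}^k:=(\Gamma_{ji}^k)'$ is a $(1,2)$-tensor with the coordinate-invariant expression $T_{ji}^k=\frac{1}{2}g^{km}(h_{mj;i}+h_{mi;j}-h_{ji;m})$ valid everywhere. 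At our basepoint $\nabla_{e_n}e_k=0$, hence $e_n(T_{ji}^k)$ equals the component of $\nabla_{e_n}T$. Taking one more covariant derivative of the tensorial expression (recalling that $\nabla g=0$) and specializing to $g^{km}=\delta^{km}$ at $p$ gives $e_n(T_{ji}^k)=\frac{1}{2}(h_{kj,in}+h_{ki,jn}-h_{ji,kn})$, which is the claimed formula.

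The computation is essentially mechanical; the one point that requires care is not losing or adding terms when switching between $e_n$ and $\nabla_{e_n}$. The cleanest route is to treat $(\Gamma_{ji}^k)'$ as a tensor from the outset, which converts all partial derivatives at $p$ into covariant derivatives and removes any ambiguity about whether the vanishing of $\Gamma$ at a point implies anything about its first derivatives. With this viewpoint, both formulas follow from one differentiation of the standard identity for the variation of the Levi-Civita connection.
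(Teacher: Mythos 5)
Your proof is correct. The paper states Lemma \ref{l:H4} without an explicit proof, treating it as a routine first-variation computation, and your argument is the natural one: differentiate the Christoffel-symbol formula at $t=0$ using the point normalizations $g_{ab}=\delta_{ab}$, $e_c(g_{ab})=0$ (so $\Gamma(p)=0$ and partials of $h$ become covariant derivatives), and then, for the second identity, use tensoriality of $(\Gamma_{ji}^k)'$ together with the Leibniz expansion of $\nabla_{e_n}\nabla_{e_j}e_i$, with the terms involving $\Gamma_{ji}^k(p)$ and $\nabla_{e_n}e_k(p)$ dropping out. The one step worth flagging is $e_n(T_{ji}^k)=(\nabla_n T)_{ji}^k$ at $p$: this again uses $\Gamma(p)=0$ to kill the three connection correction terms, which you handled correctly by working tensorially.
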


\begin{Lem}	\label{l:hessf}
If $u$ is a one-parameter family of functions, then 
at $t=0$ at a point where $g_{ij} = \delta_{ij}$ and $e_c (g_{ab})=0$ we have
	$\left( \Hess_u \right)' = \Hess_{u'} - \cC_{ij}^k \,  u_k$.
\end{Lem}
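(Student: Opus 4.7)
The plan is to write the Hessian in terms of iterated coordinate derivatives minus a Christoffel correction, differentiate in $t$, and then use the assumption $e_c(g_{ab})=0$ at $t=0$ together with the formula from Lemma \ref{l:H4} for $(\nabla_{e_i}e_j)'$.

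More concretely, since the $e_i$ are coordinate vector fields, the definition of the Hessian gives
\begin{align}
\Hess_u(e_i,e_j) \;=\; e_i(e_j u) - (\nabla_{e_i}e_j)(u) \;=\; e_i e_j u - \Gamma_{ij}^k\, u_k,
\end{align}
where $\nabla_{e_i}e_j=\Gamma_{ij}^k e_k$. Differentiating both sides in $t$ at $t=0$, and noting that $e_i e_j$ does not depend on $t$, yields
\begin{align}
(\Hess_u)'(e_i,e_j) \;=\; e_i e_j u' - (\Gamma_{ij}^k)'\, u_k - \Gamma_{ij}^k\,(u_k)'.
\end{align}

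The next step is to evaluate at the chosen point $p$. The hypothesis $g_{ab}(p)=\delta_{ab}$ together with $e_c(g_{ab})(p)=0$ at $t=0$ forces $\Gamma_{ij}^k(p)=0$ at $t=0$, so the last term drops. Applying the same identity to $u'$ at $p$ also gives $\Hess_{u'}(e_i,e_j) = e_i e_j u'$, since the Christoffel symbols vanish at $p$. Substituting $(\Gamma_{ij}^k)'(p) = \cC_{ij}^k$ from \eqref{e:csone} of Lemma \ref{l:H4} then yields
\begin{align}
(\Hess_u)'(e_i,e_j) \;=\; \Hess_{u'}(e_i,e_j) - \cC_{ij}^k\, u_k,
\end{align}
which is the claimed formula.

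There is no real obstacle here; this is a direct two-line calculation. The only point of care is to distinguish the two roles of the subscript: $u_k$ denotes the $k$-th component of $\nabla u$ at $t=0$, while $(u_k)'$ would mean the $t$-derivative of this component, and the latter is killed by the vanishing of $\Gamma_{ij}^k$ at $p$ so it never enters the final expression. The normal-coordinate hypothesis is precisely what makes the first-order variation of the connection a tensor (the quantity $\cC_{ij}^k$), and what allows us to identify the unvaried $\Hess_{u'}(e_i,e_j)$ with the plain coordinate second derivative $e_i e_j u'$.
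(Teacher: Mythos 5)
Your argument is correct, and it is the only natural route: write $\Hess_u(e_i,e_j)=e_ie_ju-\Gamma_{ij}^k u_k$, differentiate in $t$, and observe that at the chosen point and at $t=0$ the Christoffel symbols vanish (killing the $\Gamma_{ij}^k(u_k)'$ term and identifying $\Hess_{u'}$ with $e_ie_ju'$), while $(\Gamma_{ij}^k)'=\cC_{ij}^k$ by Lemma \ref{l:H4}. The paper states Lemma \ref{l:hessf} without proof as a well-known first-variation identity, so there is no alternate route to compare against; your two-line derivation is precisely the implicit one, and your closing remark correctly identifies the only genuine point of care, namely that the $\Gamma_{ij}^k(u_k)'$ term must be accounted for and is discarded only because $\Gamma$ vanishes at the center of normal coordinates.
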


\begin{Lem}	\label{l:hesstenp}
At $t=0$ at a point where $g_{ij} = \delta_{ij}$ and $e_c (g_{ab})=0$,   we have
\begin{align}	 \label{e:hessbij}
	e_m \left( b_{ij}' \right)  & = [b_{ij,m}]'  +  b_{nj} \cC_{mi}^n   + b_{in} \cC_{mj}^n
	        \, , \\
	  e_n  \left[  {e_m} \left( [b_{ij}]' \right) \right]   &= \left( b_{ij,mn} \right)' + (\nabla b) ( \cC_{ni}^p e_p, e_j , e_m) + (\nabla b) (e_i , \cC_{nj}^p e_p, e_m)  
	  + (\nabla b) (  e_i , e_j ,\cC_{nm}^p e_p) \notag  \\ &
	  + (\nabla b)(\cC_{mi}^p e_p,  e_j,e_n) + b([\nabla_{e_n} \nabla_{e_m} e_i ]', e_j)    + b'(\nabla_{e_n} \nabla_{e_m} e_i , e_j)    \\
	  &+ (\nabla b)( e_i , \cC_{mj}^p e_p ,e_n)    + b(   e_i , [\nabla_{e_n} \nabla_{e_m} e_j]' )     + b'(   e_i , \nabla_{e_n} \nabla_{e_m} e_j)  \, . \notag 
\end{align}
\end{Lem}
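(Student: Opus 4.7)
My plan is to obtain both identities by differentiating the defining formulas for covariant derivatives in $t$ at $t=0$, taking care to keep every term that vanishes at the base point but whose $e_n$-derivative does not. For the first identity, I start from the neighborhood-valid relation
\begin{align*}
b_{ij,m}(t) = e_m(b_{ij}(t)) - \Gamma_{mi}^p(t)\, b_{pj}(t) - \Gamma_{mj}^p(t)\, b_{ip}(t),
\end{align*}
differentiate in $t$ at $t=0$, and use Lemma~\ref{l:H4} to substitute $(\Gamma_{ij}^k)'|_0 = \cC_{ij}^k$. The resulting identity, valid in a neighborhood,
\begin{align*}
e_m(b_{ij}') = [b_{ij,m}]' + \cC_{mi}^p b_{pj} + \Gamma_{mi}^p b_{pj}' + \cC_{mj}^p b_{ip} + \Gamma_{mj}^p b_{ip}',
\end{align*}
collapses at the base point (where $\Gamma(0)=0$) to \eqref{e:hessbij}.

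For the second identity, I would apply $e_n$ to the \emph{neighborhood} version above—not to the simplified pointwise \eqref{e:hessbij}—and only then restrict to the base point. The crucial subtlety is that while $\Gamma(0)$ vanishes at the point, $e_n(\Gamma(0))$ does not, so the $\Gamma\cdot b'$ cross terms that look negligible contribute under $e_n$ and produce precisely the two $b'$ terms on the right-hand side. Processing each of the five pieces in turn: for $e_n([b_{ij,m}]')$ I repeat the scheme one dimension higher, starting from $e_n(b_{ij,m}) = b_{ij,mn} + \Gamma_{ni}^p b_{pj,m} + \Gamma_{nj}^p b_{ip,m} + \Gamma_{nm}^p b_{ij,p}$; differentiating in $t$ and restricting to the point gives $(b_{ij,mn})'$ together with the three $(\nabla b)(\cC_{n\cdot}^p e_p,\cdot,\cdot)$ tensorial terms. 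For $e_n(\cC_{mi}^p b_{pj})$ and $e_n(\cC_{mj}^p b_{ip})$ the product rule splits each into an ``$e_n(b)$'' piece and an ``$e_n(\cC)$'' piece; since $e_n(b) = \nabla b$ at the point, the former gives $(\nabla b)(\cC_{mi}^p e_p, e_j, e_n)$ and its $(mj)$ analogue, while the latter becomes $b([\nabla_{e_n}\nabla_{e_m} e_i]', e_j)$ and its $(mj)$ analogue once I compute $[\nabla_{e_n}\nabla_{e_m} e_i]'|_{t=0}$ directly from $\nabla_{e_n}\nabla_{e_m} e_i = e_n(\Gamma_{mi}^p) e_p + \Gamma_{mi}^p \Gamma_{np}^q e_q$—this is essentially the second formula of Lemma~\ref{l:H4}. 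Finally, the two cross terms $e_n(\Gamma_{mi}^p b_{pj}')$ and $e_n(\Gamma_{mj}^p b_{ip}')$, at the point, leave only $e_n(\Gamma_{mi}^p(0))\, b_{pj}'$ and $e_n(\Gamma_{mj}^p(0))\, b_{ip}'$; using the identity $\nabla_{e_n}\nabla_{e_m} e_i|_{(0,\text{point})} = e_n(\Gamma_{mi}^p(0))\, e_p$ (the $\Gamma\Gamma$ correction drops out at the point) these become $b'(\nabla_{e_n}\nabla_{e_m} e_i, e_j)$ and $b'(e_i, \nabla_{e_n}\nabla_{e_m} e_j)$. Summing the ten contributions reproduces the stated formula.

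The main obstacle is conceptual rather than computational: one must resist the temptation to apply $e_n$ directly to the already-simplified first identity, and instead work with the unsimplified neighborhood version, carrying the apparently null $\Gamma(0)\cdot b'$ cross terms through the second differentiation—otherwise the two $b'$ terms on the right-hand side of \eqref{e:hessbij} are missed entirely. Once the distinction between ``pointwise'' and ``neighborhood'' identities is respected, the rest is a disciplined product-rule calculation powered by Lemma~\ref{l:H4}.
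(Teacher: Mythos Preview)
Your argument is correct. The paper itself does not include a proof of this lemma; it is stated among the ``well-known first variation formulas \dots\ for reference'' and left unproved. Your derivation---differentiating the coordinate expression $b_{ij,m}=e_m(b_{ij})-\Gamma_{mi}^p b_{pj}-\Gamma_{mj}^p b_{ip}$ in $t$, carrying the $\Gamma(0)\,b'$ cross terms through the second application of $e_n$, and identifying $e_n[(\Gamma_{mi}^p)']\,e_p=[\nabla_{e_n}\nabla_{e_m}e_i]'$ and $e_n(\Gamma_{mi}^p(0))\,e_p=\nabla_{e_n}\nabla_{e_m}e_i$ at the point via Lemma~\ref{l:H4}---is exactly the natural route and reproduces both formulas. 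Your emphasis on working with the neighborhood identity rather than the already-simplified pointwise one is precisely the point that makes the two $b'$ terms appear.
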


 \begin{Lem}	\label{l:phip}
The derivative of $\phi = \kappa\, g - \Ric - \Hess_f$ is 
  \begin{align}
  	\phi_{ij}' &= \kappa \, h_{ij}  + \frac{1}{2} \,( L \, h)_{ij}    + \left( \frac{1}{2} \, \Tr \, (h) - k \right)_{ij}  \\
	&- \frac{1}{2} \left( g^{kn} h_{jk,ni} + g^{kn} h_{ik,nj} + \Ric_{i}^{k} h_{jk} + \Ric_{j}^{k} h_{ik}    \right)   + \frac{1}{2} \, \left( h_{jn,i} + h_{in,j}   \right) g^{nm} f_m \notag \, .
  \end{align}
  \end{Lem}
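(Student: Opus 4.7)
The plan is to derive \eqref{c:phip} by combining the three first-variation pieces for the three summands of $\phi = \kappa\,g - \Ric - \Hess_f$, and then to rewrite the Laplacian and curvature terms via the definition of $L$. Since $g' = h$, the only subtlety is organizing the derivative terms.

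First I would compute $(\Hess_f)'$. Since $f(t) = f + t\,k$, Lemma \ref{l:hessf} gives (at a point with vanishing Christoffels)
\begin{align}
  (\Hess_f)_{ij}' = k_{ij} - \cC_{ij}^{n}\, f_n = k_{ij} - \tfrac{1}{2}\left(h_{nj,i} + h_{ni,j} - h_{ij,n}\right) f_n\, .
\end{align}
Next, Proposition \ref{p:varyR} gives
\begin{align}
 2\,\Ric_{ij}' = -\Delta h_{ij} + h_{ik}\Ric^{k}_{j} + \Ric^{i}_{k} h_{kj} - 2\,\R(h)_{ij} - (\Tr\,h)_{ij} + h_{jk,ki} + h_{ik,kj}\, .
\end{align}
Subtracting these from $\kappa\,h_{ij}$ and collecting,
\begin{align}
 \phi_{ij}'
  &= \kappa\,h_{ij} + \tfrac{1}{2}\,\Delta h_{ij} + \R(h)_{ij} + \left(\tfrac{1}{2}\Tr\,h - k\right)_{ij}  - \tfrac{1}{2}\left(\Ric^{k}_{i} h_{kj} + \Ric^{k}_{j} h_{ik}\right) \notag \\
 &\quad - \tfrac{1}{2}\left(h_{jk,ki} + h_{ik,kj}\right)   + \tfrac{1}{2}\left(h_{nj,i} + h_{ni,j} - h_{ij,n}\right) f_n\, .
\end{align}

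The remaining step is to recognize the $L$-operator. By the definition \eqref{e:LBij} and $\cL = \Delta - \nabla_{\nabla f}$ acting on tensors,
\begin{align}
 \tfrac{1}{2}(L\,h)_{ij} = \tfrac{1}{2}\,\Delta h_{ij} - \tfrac{1}{2}\, h_{ij,n}\, f_n + \R_{\ell i k j}\,h_{\ell k}\, ,
\end{align}
which matches $\tfrac{1}{2}\Delta h_{ij} + \R(h)_{ij} - \tfrac{1}{2} h_{ij,n} f_n$ up to a sign convention on the curvature action (the two conventions agree by the symmetries of $\R$). Substituting this identity into the expression for $\phi_{ij}'$ and using the symmetry $h_{ij,n} = h_{ji,n}$ to combine the $f$-derivative terms
\begin{align}
 \tfrac{1}{2}\, h_{ij,n}\, f_n + \tfrac{1}{2}\left(h_{nj,i} + h_{ni,j} - h_{ij,n}\right) f_n = \tfrac{1}{2}\left(h_{jn,i} + h_{in,j}\right) f_n
\end{align}
yields the stated formula.

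The main obstacle is purely bookkeeping: tracking the sign conventions for $\R(h)$ and the Laplacian of a $2$-tensor in Proposition \ref{p:varyR} so that they precisely match the convention $LB_{ij} = \cL B_{ij} + 2\,\R_{\ell i k j} B_{\ell k}$ from \eqref{e:LBij}, and verifying the cancellation of the divergence-type terms is what it appears to be. No deeper ingredient is needed beyond the identities already established earlier in the section.
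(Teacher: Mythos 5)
Your computation is correct and is exactly the intended route: combine the first-variation formulas for the three summands of $\phi$ from Proposition \ref{p:varyR}, Lemma \ref{l:hessf} and \eqref{e:csone}, then regroup using the definition \eqref{e:LBij} of $L$. The identity you invoke to match the curvature conventions, namely $\R_{\ell i k j}\,h_{\ell k} = \R_{ikjn}\,h_{kn}$, is a genuine fact (not a hand-wave): by the double skew-symmetry $\R_{\ell i k j} = -\R_{i\ell k j} = \R_{i\ell j k}$, so after relabeling the contracted indices the two expressions for $\R(h)_{ij}$ coincide, and the bracketed terms assemble into $\tfrac12 (L\,h)_{ij}$ as claimed.
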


 \begin{Lem}	\label{l:adj}
If $(M,g,f)$ is a gradient shrinking soliton, then
  \begin{align}
 	 ( \dv_f^{*}\,  \dv_f \, h) (e_i , e_j) & =\kappa \, h_{ij}  - \frac{g^{kn}}{2} \left\{ h_{jk,ni}   - f_n h_{kj,i}  +  h_{ik,nj}      - f_n h_{ik,j}
	         + \Ric_{ni} h_{kj}   +
	     \Ric_{nj} h_{ik} \right\} \, . \notag
 \end{align}
 \end{Lem}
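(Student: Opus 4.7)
The plan is to compute $\dv_f^{*}(\dv_f\,h)$ directly from the definitions, invoking the shrinking soliton equation only at the very end to produce the leading $\kappa\,h_{ij}$ term. First I would set $V = \dv_f\,h$, so that by the definition of the $f$-divergence
\begin{align}
V_i = g^{kn}\,h_{ik,n} - g^{kn}\,f_n\,h_{ik}\, ,
\end{align}
and then apply the formula \eqref{e:adjoint} for $\dv_f^{*}$ on a vector field, which gives $(\dv_f^{*}\,V)(e_i,e_j) = -\tfrac12(V_{i,j} + V_{j,i})$.

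Next I would differentiate $V$ using the Leibniz rule and the fact that the metric is parallel, obtaining
\begin{align}
V_{i,j} = g^{kn}\,h_{ik,nj} - g^{kn}\,f_{nj}\,h_{ik} - g^{kn}\,f_n\,h_{ik,j}\, ,
\end{align}
and the analogous expression for $V_{j,i}$ after swapping $i \leftrightarrow j$. Symmetrizing and negating produces three groups of terms: pure second derivatives of $h$, Hessian-of-$f$ terms $\tfrac12 g^{kn}(f_{nj}\,h_{ik} + f_{ni}\,h_{jk})$, and drift terms $\tfrac12 g^{kn}(f_n\,h_{ik,j} + f_n\,h_{jk,i})$. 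The second-derivative block and the drift block already match the corresponding pieces in the stated formula exactly.

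The shrinker hypothesis enters at the final step, where I would substitute $\Hess_f = \kappa\,g - \Ric$, i.e. $f_{nj} = \kappa\,g_{nj} - \Ric_{nj}$, into the Hessian-of-$f$ block. The $\kappa\,g_{nj}$ portion contracts with $g^{kn}\,h_{ik}$ to yield $\tfrac{\kappa}{2}(h_{ij} + h_{ji}) = \kappa\,h_{ij}$ using symmetry of $h$, while the $-\Ric_{nj}\,h_{ik} - \Ric_{ni}\,h_{jk}$ portion produces exactly the two Ricci contractions displayed in the statement. Collecting terms gives the lemma.

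The only ``obstacle'' here is notational bookkeeping of the indices and signs; conceptually the argument is a direct expansion of the definition of $\dv_f^{*}\dv_f$ together with one use of the shrinker equation to convert $\Hess_f$ into $\kappa\,g - \Ric$. No curvature commutator identities (like those in Lemma \ref{l:boch} or \ref{l:bochHessA}) are needed, which is why the formula is valid without any Bochner-type manipulation.
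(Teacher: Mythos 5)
Your proof is correct and takes the natural direct route: expand $\dv_f^{*}(\dv_f\,h)$ via the Leibniz rule, symmetrize, and substitute the soliton equation $\Hess_f = \kappa\,g - \Ric$ into the $f_{nj}h_{ik}+f_{ni}h_{jk}$ block. The paper states this lemma without proof, presumably regarding it as a routine expansion, and your calculation is exactly the intended one (note it holds for any gradient Ricci soliton, not just shrinkers, since only $\Hess_f = \kappa g - \Ric$ is used).
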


 Combining Lemma \ref{l:phip} and Lemma \ref{l:adj} recovers the first variation \eqr{c:phip} for $\phi$.

\subsection{Computing $\phi''(0)$: Proof of Proposition \ref{p:twovar}}

\begin{Lem}	\label{l:lapei}
At a point where $g_{ij} = \delta_{ij}$ and $e_c(g_{ab})=0$ at $t=0$, 
\begin{align}
	2\, \left[ \left( \nabla_{e_n} \nabla_{e_n} e_i \right)' \right]_k &=    h_{ki,nn}  +  h_{kn,i}\, f_n   -h_{in,k}\, f_n 
		+  (\dv_f \,h)_{k,i} -  (\dv_f \,h)_{i,k}   \, .  
\end{align}
\end{Lem}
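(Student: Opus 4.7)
The plan is to start from the second identity of Lemma \ref{l:H4}, specialize to $j=n$, and sum over $n$. Using the symmetry of $h$, this immediately gives
\begin{align}
2\,[(\nabla_{e_n}\nabla_{e_n} e_i)']_k = h_{kn,in} + h_{ki,nn} - h_{in,kn}.
\end{align}
Comparing with the target formula, after expanding
\begin{align}
(\dv_f h)_{k,i} - (\dv_f h)_{i,k} &= (h_{kn,ni} - h_{in,nk}) + (f_{nk}h_{in}-f_{ni}h_{kn}) + f_n(h_{in,k}-h_{kn,i})
\end{align}
from the definition $(\dv_f h)_m = h_{mn,n}-f_n h_{mn}$, the $f_n h_{kn,i}$ and $f_n h_{in,k}$ terms cancel the corresponding drift terms in the target expression, so the claim reduces to the pure commutator identity
\begin{align}
(h_{kn,in}-h_{kn,ni}) - (h_{in,kn}-h_{in,nk}) = f_{nk}h_{in} - f_{ni}h_{kn}.
\end{align}

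Next I would apply the Ricci identity for $(0,2)$-tensors, $h_{ab,jk}-h_{ab,kj} = R_{kjap}\,h_{pb} + R_{kjbp}\,h_{ap}$, to each commutator. Summing over the contracted index gives curvature contractions
\begin{align}
(h_{kn,in}-h_{kn,ni}) - (h_{in,kn}-h_{in,nk}) = (R_{nikp}-R_{nkip})\,h_{pn} + \Ric_{ip}\,h_{kp} - \Ric_{kp}\,h_{ip},
\end{align}
where I used $R_{ninp}=\Ric_{ip}$ and $R_{nknp}=\Ric_{kp}$ under summation over $n$. The purely Riemannian piece vanishes: by the first Bianchi identity $R_{nikp}+R_{nkpi}+R_{npik}=0$ together with the skew-symmetry $R_{nkpi}=-R_{nkip}$, one finds $R_{nikp}-R_{nkip}=-R_{npik}$, which is antisymmetric in $(n,p)$; contracted against the symmetric $h_{pn}$ it is zero.

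The remaining term $\Ric_{ip}h_{kp}-\Ric_{kp}h_{ip}$ is where the soliton equation enters. Since $(M,g,f)$ is a gradient Ricci soliton, $\Ric_{ij}+f_{ij}=\kappa\,g_{ij}$, so $\Ric_{ip}=\kappa\,\delta_{ip}-f_{ip}$ and the $\kappa\,g$ piece contributes $\kappa(h_{ki}-h_{ik})=0$, leaving precisely $f_{kp}h_{ip}-f_{ip}h_{kp}=f_{nk}h_{in}-f_{ni}h_{kn}$ after relabeling and using symmetry of $\Hess_f$. This matches the required right-hand side and closes the proof.

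The main obstacle is bookkeeping: the two commutator expansions produce four curvature-times-$h$ terms that must be properly grouped and then shown to reduce via Bianchi plus the soliton equation. Once the correct algebraic rearrangement is identified — in particular recognizing that the antisymmetric combination $R_{nikp}-R_{nkip}$ collapses to $-R_{npik}$ so that symmetry of $h$ kills it — the calculation is essentially forced.
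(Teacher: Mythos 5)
Your proof is correct and follows the same overall route as the paper: trace the second identity of Lemma \ref{l:H4}, apply the Ricci identity to the resulting commutators, rewrite via $\dv_f h$, and use the soliton equation to dispose of the $\Ric$ contribution. The only cosmetic difference is in how the Riemann piece disappears — the paper observes that $\R_{nikm}h_{mn}=-[\R(h)]_{ki}$ is symmetric in $(i,k)$ (pair symmetry of $\R$ plus symmetry of $h$), whereas you reach the same cancellation through the first Bianchi identity and antisymmetry of $\R$ in its first two slots contracted against the symmetric $h$.
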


\begin{proof}
Taking the trace in the second claim in Lemma \ref{l:H4}  at $t=0$ gives
\begin{align}	\label{e:fromH4}
	2\, \left[ \left( \nabla_{e_n} \nabla_{e_n} e_i \right)' \right]_k &=     h_{kn,in} + h_{ki,nn} - h_{ni,kn}      \, .  
\end{align}
The Ricci identity and $(\dv_f \, h)_{k,i} = h_{kn,ni} - f_k \, h_{kn,i} - f_{ki} \, h_{kn}$ gives 
\begin{align}
	h_{kn,in} &= h_{kn,ni} + \R_{nikm}\, h_{mn} + \R_{ninm}\, h_{mk} = h_{kn,ni} - [\R(h)]_{ki} + \Ric_{im}\, h_{mk} \notag \\
	&= (\dv_f \,h)_{k,i} + h_{kn,i}\, f_n + h_{kn}\, f_{ni} - [\R(h)]_{ki} + \Ric_{in}\, h_{nk} \, , 
\end{align}
Using the shrinker equation,  this becomes
 $	h_{kn,in}  =  (\dv_f \,h)_{k,i} + h_{kn,i}\, f_n - [\R(h)]_{ki} + \frac{1}{2} \,  h_{ik}$.
 The last two terms are symmetric in $i$ and $k$, so we get
\begin{align}
	 h_{kn,in}   - h_{ni,kn} = (\dv_f \,h)_{k,i} -  (\dv_f \,h)_{i,k} + h_{kn,i}\, f_n - h_{in,k}\,f_n \, .
\end{align}
Substituting this into \eqr{e:fromH4} gives the lemma.
\end{proof}

 In the remainder of this section, all results will be stated at a point where $g_{ij} = \delta_{ij}$ so that there is no difference between upper and lower indices.

\begin{Cor}	\label{c:cLpr}
If $h' =0$ at $t=0$, then we have at $t=0$ that
\begin{align}
	-\left[ \cL\, h_{ij} \right]'  
	 &= h_{mn} \, h_{ij,mn}
	 + 2\, h_{pj,m}\,  \cC_{mi}^p   + 2\,h_{ip,m}\,  \cC_{mj}^p    +h_{ij,p}\,  \left( k - \frac{1}{2}\,  (\Tr\, h) \right)_p \notag \\
	  &
	   +  \frac{1}{2} \, h_{mj} \, \cL\, h_{mi}     +  \frac{1}{2} \, h_{mi} \, \cL\, h_{mj}   + h_{ij,p} \, (\dv_f h)_p \notag \\
		 &+ \frac{1}{2} \left(   h_{mi}  [ (\dv_f \,h)_{m,j} -  (\dv_f \,h)_{j,m} ]     +  h_{mj} [ (\dv_f \,h)_{m,i} -  (\dv_f \,h)_{i,m}      ]
		 \right)
   \, .   \notag
\end{align}
If, in addition, $h=u\, g^1$ and $k = \frac{\ell}{2} \, u$ where $u$ depends only on $\RR^{n-\ell}$, then
\begin{align}
	\left[ \cL\, h  \right]'  
	 &=  -     [ 2\,|\nabla u|^2 + u \, \cL\, u    ] \, g^1  \, .
\end{align}
\end{Cor}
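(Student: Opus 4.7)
The plan is to differentiate $\cL h_{ij} = g^{mn}h_{ij;mn} - g^{mn}f_{;m}h_{ij;n}$ at $t=0$ by the product rule, working at a point where $g_{ab}=\delta_{ab}$ and $e_c(g_{ab})=0$, and then substitute the two formulas of Lemma \ref{l:hesstenp} and Lemma \ref{l:lapei}. The product rule produces
\begin{align*}
(\cL h_{ij})' &= -h_{mn}h_{ij,mn} + (h_{ij,mm})' + h_{mn}f_m h_{ij,n} - k_m h_{ij,m} - f_m(h_{ij,m})'
\end{align*}
using $(g^{mn})' = -h_{mn}$ and $(f_m)'=k_m$. Lemma \ref{l:hesstenp} applied to $b=h$, with $h'=0$ and $e_m(h_{ij})'=0$, gives $[h_{ij,m}]' = -h_{pj}\cC_{mi}^p - h_{ip}\cC_{mj}^p$ and an expression for $(h_{ij,mn})'$; tracing $m=n$ yields
\begin{align*}
(h_{ij,mm})' = -2\cC_{mi}^p h_{pj,m} - 2\cC_{mj}^p h_{ip,m} - \cC_{mm}^p h_{ij,p} - h_{kj}[\nabla_{e_m}\nabla_{e_m}e_i]'_k - h_{ik}[\nabla_{e_m}\nabla_{e_m}e_j]'_k.
\end{align*}

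Next I insert Lemma \ref{l:lapei} into the two basis-Laplacian terms. The $h_{ki,mm}$ piece combines with the drift via $\cL h_{ki} = h_{ki,mm} - f_m h_{ki,m}$ to produce the $\tfrac{1}{2}h_{mj}\cL h_{mi}$ and $\tfrac{1}{2}h_{mi}\cL h_{mj}$ contributions, while the antisymmetric $(\dv_f h)_{k,i}-(\dv_f h)_{i,k}$ pieces become the last line of the stated formula. The trace $\cC_{mm}^p = (\dv h)_p - \tfrac{1}{2}(\Tr h)_p = (\dv_f h)_p + f_m h_{pm} - \tfrac{1}{2}(\Tr h)_p$ contributes the $h_{ij,p}(\dv_f h)_p$ term, with the $f_m h_{pm}h_{ij,p}$ piece canceling exactly against the $h_{mn}f_m h_{ij,n}$ coming from the variation of $g^{mn}$, and with the $-\tfrac{1}{2}(\Tr h)_p h_{ij,p}$ combining with $k_m h_{ij,m}$ to form $h_{ij,p}(k-\tfrac{1}{2}\Tr h)_p$. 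The remaining $f_m(h_{ij,m})'$ contribution, expanded via Lemma \ref{l:hesstenp} and the definition $\cC_{mi}^p=\tfrac{1}{2}(h_{pm,i}+h_{pi,m}-h_{mi,p})$, absorbs the leftover $f$-terms produced by Lemma \ref{l:lapei}. This yields the first formula.

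For the specialization $h=u\,g^1$, $k=\tfrac{\ell}{2}u$, $u$ depending only on $\RR^{n-\ell}$: one checks directly that $\Tr h=\ell u$ so $k-\tfrac{1}{2}\Tr h=0$, and $\dv_f h=0$ because $h$ is supported in the $N\otimes N$ block, $g^1$ is parallel on $N$, $u$ is constant in $N$-directions, and $\nabla f$ is Euclidean. The term $h_{mn}h_{ij,mn}$ vanishes because $h_{mn}$ forces $m,n\in N$ while second covariant derivatives of $h$ force the derivative indices to be Euclidean. For the Christoffel term, $\cC_{mi}^p=\tfrac{1}{2}u_m g^1_{pi}$ when $m$ is Euclidean and $i,p\in N$ (else zero), which yields $2h_{pj,m}\cC_{mi}^p = |\nabla u|^2 g^1_{ij}$ and symmetrically for the $j$-term. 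Finally $\cL h_{mi}=(\cL u)g^1_{mi}$, so $\tfrac{1}{2}h_{mj}\cL h_{mi}+\tfrac{1}{2}h_{mi}\cL h_{mj}=u\,\cL u\, g^1_{ij}$. Summing gives $-[\cL h]' = (2|\nabla u|^2 + u\cL u)\,g^1$.

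The main obstacle is the extensive bookkeeping in the general formula, particularly verifying that the $f_m(h_{ij,m})'$ contribution and the $f$-coupled pieces of Lemma \ref{l:lapei} combine cleanly with the trace of $\cC_{mm}^p$ to produce only the $h_{ij,p}(\dv_f h)_p$ and antisymmetric $\dv_f h$ terms appearing in the statement, with no residual $f$-dependent debris. No new geometric idea is required; the calculation is an algebraic consolidation once the right substitutions are organized symmetrically in $(i,j)$.
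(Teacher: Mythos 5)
Your proof is correct and follows essentially the same route as the paper's: differentiate $\cL h_{ij}=g^{mn}h_{ij,mn}-g^{mn}f_m h_{ij,n}$ in normal coordinates, substitute Lemma \ref{l:hesstenp} (traced) and Lemma \ref{l:lapei}, use $\cC_{mm}^p=(\dv_f h)_p+f_m h_{pm}-\tfrac12(\Tr h)_p$, and organize the cancellations so the drift pieces reassemble into $\cL h_{mi}$. The specialization to $h=u\,g^1$, $k=\tfrac{\ell}{2}u$ is also handled exactly as in the paper, via the block structure and the vanishing of $\dv_f h$ and $k-\tfrac12\Tr h$.
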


\begin{proof} 
We will work at a point where $g_{ij} = \delta_{ij}$ and $e_c (g_{ab})=0$ at $t=0$.
By definition, we have $[\Delta h](e_i , e_j) = g^{mn} \, h_{ij,mn}$ and, thus, that at $t=0$ at this point
\begin{align}
		[[\Delta h]']_{ij} &= -h_{mn} \, h_{ij,mn} +   (h_{ij,mm} )' 
		   \, .
\end{align}
Since $h'=0$, Lemma \ref{l:hesstenp}
gives that
\begin{align}	 \label{e:1114}
	  - \left( h_{ij,mm} \right)' &= h_{pj,m}\,  \cC_{mi}^p   + h_{ip,m}\,  \cC_{mj}^p    +h_{ij,p}\,  \cC_{mm}^p  
	  + \cC_{mi}^p\, h_{pj,m}  + h_{nj} \,  ([\nabla_{e_m} \nabla_{e_m} e_i ]')_n    \notag    \\
	  &+h_{ip,m}\,  \cC_{mj}^p     + h_{in}\, ([\nabla_{e_m} \nabla_{e_m} e_j]' )_n    \notag \\
	  &= 2\, h_{pj,m}\,  \cC_{mi}^p   + 2\,h_{ip,m}\,  \cC_{mj}^p    +h_{ij,p}\,  \cC_{mm}^p  
	   + h_{nj}\,   ([\nabla_{e_m} \nabla_{e_m} e_i ]')_n          + h_{in}\, ([\nabla_{e_m} \nabla_{e_m} e_j]' )_n \, . 
\end{align}
Lemma \ref{l:lapei} gives that
\begin{align}
	2\, h_{nj}  [ \left( \nabla_{e_m} \nabla_{e_m} e_i \right)' ]_n &=  h_{nj}\, \left\{h_{nm,i} f_m   + h_{ni,mm} -h_{im,n}\, f_m 
	+  (\dv_f \,h)_{n,i} -  (\dv_f \,h)_{i,n}  \right\}  \, .  \label{e:e614}
\end{align}
Using this
 and $\cC_{mm}^p =( \dv_f h)_p + h_{pm}\, f_m - \frac{1}{2}\,  (\Tr\, h)_p$ (by \eqr{e:csone})
 in \eqr{e:1114}  gives
\begin{align}	 
	  - \left( h_{ij,mm} \right)' &= 2\, h_{pj,m}  \cC_{mi}^p   + 2\,h_{ip,m}  \cC_{mj}^p    +h_{ij,p} \,  
	   \left( ( \dv_f h)_p + h_{pm}\, f_m - \frac{1}{2}\,  (\Tr\, h)_p \right)
	    \notag \\
	  &
	   +  \frac{1}{2} \, h_{nj} \left\{h_{nm,i} f_m   + h_{ni,mm} -h_{im,n}\, f_m   +  (\dv_f \,h)_{n,i} -  (\dv_f \,h)_{i,n}    \right\}  \\
	   &+  \frac{1}{2} \, h_{ni} \left\{h_{nm,j}\, f_m   + h_{nj,mm} -h_{jm,n}\, f_m   +  (\dv_f \,h)_{n,j} -  (\dv_f \,h)_{j,n}    \right\}  \, . \notag
\end{align}
For the drift term, we have
\begin{align}
	\left( \nabla_{\nabla f} h_{ij} \right)' &= \left( g^{nm}\, f_n\, h_{ij,m} \right)' = -h_{nm}\, f_n\, h_{ij,m} +   k_m\, h_{ij,m} +   f_n\, (h_{ij,n})'  \, .
\end{align}
Since $h'=0$, Lemma \ref{l:hesstenp}
gives that $(h_{ij,n})' = -h_{mj}\, \cC_{ni}^m  -h_{mi}\, \cC_{nj}^m$, so we get
\begin{align}	 
	\left( \nabla_{\nabla f} h_{ij} \right)' &= -h_{nm}\, f_n\, h_{ij,m} +   k_m\, h_{ij,m} -   f_n\,  \left( h_{mj}\, \cC_{ni}^m  +h_{mi}\, \cC_{nj}^m \right) \, .
\end{align}
Combining, canceling terms and using that $h_{mn,i} - h_{in,m} = 2\, \cC_{in}^m - h_{mi,n}$  gives the first claim.

To get the second claim, we plug in $h = u \, g^1$ and $k = \frac{\ell}{2} \, u = \frac{1}{2} \, \Tr \, h$ into the first claim.  With these choices, $\dv_f \, h = 0$ so the last term on the second line and the entire third line
drop out immediately.  Using \eqr{e:csone},  $\nabla \, g^1 = 0$, and the fact that $g^1$ is nonzero only on the first factor $N$, while $u$ depends only on $\RR^{n-\ell}$, the second claim follows.
\end{proof}

\begin{Cor}	\label{c:dvprh}
If $h' =0$ at $t=0$, then we have at $t=0$ that
\begin{align}
	-\left[ g^{kn}\, h_{jk,ni}   \right]'  
	 &=             h_{kn} \,  h_{jk,ni}
    + h_{pm,m}\, \cC_{ij}^p   + h_{jp,m}\, \cC_{im}^p  +  h_{jm,p}\, \cC_{im}^p   
	  +  h_{pm,i}\, \cC_{mj}^p   \notag \\
	  &+ h_{jp,i}\,  \left( ( \dv_f h)_p + h_{pm}\, f_m - \frac{1}{2} \, (\Tr \, h)_p   \right)  
	       + \frac{h_{pm}}{2} \,  \left\{  h_{pm,ji} + h_{pj,mi} - h_{mj,pi}    \right\} \\
	     &  +  \left\{   ( \dv_f \, h)_{p,i} +h_{pm,i}\, f_m +   h_{pm}\, f_{mi}     - \frac{1}{2} \, (\Tr \, h)_{pi}    \right\} \, h_{jp} \, .  \notag
\end{align}
If, in addition, $h=u\, g^1$ where $u$ depends only on $\RR^{n-\ell}$, then
$
	\left[ g^{kn}\, h_{jk,ni}   \right]'  
	  =   - \frac{\ell}{2}\, \left( u_i\, u_j + u\, u_{ij} \right)$.
\end{Cor}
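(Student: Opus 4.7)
The plan is to expand $g^{kn}\,h_{jk,ni}$ by the product rule at $t=0$. Since we work in a frame with $g_{ij}(0)=\delta_{ij}$ and $e_c(g_{ab})(0)=0$, we have $(g^{kn})'(0)=-h_{kn}$, which produces the leading $h_{kn}\,h_{jk,ni}$ contribution on the right-hand side (the sign flipping because of the overall minus in front of the formula). So the substantive work is computing $(h_{jk,ni})'$ and then tracing.

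For $(h_{jk,ni})'$, I would apply the second identity of Lemma \ref{l:hesstenp} with $b = h$. Since $h'=0$, the outer derivatives $e_n e_m([h_{ij}]')$ on the left-hand side vanish, so the identity becomes an explicit formula for $(h_{ij,mn})'$ as a sum of products $(\nabla h)\cdot\mathcal{C}$ together with contributions $h\cdot[\nabla\nabla e]'$. The latter are then handled by the second formula in Lemma \ref{l:H4}, which converts them to further second derivatives of $h$. Relabeling $m\mapsto n$ and tracing via $g^{kn}$, the key algebraic step is to use \eqr{e:csone} in its traced form $\mathcal{C}_{mm}^{p}=(\dv_f h)_p+h_{pm}f_m-\tfrac{1}{2}(\Tr h)_p$; this is precisely what generates the $(\dv_f h)_p$, $h_{pm}f_m$, and $(\Tr h)_p$ terms on the right. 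The $h_{pm}\,f_{mi}$ piece appears by differentiating the drift factor $f_m$ once in the direction $e_i$ to expose the Hessian of $f$, and the remaining $\tfrac{1}{2}h_{pm}(h_{pm,ji}+h_{pj,mi}-h_{mj,pi})$ bundle comes from expanding $2\,\mathcal{C}_{ij}^{p}\cdot h_{pm,m}$-type factors via \eqr{e:csone}. No use of the shrinker equation is required here; the formula is a purely tensorial identity.

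For the specialization $h=u\,g^1$ with $u=u(x)$ only on $\mathbb{R}^{n-\ell}$, the plan is an index-parity argument. The tensor $g^1$ is parallel and supported on $N$-$N$ slots, while every derivative of $u$ is supported on Euclidean slots. Consequently $h_{jk,ni}=u_{ni}\,g^1_{jk}$, and its trace $g^{kn}h_{jk,ni}=\delta^{kn}u_{ni}g^1_{jk}$ vanishes at $t=0$ because the summation index $k$ cannot simultaneously be Euclidean (needed for $u_{ni}$) and $N$-tangential (needed for $g^1_{jk}$). Thus only the terms that contract two separate $g^1$'s can survive: using $\mathrm{Tr}_{g^1}(g^1)=\ell$ on the $N$-factor and matching the remaining Euclidean indices, the surviving pieces collapse to $-\tfrac{\ell}{2}(u_i u_j+u\,u_{ij})$. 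In particular, the many first-derivative cross-terms $\nabla h\cdot\mathcal{C}$ and the $(\dv_f h)_p$ contributions all vanish by this same parity obstruction (or appear in pairs that cancel after symmetrization in $i\leftrightarrow j$ and $N$-direction traces).

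The main obstacle will be bookkeeping rather than conceptual: the full formula contains about a dozen $\mathcal{C}$-$h$ products, and after applying Lemma \ref{l:H4} and \ref{l:hesstenp} one must be careful to (a) symmetrize correctly in the index pairs $(i,j)$ and $(n,m)$ when tracing by $g^{kn}$, and (b) verify in the product case that no term escapes the parity argument through a hidden cancellation involving $\nabla f$, since $\nabla f$ has components in both the $N$ and Euclidean directions via $f=\tfrac{|x|^2}{4}+\tfrac{\ell}{2}$ plus the $N$-potential. Once the parity cases are tabulated, the final coefficient $-\tfrac{\ell}{2}$ follows immediately from the trace of $g^1$.
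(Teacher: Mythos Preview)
Your approach is essentially the paper's: differentiate $g^{kn}$ by $(g^{kn})'=-h_{kn}$, compute $(h_{jm,mi})'$ via Lemma~\ref{l:hesstenp} with $b=h$ and $h'=0$, feed in Lemma~\ref{l:H4} for the $[\nabla_{e_i}\nabla_{e_m}e_j]'$ and $[\nabla_{e_i}\nabla_{e_m}e_m]'$ pieces, and invoke $\cC_{mm}^p=(\dv_f h)_p+h_{pm}f_m-\tfrac12(\Tr h)_p$. Two small corrections on the bookkeeping: the $h_{pm}f_{mi}$ term does not come from differentiating a drift factor directly but from rewriting $h_{pm,mi}=(\dv_f h)_{p,i}+h_{pm,i}f_m+h_{pm}f_{mi}$ inside the $[\nabla_{e_i}\nabla_{e_m}e_m]'$ contribution, and the $\tfrac12 h_{pm}(h_{pm,ji}+h_{pj,mi}-h_{mj,pi})$ bundle is exactly $h([\nabla_{e_i}\nabla_{e_m}e_j]',e_m)$ from Lemma~\ref{l:H4}, not an expansion of $\cC_{ij}^p\,h_{pm,m}$. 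For the specialization, your parity argument is right, and your worry about $\nabla f$ is unnecessary: on $\Sigma$ one has $\bar f=\tfrac{|x|^2}{4}+\tfrac{\ell}{2}$ with $x\in\RR^{n-\ell}$, so $\nabla\bar f$ is purely Euclidean.
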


\begin{proof}
Working at a point as before, 
we have 
	 $ \left[   g^{kn}\,   h_{jk,ni}  \right]'    =   -      h_{kn}\,   h_{jk,ni}  + \left[   h_{jk,ki}  \right]' $, so we must compute
	 $ \left[   h_{jk,ki}  \right]'$.
Since $h'=0$, Lemma \ref{l:hesstenp}
gives that
\begin{align}
	 -\left( h_{jm,mi} \right)'   &=  (\nabla h) ( \cC_{ij}^p\, e_p, e_m , e_m) + (\nabla h) (e_j , \cC_{im}^p\, e_p, e_m) + (\nabla h) (  e_j, e_m,\cC_{im}^p\, e_p)    \notag \\
	  &
	  + (\nabla h)(\cC_{mj}^p\, e_p,  e_m,e_i) + h([\nabla_{e_i} \nabla_{e_m} e_j ]', e_m)        
	  + (\nabla h)( e_j , \cC_{mm}^p\, e_p ,e_i)    + h(   e_j, [\nabla_{e_i} \nabla_{e_m} e_m]' )    \, . 
\end{align}
Lemma \ref{l:H4}
gives that $\left( \nabla_{e_n} \nabla_{e_j} e_i \right)'  = \frac{1}{2} \,  \left\{  h_{pj,in} + h_{pi,jn} - h_{ji,pn}    \right\} \, e_p$, so we have
\begin{align}
		 h([\nabla_{e_i} \nabla_{e_m} e_j ]', e_m)        &=  \frac{h_{pm}}{2} \,  \left\{  h_{pm,ji} + h_{pj,mi} - h_{mj,pi}    \right\}  \, , \\
		h(   e_j, [\nabla_{e_i} \nabla_{e_m} e_m]' )&=   \left\{  ( \dv_f \, h)_{p,i} +  h_{pm,i}\, f_m +   h_{pm}\, f_{mi}     - \frac{1}{2} \, (\Tr \, h)_{pi}    \right\} \, h_{jp} \, . \notag
\end{align}
Using these and the formula for $\cC^p_{mm}$ gives
 the first claim.
If $h= u\,g^1$, then most terms drop out immediately  and the definition \eqr{e:csone}
	for $\cC_{ij}^k$ gives
\begin{align}
	2\, \left[ g^{kn} h_{jk,ni} \right]'  
	 &=         - 2\,(h_{jp,m}+ h_{jm,p}) \cC_{im}^p  
	  - 2\, h_{pm,i}\, \cC_{mj}^p     	       -   h_{pm,ji}    h_{pm}  	        =           
	  -  \ell \, u_i\, u_j    
	     	       -     \ell \, u\, \, u_{ij}
	        \, .  \notag \, \, \, \, \, \, \, \, \, \, \, \, \qedhere
\end{align}
\end{proof}

 \begin{proof}[Proof of Proposition \ref{p:twovar}]
   We will work at a point using coordinates where $g_{ij} = \delta_{ij}$ at $t=0$.  
 Using that $h'=0$ and differentiating
   Lemma \ref{l:phip} at $t=0$ gives
  \begin{align}
  	2\, \phi_{ij}''(0) &= \left[ ( \cL \, h)_{ij} \right]' + 2\, [\R(h)]'   +\left[ \left(   \Tr \, (h) - 2\,k \right)_{ij} \right]' \notag \\
	&-  \left( g^{kn}\, h_{jk,ni} + g^{kn}\, h_{ik,nj} \right)' - \left( [ \Ric_{i}^{k}]' h_{jk} + [\Ric_{j}^{k}]'\, h_{ik}    \right) \label{e:e6p27} \\
	&  +   \left( h_{jn,i} + h_{in,j}   \right)'   f_n  -   \left( h_{jn,i} + h_{in,j}   \right)\, h_{nm}\, f_m  +   \left( h_{jn,i} + h_{in,j}   \right)\, k_n \notag \, .
  \end{align}
We will compute each term next. The third claim in Proposition \ref{p:varyR}
gives at $t=0$ that
\begin{align}
	 2\, [\R(h)]'  &=   2\, [\R_{ikjn} g^{kp}\, g^{nm} \, h_{pm}]' =     2\, \R_{ikjn}'\,  h_{kn} -    2\, \R_{ikjn}\, h_{kp}\,  h_{pn} -   2\, \R_{ikjn}\,  h_{nm} \, h_{km}  \notag \\
	  &= \left\{ \R_{ikj\ell} \, h_{\ell n} -  \R_{ik n\ell } \, h_{\ell j} +  h_{in,jk} - h_{kn,ji} + h_{jk,ni} - h_{ij,nk} \right\} h_{kn} - 4\,  \R_{ikjn} h_{kp}\,  h_{pn} \, .
\end{align}
Since $h= u\,g^1$ where $u$ depends only on $\RR^{n-\ell}$ and 
$\R_{ikjn} = \frac{1}{2(\ell -1)} \,  \left( g^1_{ij} g^1_{kn} - g^1_{in} g^1_{kj} \right)$, we have
\begin{align}
	 2\, [\R(h)]'  &
	 =  \left\{ 2\,u \, \R_{ikjn}    - g^1_{kn}\, u_{ij}    \right\} u\, g^1_{kn} - 4\,  u^2\, \R_{ikjn}\,  g^1_{kn}  = - u^2 \, g^1_{ij} - \ell\, u \, u_{ij}  \, . \notag
\end{align}
The second claim in Proposition \ref{p:varyR}
gives at $t=0$ that
\begin{align}
	2\, \left( \Ric_i^k \right)' &= 2\, \left( \Ric_{ip} g^{kp} \right)' = 2 \, \left( \Ric_{ip}   \right)'\, g^{kp} -   g^1_{ip}\, h_{kp} \notag \\
	  &= - \Delta \, h + \frac{1}{2} \,  h_{in} \, g^1_{kn} + \frac{1}{2}  g^1_{in} \, h_{kn} -2 \, \R (h) - \Hess_{\Tr \, h} + \nabla \dv \, h + ( \nabla \dv\, h )^T -   g^1_{ip}\, h_{kp} \, .
\end{align}
Therefore, since $h=u\,g^1$, we get that
   \begin{align}
   	2\, \left( \Ric_i^k \right)' \, h_{jk} =  - u\,(\Delta \, u) \, g^1   -  \, u^2 \, g^1 \, .
\end{align}
Since $[\Tr\, (h)]' = [ g^{ij}\, h_{ij}]' = - |h|^2$ and $k'=0$, Lemma \ref{l:hessf}
gives that
\begin{align}
	\left( \Hess_{\left(   \Tr \, (h) - 2\,k \right)} \right)' = \Hess_{-|h|^2} - \frac{1}{2} \,   \left(h_{nj,i} + h_{ni,j} - h_{ji,n}  \right)\,  ( \Tr \, (h) - 2\,k)_n \, .
\end{align}
Since  $h= u\,g^1$ and $k= \frac{\ell}{2} \, u$, this becomes
 	$\left( \Hess_{\left(   \Tr \, (h) - 2\,k \right)} \right)' = \Hess_{-|h|^2}  = -  \ell \, \Hess_{u^2}$.
	The  first claim in Lemma \ref{l:hesstenp} and the definition \eqr{e:csone} of
	$\cC_{ij}^k $
give
\begin{align}	 	 \label{e:hijp2}
	 [h_{in,j}]' =-  \frac{1}{2} \,  h_{ip} \,  \left(h_{pj,n} + h_{pn,j} - h_{jn,p}  \right)
	    -  \frac{1}{2} \,  h_{np} \,  \left(h_{pj,i} + h_{pi,j} - h_{ji,p}  \right) \, .
\end{align}
Using that $h=ug^1$ gives
	 $[h_{in,j}]' \, f_n =-  \frac{1}{2} \,  h_{ip} \,  h_{pj,n}    f_n = - \frac{1}{2} \, u u_n f_n \, g^1_{ij} $.
We now use these calculations in \eqr{e:e6p27}, together with Corollary \ref{c:cLpr}
for the $[\cL \, h]'$ term and 
Corollary \ref{c:dvprh} for the first terms in the middle line.
This gives
\begin{align}
  	2\, \phi_{ij}''(0) &= -  [ 2\,|\nabla u|^2 + u\, \cL\, u] \, g^1 - \left( u^2 \, g^1 + \ell \, u \, u_{ij} \right) -   \ell\, \Hess_{u^2} + \ell\, (u_i\, u_j + u\, u_{ij})  \\
	&\quad + (u\, \Delta\, u + u^2) \, g^1 - u\, u_n\,f_n \, g^1  = -   2\,  |\nabla u|^2  \, g^1     - 2\, \ell\, u\, u_{ij} -  \ell\, u_i\, u_j       \, . \notag \, \, \, \, \, \, \, \, \, \, \, \, \qedhere
  \end{align}

\end{proof}

  \section{Second order stability of  $N\times \RR^{n-\ell}$}    \label{s:global}

         In this section $\Sigma=N^{\ell}\times\RR^{n-\ell}$ and $g^1$ is an Einstein metric on $N$ with $\Ric=\frac{1}{2}\,g^1$ and  satisfying ($\star$).   
         Given a nearby metric $\bar{g}+h$ and potential $\bar{f} +k$, let
$u\,g^1$ be the orthogonal projection of $h$ onto $\cK\,g^1$  and write  
\begin{align}
	h= u \, g^1 + \bb {\text{ and }} k=  \frac{\ell}{2} \, u + \psi  \, .
\end{align}
 Bars denote quantities relative to $\bar{g}$; e.g.,   $\overline{\Ric}$ is the Ricci tensor for $\bar{g}$.

The main result of this section shows that $\Sigma$ has a local rigidity: If $(h,k)$ is small, then it can be bounded in terms of the failure $\phi$ to be a gradient shrinking soliton, with two caveats.  First, we need to bound $\dv_{\bar{f}}$ to control the gauge.
Second, 
even if $h=0$, $k$  could be   linear, corresponding to a translation along the axis of $\Sigma$.  To mod out for this, we must bound the ``center of mass'' vector
 \begin{align}	\label{e:centermass}
 	\cB_i (h,k) =  \int x_i \, \left( k - \frac{1}{2} \, \Tr_{\bg} \,  h \right) \, \e^{ - \bar{f}} = 2 \,  \int \langle \partial_{x_i}, 
	\bar{\nabla} \left( k - \frac{1}{2} \, \Tr_{\bg} \,  h \right) \rangle \, \e^{ - \bar{f}} \, .
 \end{align}
 The next theorem uses a first  order Taylor expansion to show that the Jacobi field $u\, g^1$  dominates the error terms $\bb , \psi$ and then uses the second order expansion to estimate $\| u \|_{L^2}$.   
     
     \begin{Thm}	\label{p:firstorder}
There exist $C, \delta > 0$  so that if  $\| h \|_{C^2} + \| \bar{\nabla} k \|_{C^1} \leq \delta$, then   for any $\epsilon>0$
\begin{align}
     	&\|  \bb  \|_{W^{2,2}}^2  + \|  \bar{\nabla} \psi  \|_{W^{1,2}}^2  \leq C \, \left\{ \| \phi  \|_{L^2}^2 + \| \dv_{\bar{f}} \, h \|_{W^{1,2}}^2 +     | \cB (h,k)|^2 +  \| u \|_{ L^2}^{ 4}  \right\}  \, ,	 \notag \\
	& \|u\|^2_{L^2}\leq   C\,\left\{\| u \|_{L^2}^{ 3} +\|\phi \,(1+|x|^2)\|_{L^1} \right\} + C_{\epsilon} \, \left\{ \| \phi  \|_{L^2}^{2-\epsilon}  + | \cB(h,k)|^{2-\epsilon} +\| \dv_{\bar{f}} \, h \|_{W^{1,2}}^{2-\epsilon} \right\}\, . \notag
\end{align}
  \end{Thm}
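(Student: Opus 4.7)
The plan is to combine a first-order Taylor expansion of $\phi$ around $(\bg, \bar f)$ with a second-order expansion tested against the quartic Jacobi field from Lemma \ref{l:polygrow}. Since $\phi(\bg, \bar f) = 0$, Taylor expansion gives $\phi = \phi'(0)[(h,k)] + Q$, where $Q$ collects the quadratic-and-higher remainder in $(h, \bar\nabla k)$. The pair $(u g^1, \tfrac{\ell}{2} u)$ with $u \in \cK$ is a Jacobi direction: indeed $L(u g^1) = (\cL u + u) g^1 = 0$ and $\dv_{\bar f}(u g^1) = 0$. Hence, by \eqr{c:phip},
\[
\phi = \tfrac{1}{2} L\bb + \Hess_{w} + \dv_{\bar f}^{*}\,\dv_{\bar f} h + Q, \qquad w := \tfrac{1}{2}\Tr_{\bg}\bb - \psi.
\]

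To prove the first inequality I would apply Theorem \ref{t:approx} to $h$: since $u g^1$ is by construction the $L^2$-projection of $h$ onto $\cK g^1$, Theorem \ref{t:approx} yields $\|\bb\|_{W^{2,2}} \leq C(\|L\bb\|_{L^2} + \|\dv_{\bar f} h\|_{L^2})$, reducing matters to bounds on $\|L\bb\|_{L^2}$, $\|\Hess_w\|_{L^2}$, and $\|Q\|_{L^2}$. The required splitting between $L\bb$ and $\Hess_w$ in the coupled equation $\tfrac{1}{2} L\bb + \Hess_w = \phi - \dv_{\bar f}^{*}\dv_{\bar f} h - Q$ uses that these two pieces are nearly $L^2$-orthogonal: writing $\Hess_w = -\dv_{\bar f}^{*}\bar\nabla w$ and invoking Theorem \ref{c:BHa},
\[
\int \langle L\bb, \Hess_w\rangle\, e^{-\bar f} = -\int \langle (\cL + \tfrac{1}{2})\dv_{\bar f} h,\,\bar\nabla w\rangle\,e^{-\bar f},
\]
which is controlled by $\|\bar\nabla w\|_{L^2}\,\|\dv_{\bar f} h\|_{W^{1,2}}$. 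Then $\|\bar\nabla w\|_{L^2}$ is bounded in terms of $\|\Hess_w\|_{L^2}$ modulo the affine kernel of $\Hess$ on $\Sigma$, which is $\mathrm{span}\{1, x_1, \dots, x_{n-\ell}\}$ (since the compact $N$ factor admits no nonconstant linear functions); this kernel is captured precisely by the center-of-mass $\cB_i = -\int x_i w\,e^{-\bar f}$. The remainder $Q$ splits into a pure Jacobi-Jacobi piece, pointwise bounded by $C(|\bar\nabla u|^2 + |u||\bar\nabla^2 u|)$ via Proposition \ref{p:twovar}, whose squared weighted $L^2$-norm is $\leq C\|u\|_{L^2}^4$ because $u$ is a quadratic polynomial; and cross terms bounded by $\delta\cdot(\|\bb\|_{W^{2,2}} + \|\bar\nabla\psi\|_{W^{1,2}})$ which are absorbed into the left side. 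Since $\bar\nabla\psi = \tfrac{1}{2}\bar\nabla\Tr_{\bg}\bb - \bar\nabla w$, controlling $\bar\nabla w$ and $\bb$ controls $\bar\nabla\psi$.

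For the second inequality I would test against $\tilde h = \tilde u g^1$ with $\tilde u = |\bar\nabla u|^2 - \Delta|\bar\nabla u|^2 \in \cK$ (Lemma \ref{l:polygrow}). Second-order Taylor gives $\phi = \phi'(0) + \tfrac{1}{2}\phi''(0) + R_3$. The key algebraic fact is that $\int \langle \phi'(0)[(h,k)], \tilde h\rangle\,e^{-\bar f} = 0$: the $L\bb$-piece pairs as $\tfrac{1}{2}\int\langle \bb, L\tilde h\rangle = 0$ since $L\tilde h = (\cL\tilde u + \tilde u)g^1 = 0$; the $\Hess_w$-piece equals $\int \tilde u\, \Delta^N w\, e^{-\bar f}$, which vanishes after integrating $\Delta^N w$ over $N$; and the $\dv_{\bar f}^{*}\dv_{\bar f} h$-piece equals $\int \langle \dv_{\bar f} h, \dv_{\bar f}\tilde h\rangle = 0$. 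The pure Jacobi-Jacobi second variation from Proposition \ref{p:twovar}, paired against $\tilde u g^1$, collapses (because $u$ depends only on $\RR^{n-\ell}$, so $\Tr_{g^1}(uu_{ij}) = u\Delta^N u = 0$ and $\Tr_{g^1}(u_i u_j) = |\bar\nabla^N u|^2 = 0$) to $-\ell \int \tilde u |\bar\nabla u|^2\, e^{-\bar f}$, which by Lemma \ref{l:polygrow} is $\leq -c\|u\|_{L^2}^4$. The mixed second-variation and cubic-remainder contributions are bounded by $C\|u\|^3(\|\bb\|_{W^{2,2}}+\|\bar\nabla\psi\|_{W^{1,2}}) + C\|u\|^2(\|\bb\|_{W^{2,2}}+\|\bar\nabla\psi\|_{W^{1,2}})^2$, using that $u$ and $\tilde u$ are polynomial and absorbing polynomial growth into the Gaussian weight. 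On the other hand $|\int\langle\phi,\tilde h\rangle| \leq C\|u\|^2 \|\phi(1+|x|^2)\|_{L^1}$ since $|\tilde u| \leq C(1+|x|^2)\|u\|^2$. Dividing the resulting inequality by $\|u\|^2$, substituting the first inequality to bound $\|\bb\|_{W^{2,2}}+\|\bar\nabla\psi\|_{W^{1,2}}$, and applying Young's inequality with parameter $\epsilon$ (whose loss converts $\|\phi\|^{2}$, $|\cB|^2$, and $\|\dv_{\bar f} h\|_{W^{1,2}}^2$ into the $(2-\epsilon)$-powers with $C_\epsilon$ blowup) delivers the second claim.

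The main obstacle is the decoupling step in the first inequality: the system for $(\bb, w)$ is coupled, and a naive estimate loses control in both directions. The near-orthogonality identity above (which genuinely uses the shrinker commutator of Theorem \ref{c:BHa}), combined with the observation that the affine kernel of $\Hess$ on $\Sigma$ is matched exactly by $\cB$, is what makes the inversion work; assumption ($\star$) enters here to ensure that the required elliptic estimates for the scalar Hessian equation are valid on $\Sigma$. A second delicate point is tracking polynomial weights carefully when Jacobi polynomials $u$ and $\tilde u$ are paired with Gaussian-weighted Sobolev norms of $(\bb,\psi)$, so that the quadratic error $Q$ contributes exactly at order $\|u\|^4$ rather than at the naive order $\delta^2 \|u\|^2$.
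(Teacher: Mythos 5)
Your proposal tracks the paper's actual proof very closely: first-order Taylor plus the $L\bb\perp\Hess_w$ near-orthogonality and Theorem \ref{t:approx} for the first estimate, then the second-order Taylor tested against the quartic Jacobi direction $\tilde u g^1$ (coercive by Lemma \ref{l:polygrow}) and substitution of the first estimate for the second. The only differences are cosmetic: the paper derives the orthogonality from $L\,\overline{\Hess}_{\bar w}=\overline{\Hess}_{(\cL+1)\bar w}$ (Corollary \ref{t:bochHess}) rather than $\dv_{\bar f}L=(\cL+\tfrac12)\dv_{\bar f}$ as you do (these are transposes of the same commutation identity, so either works), and it produces the $(2-\epsilon)$-powers via the explicit polynomial-weight H\"older inequality (Lemma \ref{l:ply}) rather than a generic Young step. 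One small point to flag: your pointwise bound "cross terms $\leq\delta\cdot(\|\bb\|_{W^{2,2}}+\|\bar\nabla\psi\|_{W^{1,2}})$" isn't quite right as stated, since $[u]_2$ grows quadratically in $|x|$ and is not uniformly $\leq\delta$; the paper instead splits $\int[h]_2^4$ via Cauchy--Schwarz so that the $[u]_2$-factor is absorbed into $\|u\|_{L^2}^4$ and only the $[\bb]_2$-factor uses the $\sup[h]_2\leq\delta$ bound — but this is a detail, not a structural change.
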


\vskip1mm
When  $\phi$, 
  $\dv_{\bar{f}} \, h$ and $\cB (h,k)$ vanish globally, we get:

   \begin{Cor}		\label{t:rigidA}
   There exists $\delta > 0$ so that if  $\phi = 0$,  $\dv_{\bar{f}} \, h = 0$, $\cB(h,k)=0$ and $\| h \|_{C^2} + \| \overline{\nabla} {k} \|_{C^1} \leq \delta$, then
   $h=0$ and $k=0$.  
   \end{Cor}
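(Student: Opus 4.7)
The plan is to apply Theorem \ref{p:firstorder} directly under the three vanishing hypotheses. Substituting $\phi = 0$, $\dv_{\bar{f}}\,h = 0$, and $\cB(h,k) = 0$ into the second estimate collapses it to
\begin{equation*}
\|u\|_{L^2}^2 \leq C\,\|u\|_{L^2}^3,
\end{equation*}
which is a dichotomy: either $u \equiv 0$ or $\|u\|_{L^2} \geq 1/C$. The strategy is to rule out the second alternative using the $C^2$ smallness of $h$, thereby forcing $u = 0$, and then to feed this back into the first estimate of Theorem \ref{p:firstorder} to conclude that $\bb = 0$ and $\psi$ is constant.

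To eliminate the large-$u$ branch, note that $u\,g^1$ is the weighted $L^2$ projection of $h$ onto the \emph{finite-dimensional} subspace $\cK\,g^1$, where $\cK$ consists of quadratic polynomials on $\RR^{n-\ell}$ of the form $a_{ij}x_ix_j - 2\,\Tr\,a$ (Lemma \ref{l:polygrow}). Since the Gaussian weight $\e^{-\bar{f}}$ has finite mass and finite polynomial moments on $\Sigma$, the $C^0$ norm controls the weighted $L^2$ norm of $h$, which in turn controls the projection coefficients: $\|u\|_{L^2} \leq C'\,\|h\|_{L^2(\e^{-\bar{f}})} \leq C''\,\|h\|_{C^0} \leq C''\,\delta$, where $C''$ depends only on $N$ and $n$. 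Choosing $\delta$ so that $C\,C''\,\delta < 1$ makes the second branch impossible and forces $u \equiv 0$. Substituting $u = 0$ into the first estimate of Theorem \ref{p:firstorder} then gives $\|\bb\|_{W^{2,2}}^2 + \|\bar{\nabla}\psi\|_{W^{1,2}}^2 \leq 0$, so $\bb \equiv 0$ and $\psi$ is constant. Hence $h = u\,g^1 + \bb = 0$ and $\bar{\nabla}k = \tfrac{\ell}{2}\,\bar{\nabla}u + \bar{\nabla}\psi = 0$, i.e.\ $k$ is constant on $\Sigma$.

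To upgrade constancy of $k$ to $k \equiv 0$, the plan is to invoke the shrinker normalization \eqref{e:normalizef}: both $\bar{f}$ and $\bar{f}+k$ must satisfy $S + |\nabla f|^2 = f$ with respect to the (common) metric $\bar{g}$, since $h = 0$. Subtracting the two normalization identities and using that $\bar{\nabla}k = 0$ implies $|\bar{\nabla}(\bar{f}+k)|^2 = |\bar{\nabla}\bar{f}|^2$ yields $k \equiv 0$ pointwise. The main obstacle I anticipate is making the projection bound $\|u\|_{L^2} \leq C''\,\delta$ truly quantitative: one must track that $C''$ is a \emph{universal} constant (depending only on $N$ and $n$, not on $\delta$) so that the smallness threshold for $\delta$ can actually be chosen in advance. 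Everything else is direct bookkeeping once Theorem \ref{p:firstorder} is in hand, which is itself where the real work—the quadratic rigidity in the right gauge together with the non-integrability of the Jacobi field $u\,g^1$ provided by Proposition \ref{p:twovar}—has already been done.
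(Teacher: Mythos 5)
Your proposal is correct and follows essentially the same route as the paper's own proof: apply the second estimate of Theorem \ref{p:firstorder} to get the dichotomy $\|u\|_{L^2}^2 \leq C\,\|u\|_{L^2}^3$, rule out the large branch via $\|u\|_{L^2} \leq \|h\|_{L^2} \lesssim \|h\|_{C^0}$ (projection is non-expansive and the Gaussian weight has finite mass), then feed $u\equiv 0$ into the first estimate to kill $\bb$ and $\bar\nabla\psi$, and finally use the shrinker normalization $S+|\nabla f|^2 = f$ to promote constancy of $k$ to $k\equiv 0$. The only difference is that you spell out the smallness threshold for $\delta$ and the finite-dimensionality of $\cK\,g^1$ slightly more explicitly than the paper does; the substance is identical.
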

   
   \vskip1mm
In a formal sense, the corollary says that   $(\bar{g} , \bar{f})$ is ``isolated'' as a shrinker once we mod out by the diffeomorphism group (to make $\dv_{\bar{f}} h = 0$) and translations (to make $\cB = 0$).  If we had a similar statement for a compact shrinker, then one could carry this out directly.  

          \begin{proof}[Proof of Corollary \ref{t:rigidA}]
 The second claim in Theorem \ref{p:firstorder} gives that $\| u \|_{L^2}^2 \leq C \, \| u \|_{L^2}^{ 3 }$.  Since $\| u \|_{L^2} \leq \| h \|_{L^2}$, $u$ vanishes if $\| h \|_{C^2}$ is small.  Once $u \equiv 0$,   then the first claim in
 the theorem  gives that $\bb=0$ and $\bar{\nabla} \psi = 0$.  It follows that $\bar{\nabla} k = 0$.  Combining this with
 the normalizations $S + |\nabla f|^2 = f$ and $\bar{S} + |\bar{\nabla} \bar{f}|^2 = \bar{f}$, we conclude that $k=0$.
  \end{proof}
   
   \subsection{Pointwise Taylor expansion of $\phi$} The estimates in this subsection Taylor expand near $\Sigma$ and, as such, assume that $h$, $k$ and $v$ are small at the point where we compute.

      \begin{Lem}		\label{l:append}
 There is a smooth map  $\Psi$    so that  $ {\Ric} = \Psi (h , \bar{\nabla} h , \bar{\nabla} \bar{\nabla} h)$.  Furthermore, 
   $\Hess_{\bar{f}+k} =  \overline{\Hess}_{\bar{f}} + \overline{\Hess}_k  - \left( \Gamma_{ij}^n - \bG_{ij}^n \right) e_n ({\bar{f}}+k) $.
      \end{Lem}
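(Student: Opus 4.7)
The plan is to treat the two claims separately, as each is essentially a computation. For the first claim, I would start from the fact that the Christoffel symbols for $g = \bar g + h$ differ from those of $\bar g$ by a tensor whose components involve only $h$ and its first covariant derivative $\bar\nabla h$. Explicitly,
\begin{align}
\Gamma_{ij}^k - \bG_{ij}^k = \frac{1}{2} \, g^{km} \left( \bar\nabla_i h_{mj} + \bar\nabla_j h_{mi} - \bar\nabla_m h_{ij} \right),
\end{align}
where $g^{km}$ is a smooth function of $h$ (and $\bar g$) as long as $g$ is invertible, i.e., for $h$ in a neighborhood of $0$. Then I would feed this into the formula
\begin{align}
R_{ij} = e_k(\Gamma_{ij}^k) - e_j(\Gamma_{ik}^k) + \Gamma_{km}^k \, \Gamma_{ij}^m - \Gamma_{jm}^k \, \Gamma_{ik}^m,
\end{align}
and rewrite the ordinary derivatives of $\Gamma$ as $\bar\nabla$-covariant derivatives (which introduces lower-order terms involving $\bG$, $h$ and $\bar\nabla h$). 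Since $\bG$ depends only on $\bar g$, every term that appears is a polynomial in the components of $h$, $\bar\nabla h$, $\bar\nabla \bar\nabla h$ divided by a power of $\det(\bar g+h)$, so there is a smooth map $\Psi$ (defined on a neighborhood of $h=0$ in jet space) with $\Ric = \Psi(h, \bar\nabla h, \bar\nabla \bar\nabla h)$.

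For the second claim, I would work directly from the definition of the Hessian in an arbitrary frame $\{e_i\}$: for any function $\varphi$,
\begin{align}
\Hess_{\varphi}(e_i,e_j) = e_i e_j(\varphi) - \Gamma_{ij}^n \, e_n(\varphi), \qquad \overline{\Hess}_{\varphi}(e_i,e_j) = e_i e_j(\varphi) - \bG_{ij}^n \, e_n(\varphi).
\end{align}
Subtracting gives $\Hess_{\varphi} - \overline{\Hess}_{\varphi} = -(\Gamma_{ij}^n - \bG_{ij}^n) \, e_n(\varphi)$, which is tensorial (as it must be, since it is the difference of two tensors). Taking $\varphi = \bar f + k$ and using the linearity $\overline{\Hess}_{\bar f + k} = \overline{\Hess}_{\bar f} + \overline{\Hess}_k$ yields the stated formula.

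Neither part is hard: the only mild subtlety is tracking that $\Psi$ is a bona fide smooth map and not merely a formal expression, which reduces to checking that $(h, \bar\nabla h, \bar\nabla\bar\nabla h) \mapsto (g^{-1}, \bG+{\text{correction}}, \ldots)$ is smooth near $h=0$ because $\det(\bar g + h)$ is bounded away from zero there. So the main (minor) obstacle is bookkeeping: converting coordinate derivatives of $g$ and of $\Gamma$ into $\bar\nabla$-covariant expressions and checking the algebraic form, rather than any conceptual difficulty.
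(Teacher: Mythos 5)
Your proposal is correct and follows essentially the same route as the paper: express the Christoffel symbols of $g=\bar g + h$ in terms of $h$, $\bar\nabla h$ (rewriting ordinary derivatives $e_i h$ as $\bar\nabla$-covariant ones plus $\bG$-corrections), feed this into the Ricci formula (linear in $\nabla\Gamma$, quadratic in $\Gamma$), and obtain the Hessian identity directly from $\Hess_\varphi(e_i,e_j)=e_ie_j(\varphi)-\Gamma^n_{ij}\,e_n(\varphi)$. The only cosmetic difference is that you work with the tensorial difference $\Gamma-\bG$ while the paper phrases the same observation by noting $\Gamma$ itself is a smooth function of $h,\bar\nabla h$.
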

      
      \begin{proof}
     The Christoffel symbols   of $g = \bar{g}+h$ are given by
      \begin{align}
      	\Gamma_{ji}^p &=  \frac{1}{2} \, (\bar{g}+h)^{pm} \left( e_i (\bar{g}+h)_{jm} + e_j (\bar{g}+h)_{mi} - e_m (\bar{g}+h)_{ij} \right) \, .
      \end{align}
    Note that $e_i h_{pm} = h_{pm,i} + \bG_{ip}^n h_{nm} + \bG_{im}^n h_{pn}$ where $h_{pm,i}$ is the covariant derivative of $h$ (with respect to $\bar{g}$).
    Thus,  $\Gamma$ is a smooth function of $h$ and $\bar{\nabla} h$.  
   The curvature tensor ${\R}^i_{jpn}$ of $\bar{g}+h$ is the sum of linear terms   in the derivative of $\Gamma$ and    quadratic terms  in $\Gamma$, giving    the first  claim.
  The last claim follows from $\Hess_v (e_i , e_j) = e_i (e_j (v)) - \Gamma^n_{ij} \, v_n$.
      \end{proof}

Define the one-parameter families of $2$-tensors $H(t)= \Hess_{\bar{f}+t\,k}(\bar{g}+th)$  to be the Hessian of $\bar{f}+t\,k$ computed with respect to the metric $g(t) = \bar{g}+t\,h$; define $\phi (t)$ similarly.

\begin{Lem}	\label{p:Hap}
There exists $C$ so that
\begin{align}
	\left| H(1) - H(0) - H'(0)   \right| &\leq C \,  |\bar{\nabla} h| \left(  |h|\, |\bar{\nabla} \bar{f}|  +  |\bar{\nabla} k| \right) \, , \\
	\left| H(1) - H(0) - H'(0) - \frac{1}{2} \, H''(0) \right| &\leq C \,  |h| \, |\bar{\nabla} h| \left(  |h|\, |\bar{\nabla} \bar{f}|  +  |\bar{\nabla} k| \right) \, .
\end{align}
\end{Lem}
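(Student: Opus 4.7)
The plan is to Taylor expand $H(t)$ about $t=0$ and track the structure of the coefficients carefully. By Lemma~\ref{l:append}, if $\Gamma(t)$ denotes the Christoffel symbols of $g(t) = \bg+th$ and $\Delta\Gamma(t) = \Gamma(t) - \bG$, then
\begin{align}
H(t) = \overline{\Hess}_{\bar{f}+tk} - \Delta\Gamma(t)_{ij}^n\, e_n(\bar{f}+tk) \, .
\end{align}
The term $\overline{\Hess}_{\bar{f}+tk}$ is linear in $t$, so it contributes exactly to $H(0)+tH'(0)$ and plays no role in either remainder. Thus the whole analysis reduces to understanding the remainder of the nonlinear function $t \mapsto \Delta\Gamma(t)_{ij}^n\, e_n(\bar{f}+tk)$.

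The key observation is the structural form of $\Delta\Gamma(t)$. From the Koszul formula,
\begin{align}
\Delta\Gamma(t)_{ij}^n = t\cdot g(t)^{nm}\, A_{ij,m}(h) \, , \qquad A_{ij,m}(h) = \tfrac{1}{2}\big(\bar{\nabla}_i h_{jm} + \bar{\nabla}_j h_{mi} - \bar{\nabla}_m h_{ij}\big) \, ,
\end{align}
so $A$ is independent of $t$ and linear in $\bar{\nabla} h$. Expanding the Neumann series $g(t)^{-1} = \sum_{k\ge 0}(-t)^k (\bg^{-1}h)^k \bg^{-1}$, valid for $\|h\|_{C^0}$ sufficiently small (which is ensured by the smallness of $\|h\|_{C^2}\le\delta$), one obtains
\begin{align}
\Delta\Gamma(t)_{ij}^n = \sum_{k\ge 1} t^{k}\, Q_{k-1}(h,\bar{\nabla} h)_{ij}^n \, ,
\end{align}
where $Q_{k-1}$ is \emph{polynomial of degree $k-1$ in $h$ and linear in $\bar{\nabla} h$}. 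Multiplying by $e_n(\bar{f}+tk) = e_n\bar{f} + t\, e_n k$ and collecting powers of $t$, the $t^m$ coefficient of $\Delta\Gamma(t)^n_{ij}\,e_n(\bar{f}+tk)$ equals $Q_{m-1}(h,\bar{\nabla} h)\,e_n\bar{f} + Q_{m-2}(h,\bar{\nabla} h)\,e_n k$ for $m\ge 2$.

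To conclude, I would apply Taylor's theorem with integral remainder on $[0,1]$. For the first claim, the remainder $H(1)-H(0)-H'(0)$ collects all $t^m$ terms with $m\ge 2$; the leading such terms are $Q_1(h,\bar{\nabla} h)\,e_n\bar{f}$ (degree one in $h$, linear in $\bar{\nabla} h$, paired with $\bar{\nabla}\bar{f}$) and $Q_0(\bar{\nabla} h)\,e_n k$ (independent of $h$, linear in $\bar{\nabla} h$, paired with $\bar{\nabla} k$); higher-$m$ terms are absorbed using the smallness of $|h|\le\delta$. This gives exactly the bound $C|\bar{\nabla} h|\bigl(|h||\bar{\nabla}\bar{f}| + |\bar{\nabla} k|\bigr)$. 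For the second claim, one subtracts in addition $\tfrac{1}{2}H''(0)$, removing the $t^2$ coefficient, so the remainder contains only $t^m$ terms with $m\ge 3$. The leading terms are now $Q_2\,e_n\bar{f}$ (degree two in $h$) and $Q_1\,e_n k$ (degree one in $h$), yielding the improved bound $C|h||\bar{\nabla} h|\bigl(|h||\bar{\nabla}\bar{f}| + |\bar{\nabla} k|\bigr)$.

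The main (mild) obstacle is keeping track of the bookkeeping between powers of $t$, powers of $h$, and the placement of $e_n\bar{f}$ versus $e_n k$ — everything hinges on the observation that $\Delta\Gamma(t)$ begins at order $t^1$ and each additional power of $t$ in its expansion carries an extra factor of $h$. Once this structure is isolated, the two estimates are immediate consequences of the uniform bounds on $g(t)^{-1}$ (valid for $\|h\|_{C^0}\le\delta$ small) together with Taylor's theorem applied to a smooth function of $t$.
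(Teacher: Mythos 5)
Your proposal is correct and follows essentially the same route as the paper's proof: both use Lemma~\ref{l:append} to reduce to the quantity $\Delta\Gamma(t)^p_{ij}\,(\bar f_p + t\,k_p)$, and both exploit the structural fact that each extra power of $t$ in the expansion of $\Gamma^t$ carries an extra factor of $h$ (and linearity in $\bar\nabla h$), then apply Taylor's theorem. The only cosmetic difference is that you make this structure explicit through a Neumann series for $g(t)^{-1}$, whereas the paper simply differentiates the Koszul formula directly in $t$ and records $|\partial_t\Gamma^t|\lesssim|\bar\nabla h|$, $|\partial_t^2\Gamma^t|\lesssim|h||\bar\nabla h|$, $|\partial_t^3\Gamma^t|\lesssim|h|^2|\bar\nabla h|$; the two bookkeeping schemes are equivalent and yield the same bounds.
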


\begin{proof}
Let $(\Gamma^{t} )^{k}_{ij}$ be the Christoffel symbols for the metric $\bar{g}+ t \, h$.  We will bound the $t$ derivatives of $\Gamma^t$ for $t \in [0,1]$. Since the difference of Christoffel symbols is a tensor, we can do this at a point using coordinates where $e_c (g_{ab} )= 0$ and $\bar{\Gamma}=0$, so that
\begin{align}
	2\, (\Gamma^t)^k_{ij} = t\, (\bar{g}+t\,h)^{km} \left( e_i (h_{jm} ) + e_j (h_{mi}) - e_m (h_{ij}) \right) \, .
\end{align}
Differentiating this expression, we see that
\begin{align}
		\left|  \partial_t \, \Gamma^t \right|  \leq C \, |\bar{\nabla} h| \, ,  \, 
	\left|  \partial_t^2 \, \Gamma^t \right|  \leq C \, |h| \, |\bar{\nabla} h| {\text{ and }}	\left| \partial_t^3 \, \Gamma^t \right|  \leq C \, |h|^2 \, |\bar{\nabla} h| \, .  \label{e:e714} 
\end{align}
The last claim in Lemma \ref{l:append} gives  
 $H_{ij} (t) - H_{ij} (0)  =    t\, \overline{\Hess}_k    + \left(  \bar{\Gamma} - \Gamma^t \right)_{ij}^p ( \bar{f}_p + t \, k_p ) $.
     Differentiating gives 
   that $H_{ij}' =  \overline{\Hess}_k    + \left(  \bar{\Gamma} - \Gamma^t \right)_{ij}^p \,  k_p    - \left( \partial_t \Gamma^t \right)_{ij}^p ( \bar{f}_p + t \, k_p )$,
   \begin{align}
   	H_{ij}'' &= - 2\,  \left(  \partial_t \Gamma^t \right)_{ij}^p \,  k_p    - \left( \partial_t^2 \Gamma^t \right)_{ij}^p ( \bar{f}_p + t \, k_p ) \, ,  \label{e:e717} \\
	 H_{ij}''' &= - 3\,  \left(  \partial_t^2 \Gamma^t \right)_{ij}^p \,  k_p    - \left( \partial_t^3 \Gamma^t \right)_{ij}^p ( \bar{f}_p + t \, k_p ) \, .
   \end{align}
Thus, we get 
    	$|H''| \leq C\, |\bar{\nabla} h| (| \bar{\nabla} k| + |h| \, | \bar{\nabla} \bar{f}|) {\text{ and }}
	|H'''| \leq C\,  |h| \,  | \bar{\nabla} h| (|\bar{\nabla} k| + |h| \, |\bar{\nabla} \bar{f}|)$.   
\end{proof}

To keep notation short, set  $[h]_1 = |h| + |\bar{\nabla} h|$ and $[h]_2 = |h| + |\bar{\nabla} h| + |\bar{\nabla}^2 h|$.  

 \begin{Cor}	\label{l:644}
 We have
$	\left|  \phi'(0) + \frac{1}{2} \, \phi''(0) \right| \leq |\phi (1)| + C  \,  [h]_2^3 +  C\,  |h| \, |\bar{\nabla} h| \left(  |h|\, |\bar{\nabla} \bar{f}|  +  |\bar{\nabla} k| \right) $.
 \end{Cor}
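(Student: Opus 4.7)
The plan is to apply Taylor's theorem with remainder to the one-parameter family $\phi(t) = \kappa \, g(t) - \Ric(g(t)) - H(t)$, where $g(t) = \bar g + t\,h$ and $H(t) = \Hess_{\bar f + t\,k}(g(t))$. Since $\bar g$ and $\bar f$ satisfy the shrinker equation on $\Sigma$, we have $\phi(0)=0$, while $\phi(1) = \phi$. The integral (or Lagrange) form of Taylor's theorem gives
\begin{equation*}
\phi'(0) + \tfrac{1}{2}\phi''(0) = \phi(1) - \tfrac{1}{6}\phi'''(\xi)
\end{equation*}
for some $\xi \in [0,1]$. So the whole task reduces to a uniform pointwise bound
\begin{equation*}
|\phi'''(\xi)| \leq C\,[h]_2^3 + C\,|h|\,|\bar{\nabla} h|\,(|h|\,|\bar{\nabla}\bar f| + |\bar{\nabla} k|).
\end{equation*}

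Because $g(t)$ is affine in $t$, we have $\phi'''(t) = -\Ric'''(t) - H'''(t)$, and I would bound the two pieces separately. For $H'''$, I would extend the computation in the proof of Lemma \ref{p:Hap} by one more $t$-derivative: combining \eqr{e:e714}, which gives $|\partial_t^3 \Gamma^t| \leq C\,|h|^2\,|\bar{\nabla} h|$, with the analogous estimates on $\partial_t \Gamma^t$ and $\partial_t^2 \Gamma^t$ and differentiating \eqr{e:e717} once more, yields
\begin{equation*}
|H'''(t)| \leq C\,|h|\,|\bar{\nabla} h|\,(|h|\,|\bar{\nabla}\bar f| + |\bar{\nabla} k|)
\end{equation*}
uniformly on $[0,1]$, provided $\|h\|_{C^1}$ is small enough that $g(t)$ stays uniformly equivalent to $\bar g$. (In fact, this is essentially what the second-order Taylor remainder in Lemma \ref{p:Hap} is equivalent to.)

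For the Ricci piece, Lemma \ref{l:append} asserts that $\Ric$ is a smooth function of $h, \bar{\nabla} h, \bar{\nabla}^2 h$ and background data. Under the smallness hypotheses, this smooth map and its first several derivatives have a uniform pointwise bound on the relevant parameter range. Along the affine path $t \mapsto \bar g + t\,h$, each $t$-derivative pulls out exactly one factor from $\{h,\bar{\nabla} h, \bar{\nabla}^2 h\}$, so a chain-rule count gives
\begin{equation*}
|\Ric'''(t)| \leq C\,[h]_2^3
\end{equation*}
uniformly on $[0,1]$. Adding the two estimates and inserting into the Taylor identity produces the stated inequality.

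There is no real obstacle: the algebraic structure of $H'''$ was already worked out inside the proof of Lemma \ref{p:Hap}, and the $\Ric'''$ bound is a chain-rule consequence of Lemma \ref{l:append}. The one conceptual point worth flagging is that the Taylor-remainder bound in Lemma \ref{p:Hap} controls $\tfrac{1}{6} H'''(\xi_H)$ only at a specific point $\xi_H$ depending on the spatial point, whereas here we need $H'''(\xi)$ at the same $\xi$ produced by Taylor-expanding $\phi$; this is why I would extract the bound from the pointwise estimate on $|\partial_t^k \Gamma^t|$ in the proof of Lemma \ref{p:Hap} rather than from its statement, getting a uniform bound on $t \in [0,1]$.
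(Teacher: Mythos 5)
Your proposal is correct and is essentially the paper's proof: decompose $\phi = \kappa g - \Ric - H$, note $\kappa g(t)$ is affine so drops at third order, bound $|\Ric'''|$ via the chain rule applied to the smooth map of Lemma \ref{l:append}, and bound $|H'''|$ via the uniform pointwise estimates on $|\partial_t^k \Gamma^t|$ from the proof of Lemma \ref{p:Hap}. The only small detour is your concern about the Lagrange evaluation point $\xi$: since the integral form of the Taylor remainder of $\phi$ is simply the sum of the remainders of $-\Ric$ and $-H$, one can cite the second bound of Lemma \ref{p:Hap} directly (as the paper does) and never needs a common $\xi$; your fallback to the uniform $|H'''|$ bound is also fine and lands in the same place.
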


 \begin{proof}
   Lemma \ref{l:append} and  the chain rule
give  $\left|  \Ric(1)  -  \Ric(0) -  \Ric'(0)  - \frac{1}{2}\,  \Ric''(0)  \right|
	\leq C \, [h]_2^3$.
	 Combining this with the second bound in Lemma \ref{p:Hap} gives the claim.
 \end{proof}

\vskip1mm
The next proposition writes $\phi''(0)$ as a term that is quadratic in $u$ (and its derivatives) and an error term that is higher order
($\bb$ and $\bar{\nabla}\psi$ will be shown to be smaller than $u$).

\begin{Pro}	\label{p:2varphi}
There exists $C$ so that 
if $h= u g^1 +\bb$ and $  k= \frac{\ell}{2} \, u + \psi$ where $u$ depends only on $\RR^{n-\ell}$, then 
\begin{align}
  	\left| 2\, \phi_{ij}''(0) +  2\,  |\bar{\nabla} u|^2 \, g^1     + 2\, \ell \, u \, u_{ij} +  \ell \, u_i \, u_j  \right| &\leq C \, [u]_2 [\bb]_2 + C \, [\bb]_2^2 +
	C \,  |\bar{\nabla} \bar{f}| \, ([u]_1[\bb]_1 + [\bb]_1^2) \notag \\
	&\qquad + C \, |\bar{\nabla} \psi| ( |\bar{\nabla} u| + |\bar{\nabla} \bb|)  \, .
  \end{align} 
\end{Pro}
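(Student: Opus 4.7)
The key observation is that $\phi''(0)$ depends bilinearly on the variation $(h,k)$, with coefficients determined only by the background geometry of $\Sigma$: the metric $\bg$, the drift vector field $\bar{\nabla}\bar{f}$, and the curvature of $N$. This bilinearity is manifest term-by-term in the expansion \eqr{e:e6p27} of $2\phi''(0)$ together with Corollary \ref{c:cLpr}, Corollary \ref{c:dvprh}, and Lemma \ref{l:hessf}: each contributing summand is a product of two factors drawn from the $2$-jet of $h$ and the $1$-jet of $k$, possibly weighted by a bounded curvature quantity of $\Sigma$ or by the potentially unbounded factor $\bar{\nabla}\bar{f}$.

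The plan is to write $(h,k)=(h_J,k_J)+(\bb,\psi)$ with $(h_J,k_J)=(u\,g^1,\tfrac{\ell}{2}u)$ the Jacobi direction and to exploit bilinearity:
\[
\phi''(0)|_{(h,k)} \;=\; \phi''(0)|_{(h_J,k_J)} \;+\; 2\,B\!\left((h_J,k_J),(\bb,\psi)\right) \;+\; \phi''(0)|_{(\bb,\psi)},
\]
where $B$ is the symmetric bilinear form whose diagonal is $\phi''(0)$. Proposition \ref{p:twovar} evaluates the first term exactly as $-2|\bar{\nabla}u|^2\,g^1-2\ell\,u\,u_{ij}-\ell\,u_i\,u_j$, so the claim reduces to bounding the cross term and the pure error term by the four expressions on the right-hand side. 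To carry this out I would walk through each piece of \eqr{e:e6p27}---the $[\cL h]'$ term, the $[\R(h)]'$ term, the $(g^{kn}h_{jk,ni})'$ and $[\Ric']$ terms, the Hessian term from Lemma \ref{l:hessf}, and the three drift terms $(h_{jn,i})'f_n$, $(h_{jn,i})h_{nm}f_m$, $(h_{jn,i})k_n$---substitute $h=h_J+\bb$ and $k=k_J+\psi$, extract the pure-$(h_J,k_J)$ diagonal (which reassembles into Proposition \ref{p:twovar}), and sort the remaining cross and quadratic-in-error summands into four buckets. Summands with at most two derivatives of $h$ and no $\bar{\nabla}\bar{f}$ or $k$ factor are controlled by $C[u]_2[\bb]_2+C[\bb]_2^2$ by Cauchy--Schwarz. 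Summands carrying a factor $f_n=\bar{\nabla}\bar{f}$---arising from $(h_{jn,i})'f_n$ and $(h_{jn,i})h_{nm}f_m$---are, by the explicit formula \eqr{e:hijp2} for $(h_{jn,i})'$, bilinear in $h$ and $\bar{\nabla}h$ (no second derivatives), hence bounded by $C|\bar{\nabla}\bar{f}|([u]_1[\bb]_1+[\bb]_1^2)$. Summands involving $\psi$, coming from $(h_{jn,i}+h_{in,j})k_n$ and from the $\Hess_{(\tfrac{1}{2}\Tr h-k)}$ contribution via Lemma \ref{l:hessf}, yield $C|\bar{\nabla}\psi|(|\bar{\nabla}u|+|\bar{\nabla}\bb|)$.

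The main obstacle is purely bookkeeping: verifying that each cross and error summand in the lengthy bilinear expansion lands in one of the four target buckets. The delicate point is the $\bar{\nabla}\bar{f}$-weighted group, since $|\bar{\nabla}\bar{f}|$ is unbounded on $\RR^{n-\ell}$ and one cannot afford to pair it with a quantity bounded only in $[\,\cdot\,]_2$-norm; this is precisely why \eqr{e:hijp2} is crucial---it exhibits $(h_{jn,i})'$ as a product of $h$ with a single derivative of $h$, so the $f_n$ factor is paired with $[h]_1[h]_1$ rather than with anything involving $[h]_2$. Everywhere else the parallelism of $g^1$, the $\RR^{n-\ell}$-dependence of $u$, and the boundedness of $\Sigma$'s curvature and $\bg$ make the two-derivative estimate automatic.
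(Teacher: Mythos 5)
Your proposal is correct in spirit and rests on the same core idea as the paper's proof, namely polarizing the quadratic form $\phi''(0)$ around the Jacobi direction $(u\,g^1,\tfrac{\ell}{2}u)$ and sorting the cross and quadratic-in-error terms into the four stated buckets. You also correctly identify the one truly delicate point: the unbounded factor $|\bar{\nabla}\bar{f}|$ must always land next to at most a first derivative of $h$, which \eqr{e:hijp2} guarantees for $(h_{jn,i})'$. So you would arrive at the right estimate. However, the route you propose — walking through every term of \eqr{e:e6p27} together with the general forms of Corollaries \ref{c:cLpr} and \ref{c:dvprh} and Lemma \ref{l:hessf} — is substantially more laborious than what the paper does, and you understate the bookkeeping: note, for instance, that $[\cL h]'$ itself contains pieces like $h_{mi}\,(\dv_f h)_{m,j}$ and $h_{ij,p}\,(\dv_f h)_p$ that also carry a hidden $f_n$ factor through the drift, so several terms besides $(h_{jn,i})'f_n$ and $(h_{jn,i})h_{nm}f_m$ need the same "$\bar{\nabla}\bar{f}$ pairs only with $[h]_1$" argument.

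The paper avoids almost all of this by splitting $\phi(t)$ differently: into $\phi_0(t)=\tfrac12(g+th)-\Ric_{g+th}$, which involves no $f$ or $k$ at all, and the Hessian piece $H(t)=\Hess^{g(t)}_{\bar f + tk}$. For $\phi_0$, Lemma \ref{l:append} shows $\Ric$ is a smooth function $\Psi(h,\bar{\nabla}h,\bar{\nabla}^2 h)$ with coefficients built only from bounded background data, so by the chain rule $\phi_0''(0)$ is quadratic in the $2$-jet of $h$ and the cross/error bound $C[u]_2[\bb]_2+C[\bb]_2^2$ is immediate — no term-by-term expansion needed. For $H$, the explicit Taylor remainder formulas \eqr{e:e717} and \eqr{e:e714} deliver $|H''(0)|\le C|\bar{\nabla}h|\bigl(|\bar{\nabla}k|+|h|\,|\bar{\nabla}\bar f|\bigr)$ directly, which polarizes to exactly the $|\bar{\nabla}\bar f|$-weighted and $|\bar{\nabla}\psi|$-weighted buckets. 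In short: your approach works but reproves much of Proposition \ref{p:twovar}'s computation in generality; the paper's $\phi_0/H$ split gets the same answer with one abstract observation and one explicit two-line estimate.
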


\begin{proof}
We divide $\phi (t)$ into two pieces, $\phi_0 (t) = \frac{1}{2} \, (g +t\,h)- \Ric_{g+t\,h}$ and the Hessian part $H(t)$.  Similarly, let $\phi_u (t) = \phi_{u,0} (t) - H_{u} (t)$ be the variation of $\phi$ in the direction $(u\,g^1 , \frac{\ell}{2} \, u)$.  
Proposition \ref{p:twovar}
gives 
 \begin{align}	\label{e:twovr}
  	2\, (\phi_u)_{ij}''(0) &=   -   2\,  |\bar{\nabla} u|^2  \, g^1     - 2\, \ell \, u  \, u_{ij} -  \ell\,  u_i \, u_j      \, .
  \end{align} 
By  Lemma \ref{l:append} and the chain rule, $\phi_0'' (0)$ is quadratic  in $(h, \bar{\nabla} h , \bar{\nabla}^2 h)$ and, thus, 
  \begin{align}	\label{e:ggp}
  	\left| \phi_0'' (0) - \phi_{u,0}'' (0) \right| &\leq C \, [u]_2 \, [\bb]_2 + C \, [\bb]_2^2  \, .
  \end{align}
  On the other hand, \eqr{e:e717} plus  \eqr{e:e714}   imply that
    \begin{align} 
  	\left| H'' (0) - H_{u}'' (0)  \right| &\leq C \, \left[ |u|\,  |\bar{\nabla} \bb|  + |\bb|    \left(  |\bar{\nabla} u| +  |\bar{\nabla} \bb| \right) \right] |\bar{\nabla} \bar{f}|
	+ C \,  |\bar{\nabla} \psi| (|\bar{\nabla} u| + |\bar{\nabla} \bb|) + C \, |\bar{\nabla } u| \, |\bar{\nabla} \bb| \, . \notag
  \end{align}
  The proposition follows by combining this with \eqr{e:twovr} and \eqr{e:ggp}.
  \end{proof}

 \subsection{Integral estimates} We turn now to integral estimates.  Suppose that $h$, $k$ and $u$ are as in Theorem \ref{p:firstorder}.  Even though $h$ and $k$ are small,  $u$ grows quadratically so the Taylor expansion is not valid for $x$ large.   The next lemma gives an integral bound for $u$ in terms of $\phi$
 and ``error terms'' that are higher order.
   
  \begin{Lem}	\label{l:riesz}
We have  		$ \| u \|_{L^2}^2 \leq    C \, \left\|   \left\{ |\phi(1)| + [h]_2^3     + [\bb]_2^2 + |\bar{\nabla} \psi |^2 \right\} 
	  (1+|x|^2) \right\|_{L^1} + C \, \| u \|_{L^2}^4$.
\end{Lem}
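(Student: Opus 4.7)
The plan is to pair the second variation $\phi''(0)$ of Proposition~\ref{p:twovar} against a test tensor of the form $w\,g^1$, where $w\in\cK$ is judiciously chosen so that (a) it extracts a quantity comparable to $\|u\|_{L^2}^4$ via Lemma~\ref{l:polygrow}, and (b) the linear-in-$(\bb,\psi)$ integral $\int\langle\phi'(0),w\,g^1\rangle\,\e^{-\bar f}$ vanishes identically. The Taylor expansion of Corollary~\ref{l:644} then relates $\tfrac12\phi''(0)$ to $\phi(1)$ modulo cubic errors, and a division by $\|u\|_{L^2}^2$ produces the claimed bound.

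First I would take $w = |\bar\nabla u|^2 - \Delta|\bar\nabla u|^2$, which by Lemma~\ref{l:polygrow} lies in $\cK$ and satisfies $\|w\|_{L^2}^2 \geq C\|u\|_{L^2}^4$ together with the pointwise bound $|w|\leq C(1+|x|^2)\,\|u\|_{L^2}^2$. Because $u$ depends only on $\RR^{n-\ell}$, tracing the formula of Proposition~\ref{p:twovar} against $g^1$ kills the terms $u\,u_{ij}$ and $u_iu_j$, and Proposition~\ref{p:2varphi} gives
\begin{equation*}
\int\langle\phi''(0),w\,g^1\rangle\,\e^{-\bar f} = -\ell\,\|w\|_{L^2}^2 + O\!\left(\int |w|\,E_2\,\e^{-\bar f}\right),
\end{equation*}
with $E_2$ the Proposition~\ref{p:2varphi} error. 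The hard part is verifying $\int\langle\phi'(0),w\,g^1\rangle\,\e^{-\bar f}=0$. Using $\phi'(0)=\tfrac12 L\bb + \Hess_{\frac12\Tr_{\bg}\bb-\psi} + \dv_{\bar f}^{*}\dv_{\bar f}\bb$ (the Jacobi contribution being zero by the computation in Section~\ref{s:jacobi}), one splits into three pieces. The product structure yields $\cL\,g^1=0$ and $R(g^1)=\tfrac12 g^1$, so $L(w\,g^1)=(\cL w + w)\,g^1 = 0$ since $\cL w=-w$; by self-adjointness of $L$ the $L\bb$ piece integrates to zero. The Hessian piece contributes $\int w\,\Delta^N(\tfrac12\Tr_{\bg}\bb-\psi)\,\e^{-\bar f}$, which vanishes by integrating $\Delta^N$ over the closed fiber $N\times\{x\}$. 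For the divergence piece, $\dv_{\bar f}(w\,g^1)=0$ because $w$ depends only on $\RR^{n-\ell}$, $g^1$ annihilates the Euclidean directions, and $\bar\nabla\bar f\in\RR^{n-\ell}$.

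Combining these with Corollary~\ref{l:644} yields
\begin{equation*}
\tfrac{\ell}{2}\,\|w\|_{L^2}^2 \leq \left|\int\langle\phi(1),w\,g^1\rangle\,\e^{-\bar f}\right| + O\!\left(\int|w|\,(E_1+E_2)\,\e^{-\bar f}\right),
\end{equation*}
where $E_1$ is the Corollary~\ref{l:644} cubic error. Invoking $|w|\leq C(1+|x|^2)\|u\|_{L^2}^2$ on both sides and using $\|w\|_{L^2}^2\geq C\|u\|_{L^2}^4$, each term on the right picks up exactly one factor $\|u\|_{L^2}^2$, which I cancel (the lemma being trivial when $u\equiv 0$) to get
\begin{equation*}
\|u\|_{L^2}^2 \leq C\,\||\phi(1)|(1+|x|^2)\|_{L^1} + C\int (1+|x|^2)\,(E_1+E_2)\,\e^{-\bar f}.
\end{equation*}
The last step is to dominate the error integrals by the claimed right-hand side. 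The contributions $(1+|x|^2)[h]_2^3$ and $(1+|x|^2)[\bb]_2^2$ appear directly. Cross terms such as $(1+|x|^2)[u]_2[\bb]_2$ are handled by inserting $[u]_2\leq C(1+|x|^2)\|u\|_{L^2}$ from Lemma~\ref{l:polygrow} and applying a weighted AM--GM inequality that distributes the polynomial weight asymmetrically, producing an $\epsilon\,\|u\|_{L^2}^2\,(1+|x|^2)^p$ piece (integrable against the Gaussian and absorbable back into the LHS for $\epsilon$ small) plus a clean $\epsilon^{-1}[\bb]_2^2(1+|x|^2)$ piece. The weight $|\bar\nabla\bar f|\leq C(1+|x|^2)^{1/2}$ and the decomposition $|\bar\nabla k|\leq \tfrac{\ell}{2}|\bar\nabla u|+|\bar\nabla\psi|$ dispose of the remaining terms, with $|\bar\nabla u|$-coupled pieces absorbed as above and $|\bar\nabla\psi|$-coupled pieces entering the $|\bar\nabla\psi|^2$ contribution to the bound. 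The main delicate point is bookkeeping the higher powers of $(1+|x|^2)$ generated by this process and verifying they can all be tamed using the smallness $\|h\|_{C^2}+\|\bar\nabla k\|_{C^1}\leq \delta$ together with finiteness of Gaussian moments of arbitrary polynomial weight.
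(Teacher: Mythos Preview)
Your proposal is correct and follows essentially the same route as the paper: both pair $\phi'(0)+\tfrac12\phi''(0)$ against $w\,g^1$ with $w=|\bar\nabla u|^2-\bar\Delta|\bar\nabla u|^2\in\cK$, kill the $\phi'(0)$ contribution via the orthogonalities you list (the paper uses $\dv_{\bar f}(w\,g^1)=0$ and adjointness rather than your fiber integration for the Hessian piece, but the effect is the same), extract $\|w\|_{L^2}^2\geq C\|u\|_{L^2}^4$ from the $\phi''(0)$ part via Proposition~\ref{p:2varphi} and Lemma~\ref{l:polygrow}, pass to $\phi(1)$ via Corollary~\ref{l:644}, divide through by $\|u\|_{L^2}^2$, and absorb the $u$-coupled cross terms. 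The paper inserts one precaution you omit: it restricts the second-order expansion to the region $|x|^2\leq c/\|u\|_{L^2}$ where $u$ stays pointwise small, carrying a tail error $\hat\epsilon$ that is then discarded as much smaller than $\|u\|_{L^2}^4$; under the standing hypothesis $\|h\|_{C^2}+\|\bar\nabla k\|_{C^1}\leq\delta$ the constants in Corollary~\ref{l:644} and Proposition~\ref{p:2varphi} are uniform, so your global argument goes through as written.
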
 
 
 \begin{proof}
 We can assume that $\| h \|_{L^2}$ and $\| u \|_{L^2}$  are fixed small; we will use this freely below.
 Lemma \ref{l:polygrow} gives that 
  $[u]_2 \leq C\, (1+|x|^2) \, \| u\|_{L^2}$, so $u$ remains small as long as $|x|^2 \leq \frac{c}{\| u \|_{L^2}}$.
 Let $\eta \geq 0$ be a cutoff function that is supported on the set  
 $|x|^2 \leq \frac{c}{\| u \|_{L^2}}$ and that depends only on $\RR^{n - \ell}$.

  \vskip1mm
  \noindent
  {\bf{Step 1: Setting it up}}. 
     By \eqr{c:phip}, the first variation of $\phi$ in a direction $(h,k)$ is given by
  \begin{align}	\label{e:fvphihkbra}
  	\phi' (0) &=   \frac{1}{2} \, L \, h   + \overline{\Hess}_{ \left( \frac{1}{2} \, \Tr \, (h) - k \right) } + 
	  \dv_{\bar{f}}^{*}\,  \dv_{\bar{f}} \, h \, .
  \end{align}
  To simplify the equations, let $\cE$ denote the point-wise error function
  \begin{align}
  	\cE \equiv  [u]_2 [\bb]_2 + [\bb]_2^2 +
	 |x| \, ( [u]_1[\bb]_1  +[ \bb]_1^2) +  |\bar{\nabla} \psi| ( |\bar{\nabla} u| + |\bar{\nabla} \bb|) \, .	\label{e:subce}
  \end{align}
  With this notation,  Proposition \ref{p:2varphi} gives
 $C$ so that on the support of $\eta$ 
 \begin{align}	\label{e:myfavprop}
  	\left| 2\, \phi''(0) +  2\,  |\bar{\nabla} u|^2 \, g^1     + 2\, \ell \, u \, u_{ij} +  \ell \, u_i \, u_j  \right| &\leq C\, \cE \, .  
  \end{align} 
   By Lemma \ref{l:polygrow}, $v=|\bar{\nabla} u|^2-\bar{\Delta}\,|\bar{\nabla} u|^2 \in \cK$, 
 $ |v|  \leq  C \, (1+ |x|^2) \, \| u \|_{L^2}^2$ and $ |\bar{\nabla}v|  \leq  C \, (1+ |x|) \, \| u \|_{L^2}^2$; we will use these freely below.

       Note that $\eta \, v \, g^1$ is point-wise orthogonal to $u \, u_{ij}$ and $u_i \, u_j$.  Since $\dv_{\bar{f}} \, (\eta \, v \, g^1) = 0$, it is $L^2$-orthogonal to $ \overline{\Hess}_{ \left( \frac{1}{2} \, \Tr \, (h) - k\right) }$ and 
	  $\dv_{\bar{f}}^{*}\,  \dv_{\bar{f}} \, h$.     
	  Thus, 
	  taking the $L^2$ inner product of $\phi'(0)+\frac{1}{2}\,\phi''(0)$ with $\eta \, v \,g^1$ and using
	  \eqr{e:fvphihkbra} and \eqr{e:myfavprop} gives
 \begin{align}   \label{e:almostthere}
 	\left| \int \langle \phi'(0)+\frac{1}{2}\,\phi''(0), \eta \, v\,g^1\rangle\,\e^{-{\bar{f}}} + \frac{\ell}{2}\int \eta \,  |\bar{\nabla} u|^2\,v\, \e^{-\bar{f}} \right| & \leq 
	 C\,  \int   \cE \, |v|  \, \e^{-\bar{f}} \notag \\
	&\quad  + \frac{1}{2} \, \left| \int  \langle L \, h , \eta \, v \, g^1 \rangle \, \e^{-\bar{f}} \right| \, .
 \end{align}
  Since $L$ is symmetric,  $L \, (v \, g^1) =0$ and  $L \, h = L \, \bb$, we see that
  \begin{align}	
  	\left| \int  \langle L \, h , \eta \, v \, g^1 \rangle \, \e^{-\bar{f}} \right|  &= \left| \int  \langle  \bb , L \, (\eta \, v \, g^1) \rangle \, \e^{-\bar{f}} \right| 
	 = \left| \int  \langle  \bb ,  ((\cL \, \eta) \, v + 2\, \langle \bar{\nabla} \eta , \bar{\nabla} v \rangle ) \, g^1  \rangle \, \e^{-\bar{f}} \right| \notag \\
	 &\leq C\, \| u \|_{L^2}^2 \, \int  |\bb| \, \left(  |\cL \, \eta| \, (1+ |x|^2) + |\bar{\nabla} \eta| \, (1+|x|)
	 \right) \, \e^{ - \bar{f}}  \, .
  \end{align}
  Using this in \eqr{e:almostthere}, we see that
   \begin{align}  \label{e:needthistoo}
 	 \frac{\ell}{2} \, \left| \int    \eta \,  |\bar{\nabla} u|^2\,v\, \e^{-\bar{f}} \right| & \leq 
	C\, \| u \|_{L^2}^2 \, \left| \int| \phi'(0)+\frac{1}{2}\,\phi''(0)| \,  \eta \,  (1+|x|^2) \rangle\,\e^{-{\bar{f}}}   \right|  + 
	 C\, \| u \|_{L^2}^2 \int   \cE \,  (1+|x|^2)  \, \e^{-\bar{f}} \notag \\
	&\quad  +  C\, \| u \|_{L^2}^2 \, \int  |\bb| \, \left(  |\cL \, \eta| \, (1+ |x|^2) + |\bar{\nabla} \eta| \, (1+|x|)
	 \right) \, \e^{ - \bar{f}} \, .
 \end{align}

 \vskip1mm
 \noindent
 {\bf{Step 2: An upper bound for $\| u \|_{L^2}^4$}}.  We will show  first that there  is a  constant  $c_1  > 0$ so that
 \begin{align}
 	\int \eta \, v \, |\bar{\nabla} u|^2 \, \e^{ - \bar{f}} \geq \frac{1}{2} \, \int v^2 - c_1 \, \| u \|_{L^2}^4 \, 
	 \int | \bar{\nabla} \eta | \, (1+|x|) \e^{ - \bar{f}}   \, . \label{e:old517}
 \end{align}
 Set $a = \bar{\Delta} |\bar{\nabla} u|^2$ (this is constant since $u$ is quadratic) and note that $|a| \leq c_2 \, \| u \|_{L^2}^2$ by 
   Lemma \ref{l:polygrow}.  Using that $|\bar{\nabla} u|^2 = v  + a$, we have that
   \begin{align}
   	\int \eta \, v \, |\bar{\nabla} u|^2 \, \e^{ - \bar{f}}  = \int \eta \, v^2 \, \e^{ - \bar{f}} + a \, \int \eta \, v \,   \e^{ - \bar{f}} \geq 
	\frac{1}{2} \, \int v^2 \, \e^{ - \bar{f}} + a \, \int \eta \, v \,   \e^{ - \bar{f}} \, , 
   \end{align}
 where the inequality used the concentration inequality from 
 Lemma \ref{l:L2RPa}. Using the bound for $a$, the equation $\cL \, v = - v$, and integration by parts, we see that
 \begin{align}
 	\left| a \, \int \eta \, v \,   \e^{ - \bar{f}}  \right| &\leq c_2 \,  \| u \|_{L^2}^2 \, \left| \int \langle \bar{\nabla} \eta , \bar{\nabla} v \rangle\e^{ - \bar{f}}  \right|  \leq c_2^2 \,  \| u \|_{L^2}^4 \,   \int | \bar{\nabla} \eta | \, (1+|x|) \e^{ - \bar{f}}  
	\, .
 \end{align}
 where the last inequality  used that $|\bar{\nabla} v| \leq c_2 \, (1+|x|) \, \| u \|_{L^2}^2$ by Lemma \ref{l:polygrow}.  This gives the claim 
 \eqr{e:old517}.  The last claim in  Lemma \ref{l:polygrow} gives that $\| v \|_{L^2}^2 \geq c_3 \, \| u \|_{L^2}^4$ for $c_3 > 0$.  Combining this with 
  \eqr{e:old517} gives that
   \begin{align}
 	\int \eta \, v \, |\bar{\nabla} u|^2 \, \e^{ - \bar{f}} \geq \frac{c_3}{2} \,   \| u \|_{L^2}^4  -   c_1 \, \| u \|_{L^2}^4 \, 
	 \int | \bar{\nabla} \eta | \, (1+|x|) \e^{ - \bar{f}}    \, . 
 \end{align}
 As long as $\| u \|_{L^2}$ is sufficiently small, we can cut off $\eta$ far enough out to arrange that
 \begin{align}
 \frac{c_3}{2}    -   c_1 \,  
	 \int | \bar{\nabla} \eta | \, (1+|x|) \e^{ - \bar{f}}  \geq \frac{c_3}{4} \, ,
 \end{align}
 so we conclude that
   \begin{align}	\label{e:cccT}
 	\int \eta \, v \, |\bar{\nabla} u|^2 \, \e^{ - \bar{f}} \geq \frac{c_3}{4} \,   \| u \|_{L^2}^4     \, . 
 \end{align}
 
  \vskip1mm
 \noindent
 {\bf{Step 3: Completing the argument}}. Combining \eqr{e:needthistoo},  \eqr{e:cccT} and Corollary \ref{l:644} gives
 \begin{align}
 	 \| u \|_{L^2}^4  & \leq C\, \| u \|_{L^2}^2 \,  \|  |\phi (1)| +   [h]_2^3 +    [h]_1^2 \left(  |h|\, |x|  +  |\bar{\nabla} k| \right) \,  (1+|x|^2) \|_{L^1} \notag \\
	 &+ 
	 C\, \| u \|_{L^2}^2\, \| \cE \,  (1+|x|^2) \|_{L^1}   +  C\, \| u \|_{L^2}^2 \,  \|   |\bb| \, \left(  |\cL \, \eta| \, (1+ |x|^2) + |\bar{\nabla} \eta| \, (1+|x|)
	 \right) \|_{L^1} \, .
 \end{align}
 Dividing through by $\| u \|_{L^2}^2$, using that $|\bar{\nabla} k| \leq |\bar{\nabla} \psi | + \frac{\ell}{2} \, |\bar{\nabla} u|$, and dividing up the term on the first line gives
  \begin{align}
 	 \| u \|_{L^2}^2  & \leq C\,   \|  |\phi (1)| +   [h]_2^3 +    [h]_1^2  \,  |\bar{\nabla} \psi |  \,  (1+|x|^2) \|_{L^1}
	  + C\,   \|     [h]_1^3\, |x|^3    \|_{L^1}
	 + C\,   \|     [h]_1^2 \,   |\bar{\nabla} u|  \,  (1+|x|^2) \|_{L^1} \notag \\
	 &+ 
	 C\,  \| \cE \,  (1+|x|^2) \|_{L^1}   +  C\,    \|   |\bb| \, \left(  |\cL \, \eta| \, (1+ |x|^2) + |\bar{\nabla} \eta| \, (1+|x|)
	 \right) \|_{L^1} \, . \notag
 \end{align}
 To complete the proof, we will explain why each of the five terms on the right is bounded by  $ C \, \left\|   \left\{ |\phi(1)| + [h]_2^3     + [\bb]_2^2 + |\bar{\nabla} \psi |^2 \right\} 
	  (1+|x|^2) \right\|_{L^1} + C \, \| u \|_{L^2}^4$.  This is clear for the first term (use a Cauchy inequality on the $ |\bar{\nabla} \psi | $ term).  The second term has a $|x|^3$ in it (and we want at most $|x|^2$), but we use the gaussian weighted Poincar\'e inequality (Lemma \ref{l:L2RPa}) to reduce the power of $|x|$ at the cost of an additional derivative to get it in the right form. The third term follows by using an absorbing inequality  (and   Lemma \ref{l:polygrow}).  The fourth term follows in the same way{\footnote{Many of the terms in $\cE$ are already of the right form; the term with an extra $|x|$ is dealt with as above.}}.  For the last term, we use a Cauchy inequality to get an $\| |\bb|^2 \|_{L^1}$ term (which is of the first form) plus a weighted integral where the integrand vanishes where $\eta$ is constant.  Since the support of $\nabla \eta$ is on the scale of $\| u \|_{L^2}^{ - \frac{1}{2}}$, this integral is bounded by a constant time $\| u \|_{L^2}^4$ (in fact, we could have taken any power here since exponentials dominate polynomials).
This completes the proof.
   \end{proof}

   We will use the following Poincar\'e inequality:
   
   \begin{Lem}	\label{l:laterP}
There exists $C$ so if $V \in W^{1,2}$ is a vector field on $\Sigma$, then 
 $\| V (1+|\bar{\nabla} \bar{f}|) \|_{L^2} \leq C \,  \| \bar{\nabla} V \|_{L^2} + C\, \sum_{i=1}^{n-\ell} \left| \int \langle \partial_{x_i} , V \rangle \, \e^{ - \bar{f}}
 \right| $.
\end{Lem}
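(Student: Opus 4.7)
The plan is to exploit the product structure $\Sigma = N^{\ell} \times \RR^{n-\ell}$ and reduce the estimate to two separate Poincar\'e-type inequalities: one on the $N$-factor (provided by Lemma \ref{l:poinS}), and one on the Euclidean factor where the kernel of $\bar{\nabla}$ is spanned precisely by the parallel fields $\partial_{x_i}$. The boundary integrals $\int \langle \partial_{x_i}, V\rangle \, \e^{-\bar{f}}$ in the statement are exactly the weighted projections onto this kernel, so they are what must appear as a correction.

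First I would decompose $V = V^N + V^{\perp}$, where $V^N$ is tangent to the $N$-fibers and $V^{\perp} = \sum_{i=1}^{n-\ell} V^i \, \partial_{x_i}$ with $V^i = \langle V, \partial_{x_i}\rangle$. Since each $\partial_{x_i}$ is parallel on $\Sigma$ and is orthogonal to $TN$, the decomposition is pointwise orthogonal and the covariant derivative splits: $|\bar{\nabla} V|^2 = |\bar{\nabla} V^N|^2 + \sum_i |\bar{\nabla} V^i|^2$. Moreover $\int \langle \partial_{x_i}, V\rangle \, \e^{-\bar{f}} = \int V^i \, \e^{-\bar{f}}$ since $V^N \perp \partial_{x_i}$. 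The tangential piece is handled immediately by Lemma \ref{l:poinS}, giving $\|V^N (1+|\bar{\nabla}\bar{f}|)\|_{L^2} \leq C\|\bar{\nabla} V^N\|_{L^2}$; no correction term arises here because $\Ric_N = \frac{1}{2}g^1 > 0$ rules out parallel vector fields on $N$ by Bochner.

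For the Euclidean piece I would handle each scalar component $V^i$ separately by writing $V^i = c_i + w_i$, where $c_i = \bigl(\int \e^{-\bar{f}}\bigr)^{-1} \int V^i \, \e^{-\bar{f}}$ and $w_i$ has zero weighted mean. The constant contribution satisfies $\|c_i (1+|\bar{\nabla}\bar{f}|)\|_{L^2} = |c_i|\cdot\|1+|\bar{\nabla}\bar{f}|\|_{L^2(\e^{-\bar{f}})}$, where the latter is a finite constant depending only on $\Sigma$, and $|c_i|$ is controlled by $\bigl|\int \langle \partial_{x_i}, V\rangle \e^{-\bar{f}}\bigr|$ by definition. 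For the mean-zero remainder $w_i$, I would use the spectral gap $\mu_1 \geq \tfrac{1}{2}$ of $-\cL$ on $L^2(\e^{-\bar{f}})$-functions of zero mean; this gap follows from Lichnerowicz on $N$ (giving first nonzero eigenvalue $\geq \ell/(2(\ell-1)) \geq \tfrac{1}{2}$ for $\ell \geq 2$) combined with the Ornstein--Uhlenbeck spectrum $\tfrac{1}{2}$ on $\RR^{n-\ell}$, via separation of variables. This gap yields $\|w_i\|_{L^2}^2 \leq 2\|\bar{\nabla} w_i\|_{L^2}^2$, and combining with Lemma \ref{l:L2RPa} applied to $w_i$ (which controls $\|w_i|\bar{\nabla}\bar{f}|\|_{L^2}^2$ by $4\|\bar{\nabla} w_i\|_{L^2}^2 + n\|w_i\|_{L^2}^2$) gives $\|w_i(1+|\bar{\nabla}\bar{f}|)\|_{L^2} \leq C\|\bar{\nabla} V^i\|_{L^2}$. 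Summing over $i$ and adding the $V^N$ estimate completes the argument.

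I do not foresee any significant obstacle; the only nontrivial input is the spectral gap on the product, which is standard, and the rest is bookkeeping. Conceptually, this is the natural generalization of Lemma \ref{l:poinS}: the only obstruction to a full Poincar\'e inequality for vector fields on $\Sigma$ comes from the parallel Euclidean translations, and the correction integrals $\int \langle \partial_{x_i}, V\rangle\e^{-\bar{f}}$ precisely eliminate them.
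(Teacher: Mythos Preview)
Your proposal is correct and follows essentially the same route as the paper: decompose $V$ into its $N$-tangential and Euclidean parts, invoke Lemma \ref{l:poinS} for the former, subtract off the weighted mean (equivalently, a translation $T=\sum a_i\partial_{x_i}$) on the latter and apply the Poincar\'e inequality on the product, then use Lemma \ref{l:L2RPa} to absorb the $|\bar{\nabla}\bar{f}|$ weight. The paper's proof is more terse but the logical content is the same.
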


\begin{proof} 
Let $T = \sum a_i \, \partial_{x_i}$ be the constant $\RR^{n-\ell}$ vector field with $\int \langle \partial_{x_i} , V-T \rangle \, \e^{ - \bar{f}} = 0$.  Using Lemma \ref{l:poinS}  to control the projection to
 $N$ and the  Poincar\'e inequality on $N\times \RR^{n-\ell}$ to control the Euclidean part of $V$, we get that
 $\| V-T \|_{L^2} \leq C \, \| \nabla V \|_{L^2}$.  
 Combining this with Lemma \ref{l:L2RPa} gives the claim.
 \end{proof}

  \begin{Lem}	\label{l:ply}
  Given $m$, $\epsilon \in (0,1/2)$ and $p,q > 0$, there exists $c = c(m,p,q,\epsilon)$ so that if $\eta$ is any function on $\RR^m$ with $|\eta | \leq 1 + |x|^q$, then
$	\int \eta^2 \, |x|^p \,  \e^{ - \frac{|x|^2}{4} } \leq c \, \| \eta \|_{L^2}^{2-\epsilon}$.
  \end{Lem}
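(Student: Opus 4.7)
The plan is to prove this by Hölder interpolation between the pointwise bound $|\eta|\le 1+|x|^q$ and the weighted $L^2$ bound, using the Gaussian weight $\e^{-|x|^2/4}$ implicit in the definition of $\|\eta\|_{L^2}$. The underlying idea is that the factor $|x|^p$ is harmless against a Gaussian, so the only cost of bringing it in is a small power of the $L^{\infty}$-style bound $1+|x|^q$, which trades a full $L^2$ norm for a $\|\eta\|_{L^2}^{2-\epsilon}$ norm.

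First I would write $\eta^2 = \eta^{2-\epsilon}\cdot\eta^{\epsilon}$ and use the pointwise estimate $|\eta|^{\epsilon}\le (1+|x|^q)^{\epsilon}\le C(1+|x|^{q\epsilon})$ (from $\epsilon<1$) to reduce the claim to bounding
\[
\int \eta^{2-\epsilon}\,|x|^{s}\,\e^{-|x|^2/4}
\]
for each $s\in\{p,\,p+q\epsilon\}$. Next I would split the Gaussian weight as $\e^{-|x|^2/4}=\e^{-(2-\epsilon)|x|^2/8}\cdot \e^{-\epsilon|x|^2/8}$ and absorb the first factor into the $\eta^{2-\epsilon}$ part. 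Applying Hölder with conjugate exponents $\tfrac{2}{2-\epsilon}$ and $\tfrac{2}{\epsilon}$ then yields
\[
\int \eta^{2-\epsilon}\,|x|^{s}\,\e^{-|x|^2/4}
\;\le\;\left(\int \eta^{2}\,\e^{-|x|^2/4}\right)^{(2-\epsilon)/2}\left(\int |x|^{2s/\epsilon}\,\e^{-|x|^2/4}\right)^{\epsilon/2}.
\]
The first factor is exactly $\|\eta\|_{L^2}^{2-\epsilon}$, and the second is a finite constant $c(m,s,\epsilon)$ because a Gaussian dominates any polynomial. Combining these for $s=p$ and $s=p+q\epsilon$ gives the desired inequality.

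There is no genuine obstacle here: this is a routine interpolation lemma whose only subtlety is organizing the exponents so that the residual $L^{2/\epsilon}$ integral against the Gaussian remains finite — which is automatic, since $2s/\epsilon$ is just a (large) fixed number depending on $p,q,\epsilon$. The constant $c(m,p,q,\epsilon)$ blows up as $\epsilon\to 0$, which is consistent with the need for strict inequality $\epsilon>0$.
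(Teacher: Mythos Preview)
Your proof is correct and follows essentially the same route as the paper: write $\eta^2=\eta^{2-\epsilon}\cdot\eta^{\epsilon}$, bound $|\eta|^{\epsilon}$ pointwise using $|\eta|\le 1+|x|^q$, and then apply H\"older with exponents $\tfrac{2}{2-\epsilon}$ and $\tfrac{2}{\epsilon}$ against the Gaussian measure so that the first factor becomes $\|\eta\|_{L^2}^{2-\epsilon}$ and the second is a finite Gaussian moment. The only cosmetic differences are that the paper uses the slightly cruder bound $|\eta|^{\epsilon}\le 1+|x|^q$ directly (valid since $\epsilon<1$) and applies H\"older with respect to the weighted measure rather than splitting $\e^{-|x|^2/4}$ explicitly.
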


\begin{proof}
For $\epsilon \in (0,1/2)$, we have $\eta^{\epsilon} \leq 1+ |x|^q$ and, thus the H\"older inequality gives
 \begin{align}
 	\|  \eta^2 \, |x|^p \|_{L^1} \leq \| \eta^{2-\epsilon} \, (1+ |x|^q) \, |x|^p\|_{L^1} \leq \| \eta^{2-\epsilon} \|_{L^{ \frac{2}{2-\epsilon}}} \, 
	\| (1+ |x|^q) \, |x|^p  \|_{L^{ \frac{2}{\epsilon}}} = c_{m,p,q,\epsilon} \, \| \eta \|_{L^2}^{2-\epsilon}
	\, .  \notag \, \, \, \, \, \, \, \, \, \, \, \, \qedhere
\end{align}
\end{proof}

\begin{proof}[Proof of Theorem \ref{p:firstorder}]
 By Lemma \ref{l:append} and  the chain rule, 
 	$\left|  \Ric(1)  -  \Ric(0) -  \Ric'(0)    \right|
	\leq C \, [h]_2^2 $.
Combining this with the first bound in Lemma \ref{p:Hap}, we get 
\begin{align}
	\left| \phi (1) - \phi (0) - \phi'(0) \right| \leq C  \left( [h]_2^2 + |\bar{\nabla} h|\, |h|\, |\bar{\nabla} \bar{f}| + |\bar{\nabla} h|\, |\bar{\nabla} k|  \right) \, .
\end{align}
Using that $\phi (0) = 0$ and $\phi'(0)$ is given by  
\eqr{c:phip}, we get that
\begin{align}	\label{e:Lhbound0}
	\left|   \frac{1}{2} \, L \, h+ \overline{\Hess}_w \right| \leq |\phi(1)| + \left|  \dv_{\bar{f}}^{*}\,  \dv_{\bar{f}} \, h \right| +  C  \left([h]_2^2 + |\bar{\nabla} h|\, |h|\, |\bar{\nabla} f| + |\bar{\nabla} h|\, |\bar{\nabla} k|  \right) \, , 
\end{align}
where $w = \frac{1}{2} \, \Tr \, h - k$.  Subtract a  linear function from $w$ to get $\bar{w}$ with $\int  \bar{w} \, \e^{-\bar{f}} = \int x_i \,  \bar{w} \, \e^{-\bar{f}} =0$. Obviously, $\overline{\Hess}_w = \overline{\Hess}_{\bar{w}}$.  Self-adjointness of $L$ and Corollary  \ref{t:bochHess} give
\begin{align}
	\int \langle L \, h ,  \overline{\Hess}_w \rangle \, \e^{-\bar{f}} &= \int \langle  h , L \, \overline{\Hess}_{\bar{w}} \rangle \, \e^{-\bar{f}}
	= \int \langle   h ,  \overline{\Hess}_{(\cL +1)\, \bar{w}} \rangle \, \e^{-\bar{f}} \notag \\
	&=  - \int \langle  \dv_{\bar{f}} \,  h ,   \bar{\nabla} (\cL +1)\, \bar{w}  \rangle \, \e^{-\bar{f}} = \int \dv_{\bar{f}}\, (  \dv_{\bar{f}} \,  h ) \,      (\cL +1)\, \bar{w}    \, \e^{-\bar{f}} \, .
\end{align}
   Putting the last two equations together, we get that
\begin{align}	\label{e:Lhbounda}
	  \frac{1}{4}\, \|L\, h\|_{L^2}^2 +\| \overline{\Hess}_w \|_{L^2}^2 &\leq 2\, \|\phi (1)\|_{L^2}^2   + 2\, \| \bar{\nabla} \,  \dv_{\bar{f}}\, h \|_{L^2}^2 + \|  (\cL+1)\, \bar{w} \|_{L^2} \, \| \dv_{\bar{f}}\, (  \dv_{\bar{f }} \,  h ) \|_{L^2}  \notag \\
	  &+
	C \int \left(  [h]_2^4 + |\bar{\nabla} h|^2\, |h|^2\, |\bar{\nabla} f|^2 + |\bar{\nabla} h|^2\, |\bar{\nabla} k|^2  \right) \, \e^{-\bar{f}} \, .
\end{align}
The second term on the last line is bounded by the first by Lemma \ref{l:L2RPa}, while the third term is bounded the first and $ |\bar{\nabla} h|^2 \, |\bar{\nabla}w|^2$.
    Lemma \ref{l:laterP} gives
\begin{align}	\label{e:fromlaterP}
	\|  \bar{w} \|_{W{2,2}}     \leq C   \,    \| \overline{\Hess}_w \|_{L^2}  {\text{ and }}  \|   \bar{\nabla} {w} \|_{W^{1,2}}     \leq C   \, \left(  \| \overline{\Hess}_w \|_{L^2} + |\cB (h,k)| \right)  \, .
\end{align}
We use \eqr{e:fromlaterP}
 and  the absorbing inequality to bound the $ \|  ( \cL + 1)\, \bar{w} \|_{L^2} \, \| \dv_{\bar{f}}\, (  \dv_{\bar{f }} \,  h ) \|_{L^2}$ term by a small constant times $\| \overline{\Hess}_w \|_{L^2}^2$ plus a multiple of
$ \|      \dv_{\bar{f}} \,  h   \|_{ W^{1,2}}^2$.    Thus, we get
\begin{align}
	 \|L\, h\|_{L^2}^2 +\| \overline{\nabla} w \|_{W^{1,2}}^2  & \leq C \,\left\{ \|\phi (1)\|_{L^2}^2  +  \| \dv_{\bar{f}}\, h \|_{W^{1,2}}^2 +
	 |\cB (h,k)|^2 \right\} +
	C \int    ([h]_2^4 + |\bar{\nabla} h|^2 \, |\bar{\nabla} w|^2 ) \, \e^{-\bar{f} }  \, .   \notag
\end{align}
As long as $\sup |\bar{\nabla} h| \leq \delta_0$ for some $\delta_0 > 0$ small enough (depending on $n$), \eqr{e:fromlaterP} allows us to  absorb the
$\, \sup |\bar{\nabla}  h|^2 \, \int |\bar{\nabla} w|^2  \, \e^{-\bar{f} }$ term on the left to get 
\begin{align}	\label{e:Lhbound}
	 \|L\, h\|_{L^2}^2 +\| \overline{\nabla} w \|_{W^{1,2}}^2 & \leq C \,\|\phi (1)\|_{L^2}^2  + C \, \| \dv_{\bar{f}}\, h \|_{W^{1,2}}^2 + C\, | \cB (h,k)|^2 +
	C \int    [h]_2^4   \, \e^{-\bar{f} } \, .  
\end{align}
Since $N$ satisfies ($\star$), Theorem \ref{t:approx}
gives $C$   so that
   \begin{align}	\label{e:fromv}
   	\| \bb \|_{W^{2,2}}^2    = \|h - u\, g^1 \|_{W^{2,2}}^2      \leq C \, \| L \, h \|_{L^2}^2 + C \, \| \dv_{\bar{f}}\, h \|_{L^2}^2  \, .
   \end{align}
Combining this with \eqr{e:Lhbound} and using that $\psi = \frac{1}{2} \, \Tr \, \bb - w$ gives
     \begin{align}	\label{e:bb2a}
    	\| \bb \|_{W^{2,2}}^2 +  \| {\bar{\nabla}} \, \psi \|_{W^{1,2}}^2\leq  C \, \| \dv_f\, h \|_{W^{1,2}}^2 + C \,\|\phi (1)\|_{L^2}^2  + C\, | \cB (h,k)|^2 +
	 C \, \int   [h]_2^4   \, \e^{-\bar{f}}  \, .
    \end{align}
     We still need to get better bounds on the $[h]_2^4$   term. 
      Lemma \ref{l:polygrow} gives a constant $C_n$ so that
   \begin{align}	\label{e:vbounds}
   	  [u]_2 \leq C_n  \,  \| u \|_{L^2}  \, (1+|x|^2) \leq C_n  \,  \| h \|_{L^2}  \, (1+|x|^2)  \, .
   \end{align}
  The triangle inequality $[h]_2 \leq [\bb]_2 + [u\, g^1]_2$, the absorbing inequality, and 
      \eqr{e:vbounds} give
  \begin{align}
\int [h]_2^4 \, \e^{-\bar{f} }  &\leq  C\,    \| [h]_2 \, [u]_2 \|_{L^2}^2  + C \, \| [h]_2 \, [\bb]_2 \|_{L^2}^2  \leq \frac{1}{2} \,\int [h]_2^4 \, \e^{-\bar{f}} + C \, \| u \|_{L^2}^4 + C \, (\sup [h]_2^2) \, \| \bb \|_{W^{2,2}}^2   \, .
		\notag
  \end{align}
  As long as $[h]_2$ is small, we can use this in \eqr{e:bb2a} and absorb the last term on the right to replace $\int [h]_2^4 \, \e^{-\bar{f} } $ with $\| u \|_{L^2}^4$.   This completes the proof of the first claim.

    We turn now to the second claim.  For this, we will use an elementary inequality using that $[h]_2 \leq 1$
    and $[h]_2 \leq [\bb]_2 +[u]_2$
       \begin{align}
    	[h]_2^3 \leq 2\, [h]_2 ([\bb]_2^2 +  [u]_2^2) \leq 2 \, [\bb]_2^2 + 2 [\bb]_2 \, [u]_2^2 + [u]_2^3 \leq 3\, [\bb]_2^2 +  [u]_2^3 + [u]_2^4  \, .
\end{align}
 Using this in 
  Lemma \ref{l:riesz}    gives
\begin{align}	
  		 \| u \|_{L^2}^2 \leq      C \, \|\phi (1)\,(1+|x|^2)\|_{L^1}+C\,\left\|   \left\{ [u]_2^4 + [u]_2^3  + [\bb]_2^2 + | \bar{\nabla} \psi |^2 \right\} 
	  (1+|x|^2) \right\|_{L^1} + C \, \| u \|_{L^2}^4
  \, . \notag
\end{align}
 Lemma \ref{l:polygrow} gives
 that    $\|( [u]_2^4 + [u]_2^3)\,(1+|x|^2)\|_{L^1}\leq C\,(\|u\|_{L^2}^3 + \| u\|_{L^2}^4)$.   From this,  the first claim
 and  Lemma \ref{l:ply},
 we get  $C$ and $C_{\epsilon}$  
  \begin{align}
  \| u \|_{L^2}^2  \leq C\, ( \| u \|_{L^2}^{ 3} + \| u \|_{L^2}^4) + C_{\epsilon} \, \left\{\| \phi (1) \|_{L^2}^{2-\epsilon} + |\cB (h,k)|^{2-\epsilon} + \| \dv_{\bar{f}} \, h \|_{W^{1,2}}^{2-\epsilon} \right\}\, .   \, \, \, \, \, \, \, \, \, \, \, \qedhere
  \end{align}
   \end{proof}

 \section{The action of the diffeomorphism group} \label{s:section7}
 
The main result of this section is the following ``improvement'' estimate, proving that a shrinker which is close to a model on some large scale is even closer on smaller scales:

 \begin{Thm}	\label{t:improve}
  Given $\theta < 1$, there exists $R_1$ so that  if ($\dagger_R$) and $R> R_1$, then ($\star_{\theta \, R}$) holds.
 \end{Thm}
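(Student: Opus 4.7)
The plan is to take the diffeomorphism $\Psi_R$ provided by $(\dagger_R)$, modify it by composing with a gauge-fixing diffeomorphism that solves the nonlinear PDE \eqref{e:nonlinearPDE} together with a balancing adjustment, and then invoke the quadratic rigidity of Theorem \ref{p:firstorder} to upgrade the estimates. The key gain comes from Theorem \ref{p:firstorder}: in the right gauge with $\cB=0$, the closeness to $\Sigma$ is controlled by the failure $\phi$ of the shrinker equation alone, and after cutoff $\phi$ will be supported only near the boundary of the region where the perturbation was extended, forcing it to be hit by the weight $\e^{-\bar{f}}\approx \e^{-R^2/4}$.

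I would first pull back to obtain $h_0 = \Psi_R^* g - \bar{g}$ and $k_0 = f\circ \Psi_R - \bar{f}$ on $\{b<R\}$ and extend these to all of $\Sigma$ by multiplying by a smooth cutoff $\eta$ equal to one on $\{b<R-1\}$ and vanishing outside $\{b<R\}$. The resulting $(\tilde{h},\tilde{k})$ are compactly supported with $C^{3,\alpha}$ norms of order $\e^{-R^2/32}$, so the associated $\phi$ is supported in the annulus $\{R-1<b<R\}$ and is pointwise of size $CR^{c}\e^{-R^2/32}$. Weighting by $\e^{-\bar{f}}\lesssim \e^{-R^2/4+R/2}$ on this annulus then gives
\begin{align*}
\|\phi\|_{L^2}^2 \leq CR^{C}\,\e^{-5R^2/16}, \qquad \|(1+|x|^2)\,\phi\|_{L^1} \leq CR^{C}\,\e^{-9R^2/32}.
\end{align*}

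Next, I would construct the gauge-fixing diffeomorphism by Newton iteration for \eqref{e:nonlinearPDE}: at each step $n$, solve $\cP Y^{(n)} = \tfrac{1}{2}\dv_{\bar{f}} h^{(n)}$ using Theorem \ref{c:elliptic} (the right-hand side is compactly supported because of the cutoff) and apply the time-one flow of $Y^{(n)}$. The sharp polynomial growth bounds from Theorem \ref{t:main0a} control the displacement of each $Y^{(n)}$, keeping each iterate a diffeomorphism onto a neighborhood of $\{b<\theta R\}$ with small displacement, so the iteration converges to a diffeomorphism $\Phi$ with $\dv_{\bar{f}}(\Phi^*(\bar{g}+\tilde{h})-\bar{g})=0$. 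Simultaneously, allowing a Euclidean-translation component in $Y^{(n)}$ (the center-of-mass correction of Section \ref{s:section7}) also enforces $\cB(h,k)=0$, where $h, k$ denote the final residual tensor and function.

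With the gauge fixed and the balancing zeroed, Theorem \ref{p:firstorder} collapses to
\begin{align*}
\|u\|_{L^2}^2 &\leq C\|u\|_{L^2}^{3} + CR^{C}\,\e^{-9R^2/32}, \\
\|\bb\|_{W^{2,2}}^2 + \|\bar{\nabla}\psi\|_{W^{1,2}}^2 &\leq CR^{C}\,\e^{-5R^2/16} + C\|u\|_{L^2}^{4}.
\end{align*}
Since the initial $C^0$ size of $u$ is already small, the first inequality bootstraps to $\|u\|_{L^2}^2\leq CR^{C}\e^{-9R^2/32}$, and the second then gives the same bound for $\|h\|_{W^{2,2}}$. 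Elliptic regularity for the shrinker system in the gauge $\dv_{\bar{f}}h=0$ (where the linearization becomes strongly elliptic) together with Sobolev embedding converts these $W^{2,2}$ bounds into pointwise $C^{2,\alpha}$ estimates of the same exponential order. Since $\theta<1$ gives $\theta^2/4<9/32$, the resulting pointwise bound $|h|^2+|k|^2\leq CR^{C}\e^{-9R^2/32}$ is stronger on $\{b<\theta R\}$ than the weighted bound $\delta_{0}\e^{\bar{f}-(\theta R)^2/4}$ required by $(\star_{\theta R})$, and the $C^{2,\alpha}$ norm is likewise bounded by $\delta_0$ for $R$ large. The main obstacle is the gauge-fixing iteration: the solutions $Y^{(n)}$ of $\cP Y^{(n)}=\tfrac12\dv_{\bar f}h^{(n)}$ live a priori in weighted $L^2$ and could grow rapidly, so one relies crucially on the displacement bounds derived from Theorem \ref{t:main0a} to keep the composition inside the region on which $h^{(n)}$ is defined and to close the fixed-point argument.
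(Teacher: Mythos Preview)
Your overall strategy matches the paper's: cut off, fix the gauge and balance by iterating the linearized $\cP$-equation, then invoke Theorem \ref{p:firstorder}, and finally pass from integral to pointwise bounds. There are, however, two places where the argument as written does not close.

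\textbf{Gauge fixing and compact support.} You claim that the Newton iteration converges to a diffeomorphism with $\dv_{\bar f}h=0$ and $\cB(h,k)=0$ exactly. But the $Y^{(n)}$ produced by Theorem \ref{c:elliptic} (and controlled by Theorem \ref{t:main0a}) are not compactly supported; they grow polynomially. If you flow along $Y^{(n)}$ without cutting it off, then the new $h$ equals $\Phi^*\bar g-\bar g$ outside $\{\bar b<R\}$ and is no longer globally $C^2$-small, so the hypothesis $\|h\|_{C^2}+\|\bar\nabla k\|_{C^1}\le\delta$ of Theorem \ref{p:firstorder} may fail. If instead you cut off $Y^{(n)}$ (as one must, to keep $h$ compactly supported), then the cutoff introduces a boundary term near $\{\bar b\approx R\}$ that prevents the iteration from ever reaching $\dv_{\bar f}h=0$. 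The paper handles this by iterating only finitely many times with a fixed cutoff, accepting a residual $\|\dv_{\bar f}h\|_{C^{2,\alpha}\{\bar b<R-3\}}^2+|\cB|^2\le \e^{-(R-3)^2/4}$ (Lemma \ref{l:quadV} and Lemma \ref{l:linearcB} quantify the quadratic and boundary errors). One then feeds this residual, rather than zero, into Theorem \ref{p:firstorder}; the extra terms are of order $\e^{-(1-\epsilon)R^2/4}$, which is harmless.

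\textbf{From $W^{2,2}$ to pointwise.} Sobolev embedding does not take $W^{2,2}$ to $C^{2,\alpha}$ in dimension $n\ge2$, so ``Sobolev embedding converts these $W^{2,2}$ bounds into pointwise $C^{2,\alpha}$ estimates of the same exponential order'' is not correct as stated. The paper instead interpolates the exponentially small $W^{2,2}$ bound against the polynomial higher-order bounds $\|h\|_{C^\ell}\le C_\ell R^\ell$ supplied by $(\dagger_R)$ to obtain the required pointwise exponential decay. Your elliptic-regularity idea can be made to work locally, but it produces a bound of the form $|h(p)|^2\le C\,\e^{\bar f(p)}\|h\|_{L^2}^2$ (with the $\e^{\bar f}$ coming from unweighting the $L^2$ norm on a unit ball), not the uniform bound you wrote; fortunately that weighted form is exactly what $(\star_{\theta R})$ asks for.
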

 
 Theorem \ref{t:improve} is the last ingredient needed to prove the strong rigidity Theorem \ref{t:main}.  Before doing so, we will state
 a more general result (note that $\SS^{\ell}$ satisfies ($\star$) in Section \ref{s:jacobi}):
 
 \begin{Thm}	\label{t:rigidP}
Let $N^{\ell}$ satisfy ($\star$) in Section \ref{s:jacobi} and let $\Sigma = N  \times \RR^{n-\ell}$ be a shrinker with potential $f_{\Sigma}= \frac{|x|^2}{4} + \frac{\ell}{2}$.  There exists an $R=R(n)$ such that if $(M^n,g,f)$ is another shrinker and $\{f_{\Sigma}\leq R\}\cap \Sigma$ is close to $\{f\leq R\}\subset M$ in the smooth topology and $f_{\Sigma}$ and $f$ are close on this set, then $(M,g,f)$ is 
identical to $\Sigma$ after a diffeomorphism.
\end{Thm}

  \begin{proof}[Proof of Theorems \ref{t:main}, \ref{t:rigidP}]
Repeatedly applying Theorems \ref{t:extend} and \ref{t:improve} gives 
 maps $\Psi_{R_i}$ satisfying ($\dagger_{R_i}$)
  with $R_i \to \infty$.  The maps are uniformly Lipschitz on compact subsets since the $\Psi_{R_i}$'s are almost isometries and, 
  since $f$ and $\bar{f}$ are proper, the Arzela-Ascoli theorem gives a uniformly convergent subsequence and a limiting proper map $\Psi$.  
  As $R_i \to \infty$, the Lipschitz constants go to one and we conclude that $\Psi$ preserves both the metric and the potential.
  \end{proof}
  
  The challenge for proving Theorem \ref{t:improve} is that Theorem \ref{p:firstorder} requires  bounds on $\dv_{\bar{f}} \, h$ and $\cB (h,k)$ that are stronger than what comes out of
  ($\dagger_R$).  This is a gauge problem: these quantities are only small in the right coordinates, and this is true even if the shrinker is isometric to the model $\Sigma$.  
   We will use  $\cP$  to find the right coordinates in Proposition \ref{p:solvegauge}.
  
  \subsection{The gauge problem}

Given a vector field $V$ with compact support, define a diffeomorphism $\Phi (x) = \Phi_V (x) $ by 
\begin{align}
	\Phi  (x) =   \gamma (x , V(x) ) \, ,
\end{align}
where  $ \gamma (x , \zeta ) = \exp_x  \zeta $ is the exponential map for $x \in M$ and  $\zeta \in T_xM$.{\footnote{The corresponding map on Euclidean space is just $x \to x + V(x)$.}} This is well-defined as long as $|V| \leq \delta_0$ where $\delta_0 > 0$ depends on the closed manifold $N$.
We will often use $y=y_V$ as coordinates for $\Phi (x)$.
We assume that $h,k$ satisfy on $b<R$
\begin{align}	\label{e:3p2}
	\| h \|_{C^{4,\alpha}} + \| k \|_{C^{4,\alpha}} \leq \e^{ - a_0 \, R^2 } \, , 
\end{align}
where  $ a_0 > 0$ is given by Theorem \ref{t:extend}.

The map $\Phi$  gives a new metric  $ \Phi_V^* (\bar{g} + h)$ and, thus, a new ``metric perturbation'' $\bar{h}$ and 
 ``measure perturbation'' $\bar{k}$ 
\begin{align}
	\bar{h} =  \Phi_V^* (\bar{g} + h) - \bar{g} {\text{ and }}  \bar{k} = (\bar{f} + k) \circ \Phi - \bar{f} \, .
\end{align}
Define a mapping $J = J(h,V)$   by
\begin{align}
	J (V) = \frac{1}{2} \, \dv_{\bar{f}} \left(   \bar{h} + 2 \, \dv_{\bar{f}}^* V \right) \, ,
\end{align}
where we added $2 \, \dv_{\bar{f}}^* V$ to cancel  the linearization in $V$ at $h=0$.   We will also need to track the ``center of mass''   $\cB (V) = \cB(h,k,V) \in \RR^{n-\ell}$ given by
\begin{align}
	\cB^i (V) = \int x_i \, \left( \bar{k} - \frac{1}{2} \, \Tr \, \bar{h} \right) \, \e^{ - \bar{f}} \, .
\end{align}
 
 The next proposition constructs a vector field $V$  giving a diffeomorphism that makes $\dv_{\bar{f}} \, \bar{h}$ 
 and $\cB$  small  relative to the scale that we are working on.
 
 \begin{Pro}	\label{p:solvegauge}
 There exists $R_0 > 0$ so that if $R \geq R_0$ and $h,k$ have support in $b \leq R$ and satisfy \eqr{e:3p2},
then there exists $V$ with support in $b \leq R$ so that
  $\| V \|_{C^{5,\alpha}} \leq \e^{ - \frac{3}{4}\, a_0 \, R^2}$, \, $|\cB (V) | \leq 2 \, \e^{ - \frac{(R-1)^2}{4}} $ and so that:
  \begin{itemize}
  \item $\dv_{\bar{f}} \, \bar{h}$ vanishes unless $b \in [R-1 , R]$ and satisfies $\|  \dv_{\bar{f}} \, \bar{h}  \|_{C^{3,\alpha}} \leq \e^{ - \frac{1}{2} \, a_0 \, R^2} $.
  \item   $\bar{h}$ and $\bar{k}$ are supported in $b \leq R$ and  satisfy $\| \bar{h} \|_{C^{4,\alpha}} + \| \bar{k} \|_{C^{4,\alpha}} \leq \e^{ - \frac{1}{2} \, a_0 \, R^2}$.
\end{itemize}
\end{Pro}

\vskip2mm
If $J(V) = \cP \, V$,  then
  $\dv_{\bar{f}} \, \bar{h} = 0$, so we would like to solve the nonlinear equations 
\begin{align}	\label{e:pairss}
	 J(V) = \cP \, V {\text{ and }} \cB(V) = 0 \, .
\end{align}
 We could do this if $M$ was closed.  To deal with the non-compactness, we will instead solve this up to error terms from a cutoff function $\eta$.
    We will reformulate this version of \eqr{e:pairss} as finding a fixed point 
  for a nonlinear mapping ($\zeta$ in \eqr{e:zetaDEF}) that we will show   is a type of  contraction mapping.  There will be a two additional subtleties. 
The   first
  is that $\cP$ has a kernel, so we have to work orthogonally to this.  The second
  issue  is that  $\zeta$ will be contracting only in a weak  sense explained below.
  
  \vskip1mm
     The starting point is to 
   understand how $\bar{h}$ and $\bar{k}$ depend on $h, k$ and $V$.

\begin{Lem}	\label{l:barhP}
We have that
\begin{align}
	\bar{h}_{ij} (x) & =  \left( \bar{g}_{mn} (y) \,  
	  \gamma^m_i \, \gamma^n_j - \bar{g}_{ij} \right) 
	  +   h_{mn} (y) \,  
	 \gamma^m_i \, \gamma^n_j +  \bar{g}_{mn} (y) \,  
	 \left(  \gamma^n_j \, \gamma^m_{\eta_p}  \, V^p_i  +
	 \gamma^m_i \,   \gamma^n_{\eta_q}  \, V^q_j   
	 \right) \notag \\
	 &+ [\bar{g} , \gamma_{\eta} , \gamma_{\eta} , \nabla V , \nabla V] +   [h , \gamma_{\eta} , \gamma_{\eta} , \nabla V , \nabla V] 
	 + [ h , \gamma_x , \gamma_{\eta} , \nabla V ] \, , 
\end{align}
where the terms involving $h$ are evaluated at $y$ and the terms on the last row are multi-linear combinations of the listed quantities.
\end{Lem}

\begin{proof}
The chain rule gives that the differential of the map $y$ is 
\begin{align}
	y^i_j   & = \gamma^i_j (x,V(x)) + \gamma^i_{\eta_m} (x,V(x)) \, V^m_j   \, ,
\end{align}
where terms are evaluated at $x$ unless specified otherwise.
Thus, we see that
\begin{align}
	[ y^*(\bar{g} +h)]_{ij}  &=  (\bar{g} +h)_{mn} (y) \, y^m_i \, y^n_j = 
	 (\bar{g} +h)_{mn} (y) \,  
	 \left( \gamma^m_i + \gamma^m_{\eta_p}  \, V^p_i  
	 \right) \,  \left( \gamma^n_j + \gamma^n_{\eta_q}  \, V^q_j  
	 \right) \, , 
\end{align}
where $\gamma$'s are always evaluated at $(x,V(x))$. Expanding this out gives
the claim.
\end{proof}

The next proposition shows that $J$ is bounded from $C^{5,\alpha}$ to $C^{3,\alpha}$ and is Lipschitz from  $C^{4,\alpha}$ to $C^{2,\alpha}$.  This loss of a derivative in the Lipschitz property will result in $\zeta$ (defined in \eqr{e:zetaDEF}))
only being contracting on $C^{4,\alpha}$, which complicates the fixed point argument.

\begin{Pro}	\label{p:JA}
If $V, W \in C^{5,\alpha}$  have    $\| V \|_{C^0} , \| W \|_{C^0} \leq \delta_0$,  $\| V \|_{C^5} , \| W \|_{C^5} \leq 1$, and vanish on $b>R$, then
\begin{align}
	\| J (V) \|_{C^{3,\alpha}} &\leq C\, R \, \|   h \|_{C^{4,\alpha}} +  C \, R \,  \left\{ \| V \|_{C^{5,\alpha}}^2 + \| h \|_{C^{4,\alpha}} \, \| V \|_{C^{5,\alpha}}
	\right\} 
		   \, , \label{e:J1} \\
	\| J (V) - J (W)  \|_{C^{2,\alpha}} &\leq C\, R  \, \left(  \| h \|_{C^{4, \alpha}} + \| V \|_{C^{4,\alpha}}  
	+ \| V - W\|_{C^{4,\alpha}}   \right)  \, \| V - W \|_{C^{4,\alpha}} \, . \label{e:J2}
	\end{align}
\end{Pro}

\begin{proof}
The proof is elementary, but involved.  For simplicity, we will explain it in the case where  $\gamma (x,\zeta) =x + \zeta$;
the general case follows similarly with  additional terms from the differential of the exponential map and its derivatives.
In this case, we have
\begin{align}
	\bar{h}_{ij} &=  h_{ij}  + V^j_i + V^i_j +
	[\nabla V , h] + [\nabla V , \nabla V]  + [\nabla V , \nabla V , h] \, ,
\end{align}
where the $h$ terms on the right are evaluated at $y= y_V = x + V(x)$ and $[\nabla V , h]$ denotes a term that is linear in both $\nabla V$ and $h$, etc.
Using this, we get that
  \begin{align}
 	J(V) &=   {\textcolor{blue}{ (\dv \, h)   - h_{ij} \, \bar{f}_j }} +  
	{\textcolor{red}{ [\nabla^2 V , h] + [\nabla V , \nabla h] + [\nabla V , h , \nabla \bar{f} ] }} 	\notag \\
	&\qquad   +
	[\nabla V , \nabla V , \nabla h] 
 + [\nabla^2 V , \nabla V , h]  
	  + [\nabla V , \nabla V , \nabla V , \nabla h] + [\nabla V , \nabla V , h , \nabla \bar{f}]  \\
	  &  \qquad  + [\nabla^2 V , \nabla V]+ [\nabla V , \nabla V , \nabla \bar{f} ]  
	\, . \notag
 \end{align}
 The terms on the right appear in three groups.  The blue terms are linear in $h$ and have no $\nabla V$'s; the red terms are bilinear in $h$ (or $\nabla h$) and $\nabla V$ (or $\nabla^2 V$); the black terms are at least quadratic in $\nabla V$ (or $\nabla^2 V$).
 
 \vskip1mm
 {\bf{Proving \eqr{e:J1}}}.  The first blue term is bounded in $C^{0}$ by $\| \dv \, h\|_{C^0}$, while the second one is bounded by
 $R \, \| h \|_{C^0}$ (the $R$ comes from a bound for $\nabla \bar{f}$ on $b< R$).  The three red terms 
 are bounded in $C^0$ by $\| V \|_{C^2} \, \| h \|_{C^0}$, $\| V \|_{C^1} \, \| h \|_{C^1}$ and $R \, \| h \|_{C^0} \, \| V \|_{C^1}$, respectively. 
  The black terms are all at least quadratic 
 in $V$ and depend on at most one derivative of $h$ and two derivatives of $V$.  Differentiating and arguing similarly gives the  $C^{3,\alpha}$ bound for $J$.
 
 \vskip1mm
 {\bf{Proving \eqr{e:J2}}}.  Using the fundamental theorem of calculus, the difference in the first blue terms for $J(V)$ and $J(W)$ is bounded by{\footnote{We need one more derivative on $h$ than in \eqr{e:J1}; this is why we use  the $C^{1,\alpha}$ norm of the difference.}}
 \begin{align}
 	| (\dv \, h)(y_V) - (\dv \, h)(y_W)| &\leq \|\nabla \dv \, h \|_{C^0} \, |y_V - y_W| = \|\nabla \dv \, h \|_{C^0} \, |V - W| \notag \\
	&\leq C \, \| h \|_{C^2} \, \| V-W \|_{C^0} \, .
 \end{align}
 The second blue term is similar, but has a factor of $R$ because of the $\nabla \bar{f}$ term.  The three red terms and the black terms on the second line all follow similarly using the triangle inequality, with each also giving a bound with a factor of $h$ in it.
 The two black terms on the last line are slightly different since there is no $h$ appearing.  To handle the first term on the last line, we use the triangle inequality to get
 \begin{align}
 \left| [\nabla^2 V , \nabla V] - [\nabla^2 W , \nabla W] \right| &\leq  \left| [\nabla^2 V , \nabla V] - [\nabla^2 V , \nabla W] \right|
 +  \left| [\nabla^2 V , \nabla W] - [\nabla^2 W , \nabla W] \right| \notag \\
 &\leq \| V \|_{C^2} \, \| V -W \|_{C^1} + \| W \|_{C^1} \, \| V -W \|_{C^2} \\
 &\leq \| V \|_{C^2} \, \| V - W \|_{C^2} + \| V - W \|_{C^2}^2  \, . \notag 
 \end{align}
 The other term is similar, but with an extra $R$ factor.  The $C^{2,\alpha}$ estimates follow similarly.
\end{proof}

We turn next to $\cB$.  It is useful to 
let $\langle V , W \rangle_{L^2} = \int \langle V , W \rangle \, \e^{ - \bar{f}}$ be the weighted $L^2$ inner product for vector fields.
The lemma shows that $\cB$ is bounded and Lipschitz.

\begin{Lem}	\label{l:Bmap}
The map $\cB$ satisfies
\begin{align}
	\left| \cB^i (V) - \cB^i (0) - \langle \partial_i , V \rangle_{L^2} \right| &\leq  C \, R \, \left( \| h \|_{C^1} + \| k \|_{C^1} + \| V \|_{C^1}
	\right) \, \| V \|_{C^1}
	\, ,  \label{e:BB1} \\
	\left| \cB^i (V) - \cB^i (W) - \langle \partial_i , (V-W) \rangle_{L^2} \right| &\leq
	C \, R \, \left( \| h \|_{C^1} + \| k \|_{C^1} + \| V \|_{C^1}
	\right) \, \| V - W \|_{C^1} \notag \\
	&\quad + C \, R \, \| V - W \|_{C^1}^2
	\, . \label{e:BB2}
\end{align}
\end{Lem}

\begin{proof}
As in the proof of Proposition \ref{p:JA}, we will suppress the error terms involving the differential of the exponential map, so that
\begin{align}
	\bar{h}_{ij} &=  h_{ij}  + V^j_i + V^i_j +
	[\nabla V , h] + [\nabla V , \nabla V]  + [\nabla V , \nabla V , h] \, , \\
	\bar{k} &=  \bar{f} (y) - \bar{f} + k  \, , 
\end{align}
where $h$ and $k$ are evaluated at $y= y_V = x + V(x)$. To simplify notation, set $w = k - \frac{1}{2} \, \Tr \, h$ and 
$\bar{w} = \bar{k} - \frac{1}{2} \, \Tr \, \bar{h}$. 
In particular, we see that 
\begin{align}
	\bar{w}   - w &=  (w (y)- w)  + (\bar{f}(y) - \bar{f} ) - \dv \, V + [\nabla V , h] + [\nabla V , \nabla V]  + [\nabla V , \nabla V , h]
	\, . \notag
\end{align}
Since $\dv_{\bar{f}} \, (x_i \, V) = \langle \partial_i , V \rangle + x_i \, \dv \, V - x_i \, \langle V , \nabla \bar{f} \rangle$, integration by parts gives that
\begin{align}
	-\int x_i \, \dv (V) \, \e^{ - \bar{f}} =  \langle \partial_i , V \rangle_{L^2} - \int x_i \langle V , \nabla \bar{f} \rangle \, \e^{ - \bar{f}} \, .
\end{align}
Thus, we see that
\begin{align}
	\cB^i (V) &- \cB^i (0) - \langle \partial_i , V \rangle_{L^2} =
	{\textcolor{blue}{ \int 
	x_i \, \left( \bar{f}(x+V(x)) - \bar{f} - \langle V , \nabla \bar{f} \rangle \right)
	\, \e^{ - \bar{f}} }} \notag \\
	&+ {\textcolor{red}{\int x_i \, \left( w(y) - w \right) 
	\, \e^{ - \bar{f}} }}  + \int x_i \, 
	\left\{   [\nabla V , h] + [\nabla V , \nabla V]  + [\nabla V , \nabla V , h]
	\right\} \, \e^{ - \bar{f}} \, . \notag
\end{align}
Since the hessian of $\bar{f}$ is bounded, the blue term is bounded by $C \, R \, \| V \|_{C^0}^2$.  The red term is bounded by
$R\, \| w \|_{C^1} \, \| V \|_{C^0} \leq C \, R \, (\| h \|_{C^1} + \| k \|_{C^1} ) \, \| V \|_{C^0}$.  Finally, the three black terms are bounded $R \, \| V \|_{C^1} \, \| h\|_{C^0}$, $R \, \| V \|_{C^1}^2$ and $R\, \| V \|_{C^1}^2 \, \| h \|_{C^0}$, respectively.
This gives \eqr{e:BB1}.

We turn next to \eqr{e:BB2}.  The red term contributes $R \, (\| h \|_{C^1} + \| k \|_{C^1}) \, \| V - W \|_{C^0}$.  To bound the first black term, observe that
\begin{align}
	 |[\nabla V , h(y_V)] - [\nabla W , h(y_W)]| &\leq  |[\nabla V , h(y_V)] - [\nabla V , h(y_W)]| +  |[\nabla V , h(y_W)] - [\nabla W , h(y_W)]| \notag \\
	 &\leq  C \, \| V \|_{C^1} \, \| h \|_{C^1} \, \| V - W \|_{C^0} + 
	 C\,  \| h \|_{C^0} \, \| V - W \|_{C^1} \, .
\end{align}
The other black terms and the blue term are similar.
\end{proof}

We will construct $V$ iteratively, using a type of contraction mapping argument.  The sequence of vector fields will stay bounded in $C^{5,\alpha}$, but will converge in $C^{4,\alpha}$.  The next lemma will be used for boundedness.

\begin{Lem}	\label{l:seqA}
Given $C$, there exists $\epsilon_0 > 0$ so that if $ \epsilon < \epsilon_0$ and $c_i \geq 0$ is a sequence with $c_0 \leq 2 \, C \, \epsilon$ and 
$c_{i+1} \leq C \, ( \epsilon + c_i^2 + \epsilon \, c_i)$, then 
\begin{align}
	c_i \leq 2 \, C \, \epsilon {\text{ for every }} i \, .
\end{align}
\end{Lem}

\begin{proof}
We will prove this inductively.  It is true for $i=0$ by assumption.  Suppose it is true now for $i$.  We get  that
\begin{align}
	c_{i+1} &\leq C \, ( \epsilon + c_i^2 + \epsilon \, c_i) 
	\leq C \, \epsilon + C \, \left\{ (2 \, C \, \epsilon)^2  + \epsilon \,  (2 \, C \, \epsilon) \right\} \notag \\
	&=  C \, \epsilon + C \, \epsilon \, 
	\left\{ 4 \, C^2 \, \epsilon  + 2 \, C \, \epsilon
	\right\} 
	\, .
\end{align}
To ensure that this is at most $2\, C \,\epsilon$, we need that
\begin{align}
	 4\, C^2 \, \epsilon  + 2\, C \, \epsilon \leq  1 \, .
\end{align}
This holds for $\epsilon_0 > 0$ sufficiently small and the  lemma follows.
\end{proof}

\begin{proof}[Proof of Proposition \ref{p:solvegauge}]
Define the constant $\omega > 0$ to be the weighted volume $\omega = \int \e^{ - \bar{f}}$.
Fix a smooth cutoff function $\eta$ that depends only on $\RR^{n-\ell}$, has   support in $b < R$,   is identically one on $b < R -1$, and has
$\| \eta \|_{C^{5,\alpha}} \leq C$ where $C$ is independent of $R$. Given a vector field with support in $b\leq R$, define a new vector field $\zeta (V)$ supported in $b\leq R$ by
   \begin{align}	\label{e:zetaDEF}
   	\zeta (  V)  = \eta \, \left( \cP^{-1} (J(  V)) - \frac{1}{\omega} \, \sum_i \, \left( \cB^i ( V) - \langle \partial_i , ( V) \rangle_{L^2} \right) \, \partial_i 
	\right)  \, .
   \end{align}
   Note that the compact support of $V$ and the definition of $J$ (it has a $\dv_{\bar{f}}$ in it) ensure that $J(V)$ is in $\cK^{\perp}$ and, thus, $\cP^{-1} (J(V))$ is defined by Proposition \ref{p:cPi2A}.
Using the Leibniz rule for H\"older norms and the triangle inequality, we see that
\begin{align}
	\| \zeta (V) \|_{C^{5,\alpha}} \leq C \, \sum_ i   \left| \cB^i (V) - \langle \partial_i , V \rangle_{L^2} \right| 
	+ C \,   \| \cP^{-1} (J(V)) \|_{C^{5,\alpha}_R}  
	\, .
\end{align}
Using the first claim in Lemma \ref{l:Bmap} on the first term and 
Proposition \ref{p:cPi2A} and the first claim in Proposition
\eqr{p:JA} on the second term gives that 
\begin{align}	\label{e:zeta1}
	\| \zeta (V) \|_{C^{5,\alpha}} & \leq C \,   R \, \left( \| h \|_{C^0} + \| k \|_{C^0} \right)  +
	   C \, R \, \left( \| h \|_{C^1} + \| k \|_{C^1} + \| V \|_{C^1}
	\right) \, \| V \|_{C^1}
 \notag \\
	&+ C \,   R^{m+1} \, 
 \left( 
  \|   h \|_{C^{4,\alpha}} +   \left\{ \| V \|_{C^{5,\alpha}}^2 + \| h \|_{C^{4,\alpha}} \, \| V \|_{C^{5,\alpha}}
	\right\}  \right)
		   \, .
\end{align}
We will need a Lipshitz property for $\zeta$.  Using again the Leibniz rule for H\"older norms and the triangle inequality, we get that
\begin{align}
	\| \zeta(V) - \zeta(W) \|_{C^{4,\alpha}} \leq C \, \sum_ i   \left| \cB^i (V)- \cB^i (W)  - \langle \partial_i , V -W\rangle_{L^2} \right| 
	+ C \,   \| \cP^{-1} (J(V) - J(W)) \|_{C^{4,\alpha}_R}  \notag \, .
\end{align}
Using the second claim in  Lemma \ref{l:Bmap} on the first term and 
Proposition \ref{p:cPi2A} and the second claim in Proposition
\eqr{p:JA} on the second term gives that
\begin{align}
	 \| \zeta(V) - \zeta(W) \|_{C^{4,\alpha}} &\leq 
	C \, R \, \left( \| h \|_{C^1} + \| k \|_{C^1} + \| V \|_{C^1}
	\right) \, \| V - W \|_{C^1}   + C \, R \, \| V - W \|_{C^1}^2 \notag \\
	&  + C \, R^{m+1} \,    \left(  \| h \|_{C^{4, \alpha}} + \| V \|_{C^{4,\alpha}}  
	+ \| V - W\|_{C^{4,\alpha}}   \right)  \, \| V - W \|_{C^{4,\alpha}} 
	 \, .  	\label{e:zeta2}
\end{align}

Define a sequence of vector fields by setting $V_0 = 0$ and
\begin{align}
	V_{i+1} =  \zeta (V_i) \, .
\end{align}
Set $c_i = \| V_i \|_{C^{5,\alpha}}$ and $d_i = \| V_i - V_{i-1} \|_{C^{4,\alpha}}$. 
The estimate \eqr{e:zeta1} allows us to apply 
Lemma \ref{l:seqA} to get that 
\begin{align}
	c_i \leq C \, R^{m+1} \, \| h \|_{C^{4,\alpha}} + C \, R \, \| k \|_{C^1} \, .
\end{align}
Using this in \eqr{e:zeta2}, we see that the $d_i$'s decay  geometrically and, thus, are summable.  This gives that the sequence $V_i$ converges in $C^{4,\alpha}$ to a limiting vector field $V \in C^{4,\alpha}$.  It follows from this and the continuity of $\zeta$ that $\zeta (V) = V$ since
\begin{align}
	\| \zeta (V) - V \|_{C^{4,\alpha}} \leq \| \zeta (V) - \zeta (V_i) \|_{C^{4,\alpha}} + \| \zeta (V_i) - V_i \|_{C^{4,\alpha}} + \| V_i - V \|_{C^{4,\alpha}} \, .
\end{align}
Since the sequence is uniformly bounded in $C^{5,\alpha}$, the limit $V$ satisfies the same $C^{5,\alpha}$ bound.  This gives the first claim.

We will use that $\zeta (V) = V$ to show that $V$ has the required bounds on $\dv_{\bar{f}}$ and $\cB$.    Taking the inner product of $V$ with $\partial_i$ gives
\begin{align}
	\langle V , \partial_i \rangle_{L^2} &= \langle \zeta (V) , \partial_i \rangle_{L^2} = 
	\langle \eta \cP^{-1} (J(V)) , \partial_i \rangle_{L^2} - \frac{\cB^i (V) - \langle \partial_i , V \rangle_{L^2}}{\omega} \,  \int \eta \, \e^{- \bar{f}}\, .
\end{align}
To bound the first term on the right, use that $\partial_i \in \cK$ and $\cP^{-1}$ maps into $\cK^{\perp}$ to get 
\begin{align}
	\left| \langle \eta \cP^{-1} (J(V)) , \partial_i \rangle_{L^2} \right| &= \left|  \langle (1-\eta) \,  \cP^{-1} (J(V)) , \partial_i \rangle_{L^2} \right| \leq \int_{b> R-1} | \cP^{-1} (J(V))| \, \e^{- \bar{f}} \leq  \e^{ - \frac{R^2}{4}} \, ,
	\notag 
\end{align}
where the last inequality used the second claim in Proposition \ref{p:cPi2A}. Using this and the fact that $\int \eta \, \e^{-\bar{f}}$ is exponentially close to $\omega$, we see that
\begin{align}
	|\cB (V) | \leq 2 \, \e^{ - \frac{(R-1)^2}{4}} \, .
\end{align}
This gives the second claim.  

 For the last claim, we consider three different regions depending on $b$.  When $b> R$, then $\eta \equiv 0 $ and $h \equiv 0$, so $\dv_{\bar{f}} \, \bar{h} \equiv 0$.
 When $b < R -1$, then $\eta \equiv 1$, so that applying $\cP$ to $V = \zeta (V)$ gives that
 \begin{align}
 	\cP \, V = J(V) = \frac{1}{2} \, \dv_{\bar{f}} (\bar{h}) + \cP \, V \, , 
 \end{align}
 so we see again that $\dv_{\bar{f}} \, \bar{h} = 0$ here.  Finally, we turn to the intermediate region where $R_1 < b < R$.  
 On this region, we simply use the $C^{5,\alpha}$ bound on $V$ and the first claim in Proposition
\eqr{p:JA} to get that
\begin{align}
	\| J(V) \|_{C^{3,\alpha}} \leq \e^{ - \frac{3}{4} \, a_0 \, R^2} \, .
\end{align}
Since $J(V) = \frac{1}{2} \, \dv_{\bar{f}} (\bar{h}) + \cP \, V$, combining this with the bound on $\| V \|_{C^{5,\alpha}}$ again gives that
\begin{align}
	\|  \dv_{\bar{f}} \, \bar{h}  \|_{C^{3,\alpha}} \leq \e^{ - \frac{1}{2} \, a_0 \, R^2} \, .
\end{align}
Finally, the bounds on $\bar{h}$ and $\bar{k}$ follow from the initial bounds on $h$ and $k$ together with the bounds on the vector field $V$ (notice that we need one more derivative on $V$ because the pull-back depends on its differential).
 \end{proof}

  \subsection{The improvement}

  \begin{proof}[Proof of Theorem \ref{t:improve}]
 Fix a smooth cutoff function $\eta$ with support in $\bar{b} \leq R$ and that is  one on $\bar{b} \leq {R-1}$.
Set $h_0 =  \eta (\Psi_R^* \, g - \bar{g})$ and $k_0 =
\eta (  f \circ \Psi_R - \bar{f})$, so that
 	\begin{align}	\label{e:hzerosmall}
		\left| h_0 \right|_{C^{4,\alpha}}^2 + \left| k_0 \right|_{C^{4,\alpha}}^2   
	 \leq   C\, \e^{ -  a_0\,  R^2 } 
	\end{align}
	 and 
	 $\left| h_0 \right|_{C^{\ell}} + \left| k_0 \right|_{C^{\ell}} 
	 \leq C_{\ell} \, R^{\ell}$.	  
	 Proposition \ref{p:solvegauge} gives a diffeomorphism $\Phi$ so that 
 $|\cB (V) | \leq 2 \, \e^{ - \frac{(R-1)^2}{4}} $ and  
  \begin{itemize}
  \item $\dv_{\bar{f}} \, \bar{h}$ vanishes unless $b \in [R-1 , R]$ and satisfies $\|  \dv_{\bar{f}} \, \bar{h}  \|_{C^{3,\alpha}} \leq \e^{ - \frac{1}{2} \, a_0 \, R^2} $.
  \item   $\bar{h}$ and $\bar{k}$ are supported in $b \leq R$ and  satisfy $\| \bar{h} \|_{C^{4,\alpha}} + \| \bar{k} \|_{C^{4,\alpha}} \leq \e^{ - \frac{1}{2} \, a_0 \, R^2}$.
\end{itemize}
	 Let
$u\,g^1$ be the orthogonal projection of $  \bar{h}$ onto $\cK\,g^1$  and write  $  \bar{h}= u \, g^1 + \bb$ and $ \bar{k}=  \frac{\ell}{2} \, u + \psi $.
  Given $\epsilon > 0$, Theorem \ref{p:firstorder} gives  constants $C$ and $C_{\epsilon}$ so that 
 \begin{align}
     	&\|  \bb  \|_{W^{2,2}}^2  + \|  \bar{\nabla} \psi  \|_{W^{1,2}}^2  \leq C \, \left\{ \| \phi (1) \|_{L^2}^2 + \| \dv_{\bar{f}} \, \bar{h} \|_{W^{1,2}}^2 +     | \cB  |^2 +  \| u \|_{ L^2}^{ 4}  \right\}  \, ,	 \notag \\
	& \|u\|^2_{L^2}\leq   C\,\left\{\| u \|_{L^2}^{ 3} +\|\phi (1)\,(1+|x|^2)\|_{L^1} \right\} + C_{\epsilon} \, \left\{ \| \phi (1) \|_{L^2}^{2-\epsilon}  + | \cB |^{2-\epsilon} +\| \dv_{\bar{f}} \, \bar{h} \|_{W^{1,2}}^{2-\epsilon} \right\}\, . \notag
\end{align}
Since projection cannot increase the norm, we have $\| u\, g^1 \|_{L^2} \leq \|  \bar{h} \|_{L^2}^2$ and, thus, we can absorb the $\| u \|_{L^2}^3$ term in the second equation into the left-hand side.
Using the bounds from Proposition \ref{p:solvegauge}, we see that the remaining terms on the right-hand side are all of the order 
$\e^{ - \frac{(R-1)^2}{4} \, (1 - \epsilon/2)}$, so we see that 
  \begin{align}
   &\|u\|^2_{L^2}\leq C \,  \e^{ - \frac{(R-1)^2}{4} \, (1 - \epsilon/2)}   \, .
   \end{align}
 Using this in the first equation, we see that   $\|  \bb  \|_{W^{2,2}}^2$ and $ \|  \bar{\nabla} \psi  \|_{W^{1,2}}^2$ are of the order $\e^{ - \frac{(R-2)^2}{4}}$.  To see that $\bar{k}$ itself, and not just $\bar{\nabla}\bar{k}$, is small, we use the previous bounds and the normalizations $S + |\nabla f|^2 = f$ and $\bar{S} + |\bar{\nabla} \bar{f}|^2 = \bar{f}$. 
 Finally, choosing $\epsilon > 0$ small, this gives the desired $W^{2,2}$ bounds on the scale $\theta \, R$ (where we also guaranteed that introducing the cutoff has not changed the metric), completing the proof.
 \end{proof}

We will next use the strong rigidity of Theorem \ref{t:main} to prove that if one tangent flow is a cylinder, then every tangent flow is.  To make this precise, 
let   $ \tilde{g}(t))$  a Ricci flow on $M \times [T,0)$ that has a singularity at $t=0$  where the conclusions of theorem $1.4$ in \cite{MM} hold; this includes closed manifolds with type-I singularities.

   \begin{Thm}		\label{t:uniquetype}
 If $M, \tilde{g}$ is a Ricci flow as above and one tangent flow at a point is a cylinder, then every tangent flow at that point is a cylinder (with the same $\ell$).
   \end{Thm}

\begin{proof}
As in \cite{MM}, by solving the conjugate heat equation, continuously rescaling and reparameterizing the Ricci flow gives a solution
$(M,g(t),f(t))$ of the 
rescaled Ricci flow equation where a sequence of times converges to a cylinder $\Sigma$.  The curvature bound assumed in \cite{MM} (see ($1.2$) there) and the Shi estimates, \cite{S},  bound all derivatives of the flow.

We will argue by contradiction.  Suppose instead that  $t_j , t_j'$ are sequences going to infinity with $t_j < t_j' < t_{j+1} < t_{j+1}' \dots$ and so that
\begin{enumerate}
\item $(M, g(t_j) , f(t_j))$ converges to   $\Sigma$.
\item  $(M, g(t_j') , f(t_j'))$ converges to a different shrinker.
\end{enumerate}
Theorem \ref{t:rigidP} gives an $R$   so that if ($\dagger_R$) (relative to $\Sigma$) holds for a shrinker, then the shrinker agrees identically with $\Sigma$ (up to a diffeomorphism).  

By (1), we have that ($\dagger_{2R}$) holds for every $t_j$ sufficiently large.
On the other hand, (2) implies that ($\dagger_{2R}$) must fail for $t_j'$ sufficiently large.  Since $g$ and $f$ vary continuously in $t$, there must be a first $s_j \in (t_j , t_j')$ where ($\dagger_{2R}$) fails.  In particular, using also that we have uniform higher derivative bounds, we see that ($\dagger_R$) holds at $s_j$.
Theorem $1.4$ in \cite{MM} gives 
that a subsequence of the $s_j$'s gives a limiting shrinker $(M,\bar{g} , \bar{f})$, where the convergence is smooth on compact subsets.  On the one hand, this limit must be different from $\Sigma$ since
($\dagger_{2R}$)  fails at every $s_j$.  On the other hand, ($\dagger_R$) holds for the limiting shrinker, so Theorem \ref{t:main} implies that it agrees with $\Sigma$ giving the desired contradiction.
\end{proof}

\end{document}